\documentclass[a4paper,11pt,twoside]{amsart}
\usepackage[english]{babel}
\usepackage[utf8]{inputenc}
\usepackage[a4paper,inner=3cm,outer=3cm,top=4cm,bottom=4cm,pdftex]{geometry}
\usepackage{fancyhdr}
\usepackage{fancyhdr}
\usepackage{titlesec}
\usepackage{enumerate}
\usepackage{color}
\usepackage{bold-extra}
\titleformat{\section}{\normalfont\scshape\centering}{\thesection}{1em}{}
\titleformat{\subsection}{\bfseries}{\thesubsection}{1em}{}

\usepackage{mathrsfs}

\usepackage{comment}
\usepackage{graphics}
\usepackage{aliascnt}
\usepackage[pdftex,citecolor=green,linkcolor=red]{hyperref}

\usepackage{amsmath}
\usepackage{amsfonts}
\usepackage{amssymb}
\usepackage{amsthm}
\usepackage{mathtools}

\newtheorem{theorem}{Theorem}[section]
\newtheorem{corollary}[theorem]{Corollary}

\newtheorem{lemma}[theorem]{Lemma}
\newtheorem{proposition}[theorem]{Proposition}

\theoremstyle{definition}
\newtheorem{definition}[theorem]{Definition}
\newtheorem{remark}[theorem]{Remark}
\numberwithin{equation}{section}

\newcommand\1{\mathbf{1}}

\let\oldpmod\pmod
\renewcommand{\pmod}[1]{\hspace{-0.12cm}\oldpmod {#1}}

\DeclareMathOperator{\sgn}{sgn}

\begin{document}

\title{Linnik's problem for multiplicative functions}

\author{Kaisa Matom\"aki}
\address{Department of Mathematics and Statistics, University of Turku, 20014 Turku, Finland}
\email{ksmato@utu.fi}

\author{Joni Ter\"av\"ainen}
\address{Department of Pure Mathematics and Mathematical Statistics, University of Cambridge, Cambridge CB3 0WB, UK}
\email{joni.p.teravainen@gmail.com}

\begin{abstract} 
We study a multiplicative function analogue of Linnik's problem on the least prime in an arithmetic progression. Let $h\colon \mathbb{N}\to\mathbb{R}\setminus\{0\}$ be a multiplicative function, and let $a \pmod q$ be a reduced residue class. We ask how far one must go before finding square-free integers $n_1,n_2\equiv a \pmod q$ with $h(n_1)<0<h(n_2)$.

We show that one can always find such integers with $n_1,n_2\le q^{2+o(1)}$, unless the sign of $h$ strongly pretends to be a real Dirichlet character modulo $q$. Thus, apart from this natural character obstruction, sign changes of a multiplicative function occur in every reduced residue class at a scale corresponding essentially to the square root barrier.

In the special case of the Liouville function $\lambda$ this improves on a recent result of Ford and Radziwi{\l\l} and matches, up to $q^{o(1)}$ factors, what was previously known conditionally under the generalized Riemann hypothesis.
\end{abstract}

\maketitle

\section{Introduction}


For $q \in \mathbb{N}$ and a multiplicative function $h \colon \mathbb{N} \to \mathbb{R}$, define
\begin{multline*}
  R(h; q) := \min\{N \in \mathbb{N} \colon \text{For all $a \in \mathbb{Z}_q^\times$, there exist square-free integers $n_1, n_2 \leq N$} \\  \text{such that $n_1 \equiv n_2 \equiv a \pmod{q}$ and $h(n_1) < 0 < h(n_2)$}\}.
\end{multline*}
Thus, $R(h;q)$ expresses the threshold for finding a sign change of $h$ in every reduced residue class modulo $q$.
In this paper we study the problem of upper bounding $R(h; q)$, which can be seen as a multiplicative function analogue of Linnik's problem concerning the least prime in a residue class.

If $\chi$ is a real character $\pmod{q}$, then $R(\chi; q)$ does not exist, so we necessarily need to make some assumptions about the function $h$. In this paper we shall show that $R(h; q)$ is at most slightly larger than $q^2$, unless the sign of $h$ pretends to be a real character in a certain strong sense. This is achieved in the following theorem.

\begin{theorem}
\label{thm:generaleasy}
Let $\varepsilon > 0$ be sufficiently small, let $c > 0$, and let $q \in \mathbb{N}$ be sufficiently large in terms of $\varepsilon$ and $c$. Let $h \colon \mathbb{N} \to \mathbb{R}$ be a multiplicative function such that $h(p) \neq 0$ for every $p \nmid q$. Then
\[
  R(h; q) \leq q^{2+\varepsilon},
\]
unless there exists a character $\chi \pmod{q}$ of order at most two such that the sign of $h$ pretends to be $\chi$ in the sense that
\begin{equation} \label{eq:thmConditionEasyInThm}
\sum_{\substack{p \leq q^{1/3} \\ h(p)\chi(p) < 0}} \frac{1}{p} \leq \frac{c}{q^{\varepsilon/100}}.
\end{equation}
\end{theorem}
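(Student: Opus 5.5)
We may replace $h$ by its sign on primes: set $g$ to be the completely multiplicative function with $g(p)=\sgn h(p)$ for $p\nmid q$ and $g(p)=0$ for $p\mid q$. On square-free $n$ coprime to $q$ we have $\sgn h(n)=g(n)$. Fix $a\in\mathbb{Z}_q^\times$ and $x=q^{2+\varepsilon}$. It suffices to show that
\[
S_\pm:=\sum_{\substack{n\le x,\ n\equiv a \pmod q}} \mu^2(n)\,\frac{1\pm g(n)}{2}\,w(n) > 0
\]
for a suitable nonnegative weight $w$. The weight $w$ will restrict to $n$ built from many small primes, say $n=p_1\cdots p_k m$ with the $p_i$ in short dyadic ranges below $q^{1/3}$. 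This amounts to showing that the twisted sum $\sum \mu^2(n)g(n)w(n)\1_{n\equiv a\,(q)}$ is smaller than the untwisted one.

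\textbf{Step 1: character expansion.} Expand by orthogonality:
\[
\frac{1}{\varphi(q)}\sum_{\chi \pmod q}\overline{\chi}(a)\sum_{n}\mu^2(n)g(n)\chi(n)w(n).
\]
The key structural choice is the form of $w$: a product of $k\approx 6/\varepsilon$ short prime sums $P_j(\chi)=\sum_{p\sim P_j} g(p)\chi(p)$ with $P_j\le q^{1/3}$ and $\prod P_j\approx q^{2+\varepsilon}$. In the untwisted sum only the principal character contributes the main term, which is $\approx x/\varphi(q)$ up to $\log$ factors and an $\varepsilon$-loss in density.

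\textbf{Step 2: bounding non-principal characters.} For the remaining characters we use the large sieve or a mean value bound,
\[
\sum_\chi |P(\chi)|^{2}\ll (q+P)P,
\]
combined with Hölder over the $k$ factors. Taking products to length about $q$ in two blocks, Cauchy--Schwarz gives total error $\ll q^{o(1)}\bigl(x\,(q+x^{1/2})\bigr)^{1/2}\cdot x^{1/2}/\varphi(q)$-type bounds. These beat the main term once $x\ge q^{2+\varepsilon}$; this is exactly the square-root barrier.

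The problem is that one or a few characters $\chi$ may have $|P_j(\chi)|$ nearly maximal, i.e.\ $g\chi$ is close to $1$ on primes up to $q^{1/3}$. Characters with large values are few: by the large sieve with a high moment, at most $q^{o(1)}$ of them. Their contribution must be compared with the principal term. For complex $\chi$, $g$ is real, so $g\chi$ and $g\overline\chi$ both being close to $1$ would force $\chi^2$ to be close to $1$ on primes. That contradicts zero-density / Halász-type bounds for $\sum_p \chi^2(p)/p$ over short ranges, so only real characters can be exceptional. If a real $\chi$ is exceptional for the sign $+$ (or $-$) sum, the main term can still cancel. We then use the negation of \eqref{eq:thmConditionEasyInThm}: there is mass $\ge c q^{-\varepsilon/100}$ of primes $p\le q^{1/3}$ with $g(p)\chi(p)=-1$. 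We insert one such prime as a factor of the weight, which flips the sign of the exceptional contribution relative to the principal one, or alternatively choose $w$ using such primes so the two cannot cancel.

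\textbf{Main obstacle.} The main obstacle is this last step: controlling a possibly exceptional real character when the pretentious distance is only of size $q^{-\varepsilon/100}$. The gain from inserting a prime with $g\chi(p)=-1$ is tiny, about $q^{-\varepsilon/100}$ relative to the main term. So all other non-principal contributions must be bounded by $q^{-\varepsilon/50}$ times the main term. I would try to get this by taking $k$ large and using the $2k$-th moment large sieve, which saves $q^{-\varepsilon/3}$ from the extra room $q^{\varepsilon}$ above $q^2$. I would also restrict the weight to $n$ having exactly one prime factor from the set $\{p : g\chi(p)=-1\}$ among the small factors and all others from the complement. The $\pm$ sums then differ by exactly twice the exceptional term, while the principal and exceptional terms add with opposite signs for one choice and the same sign for the other, so both $S_+$ and $S_-$ are positive.
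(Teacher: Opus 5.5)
\textbf{There is a genuine gap: your Step 2 does not beat the main term, and the exceptional characters cannot be handled the way you propose.} For a block of length $N\ge q$, the mean value theorem $\sum_{\chi}|\sum_{n\le N}a_n\chi(n)|^2\le(\varphi(q)+N)\sum_n|a_n|^2$ is dominated by its diagonal term. That term is already as large as the principal character's contribution. So Cauchy--Schwarz over two blocks with $N_1N_2=x$ gives at best $\varphi(q)^{-1}\bigl((q+N_1)N_1(q+N_2)N_2\bigr)^{1/2}\ge x/\varphi(q)$, which is never below the main term, whatever $x$ is. Your own displayed bound $x(q+x^{1/2})^{1/2}/\varphi(q)$ exceeds $x/\varphi(q)$. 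The density loss of a weight supported on products of primes makes things worse, not better.

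The extra room $q^{\varepsilon}$ beyond $q^2$ can only be exploited through a pointwise power saving on one short prime factor of length about $q^{\varepsilon}$. That saving is available only for characters outside an exceptional set $\mathcal{Y}$, where the short prime sum is large. Lemma~\ref{le:largevalue} bounds $|\mathcal{Y}|$ only by a small power of $q$, not by $q^{o(1)}$ once a fixed power saving is required. On $\mathcal{Y}$ the remaining long sums are merely $O(1)$ in mean square (Lemma~\ref{le:Hal-Mon}), so there is no saving relative to the main term.

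So everything hinges on the characters in $\mathcal{Y}$, and for an arbitrary sign pattern you cannot show their contribution is small. This is exactly the Linnik-type obstruction. The specific steps fail as follows:
\begin{enumerate}
\item For general $h$, the sets $\{n\colon \sgn(h(n))=\pm\}$ can correlate strongly with non-principal characters, including non-real ones.
\item Your reduction to real characters would need that a non-principal $\chi^2$ cannot be close to $1$ on most primes in ranges of size about $q^{\varepsilon/3}$. That is a Vinogradov-type statement far beyond Burgess. It is not a consequence of zero-density or Hal\'asz-type bounds, which say nothing at that scale.
\item The final step also fails. Restricting to $n$ with exactly one prime factor from $E=\{p\colon g(p)\chi(p)=-1\}$ forces $g(n)=-\chi(a)$ on the whole class, so it produces only one sign. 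Moreover, counting such $n$ in a fixed class is again a Linnik problem for products of primes from an arbitrary set.
\item The closing sign-cancellation heuristic is not an argument.
\end{enumerate}

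The paper never tries to show these characters are negligible, and this is the idea you are missing. It builds $n=r_1r_2r_3pu$ with $z$-rough $r_j\asymp q^{1/2}$ and a prime $p\in(q^{\varepsilon}/e,q^{\varepsilon}]$. It replaces the sparse indicators $f^\Delta$ by bounded dense models $g^\Delta$ on $\mathbb{Z}_q^\times$ whose character sums differ by at most $\delta$ (Proposition~\ref{prop:f=g+h}). Hence on $\mathcal{Y}$ only the small difference $|F^\Delta-G^\Delta|\le\delta$ enters. Positivity of the dense triple convolution then comes from Kneser-type arguments (Lemma~\ref{le:KneserAppl}), which do not care about Fourier structure. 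The only surviving obstruction is concentration of $A^\pm$ on opposite cosets of an index-two subgroup. Only there does a real character appear, and the hypothesis is used through Lemma~\ref{lemma:le:multfunctsigns} applied to the variable $u$.
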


In many cases, we can further improve the bound $R(h;q)\leq q^{2+\varepsilon}$ to $R(h;q)\leq q^{2+o(1)}$ with a very good control on the $o(1)$-term. For technical reasons, our bound is slightly weaker if $q$ has very many small prime factors. To take this into account, we define
\begin{equation} \label{eq:defB(q)}
  B(q)\coloneqq \min\left\{B \geq 2 \colon \text{For all $z \geq B$, we have } |\{p \leq z\colon p \mid q\}| \leq \frac{z}{10\log z} \right\}.
\end{equation}
Thus $B(q)$ expresses the threshold beyond which the primes dividing $q$ are no longer unusually dense.
Notice that $B(q) \ll 1$ for almost all $q \in \mathbb{N}$ and $B(q) \leq 20 \log q$ for all $q\geq 2$. Theorem~\ref{thm:generaleasy} is a special case of the following theorem (take $Q_1 = q^\varepsilon$).
\begin{theorem}
\label{thm:general}
Let $\varepsilon > 0$ be sufficiently small and let $c > 0$. Let both $q \in \mathbb{N}$ and $Q_1 \in [B(q), q^\varepsilon]$ be sufficiently large in terms of $\varepsilon$ and $c$. Let $h \colon \mathbb{N} \to \mathbb{R}$ be a multiplicative function such that $h(p) \neq 0$ for every $p \nmid q$. Then
\[
  R(h; q)\leq q^2 Q_1,
\]
unless there exists a character $\chi \pmod{q}$ of order at most two such that the sign of $h$ pretends to be $\chi$ in the sense that
\begin{equation} \label{eq:thmCondition}
\sum_{\substack{p \leq q^{1/2} \\ h(p)\chi(p) < 0}} \frac{1}{p} \leq \frac{c}{Q_1^{1/100}}.
\end{equation}
\end{theorem}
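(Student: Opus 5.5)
We work with the completely multiplicative sign function $f$ defined by $f(p)=\sgn h(p)$ for $p\nmid q$ and $f(p)=0$ for $p\mid q$. Since we only ever evaluate $h$ at square-free $n$ coprime to $q$, we have $\sgn h(n)=f(n)$ there. It therefore suffices to show the following: for every $a\in\mathbb{Z}_q^\times$ there are square-free $n\le x:=q^2Q_1$ with $n\equiv a \pmod q$ and $f(n)=+1$, and also such $n$ with $f(n)=-1$. Equivalently, we want the count of square-free $n \le x$, $n\equiv a\pmod q$, to exceed the absolute value of $\sum_{n\le x,\, n\equiv a (q)} \mu^2(n) f(n)$.

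Writing $\mathbf 1_{n\equiv a}=\frac{1}{\varphi(q)}\sum_\chi \bar\chi(a)\chi(n)$, this reduces to comparing $\frac{1}{\varphi(q)}\sum_\chi |S_\chi|$ with the main term $\asymp x/q$ (up to $\varphi(q)/q$ factors), where $S_\chi=\sum_{n\le x}\mu^2(n)f(n)\chi(n)$. A direct large-sieve bound is far too weak at $x\approx q^2$. So, as in Linnik-type arguments, we do not count all $n$. Instead we use products $n=mp_1p_2$ (or more generally a convolution) built from primes in ranges $[Q_1^{\delta}, q^{1/2}]$ together with a short smooth factor.

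The key steps are as follows.

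First, choose a weighted sequence $n=p_1 p_2 m$ with $p_1,p_2\sim q^{1/2}$-ish, so that for each $\chi$ the character sum factors as $P_1(\chi)P_2(\chi)M(\chi)$ with $P_i(\chi)=\sum_{p\sim P_i} f(p)\chi(p)$. We then apply Cauchy--Schwarz and the large sieve, $\sum_\chi|P(\chi)|^2\ll (q+P)P/\log P$. This gives
\[
\sum_{\chi\neq\chi_0,\ \chi \text{ not near-}f}|P_1P_2M|\le\sup|M|\,(\sum|P_1|^2)^{1/2}(\sum|P_2|^2)^{1/2}.
\]
This is $o(\varphi(q)\cdot x/q)$ exactly when $P_1P_2\approx q$ and $M$ gains a factor $Q_1^{\eta}$ over trivial. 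That gain is why $x=q^2Q_1$ is the square-root barrier.

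Second, we handle the characters $\chi$ for which some $|P_i(\chi)|$ is nearly maximal, i.e.\ $f$ pretends to be $\chi$ on primes. Only $O(1)$ such characters exist by a Halász/large-sieve repulsion argument. For complex $\chi$, the sum over the $\chi$- and $\bar\chi$-contributions to both sign classes cannot cancel $\chi_0$ unless $f\chi$ is nearly real, which is impossible for real $f$ and complex $\chi$ with an order-$>2$ character. We make this quantitative via the pretentious triangle inequality $\mathbb D(f,\chi)+\mathbb D(f,\bar\chi)\ge \mathbb D(\chi^2,1)$.

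The main obstacle is the third step: real $\chi$, including $\chi_0$, with $f\approx\chi$. Here we must show that the complement of \eqref{eq:thmCondition} — at least $cQ_1^{-1/100}$ of prime reciprocal mass where $f\chi=-1$ — produces enough sign change. The plan is to exhibit explicitly a small prime $\ell\le q^{1/2}$ with $f(\ell)\chi(\ell)=-1$ and replace $n$ by $\ell n'$. We pair the two sequences $\{p_1p_2m\}$ and $\{\ell p_1 p_2 m'\}$ in the same residue class, which flips the $\chi$-twisted sign. Controlling this needs a lower bound for primes in progressions on average over small sieve weights, and it is exactly where the $Q_1^{1/100}$ loss and the hypothesis $Q_1\ge B(q)$ enter: we need enough $m\le Q_1$ coprime to $q$ to generate the required residues. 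A possible Siegel zero for a real $\chi$ would also interfere. We would treat it by using that an exceptional character forces $f\approx\chi$ to fail quantitatively (Deuring--Heilbronn repulsion), or by absorbing it into the pretentious alternative.
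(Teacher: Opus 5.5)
There is a genuine gap, and it is in your first step. To make $\sup|M|\,(\sum_\chi|P_1|^2)^{1/2}(\sum_\chi|P_2|^2)^{1/2}$ smaller than the principal term, you need $P_1P_2\approx q^2$. (With $p_i\sim q^{1/2}$, as written, the mean value theorem loses a full factor $q^{1/2}$, and $n\approx qQ_1$ rather than $q^2Q_1$.) Even then you need a pointwise saving in the short factor $M(\chi)$, of length at most $Q_1\le q^{\varepsilon}$, for every character outside an $O(1)$-sized set of characters that $f$ pretends to be.

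No such bound can be established. The characters for which a sum of length $q^{\varepsilon}$ is large have nothing to do with $f$ pretending to be $\chi$. Large values estimates such as Lemma~\ref{le:largevalue} only bound their number by a power of $q$, and pointwise bounds at this length are far below the Burgess range. On this exceptional set $\mathcal{Y}$, the long factors get no saving either: Lemma~\ref{le:Hal-Mon} gives only $\sum_{\chi\in\mathcal{Y}}|P_i(\chi)|^2\ll$ (trivial size). So these characters contribute as much as $\chi_0$ and can cancel the main term at a given $a$.

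In effect your scheme would give an asymptotic for the number of $p_1p_2m\equiv a \pmod q$ with $p_i\le q$, $m\le q^{\varepsilon}$, for every $a$. That is a Linnik-type statement at the square-root barrier and is not known.

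Your later steps do not repair this. Step three is also circular as stated: finding $n'\equiv a\overline{\ell}\pmod q$ with prescribed sign is the original problem. Moreover, a single prime $\ell$ cannot supply the needed density. One needs a proportion $\gg\min\{1,\sum_{h\chi(p)<0}1/p\}$ of square-free $u$ with prescribed sign of $h\chi$, as in Lemma~\ref{lemma:le:multfunctsigns}.

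The missing idea is to give up on an asymptotic and transfer to a dense model. The paper replaces each rough factor $f_k^{\Delta}$ by a bounded $g_k^{\Delta}$ on $\mathbb{Z}_q^\times$ with $|F_k^\Delta(\chi)-G_k^\Delta(\chi)|\le\delta=(\log q)^{-1/4}$ for every $\chi$ (Proposition~\ref{prop:f=g+h}). It then shows $S-T$ is small in two regimes. On the few characters where all prime sums in the ladder $(P_j,Q_j]$ are large, one gains $\delta$ from $|F-G|$. On the remaining characters, one uses the pointwise smallness of some ladder prime sum together with the mean value theorem, via the Matom\"aki--Radziwi{\l\l} iteration (Lemmas~\ref{le:ChiSumMsplit} and~\ref{le:amplify}).

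Positivity of $T(a)$ is then purely combinatorial: Kneser-type lower bounds for triple convolutions of the dense sets $A_k^{\pm}$ (Lemma~\ref{le:KneserAppl}). Their only failure mode is that $A_k^{+}$ and $A_k^{-}$ concentrate on opposite cosets of a common index-two subgroup. That failure mode is what produces the quadratic-character alternative, which is excluded by Lemma~\ref{lemma:le:multfunctsigns} applied to $h\chi$ together with Lemma~\ref{le:P2}. No $L$-function zeros or Siegel-zero considerations are needed.
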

We have not tried to optimize the exponents in $q^{1/2}$ and $Q_1^{1/100}$ in~\eqref{eq:thmCondition}. The condition that $h(p) \neq 0$ for every $p \nmid q$ is for convenience; it would be possible to use the same method when $h(p)$ vanishes for some (but not too many) primes $p \nmid q$.

We will obtain the following corollary for the M\"obius function. For the deduction, see Section~\ref{sec:proofofCor}.
\begin{corollary}
\label{cor:Liouville}
Let $\varepsilon_0 > 0$ and let $q$ be sufficiently large in terms of $\varepsilon_0$. Define
\[
  L(q) \coloneqq  \max_{\substack{\chi \pmod q \\ \chi \textnormal{ real}}} \prod_{p > q} \left(1-\frac{\chi(p)}{p}\right).
\] 
Then
\[
  R(\mu; q) \ll q^2 \left(L(q)^{100}+B(q)\right) \ll_{\varepsilon_0} q^{2+\varepsilon_0}.
\]
\end{corollary}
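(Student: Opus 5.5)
We deduce the corollary from Theorem~\ref{thm:general} applied to $h=\mu$. Since $\mu(p)=-1\neq 0$ for all primes $p$, the hypothesis on $h$ holds. Note also that $R(\mu;q)$ only involves square-free $n$, where $\mu=\lambda$. We fix a small absolute $\varepsilon$ and take $c=1$. We then choose
\[
Q_1 \coloneqq C\bigl(L(q)^{100}+B(q)\bigr)
\]
with $C$ a large absolute constant. This makes $Q_1\ge B(q)$ and makes $Q_1$ large in terms of $\varepsilon,c$.

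If $Q_1\le q^{\varepsilon}$, Theorem~\ref{thm:general} gives $R(\mu;q)\le q^2Q_1\ll q^2(L(q)^{100}+B(q))$, provided we rule out the exceptional case~\eqref{eq:thmCondition}. If instead $Q_1>q^{\varepsilon}$, the claimed bound is weaker than what we get by applying the theorem with $Q_1=q^{\varepsilon}$. This still requires ruling out~\eqref{eq:thmCondition} for that smaller $Q_1$, and the argument below handles every admissible $Q_1\le C(L(q)^{100}+B(q))$ uniformly.

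So the main step is to show that~\eqref{eq:thmCondition} fails for every real $\chi\pmod q$ (order at most two). The sign of $\mu(p)$ is $-1$, so the sum in~\eqref{eq:thmCondition} runs over $p\le q^{1/2}$ with $\chi(p)=1$. We split into two cases.

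\emph{Principal $\chi$.} Here $\chi(p)=1$ for all $p\nmid q$. The sum is then at least $\sum_{p\le q^{1/2},\,p\nmid q}1/p$. Using the definition of $B(q)$, the primes dividing $q$ are sparse beyond $B(q)$, so this sum is $\gg 1$ (the primes in $[B(q),q^{1/2}]$ not dividing $q$ already contribute $\gg1$, as $B(q)\le 20\log q$). This is certainly not $\le c/Q_1^{1/100}$.

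\emph{Non-principal real $\chi$.} Suppose~\eqref{eq:thmCondition} holds, so that $\sum_{p\le q^{1/2},\chi(p)=1}1/p\le Q_1^{-1/100}$. Then $\chi(p)=-1$ for almost all $p\le q^{1/2}$ with $p\nmid q$, in the $1/p$-weighted sense. This should force $L(1,\chi)$ to be small. We have
\[
\log L(1,\chi)=\sum_{p\le q^{1/2}}\frac{\chi(p)}{p}+\sum_{p> q^{1/2}}\frac{\chi(p)}{p}+O(1),
\]
and the first sum is about $-\log\log q^{1/2}+\sum_{p\mid q,\,p\le q^{1/2}}1/p+O(Q_1^{-1/100})$. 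We need to control the tail. By Pólya–Vinogradov and partial summation, $\sum_{p>q}\chi(p)/p$ relates to the product defining $L(q)$, namely $\prod_{p>q}(1-\chi(p)/p)=\exp(-\sum_{p>q}\chi(p)/p+O(1/q))$. On the range $q^{1/2}<p\le q$ we bound trivially by $\log 2$.

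Combining these, $L(1,\chi)\asymp \prod_{p\le q}(1-\chi(p)/p)^{-1}\cdot L(q)^{\pm1}$, up to constants. Meanwhile the class number formula and Siegel-type lower bounds give only $L(1,\chi)\gg q^{-1/2}$ trivially, which is not enough. Instead we compare directly: $\prod_{p\le q^{1/2}}(1-\chi(p)/p)^{-1}\approx \prod_{p\le q^{1/2},p\nmid q}(1+1/p)^{-1}\asymp \frac{\phi(q)}{q}\cdot\frac{1}{\log q}$-ish. Since $L(1,\chi)\gg q^{-1/2}$ cannot be used, we need the precise relation between $L(1,\chi)$ and $L(q)$.

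\emph{Main obstacle.} The main obstacle is turning this into an inequality that contradicts $Q_1\ge C L(q)^{100}$. We must show that~\eqref{eq:thmCondition} implies $L(q)\gg Q_1^{1/100}$. Equivalently, we need to show that $\chi(p)=-1$ on most small primes forces $\sum_{p>q}\chi(p)/p\le -\tfrac1{100}\log Q_1 - O(1)$. I would attempt this via the convolution $1*\chi\ge0$. Consider $S=\sum_{n\le x}(1*\chi)(n)/n$ for $x=q^{A}$. Its value is $L(1,\chi)\log x+O(\cdot)$ by Pólya–Vinogradov (the hyperbola method, with error $\ll q^{1/2}\log q/\sqrt{x}$). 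Lower bound it by its restriction to $n$ composed of primes $p\le q^{1/2}$ with $\chi(p)=1$ or $p\mid q$, together with squares. The smallness in~\eqref{eq:thmCondition} then feeds into a sieve comparison. It gives $L(1,\chi)\asymp\prod_{p\le q}(1-\chi(p)/p)^{-1}\cdot\exp(O(Q_1^{-1/100}\log\ldots))$, which pins down $\prod_{p>q}$ and yields $L(q)\ge Q_1^{1/100}/C'$. That contradicts the choice of $C$. The bound $\ll_{\varepsilon_0}q^{2+\varepsilon_0}$ then follows from $B(q)\le20\log q$ and Siegel's theorem, which gives $L(q)\ll_{\varepsilon_0} q^{\varepsilon_0/100}$ via $L(1,\chi)\gg_{\varepsilon_0}q^{-\varepsilon_0/100}$.
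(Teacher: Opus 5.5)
\textbf{There is a genuine gap: the step that excludes \eqref{eq:thmCondition} for a quadratic $\chi$ is not proved, and the route you sketch for it fails.} Your reduction to Theorem~\ref{thm:general} is fine: taking $h=\mu$, disposing of the principal character, and choosing $Q_1\asymp L(q)^{100}+B(q)$ all work. What remains is to show that $\delta_\chi:=\sum_{p\le q^{1/2},\,\chi(p)=1}1/p\le c\,Q_1^{-1/100}$ is impossible once $Q_1\ge C L(q)^{100}$. You label this the ``main obstacle'' but do not resolve it.

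Since $\prod_{p>q}(1-\chi(p)/p)=L(1,\chi)^{-1}\prod_{p\le q}(1-\chi(p)/p)^{-1}$, you need a small $\delta_\chi$ to force $L(1,\chi)$ to be \emph{small}, by a factor $\asymp\delta_\chi$, relative to $\prod_{p\le q}(1-\chi(p)/p)^{-1}$. Equivalently, you need $\delta_\chi\gg 1/L(q)$. Your sketch fails for three reasons.

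\begin{enumerate}
\item Lower-bounding $S=\sum_{n\le x}(1*\chi)(n)/n$ by a restricted sum bounds $L(1,\chi)$ from below (in your formula), which is the wrong direction.
\item The outcome you announce, $L(1,\chi)\asymp\prod_{p\le q}(1-\chi(p)/p)^{-1}e^{o(1)}$, would mean $\prod_{p>q}(1-\chi(p)/p)\asymp 1$. That does not give $L(q)\ge Q_1^{1/100}/C'$.
\item $S$ cannot serve for an upper bound either. Its main term is $L(1,\chi)(\log x+\gamma)+L'(1,\chi)$, not $L(1,\chi)\log x$. The term $n=1$ gives $S\ge1$ for every $\chi$, so no factor $\delta_\chi$ can emerge; indeed $L'(1,\chi)\ge 1-o(1)$ whenever $L(1,\chi)\log x=o(1)$, which is exactly the regime you need. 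Moreover, with $x=q^A$ the sum involves primes in $(q^{1/2},x]$, about which \eqref{eq:thmCondition} says nothing.
\end{enumerate}

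\textbf{The missing idea} is an unweighted count at a scale $y\in[q^{2/5},q^{1/2}]$, restricted to $q^{\varepsilon}$-rough $n$. Lemma~\ref{le:QRupper} combines the fundamental lemma of the sieve with a power-saving asymptotic for $\sum_{n\le y,\,e\mid n}(1*\chi)(n)$, valid for $y\ge q^{1/3}$. It gives
\[
\sum_{\substack{n\le y\\ (n,P(q^{\varepsilon}))=1}}(1*\chi)(n)\ \ge\ c_\varepsilon\, yL(1,\chi)\frac{\varphi(q)}{q}\prod_{\substack{2<p\le q\\ \chi(p)=1}}\Bigl(1-\frac{2}{p}\Bigr)\ \gg\ \frac{y}{L(q)\log q},
\]
where the last step follows by comparing Euler products.

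Now consider composite $q^\varepsilon$-rough $n\le y$ with $(1*\chi)(n)\neq0$. Apart from a negligible set of non-square-free $n$, each is of the form $pm$ with $p\in[q^\varepsilon,y^{1/2}]$ and $\chi(p)\neq-1$, and satisfies $(1*\chi)(n)\le 2^{1/\varepsilon}$. So composites contribute $\ll_\varepsilon \frac{y}{\log q}\sum_{q^\varepsilon\le p\le q^{1/2}}\frac{(1*\chi)(p)}{p}$. Assume \eqref{eq:thmCondition} with $Q_1\ge CL(q)^{100}$, and take $c\,C^{-1/100}$ small in terms of $\varepsilon$. Then this composite contribution is at most half of the lower bound above.

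Hence the primes must carry the mass: $|\{p\le y\colon \chi(p)=1\}|\gg_\varepsilon y/(L(q)\log y)$ for every $y\in[q^{2/5},q^{1/2}]$. Partial summation then gives $\sum_{p\le q^{1/2},\,\chi(p)=1}1/p\gg_\varepsilon 1/L(q)$, contradicting \eqref{eq:thmCondition}.

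\textbf{A smaller error:} your treatment of the case $Q_1>q^{\varepsilon}$ is backwards. \eqref{eq:thmCondition} becomes harder, not easier, to exclude as $Q_1$ decreases. That case is vacuous for large $q$, however, since Siegel's theorem and $B(q)\le 20\log q$ give $L(q)^{100}+B(q)\le q^{\varepsilon}/C$.
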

This improves upon a recent result of Ford and Radziwi{\l\l}~\cite{FordRad} concerning sign changes of the Liouville function in arithmetic progressions. They proved that, for any $\varepsilon > 0$, any sufficiently large prime $q$, and any $a \in \mathbb{N}$, one can find integers $m, n \leq q^{5/2+\varepsilon}$ such that $m \equiv n \equiv a \pmod{q}$ and $\lambda(m) = -1$ and $\lambda(n) = 1$.
\begin{remark}
In the definition of $R(h; q)$ we consider only $a \in \mathbb{Z}_q^\times$. On the other hand, for integers $a, q \in \mathbb{N}$ with $(a, q) = r$ and a multiplicative function $h \colon \mathbb{N} \to \mathbb{R}$ such that $h(r) \neq 0$, the least $n$ such that $n \equiv a \pmod{q}$ and $\sgn(h(n)) = \Delta$ (if it exists) is of the form $n_0r$ with $n_0 \equiv a/r \pmod{q/r}$ and $\sgn(h(n_0)) = \Delta \sgn(h(r))$ and thus satisfies the bound $n \leq r \cdot R(h; q/r)$. Hence, for instance for the M\"obius function, we see that for each $\Delta\in \{-1,+1\}$, the least $n\equiv a\pmod q$ with $\mu(n)=\Delta$ is $\ll_{\varepsilon} q^{2+\varepsilon}$ whenever $(a,q)$ is square-free.
\end{remark}

\subsection{Previous results}
We already mentioned the recent work of Ford and Radziwi{\l\l}~\cite{FordRad}, but let us next dive deeper into the history of the topic.

Linnik's theorem~\cite{Linnik1}, \cite{Linnik2} from 1944 asserts that, for every reduced residue class $a \pmod q$, the least prime $p \equiv a \pmod q$ is  $\ll q^{L+o(1)}$ for some absolute constant $L$. Considerable effort has gone into reducing the admissible value of $L$; we refer to~\cite{Heath-Brown2} for a discussion of previous results. Xylouris~\cite{Xylouris} proved that $L=5$ is admissible, using a method due to Heath-Brown~\cite{Heath-Brown2}. When it comes to conditional results, it was shown already in 1934 by Chowla~\cite{chowla-AP} that the generalized Riemann hypothesis (GRH) implies that $L=2$ is admissible. This corresponds to the natural square root barrier for equidistribution problems in arithmetic progressions, and so one does not expect multiplicative methods to do substantially better, even under GRH. Nevertheless, Chowla also conjectured that $L=1$ works, which would be the optimal result. Thus, the known unconditional results are still a significant distance away from the conjectural results.

There has also been substantial recent interest in analogues of Linnik's problem for numbers with few prime factors. Ramar\'e and Walker~\cite{ramare-walker} proved that 
every reduced residue class modulo $q$ contains a product $p_1p_2p_3\leq q^{16}$ of exactly three primes, with $p_1,p_2,p_3\leq q^{16/3}$. More recently, in~\cite{MatoTeraEkq} the present authors showed that for every sufficiently large cube-free modulus $q$, every reduced residue class modulo $q$ can be represented as a product of three primes $p_1p_2p_3\leq q^3$ with $p_1,p_2,p_3 \le q$, and that for all large enough $q$, at least $(2/3-\varepsilon)\varphi(q)$ residue classes admit a representation as a product of two primes $p_1p_2\leq q^2$ with $p_1,p_2 \le q$. These results may be viewed as partial ternary and binary analogues of the conjectural $q^{2+o(1)}$
bound in Linnik's problem.

It is also natural to ask for Linnik-type results for other multiplicatively defined sets, such as the totient numbers (numbers of the form $\varphi(n)$, where $n$ is any natural number). Recently, Jha~\cite{Jha} proved that for any odd modulus $q$ and any reduced residue class $a \pmod q$, there exists a totient value $v\equiv a\pmod q$ with $v \le q^{2+o(1)}$. 

For general bounded multiplicative functions, Klurman, Mangerel and Ter\"av\"ainen~\cite{KMT} established variance bounds in short arithmetic progressions for almost all moduli. As a consequence, they obtained Linnik-type results for products of exactly three primes for almost all moduli, such as the existence of products of three primes $p_1p_2p_3\leq q^{2+\varepsilon}$ in every reduced residue class modulo $q$, for sufficiently smooth moduli $q$, or for all but a small number of exceptional prime moduli $q$. However, the approach of that paper relied on zero-free regions for Dirichlet $L$-functions (which are much wider for smooth moduli), and zero-density estimates (which give good zero-free regions for most moduli), and therefore that approach does not seem to extend to all moduli without significantly stronger information on zero-free regions for Dirichlet $L$-functions.

Finally, in the specific case of the M\"obius function, the bound $q^{2+o(1)}$ was previously available conditionally under GRH. Indeed, GRH gives the expected $q^{2+o(1)}$ bound for the least prime in a reduced residue class, from which one can obtain both signs of $\mu$ in a fixed class by considering a prime and a suitable product $p_1p_2$ of two distinct primes (where for instance $p_2$ is fixed to be the least prime coprime to $q$, and $p_1$ is the least prime in the progression $p_2^{-1}a\pmod q$). Our result therefore recovers unconditionally, for the M\"obius function, the strength that was previously accessible only under GRH.

\subsection{Proof outline}
We will first prove the simpler Theorem~\ref{thm:generaleasy}, and then modify the argument to obtain the full Theorem~\ref{thm:general}. The proofs have three main ingredients: a multiplicative dense model theorem, additive combinatorial
information on triple product sets in $\mathbb{Z}_q^\times$, and estimates for character sums that allow us to verify the conditions of the dense model theorem and transfer from the original sparse problem to the dense model. 

\medskip

\noindent \textbf{Proof of Theorem~\ref{thm:generaleasy}:}
For Theorem~\ref{thm:generaleasy}, we look for integers of the form
\begin{align*}
n=r_1r_2r_3pu,
\end{align*}
where $r_1,r_2,r_3$ are square-free numbers of size about $q^{1/2}$ with no small prime factors, $p$ is a prime in $(q^\varepsilon/e,q^\varepsilon]$, and $u\le q^{1/2}$ is square-free and restricted to an index two coset. Any such number is at most $q^{2+\varepsilon}$ in size, and is square-free with high probability. We will fix the signs of $h(r_j)$ and $h(p)$ and use the factor $u$ to force both signs of $h$ to appear by appealing to lower bounds for square-free numbers on which a multiplicative function has prescribed sign. 

To study the contribution of the variables $r_j$, we introduce functions $f^\Delta$ that detect square-free integers in a fixed interval, free of small prime factors, and with $\sgn(h(n))=\Delta$. These are sparse functions on $\mathbb{Z}$, but Proposition~\ref{prop:f=g+h} allows us to replace them by dense model functions $g^\Delta \colon \mathbb{Z}_q^\times \to [0,1+o(1)]$ that have essentially the same character sums. For showing that the count of solutions is roughly the same, we use the fact that the product $r_1r_2r_3u$  can be split into two subproducts of comparable size and that we have a small prime variable $p$.

The key point is that after this replacement one is no longer dealing with a sparse multiplicative set, but with dense subsets of the finite abelian group $\mathbb{Z}_q^\times$. Writing
\[
A^\Delta:=\{a\in \mathbb{Z}_q^\times:\ g^\Delta(a)\ge \varepsilon^2\},
\]
one is thus led to a problem about triple products of the sets $A^+$ and $A^-$.

The combinatorial input is that large subsets of $\mathbb{Z}_q^\times$ have very rigid product-set behaviour. Roughly speaking, Proposition~\ref{prop:AAA->E1} states that if one of the triple convolutions
\[
\mathbf{1}_{A^\Delta} * \mathbf{1}_{A^\Delta} * \mathbf{1}_{A^\Delta}
\]
is large on all of $\mathbb{Z}_q^\times$, then one can represent every residue class using three factors of the same sign, and the remaining variables $p$ and $u$ are used to adjust the final sign of $h(n)$. If this does not happen,
then Kneser-type arguments of Lemma~\ref{le:KneserAppl} show that both $A^+$ and $A^-$ must be concentrated on cosets of an index two subgroup $H\le \mathbb{Z}_q^\times$, and in fact on opposite cosets.  The rest of the proof shows that in this exceptional case the sign of $h$ must correlate strongly with the quadratic character attached to $H$, giving the alternative in the theorem. 

The role of the short prime variable $p\in (q^\varepsilon/e,q^\varepsilon]$ is especially important in the transference step. After expanding by characters, one needs to compare a sparse convolution built from the $f^\Delta$
with its dense analogue built from the $g^\Delta$. Since the $p$-sum is long enough to admit a good large values estimate, one can split the characters into those for which the prime sum is small and those for which it is large, and control both contributions using mean square estimates for character sums. This is the reason why Theorem~\ref{thm:generaleasy} is significantly simpler than the general theorem.

\medskip

\noindent \textbf{Proof of Theorem~\ref{thm:general}:}
For the full Theorem~\ref{thm:general}, we follow the same broad strategy, but the transference step becomes more delicate because the prime variable may now be much shorter. To compensate for this, we insert an additional factorization and look for integers of the form
\[
n=r_1r_2r_3 \cdot p_1 \cdot u \cdot m,
\]
where $p_1\in (Q_1/e,Q_1]$, the variables $r_j$ lie in suitable $e$-adic intervals of length about $q^{1/2-\varepsilon/4}$, the variable $u$ has length about $q^{1/2+\varepsilon/4}$, and $m$ has length
$q^{\varepsilon/2}$ and is required to possess a prime factor from each of a sequence of disjoint intervals $(P_j,Q_j]$. The purpose of the factor $m$ is to provide a ``ladder'' of prime factors of increasing sizes, as in~\cite{matomaki-radziwill}. We can then use various character sum estimates and a case analysis depending on which character sum is large to conclude.

The most technical part of the paper is the comparison between the sparse and dense convolutions in this general setting. After expanding in characters, one partitions the characters into classes $\mathcal{X}_j$ and $\mathcal{Y}$. For a character in $\mathcal{X}_j$, the prime sum in the range $(P_j,Q_j]$ is small, and one can exploit this by using a pointwise bound on this character sum and mean value estimates and an amplification argument for the remaining character sums. The remaining set $\mathcal{Y}$ consists of characters for which the prime sums are large on every scale. Large values estimates show that there are very few such characters, so their total contribution is negligible.

The dense model theorem is then applied separately to the three variables $r_1,r_2,r_3$ in each $e$-adic interval, producing sets $A_k^\pm\subseteq \mathbb{Z}_q^\times$. The combinatorial analysis is by now similar in spirit to the case of Theorem~\ref{thm:generaleasy}, but with one important new feature: since the variables $r_j$ are restricted to many different intervals, one has to sum over many triples $(k_1,k_2,k_3)$. If for many such triples one has strong triple product expansion,
then one is in the generic case. If not, then for many $k$ the sets $A_k^+$ and $A_k^-$ must each be concentrated in cosets of some index two subgroup $H_k$. At this point the argument splits again. Either many of the $H_k$ are equal, which yields the same quadratic obstruction as before, or else many distinct subgroups occur, and then mixed triple products coming from different $k$'s are forced to expand, bringing us back to the generic case. 

\medskip

\noindent \textbf{Structure of the paper:}
The structure of the paper reflects this strategy of first proving Theorem~\ref{thm:generaleasy} and then Theorem~\ref{thm:general}. In Section~\ref{sec:auxil1} we collect the tools needed for the proof of Theorem~\ref{thm:generaleasy}, including the sign results for multiplicative functions, the dense model theorem, the character sum estimates, and the additive combinatorics lemmas on product sets. In Section~\ref{sec:proofofCor} we deduce Corollary~\ref{cor:Liouville} from the main theorem. Section~\ref{sec:thmgeneraleasy} contains the proof of Theorem~\ref{thm:generaleasy}. In Section~\ref{sec:auxil2} we develop the additional character sum estimates and decomposition lemmas needed for the full theorem, and the final four sections then carry out the general transference argument and complete the proof of Theorem~\ref{thm:general}.

\section*{Acknowledgements}
The authors would like to thank Kevin Ford and Maksym Radziwi{\l\l} for suggesting the problem and for sharing details of their work~\cite{FordRad} before it was publicly available.

The first author was supported by Research Council of Finland grants number 346307, 333707, and 370133. The second author was supported by European Union's Horizon Europe research and innovation programme under Marie Sk\l{}odowska-Curie grant agreement no. 101058904 and ERC grant agreement no. 101162746.

\section{Notation} The letter $p$, with or without subscripts, is reserved for prime numbers. For $z\geq 1$, we write 
$$P(z)\coloneqq \prod_{p<z}p.$$
Define, for $q \in \mathbb{N}$, $N \geq 2$, and an interval $I \subseteq\mathbb{R}$,
\[
[N]_q = \{n \in \mathbb{N} \colon n\in [1,N] \text{ and } (n, q) = 1\} \quad \text{and} \quad [I]_q = \{n \in \mathbb{N} \cap I \colon (n, q) = 1\}.
\]
We write $\tau_k$ for the $k$-fold divisor function and abbreviate $\tau_2=\tau$. Using M\"obius inversion, we see that for any finite interval $I \subseteq \mathbb{R}$ and $q \in \mathbb{N}$ we have
\begin{equation}
\label{eq:Simplegcd}
|[I]_q| = \sum_{\substack{m \in I \\ (m, q) = 1}} 1 = \sum_{d\mid q}\mu(d)\sum_{\substack{m \in I \\ d \mid m}}1=\sum_{d\mid q}\mu(d)\left(\frac{|I|}{d}+O(1)\right)=|I|\frac{\varphi(q)}{q} + O(\tau(q)).
\end{equation}

We define the function $\sgn \colon \mathbb{R} \setminus \{0\} \to \{+, -\}$ by 
\[
\sgn(x) = 
\begin{cases}
+ & \text{if $x > 0$;} \\
- & \text{if $x < 0$.}
\end{cases}
\]
Throughout, we identify the set $\{+,-\}$ with the set $\{+1,-1\}$. Thus, for $\Delta_1, \Delta_2 \in \{+, -\}$, we define the product $\Delta_1 \Delta_2$ to be $+$ if $\Delta_1 = \Delta_2$ and $-$ otherwise. For a proposition $P$ and a set $A$, we define
\[
  \1_P = \begin{cases}
    1 &\text{if $P$ holds;} \\
    0 & \text{otherwise;}
  \end{cases}
  \quad \text{and} \quad
  \1[A](n) = \begin{cases}
    1 &\text{if $n \in A$;} \\
    0 & \text{otherwise.}
  \end{cases}
          \]

For $\Delta \in \{+, -\}$ and $x\geq 1$, we write 
\begin{align*}
  E_h^\Delta(x) &:= \{a \in \mathbb{Z}_q^\times \colon \text{there exists a square-free $n \leq x$ such that } \\
  & \qquad \qquad \qquad n \equiv a \pmod{q} \text{ and } \sgn(h(n)) = \Delta\}.
\end{align*}
Note that $E_h^{\Delta}(x)$ depends also on $q$, but the choice of $q$ will always be clear from context.

With this notation, our aim is to prove that, under the assumptions of Theorem~\ref{thm:general}, either
\begin{equation*}
E_h^+(q^{2}Q_1) = E_h^-(q^{2}Q_1) = \mathbb{Z}_q^\times,
\end{equation*}
or there exists a quadratic or principal character such that~\eqref{eq:thmCondition} holds.

By an abuse of notation, for an integer $n$ and a set $A\subseteq\mathbb{Z}_q^{\times}$, we write $n\in A$ to mean $n\pmod q\in A$. Also, when the choice of $q$ is clear from the context and $(n,q)=1$, we write $\overline{n}\in \mathbb{Z}_q^{\times}$ for the unique solution to $n\overline{n}\equiv 1\pmod q$. 

For functions $f,g\colon [I]_q\to \mathbb{C}$, we can extend them to all of $\mathbb{Z}$ by setting them equal to $0$ outside $[I]_q$. We then use $f*g$ to denote the Dirichlet convolution of these extensions of $f$ and $g$, i.e.
\[
  (f \ast g)(n) = \sum_{\substack{ab = n \\ a, b \in [I]_q}} f(a)g(b).
  \]
\section{Auxiliary results}\label{sec:auxil1}

\subsection{Signs of multiplicative functions} 
In this subsection, our goal is to prove the following lemma concerning positive and negative values of multiplicative functions at square-free integers.
\begin{lemma}\label{lemma:le:multfunctsigns}
Let $\varepsilon > 0,$ let $q \in \mathbb{N}$ be sufficiently large in terms of $\varepsilon$, and let $y \geq q^{\varepsilon/2}$. Let $h_0 \colon \mathbb{N} \to \mathbb{R}$ be a multiplicative function such that $h_0(p) \neq 0$ for every $p \nmid q$. Let $\chi_0$ be the principal character $\pmod q$. Then
  \begin{equation} \label{eq:h0poslow}
    \sum_{\substack{n \leq y \\ h_0(n) > 0}} \chi_0(n) |\mu(n)| \gg \frac{\varphi(q)}{q} y 
  \end{equation}
  and
  \begin{equation} \label{eq:h0neglow}
    \sum_{\substack{n \leq y \\ h_0(n) < 0}} \chi_0(n) |\mu(n)| \gg \frac{\varphi(q)}{q} y \cdot \min\left\{1, \sum_{\substack{p \leq y/q^{\varepsilon/8} \\ h_0(p) < 0}} \frac{\chi_0(p)}{p}\right\}. 
    \end{equation}
  \end{lemma}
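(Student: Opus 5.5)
Both bounds will come from elementary constructions that use only the sign pattern of $h_0$ on primes. Let $g(n)=\sgn(h_0(n))\in\{\pm1\}$ for square-free $n$ coprime to $q$; this $g$ is multiplicative. Let $S(y)$ be the number of square-free $n\le y$ with $(n,q)=1$. A standard sieve computation, using $y\ge q^{\varepsilon/2}$ and the bound $\tau(q)\ll_\varepsilon q^{\varepsilon/10}$, gives
\[
S(y)\asymp \frac{\varphi(q)}{q}\,y .
\]
More precisely, with $\prod_{p\mid q}(1+1/p)^{-1}$ in the main term, the count is $\ge c\,\frac{\varphi(q)}{q}y$ (the Euler factors at $p\mid q$ only help). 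Then
\[
\sum_{\substack{n\le y\\ h_0(n)>0}}\chi_0(n)|\mu(n)|=\tfrac12\Bigl(S(y)+\sum_{n\le y}\chi_0(n)\mu^2(n)g(n)\Bigr),
\]
and similarly for the negative sum with a minus sign.

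\textbf{Positive sum \eqref{eq:h0poslow}.} I would avoid mean-value theorems and use an injection. Fix a prime $p_0\nmid q$ with $p_0\le 2\log q$, and consider $n$ in the set of square-free $n\le y$ coprime to $q$. If $g(n)=+1$ we keep $n$. If $g(n)=-1$ and $p_0\mid n$, then $n/p_0$ has sign $-g(p_0)$, which still needs care, so this route is not clean.

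Cleaner is to consider the pairs $(n, p_0 n)$ over square-free $n\le y/p_0$ coprime to $qp_0$. Their signs are $g(n)$ and $g(p_0)g(n)$. If $g(p_0)=-1$, exactly one member of each pair is positive, which gives $\gg S(y)/p_0$ positive values. That is too weak, since $p_0$ may be large.

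The better idea is to use squares of structure: $n=m\cdot m'$ does not work because of square-freeness. Instead, take $n=ab$ with $a,b$ coprime and $g(a)=g(b)$. Concretely, the positive count is at least half of $S(y)$ unless $\sum \mu^2\chi_0 g$ is close to $-S(y)$, that is, unless $g(n)=-1$ for almost all $n$. But $g(n)=-1$ for almost all $n$ is impossible: if $g(p)=-1$ for a positive proportion (in $1/p$-density) of primes, then $n$ with an even number of such factors have positive density. Hal\'asz-type reasoning (or simply the Wirsing/Hall–Tenenbaum bound) shows that for real $g$ the mean of $\mu^2\chi_0 g$ is at least $-(1-\delta)$ times the mean of $\mu^2\chi_0$ for an absolute $\delta>0$. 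This is the Hall–Tenenbaum result that the mean value of a real multiplicative function with values in $[-1,1]$ is $\ge -0.3286$. I would invoke it for $\mu^2\chi_0 g$, normalized by the density $\frac{6}{\pi^2}\prod_{p\mid q}(1+1/p)^{-1}$, applied via a comparison argument. This gives \eqref{eq:h0poslow}.

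\textbf{Negative sum \eqref{eq:h0neglow}.} Write $z=y/q^{\varepsilon/8}$ and $\mathcal{P}=\{p\le z: g(p)=-1,\ p\nmid q\}$, with $\sigma=\sum_{p\in\mathcal{P}}1/p$. Consider $n=pm$ with $p\in\mathcal{P}$, and $m\le y/p$ square-free, coprime to $pq$, with $g(m)=+1$. Each such $n$ is negative. A given $n$ arises from at most $\omega(n)$ pairs; to avoid this loss, restrict to $m$ having no prime factor in $\mathcal{P}$ below $p$, or simply weight by $1/\omega_{\mathcal{P}}(n)$. By \eqref{eq:h0poslow} applied at height $y/p\ge q^{\varepsilon/8}$ (fine after adjusting $\varepsilon$), the count of admissible $m$ is $\gg \frac{\varphi(q)}{q}\,y/p$, minus the $\ll y/p^2$ multiples of $p$. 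Summing over $p$ gives $\gg \frac{\varphi(q)}{q}\,y\,\sigma$ pairs. For the multiplicity I would use a Tur\'an–Kubilius bound: $\omega_{\mathcal{P}}(n)\le 10(\sigma+1)$ outside a set of size $\ll e^{-1}S(y)$ in each relevant range. This yields $\gg \frac{\varphi(q)}{q}\,y\,\sigma/(1+\sigma)\gg\frac{\varphi(q)}{q}\,y\min\{1,\sigma\}$.

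The main obstacle is the multiplicity and exceptional-set control in the negative case when $\sigma$ is large. There I would truncate $\mathcal{P}$ to a subset with $\sigma'\asymp\min\{1,\sigma\}$ to keep $\omega_{\mathcal{P}}$ bounded on average, which makes the argument work uniformly.
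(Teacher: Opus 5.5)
Your reduction of \eqref{eq:h0neglow} to \eqref{eq:h0poslow} is a reasonable alternative to the paper's argument. However, the proof of \eqref{eq:h0poslow} itself, on which everything rests, has a genuine gap.

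The Hall--Tenenbaum theorem is an \emph{absolute} bound: for real multiplicative $f$ with $|f|\le 1$ it gives $\sum_{n\le x}f(n)\ge -(\delta_1+o(1))x$ with $\delta_1\approx 0.3286$. For $f=\mu^2\chi_0 g$ the mean of $|f|$ is $\frac{6}{\pi^2}\prod_{p\mid q}(1+1/p)^{-1}$. This is already $3/\pi^2\approx 0.304<\delta_1$ as soon as $6\mid q$, and it is $\asymp 1/\log\log q$ when $q$ has many small prime factors. So the cited bound says nothing for such $q$.

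What you need is the \emph{relative} statement $\sum_{n\le y}f(n)\ge-(1-\delta)\sum_{n\le y}|f(n)|$, uniformly in $q$ and down to $y=q^{\varepsilon/2}$. That is essentially \eqref{eq:h0poslow} itself. ``Normalizing by the density'' does not produce a function to which Hall--Tenenbaum applies, since the result is no longer bounded by $1$. The promised comparison argument is not given, and there is no routine one. For instance, extending $g$ by $g(p)=1$ for $p\mid q$ turns the full sum into $\sum_{d\mid q,\,\mu^2(d)=1}\Sigma(y/d)$ with $\Sigma(t)=\sum_{m\le t}\mu^2(m)\chi_0(m)g(m)$. This mixes many scales and yields nothing without control of how $\Sigma(t)/t$ varies.

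The difficulty is real. When the primes with $h_0(p)<0$ have bounded reciprocal sum but may lie near $y$, showing that integers avoiding them still have density $\gg\varphi(q)/q$ is beyond the fundamental lemma.

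The paper closes this gap by splitting according to $\sum_{p\le y,\,h_0(p)<0}\chi_0(p)/p$:
\begin{itemize}
\item If this sum is $<C_0$, Lemma~\ref{le:sieveworks} (Granville--Koukoulopoulos--Matom\"aki) shows that square-free $n\le y_0$ composed only of primes $p\nmid q$ with $h_0(p)>0$ number $\gg\frac{\varphi(q)}{q}y_0$ for every $y_0\in[q^{\varepsilon/8},y]$.
\item If it is $\ge C_0$, the paper uses the Balog--Granville--Soundararajan form of Hal\'asz's theorem with the distance $\mathbb{D}_q$, which ignores $p\mid q$ and carries the factor $\varphi(q)/q$. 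Combined with the Granville--Soundararajan fact that a real function far from $1$ is far from every $n^{it}$, this gives $|\sum_{n\le y}h_1(n)|\le\frac{1}{100}\frac{\varphi(q)}{q}y$.
\end{itemize}

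Granted \eqref{eq:h0poslow} with an absolute constant $c$, your treatment of \eqref{eq:h0neglow} does work. You count pairs $(p,m)$, truncate $\mathcal{P}$ to $\mathcal{P}'$ with $\sigma'\asymp\min\{1,\sigma\}$, and control multiplicity by Cauchy--Schwarz against $\sum_{n\le y,\,(n,q)=1}\omega_{\mathcal{P}'}(n)^2\ll\frac{\varphi(q)}{q}y\,\sigma'$. This is genuinely different from the paper. In the small-$\sigma$ case the paper gets uniqueness of representation for free by taking $m$ built only from primes with $h_0>0$, so that $pm$ has exactly one negative prime factor. In the large-$\sigma$ case it gets both signs at once from Hal\'asz.

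Two details in your route need fixing:
\begin{itemize}
\item Removing the $m$ divisible by $p$ costs up to about $\frac{\varphi(q)}{q}\frac{y}{p^2}$ (your stated $\ll y/p^2$ is even weaker). This is dominated by $c\frac{\varphi(q)}{q}\frac{y}{p}$ only when $p\gg 1/c$, so for small $p$ you should apply \eqref{eq:h0poslow} with modulus $pq$.
\item The second-moment bound needs some care for pairs $p_1p_2$ close to $y$, where the factor $\varphi(q)/q$ is not automatic.
\end{itemize}
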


For proving this, we need the following slight variant of~\cite[Theorem 1]{GKM} (we do not need the stronger form from~\cite{MSsieve}).

\begin{lemma} \label{le:sieveworks}
There exist positive constants $\lambda$ and $c$ such that if $x\geq 10$ and $\mathcal{P}$ is a subset of the primes $\le x$ for which there is some $v\in [1, c\sqrt{\log x}]$ with
\[
\sum_{\substack{ p\in \mathcal{P} \\ x^{1/(ev)}<p\leq x}} \frac 1p \geq 1+\lambda,
\]
then
\begin{equation}
\label{eq:SieveWorks}
\frac{1}{x} \sum_{\substack{n \leq x \\ p \mid n \implies p \in \mathcal{P}}} |\mu(n)| \gg \ \frac{1}{v^{v/c}}  \ \prod_{\substack{p \leq x \\ p \not \in \mathcal{P}}}  \left( 1 -\frac 1p \right).
\end{equation}
\end{lemma}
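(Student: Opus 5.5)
We would deduce Lemma~\ref{le:sieveworks} from \cite[Theorem 1]{GKM}, which is the same statement without the factor $|\mu(n)|$ and with the window $x^{1/v}<p\le x$. First, the window: applying \cite[Theorem 1]{GKM} with $v$ replaced by $ev$ turns its window into $x^{1/(ev)}<p\le x$, as in our hypothesis. This only changes the constant in $v^{v/c}$ and the admissible range $v\le c\sqrt{\log x}$, both absorbed by shrinking $c$. So the real content is inserting the square-free condition while losing at most a factor of the shape $v^{-O(v)}$.

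\textbf{Removing small primes.} Let $w := C v^{v/c_0}$, where $c_0$ is the constant from \cite{GKM} and $C$ is a large absolute constant. Define
\[
\mathcal{P}' := \mathcal{P}\setminus\{p \le \min(w, x^{1/(ev)})\}.
\]
This leaves the prime sum over the window unchanged, so the hypothesis still holds for $\mathcal{P}'$. It costs at most a factor $\prod_{p\le w}(1-1/p)\asymp 1/\log w \gg 1/(v\log v)$ in the product on the right of \eqref{eq:SieveWorks}. By \cite{GKM}, the number of $n\le x$ composed of primes in $\mathcal{P}'$ is therefore
\[
\gg x\, v^{-v/c_0}(\log w)^{-1}\, \Pi, \qquad \Pi := \prod_{p\le x,\ p\notin\mathcal{P}}\Bigl(1-\frac1p\Bigr).
\]

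\textbf{Subtracting non-square-free $n$.} Any non-square-free $n$ composed of primes of $\mathcal{P}'$ is divisible by $p^2$ for some $p\in\mathcal{P}'$, and necessarily $p>\min(w,x^{1/(ev)})$. We bound the number of such $n$ by summing over $p$ the count of $m\le x/p^2$ composed of primes in $\mathcal{P}'$, and split into two ranges.

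For $p\le x^{1/4}$ we use an upper bound sieve (e.g.\ Brun's sieve or Selberg's sieve). This gives
\[
\ll \frac{x}{p^2}\prod_{\ell\le x/p^2,\ \ell\notin\mathcal{P}'}\Bigl(1-\frac1\ell\Bigr) \ll \frac{x}{p^2}\,\log w\cdot \Pi,
\]
since $\log x/\log(x/p^2)\ll 1$ in this range. For $p>x^{1/4}$ we use the trivial bound $x/p^2$. Summing the trivial bound gives $O(x^{3/4})$, which is negligible because $\Pi\gg 1/\log x$.

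The total error is thus
\[
\ll x\,\Pi\,\log w\sum_{p>\min(w,\,x^{1/(ev)})} p^{-2} \ll \frac{x\,\Pi\,\log w}{\min(w,x^{1/(ev)})}.
\]
If $w\le x^{1/(ev)}$, this is at most $x\Pi\log w/w$. Since $w=Cv^{v/c_0}$, this is a small multiple of the main term once $C$ is large. If $w>x^{1/(ev)}$, the error is $\ll x^{1-1/(ev)}\,\Pi\,\log w$. Because $v\le c\sqrt{\log x}$, we have $x^{-1/(ev)}\le \exp(-\sqrt{\log x}/(ec))$, and this again beats $v^{-v/c_0}$ provided $c$ is small.

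Subtracting, we obtain \eqref{eq:SieveWorks} with $v^{v/c_0}\log w$ in place of $v^{v/c}$; this is $\ll v^{v/c}$ after shrinking $c$.

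\textbf{Main obstacle.} The delicate step is the bookkeeping between $w$, $v$ and $x^{1/(ev)}$. We need to discard enough small primes to make the squareful contribution smaller than the main term, which is only of size $v^{-v/c_0}$. At the same time we must not disturb the hypothesis on the window, and the sieve product must change only by factors $v^{O(1)}$. Capping the discarded range at $x^{1/(ev)}$, and using that primes above $x^{1/(ev)}$ contribute at most $x^{1-1/(ev)}$, is what we expect to make this work; the constraint $v\le c\sqrt{\log x}$ is exactly what makes the second case above harmless.
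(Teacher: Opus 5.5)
\textbf{The gap is your second case, $w>x^{1/(ev)}$.}

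In that case your squareful bound is $\ll x^{1-1/(ev)}\Pi\log w$, while the main term is $\gg x v^{-v/c_0}\Pi/\log w$. But the case hypothesis $Cv^{v/c_0}>x^{1/(ev)}$ says exactly that $v^{-v/c_0}<Cx^{-1/(ev)}$. So in this case the error bound always exceeds the main term, and the argument defeats itself.

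Your justification is also false. From $v\le c\sqrt{\log x}$ you get $\log x^{1/(ev)}\ge\sqrt{\log x}/(ec)$. However, $\log v^{v/c_0}$ can be as large as $\frac{c}{c_0}\sqrt{\log x}\,\log(c\sqrt{\log x})$, which is larger for every fixed $c>0$ once $x$ is large. Shrinking $c$ does not help, because of the $\log\log x$.

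The case is not vacuous: it occurs once $v$ exceeds a constant multiple of $\sqrt{\log x/\log\log x}$. So the proposal proves the lemma only for $v\le c'\sqrt{\log x/\log\log x}$. Reaching $v\asymp\sqrt{\log x}$ would require showing that $\mathcal{P}$-numbers divisible by $p^2$ with $p>x^{1/(ev)}$ are fewer than $v^{-O(v)}$ times the expected count. Neither the trivial bound nor an upper-bound sieve gives this.

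\textbf{A fixable slip in the first case.} With the bounds as you wrote them, error/main is of order $(\log w)^2/C$, which is unbounded in $v$. Instead, compare both terms with $\Pi':=\prod_{\ell\le x,\ \ell\notin\mathcal{P}'}(1-1/\ell)$.
\begin{itemize}
\item GKM applied to $\mathcal{P}'$ gives a main term $\gg xv^{-v/c_0}\Pi'$.
\item The upper-bound sieve gives $\ll x\Pi'/p^2$ for each $p\le x^{1/4}$.
\item Hence the ratio is $\ll v^{v/c_0}/w\ll 1/C$.
\end{itemize}
Only at the end should you use $\Pi'\gg\Pi/\log w$.

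\textbf{How the paper argues.} It does not use GKM's Theorem 1 as a black box. It writes $n=ab$, where $a\le x^{1/4}$ is square-free and composed of primes of $\mathcal{A}=\mathcal{P}\cap[1,x^{1/(ev)}]$, and $b$ is composed of primes of $\mathcal{B}=\mathcal{P}\cap(x^{1/(ev)},x]$. It applies GKM's Hypothesis P to the $b$-sum. It then recovers $\prod_{p\notin\mathcal{P}}(1-1/p)$ from $\sum_a|\mu(a)|/a$ by multiplying with $\sum_\ell 1/\ell$ over $\mathcal{P}^c$-numbers.

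So square-freeness of the small-prime part costs nothing, and no pre-sieving to level $w$ is needed. Only $p^2\mid b$ with $p\in\mathcal{B}$ has to be removed. That removal, however, is also done with the trivial bound. The displayed $\sum_{p\le (x/a)^{1/2}}x/(ap^2)\ll (x/a)^{1/2}$ is not correct as written; restricting to $p\in\mathcal{B}$, as one may, gives $\ll (x/a)x^{-1/(ev)}$. So the written argument saves only $x^{-1/(ev)}\log x$ against a main term of size $v^{-O(v)}$. It meets the same obstruction for $v$ near $c\sqrt{\log x}$.

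This is immaterial for the paper, since the lemma is only used in Lemma~\ref{lemma:le:multfunctsigns} with $v=v(C_0)$ bounded. In that regime your black-box route, with the bookkeeping repaired, is a valid alternative. It avoids GKM's internals, at the cost of an extra upper-bound-sieve step and a harmless loss of $1/\log w$.
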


\begin{proof}
If we had $1$ in place of $|\mu(n)|$ on the left-hand side of~\eqref{eq:SieveWorks}, the claim would immediately follow from~\cite[Theorem 1]{GKM}. In order to prove our slightly stronger claim, we follow~\cite[Proof that Hypothesis P implies Theorem 1 in Section 3]{GKM} and write $\mathcal{A} = \mathcal{P} \cap [1, x^{1/(ev)}]$ and $\mathcal{B} = \mathcal{P} \cap (x^{1/(ev)}, x]$. Then
\begin{align} \label{eq:GKMfirst}
  \begin{aligned}
 \frac{1}{x} \sum_{\substack{n \leq x \\ p \mid n \implies p \in \mathcal{P}}} |\mu(n)| &\geq \frac{1}{x} \sum_{\substack{a \leq x^{1/4} \\ p \mid a \implies p \in \mathcal{A}}} |\mu(a)| \sum_{\substack{b \leq x/a \\ p \mid b \implies p \in \mathcal{B}}} |\mu(b)| \\
  & \geq \frac{1}{x} \sum_{\substack{a \leq x^{1/4} \\ p \mid a \implies p \in \mathcal{A}}} |\mu(a)| \sum_{\substack{b \leq x/a \\ p \mid b \implies p \in \mathcal{B}}} 1 - \frac{1}{x} \sum_{\substack{a \leq x^{1/4}}} \sum_{p \leq (x/a)^{1/2}} \sum_{\substack{b' \leq x/(ap^2)}} 1.
\end{aligned}
  \end{align}
Now the second term on the right-hand side is at most
\[
  \frac{1}{x} \sum_{\substack{a \leq x^{1/4}}} \sum_{p \leq (x/a)^{1/2}} \frac{x}{ap^2} \ll \frac{1}{x} \sum_{a \leq x^{1/4}} \left(\frac{x}{a}\right)^{1/2} \ll x^{-3/8}.
  \]
As in~\cite{GKM}, we apply~\cite[Hypothesis P]{GKM} (which holds with $\pi_v = v^{-O(v)}$ by~\cite[Proposition 4.1 and Section 6]{GKM}) to the first sum on the right-hand side of~\eqref{eq:GKMfirst}, obtaining
\[
  \frac{1}{x} \sum_{\substack{a \leq x^{1/4} \\ p \mid a \implies p \in \mathcal{A}}} |\mu(a)| \sum_{\substack{b \leq x/a \\ p \mid b \implies p \in \mathcal{B}}} 1 \gg \frac{1}{v^{O(v)} \log x} \sum_{\substack{a \leq x^{1/4} \\ p \mid a \implies p \in \mathcal{A}}} \frac{|\mu(a)|}{a}.
  \]
  We then argue as in~\cite[Proof of Lemma 2.1]{GKM}. Observe that
  \begin{align*}
  \sum_{\substack{\ell \leq x^{1/4} \\ p \mid \ell \implies p \not \in \mathcal{P}}} \frac{1}{\ell}\leq  \prod_{\substack{p \leq x^{1/4} \\ p \not \in \mathcal{P}}} \left(1-\frac{1}{p}\right)^{-1}.   
  \end{align*}
  Hence, we can estimate
  \begin{align*}
    \sum_{\substack{a \leq x^{1/4} \\ p \mid a \implies p \in \mathcal{A}}} \frac{|\mu(a)|}{a} &\geq \sum_{\substack{a \leq x^{1/4} \\ p \mid a \implies p \in \mathcal{A}}} \frac{|\mu(a)|}{a} \cdot \left(\sum_{\substack{\ell \leq x^{1/4} \\ p \mid \ell \implies p \not \in \mathcal{P}}} \frac{1}{\ell} \prod_{\substack{p \leq x^{1/4} \\ p \not \in \mathcal{P}}} \left(1-\frac{1}{p}\right) \right)  \\
    & \geq \sum_{\substack{n \leq x^{1/4} \\ p \mid n \implies p \leq x^{1/(ev)}}} \frac{|\mu(n)|}{n} \prod_{\substack{p \leq x^{1/4} \\ p \not \in \mathcal{P}}} \left(1-\frac{1}{p}\right) \gg (\log x) \prod_{\substack{p \leq x \\ p \not \in \mathcal{P}}} \left(1-\frac{1}{p}\right),
  \end{align*}
  and the claim follows.
\end{proof}
With this lemma in hand, we are ready to prove Lemma~\ref{lemma:le:multfunctsigns}.
  \begin{proof}[Proof of Lemma~\ref{lemma:le:multfunctsigns}]
    Let $C_0$ be a sufficiently large absolute constant. We may assume $q>C_0$. 
We split into two cases.
    
    \textbf{Case 1: We have}
    \begin{equation} \label{eq:h0sumsmall}
      \sum_{\substack{p \leq y \\ h_0(p) < 0}} \frac{\chi_0(p)}{p} < C_0. 
    \end{equation}
    Let
      \[
        \mathcal{P} := \{p \leq y \colon h_0(p) > 0 \text{ and } p \nmid q\}.
        \]
In this case, let $\lambda$ be the constant appearing in Lemma~\ref{le:sieveworks}. Then there exists $v = v(C_0) > 0$ such that, for any $y_0 \in [q^{\varepsilon/8}, y]$,
      \[
        \sum_{\substack{y_0^{1/(ev)} < p \leq y_0 \\ p \in \mathcal{P}}} \frac{1}{p} \geq 1 + \lambda.
        \]
Now, for any $y_0 \in [q^{\varepsilon/8}, y]$, we have, by Lemma~\ref{le:sieveworks} and~\eqref{eq:h0sumsmall},
      \begin{equation} \label{eq:sumnh0+low}
        \sum_{\substack{n \leq y_0 \\ p \mid n \implies h_0(p) > 0}} \chi_0(n) |\mu(n)| = \sum_{\substack{n \leq y_0 \\ p \mid n \implies p \in \mathcal{P}}} |\mu(n)| \gg y_0 \prod_{\substack{p \leq y_0 \\ p \not \in \mathcal{P}}} \left(1-\frac{1}{p}\right) \gg \frac{\varphi(q)}{q} y_0.
        \end{equation}
Taking $y_0 = y$, this immediately implies~\eqref{eq:h0poslow}. To show~\eqref{eq:h0neglow}, notice that
        \[
          \sum_{\substack{n \leq y \\ h_0(n) < 0}} \chi_0(n) |\mu(n)| \geq \sum_{\substack{p \leq y/q^{\varepsilon/8} \\ h_0(p) < 0}} \chi_0(p) \sum_{\substack{n \leq y/p \\ p' \mid n \implies h_0(p') > 0}} \chi_0(n) |\mu(n)|.
    \]
Now~\eqref{eq:h0neglow} follows from applying~\eqref{eq:sumnh0+low} with $y_0 = y/p$ to the inner sum.

\textbf{Case 2: We have}
\[
      \sum_{\substack{p \leq y \\ h_0(p) < 0}} \frac{\chi_0(p)}{p} \geq C_0. 
    \]
    Let $\Delta \in \{+, -\}$ and define the multiplicative function
    \[
      h_1(n) :=
      \begin{cases} 
	0 & \text{if $\mu(n) = 0$ or $(n, q) \neq 1$;} \\      
    \sgn(h_0(n)) & \text{otherwise.}
      \end{cases}
    \]
    Note that
    \begin{equation} \label{eq:splittingS1S2}
          \sum_{\substack{n \leq y \\ \sgn(h_0(n)) = \Delta}} \chi_0(n) |\mu(n)| = \frac{1}{2} \left( \sum_{\substack{n \leq y}} \chi_0(n) |\mu(n)| + \Delta \sum_{\substack{n \leq y}} h_1(n) \right)\eqqcolon \frac{1}{2}(S_1+S_2),
        \end{equation}
        say. By a standard M\"obius inversion calculation, we have
        \begin{equation} \label{eq:S1lower}
        S_1=(1+o(1)) \frac{\varphi(q)}{q} \prod_{\substack{p \leq y \\ p \nmid q}} \left(1-\frac{1}{p^2}\right)\cdot y \geq \frac{1}{10} \cdot \frac{\varphi(q)}{q} y.
      \end{equation}
      
For multiplicative functions $f,g\colon \mathbb{N}\to \mathbb{C}$ and for $r\in \mathbb{N}$ and $x\geq 2$, define the distance function
\begin{align*}
\mathbb{D}_r(f,g;x)=\left(\sum_{\substack{p\leq x\\ p\nmid r}}\frac{1-\textnormal{Re}(f(p)\overline{g(p)})}{p}\right)^{1/2}.    
\end{align*}
Then, by~\cite[Corollary 2.2]{bgs} (which is a quantitative version of Hal\'asz's theorem), for some large absolute constant $C$ independent of $C_0$ and some $t\in [-(\log y)^{1/2},(\log y)^{1/2}]$ we have
\begin{align*}
|S_2|\leq C\frac{\varphi(q)}{q}y(1+\mathbb{D}_q(h_1,n^{it};y)^2)\exp(-\mathbb{D}_q(h_1,n^{it};y)^2)+O\left(\frac{y}{(\log y)^{1/4}}\right).    
\end{align*}
Since $h_1$ is real-valued, by an argument of Granville and Soundararajan (see~\cite[Lemma C.1]{mrt}) we have, with an absolute implied constant,
\begin{align*}
\mathbb{D}_q(h_1,n^{it};y)\geq \frac{1}{100}\mathbb{D}_q(h_1,1;y)+O(1) \geq \frac{1}{100} \cdot \sqrt{2C_0}+O(1).   
\end{align*}
Hence, if $C_0$ is large enough,
\[
  |S_2| \leq \frac{1}{100} \frac{\varphi(q)}{q} y,
\]
and the claim follows from combining this with~\eqref{eq:splittingS1S2} and~\eqref{eq:S1lower}.
\end{proof}

\subsection{The dense model theorem}
The following proposition (which follows immediately from~\cite[Proposition 4.1]{MatoTeraEkq} where it was used in a related context) is a dense model theorem which gives, for an unbounded function $f: [I]_q \to \mathbb{R}_{\geq 0}$ which is majorized by a pseudorandom measure, a model function $g : \mathbb{Z}_q^\times \to \mathbb{R}_{\geq 0}$ that is bounded and such that character sums of $f$ and $g$ behave similarly. 

\begin{proposition}[A multiplicative dense model theorem] \label{prop:f=g+h}
Let $N \geq 2$ and $q \in \mathbb{N}$ and let $I = (N/e, N]$. Let $r > 1$ be fixed. Let $\eta, \varepsilon \in (0,1), C\geq 1$, and let 
\begin{equation*}
\delta \in \left(\left(\frac{10 C r  \log \log q}{\varepsilon \log q}\right)^{1/r}, \frac{1}{10}\right).
\end{equation*}
Let $f \colon [I]_q \to \mathbb{R}_{\geq 0}$ satisfy the following two assumptions.
\begin{enumerate}[({A}1)]
\item There exists a majorant function $\nu \colon [I]_q \to \mathbb{R}_{\geq 0}$ such that $f(n) \leq \nu(n)$ for every $n \in [I]_q$, 
\[
\left|\mathbb{E}_{n \in [I]_q} \nu(n) - 1\right| \leq \eta, \quad \text{and} \quad \max_{\chi \neq \chi_0\pmod q} \left|\mathbb{E}_{n \in [I]_q} \nu(n)\overline{\chi}(n) \right| \leq q^{-\varepsilon}.
\]
\item There exist at most $C\delta^{-r}$ characters $\chi \pmod{q}$ such that
\[
\left|\mathbb{E}_{n \in [I]_q} f(n)\overline{\chi}(n) \right| \geq \delta.
\]
\end{enumerate}
Then there exists a function $g \colon \mathbb{Z}_q^\times \to \mathbb{R}_{\geq 0}$ with the following properties.
\begin{enumerate}[(i)]
\item For every $a \in \mathbb{Z}_q^{\times}$, we have 
\[
0 \leq g(a) \leq 1+\eta + O(q^{-\varepsilon/2}).
\]
\item We have, for any $\chi \pmod{q}$,
\[
\left|\mathbb{E}_{n \in [I]_q} f(n) \overline{\chi}(n) - \mathbb{E}_{a \in \mathbb{Z}_q^\times} g(a) \overline{\chi}(a)\right| \leq \delta.
\]
\item We have, for any $\chi \pmod{q}$,
\[
\left|\mathbb{E}_{a \in \mathbb{Z}_q^\times} g(a) \overline{\chi}(a)\right| \leq |\mathbb{E}_{n \in [I]_q} f(n) \overline{\chi}(n)|
\]
and
\[
\left|\mathbb{E}_{n \in [I]_q} f(n) \overline{\chi}(n) - \mathbb{E}_{a \in \mathbb{Z}_q^\times} g(a) \overline{\chi}(a)\right| \leq |\mathbb{E}_{n \in [I]_q} f(n) \overline{\chi}(n)|.
\]
\item We have $\mathbb{E}_{a \in \mathbb{Z}_q^{\times}} g(a) = \mathbb{E}_{n \in [I]_q} f(n)$.
\item
Let $H \leq \mathbb{Z}_q^{\times}$ be a subgroup of index $2$. Then, for any $b \in \mathbb{Z}_q^{\times}$, we have
\[
\mathbb{E}_{\substack{n \in [I]_q}} f(n) \mathbf{1}_{n \in bH} = \mathbb{E}_{a \in \mathbb{Z}_q^\times} g(a) \mathbf{1}_{a \in bH} + O(\delta).
\]
\end{enumerate}
\end{proposition}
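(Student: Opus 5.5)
The plan is to push $f$ forward to the group $\mathbb{Z}_q^\times$ and then smooth it by convolving with a kernel adapted to the large spectrum of $f$ (a Bohr-set argument). Define $F\colon\mathbb{Z}_q^\times\to\mathbb{R}_{\ge0}$ by
\[
F(a)=\frac{\varphi(q)}{|[I]_q|}\sum_{\substack{n\in[I]_q\\ n\equiv a \pmod q}} f(n),
\]
and define $V$ in the same way from $\nu$. Then $\widehat F(\chi):=\mathbb{E}_{a}F(a)\overline\chi(a)=\mathbb{E}_{n\in[I]_q}f(n)\overline\chi(n)$, and similarly for $V$. We have $0\le F\le V$ pointwise. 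By (A1), $|\widehat V(\chi_0)-1|\le\eta$ and $|\widehat V(\chi)|\le q^{-\varepsilon}$ for $\chi\ne\chi_0$. Pointwise control of $V$ by Fourier inversion alone would cost a factor $\varphi(q)$, which is why smoothing is needed.

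Let $S$ be the set of characters with $|\widehat F(\chi)|\ge\delta$; by (A2), $|S|\le C\delta^{-r}$. Let
\[
B=\{a\in\mathbb{Z}_q^\times\colon |\chi(a)-1|\le \delta/10 \text{ for all } \chi\in S\},
\]
let $\mu_B=(\varphi(q)/|B|)\mathbf 1_B$ (so $\mathbb{E}\mu_B=1$), and put $K=\mu_B*\widetilde{\mu_B}$, where $\widetilde{\mu_B}(a)=\mu_B(a^{-1})$ and $*$ is normalized convolution on the group. Then $K\ge0$ and $\widehat K(\chi)=|\widehat{\mu_B}(\chi)|^2\in[0,1]$, with $\widehat K(\chi_0)=1$. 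For $\chi\in S$ we have $\operatorname{Re}\widehat{\mu_B}(\chi)\ge1-\delta/10$, hence $1-\widehat K(\chi)\le\delta/5$.

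Set $g=F*K$, so that $\widehat g=\widehat F\,\widehat K$. The properties then follow in turn.
\begin{enumerate}[(i)]
\item Since $F\le V$ and $K\ge0$, we have $0\le g\le V*K$. By Parseval,
\[
V*K(a)=\sum_\chi\widehat V(\chi)\widehat K(\chi)\chi(a)\le 1+\eta+q^{-\varepsilon}\sum_\chi|\widehat{\mu_B}(\chi)|^2=1+\eta+q^{-\varepsilon}\frac{\varphi(q)}{|B|}.
\]
\item For $\chi\in S$, the error is $|\widehat F(\chi)|\,(1-\widehat K(\chi))\le\delta/5$. For $\chi\notin S$, the error is at most $|\widehat F(\chi)|<\delta$, since $\widehat K\in[0,1]$.
\item Both inequalities are immediate from $0\le\widehat K\le1$.
\item This holds because $\widehat K(\chi_0)=1$.
\item Write $\mathbf 1_{bH}=\tfrac12\bigl(1+\xi(b)\xi\bigr)$, where $\xi$ is the quadratic character with kernel $H$. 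Property (v) then follows from (ii) applied to $\chi_0$ and to $\xi$.
\end{enumerate}

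The main obstacle is the lower bound $|B|\ge q^{-\varepsilon/2}\varphi(q)$, which is what (i) needs. The standard pigeonhole argument handles it: partition the torus $(S^1)^{S}$ into $(O(1/\delta))^{|S|}$ boxes of side $\delta/20$. Some box contains at least $(c\delta)^{|S|}\varphi(q)$ vectors $(\chi(a))_{\chi\in S}$, and quotients of pairs from that box lie in $B$. This gives $|B|\ge\exp\bigl(-|S|\log(O(1/\delta))\bigr)\varphi(q)$.

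Using $|S|\le C\delta^{-r}$ together with the lower bound $\delta^r\ge 10Cr\log\log q/(\varepsilon\log q)$, which also forces $\log(1/\delta)\le r^{-1}\log\log q+O(1)$, we get
\[
|S|\log(O(1/\delta))\le \frac{\varepsilon\log q}{10r\log\log q}\bigl(r^{-1}\log\log q+O(1)\bigr)\le\frac{\varepsilon}{2}\log q
\]
for $q$ large. This is exactly where the hypothesised range of $\delta$ enters, and it yields the error $O(q^{-\varepsilon/2})$ in (i).
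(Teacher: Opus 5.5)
Your proof is correct and is essentially the argument the paper relies on: the paper cites its earlier dense model theorem, which is proved by this same Bohr-set smoothing $g=F*\mu_B*\widetilde{\mu_B}$, with the small nontrivial Fourier coefficients of $\nu$ giving the $L^\infty$ bound and the pigeonhole bound $|B|\ge (c\delta)^{|S|}\varphi(q)$ being exactly where the hypothesised range of $\delta$ is used. One small slip: in (ii), for $\chi\in S$ the error is $|\widehat F(\chi)|(1-\widehat K(\chi))\le (1+\eta)\delta/5<\delta$ rather than $\delta/5$, since $|\widehat F(\chi)|\le \widehat F(\chi_0)\le \widehat V(\chi_0)\le 1+\eta$; this still gives the claimed bound.
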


\begin{proof}
This is a slight variant of~\cite[Proposition 4.1]{MatoTeraEkq} --- there we had $I = [1, N]$, but exactly the same proof works here. Furthermore, (v) here corresponds to a special case of~\cite[Proposition 4.1(v)]{MatoTeraEkq}. 
\end{proof}

\subsection{Sieves, Burgess' bound and products of primes in cosets} \label{ssec:sievesetc}
In this subsection, our main goal is to prove Lemma~\ref{le:P2}, which gives a lower bound for the number of rough numbers in any index $2$ coset. For proving this, we need the Burgess bound.
\begin{lemma}
\label{le:Burgess}
Let $q \in \mathbb{N}$, let $\chi$ be a non-principal character $\pmod{q}$ and let $M, N \geq 1$. Then, for any $\varepsilon > 0$ and $r \in \{1, 2, 3\}$, we have
\[
\left|\sum_{M < n \leq M+N} \chi(n)\right| \ll_{\varepsilon} N^{1-\frac{1}{r}} q^{\frac{r+1}{4r^2} + \varepsilon}.
\]
If $q$ is cube-free or $\chi$ has bounded order, then this holds for any $r \in \mathbb{N}$.

In particular, for any $\varepsilon > 0$, there is $\delta = \delta(\varepsilon) > 0$ such that for any $q \in \mathbb{N}$ and any $N \geq q^{1/3+\varepsilon}$ we have
\[
\left|\sum_{n \leq N} \chi(n)\right| \ll_{\varepsilon} N^{1-\delta}.
\]
When $q$ is cube-free or $\chi$ has bounded order, this holds for $N \geq q^{1/4+\varepsilon}$.
\end{lemma}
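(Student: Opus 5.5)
The first part of the lemma is the classical Burgess bound, and I would not reprove it but quote it. For $r\in\{1,2,3\}$ and arbitrary $q$, the bound $\sum_{M<n\le M+N}\chi(n)\ll_\varepsilon N^{1-1/r}q^{(r+1)/(4r^2)+\varepsilon}$ is Burgess' theorem for general moduli (Iwaniec--Kowalski, Theorem 12.6). For $r=1$ it is just P\'olya--Vinogradov, since $q^{1/2+\varepsilon}$ dominates $q^{1/2}\log q$. For cube-free $q$ every $r\in\mathbb{N}$ is admissible by Burgess' original work. The same holds for characters of bounded order, since Burgess' method only needs the Weil bound for complete sums of $\chi$ evaluated at a product of shifts. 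That input is available for any $q$ once the order is bounded, because the relevant polynomial can be arranged not to be a perfect $k$-th power with $k$ the order. A possibly imprimitive $\chi$ is handled by passing to the inducing primitive character $\chi^*$ of conductor $q^*\mid q$ and removing the coprimality condition with the remaining prime factors of $q$ by M\"obius inversion. This costs only a factor $\tau(q)\ll q^{\varepsilon}$, which is absorbed into $q^{\varepsilon}$.

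For the ``in particular'' statement I would choose $r$ depending on $\varepsilon$ and compare exponents. With $N\ge q^{1/3+\varepsilon}$, write $q\le N^{1/(1/3+\varepsilon)}$. The bound with parameter $r$ and an auxiliary $\varepsilon'$ gives
\[
N^{1-1/r}q^{\frac{r+1}{4r^2}+\varepsilon'}\le N^{1-\frac1r+\frac{(r+1)/(4r^2)+\varepsilon'}{1/3+\varepsilon}}.
\]
Take $r=3$, so that $(r+1)/(4r^2)=1/9$. The exponent is then $1-\tfrac13+\tfrac{1/9+\varepsilon'}{1/3+\varepsilon}$. Since $\tfrac{1/9}{1/3+\varepsilon}<\tfrac13$ strictly, choosing $\varepsilon'$ small in terms of $\varepsilon$ makes the exponent equal to $1-\delta$ for some $\delta(\varepsilon)>0$. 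If $N>q$ I would use periodicity instead: the sum over complete periods vanishes, so it is bounded by $q\le N^{1-\delta}$ trivially as long as $\delta<1$, using the incomplete-period bound applied with $N$ replaced by $q$.

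For cube-free $q$ or bounded order, I would take general $r$. The exponent becomes $1-\tfrac1r+\tfrac{(r+1)/(4r^2)+\varepsilon'}{1/4+\varepsilon}$, and for large $r$
\[
\frac{r+1}{4r^2}\cdot\frac{1}{1/4+\varepsilon}=\frac{r+1}{r^2(1+4\varepsilon)},
\]
which is less than $1/r$ as soon as $(r+1)/r<1+4\varepsilon$, i.e.\ $r>1/(4\varepsilon)$. Fixing such an $r$ and then $\varepsilon'$ small yields a positive saving $\delta$.

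The only real obstacle is the bounded-order case for non-cube-free $q$. Here I would cite the literature (e.g.\ the Burgess-type bounds for characters of bounded order via the Weil bound for complete sums) rather than reprove it. The exponent arithmetic above is routine.
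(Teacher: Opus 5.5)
Your proposal is correct and follows essentially the same route as the paper: quote P\'olya--Vinogradov and Burgess (with Heath-Brown's lemma for the bounded-order case), pass to the inducing primitive character by M\"obius inversion on $\mathbf{1}_{(n,q)=1}$, and then take $r=3$, respectively $r$ large, with the same exponent bookkeeping you carry out. Two minor remarks. First, your separate periodicity treatment of $N>q$ is unnecessary, since $q\le N^{1/(1/3+\varepsilon)}$ already covers every $N\ge q^{1/3+\varepsilon}$; moreover, the claim $q\le N^{1-\delta}$ made there is not automatic. Second, the bounded-order case works for every $q$ not because of anything about the Weil bound, but because a primitive character modulo $p^\alpha$ has order divisible by $p^{\alpha-2}$, so bounded order forces the conductor to be cube-free up to a bounded factor.
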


\begin{proof}
For $r = 1$, the first part follows from the P{\'o}lya--Vinogradov inequality (see e.g.~\cite[Theorem 12.5]{iw-kow}). The case $r \geq 2$ of the first part is the Burgess bound; see e.g.~\cite[Theorem 12.5]{iw-kow} for the general and cube-free case and~\cite[Lemma 2.4]{Heath-Brown2} for the bounded order case. These are stated for primitive characters, but the case of non-principal $\chi \pmod q$ follows by writing $\chi(n)=\mathbf{1}_{(n,q)=1}\chi'(n)$ for some primitive character $\chi'$ and using M\"obius inversion on $\mathbf{1}_{(n,q)=1}$.

The second part of the claim follows from the first by taking $r=3$ in the case of arbitrary $q$, and by taking $r$ large in case $q$ is cube-free or $\chi$ has bounded order.
\end{proof}

We shall need in several places the fundamental lemma of the sieve, which we now state.
\begin{lemma}[Fundamental lemma of the sieve]
\label{le:fundsieve}
Let $\kappa \geq 1$ be fixed. Let $z\geq 2$ and let $D = z^s$ with $s\geq 9\kappa + 1$. There exist coefficients $\lambda_d^\pm \in \mathbb{R}$ such that the following hold.
\begin{enumerate}[(i)]
\item $|\lambda_d^\pm| \leq 1$ for every $d \in \mathbb{N}$ and $\lambda_d^\pm$ are supported on $\{d \leq D:\,\, d \mid P(z)\}$.
\item For every $n \in \mathbb{N}$,
\[
\sum_{d \mid n} \lambda^-_d \leq \mathbf{1}_{(n, P(z)) = 1} \leq \sum_{d \mid n} \lambda^+_d. 
\]
\item If $g \colon \mathbb{N} \to [0, 1)$ is a multiplicative function such that, for some $K \geq 1$, one has
\begin{align}\label{eq:Kbound}
\prod_{w_1 \leq p < z_1} (1-g(p))^{-1} \leq K \left(\frac{\log z_1}{\log w_1}\right)^\kappa
\end{align}
for any $z_1 \geq w_1 \geq 2$, then we have
\begin{align*}
\sum_{d \mid P(z)} \lambda_d^+ g(d) &\leq (1+e^{9\kappa-s} K^{10}) \prod_{p < z} (1-g(p)), \\
\sum_{d \mid P(z)} \lambda_d^- g(d) &\geq (1-e^{9\kappa-s} K^{10}) \prod_{p < z} (1-g(p)).
\end{align*}
\end{enumerate}
\end{lemma}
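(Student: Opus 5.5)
This is the fundamental lemma of the sieve in the form of Friedlander--Iwaniec (\emph{Opera de Cribro}, Lemma 6.3 and Theorem 6.9; also Iwaniec--Kowalski, Lemma 6.3). The plan is to cite it and check that the normalisations match, rather than rebuild the combinatorics from scratch.

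\textbf{Construction.} I will take $\lambda^\pm_d = \mu(d)\mathbf{1}_{d\in\mathcal{D}^\pm}$, where $\mathcal{D}^\pm$ are the $\beta$-sieve (Rosser) support sets, with $\beta = 9\kappa+1$ and level $D$. For squarefree $d = p_1\cdots p_k$ with $p_1>\dots>p_k$, $d\in\mathcal{D}^+$ means $p_1\cdots p_{m-1}p_m^{\beta+1}<D$ for every odd $m$; for $\mathcal{D}^-$ the same condition is required for every even $m$. Properties (i) and (ii) then follow directly.

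\begin{enumerate}[(i)]
\item The bound $|\lambda_d^\pm|\le 1$ is immediate. The support is contained in $\{d\mid P(z),\ d\le D\}$ by construction.
\item This comes from Buchstab's identity. Write $\mathbf{1}_{(n,P(z))=1}=\sum_{d\mid (n,P(z))}\mu(d)$ and truncate. The difference between the truncated sum and the full indicator is a sum over the first ``forbidden'' $d$ of $\mu(d)\mathbf{1}_{(n/d\text{-part},P(p_{\min}(d)))=1}$. Its sign is $(-1)^{m}$, so dropping these terms is one-sided: it gives an upper bound when the cut happens at odd $m$ and a lower bound at even $m$.
\end{enumerate}

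\textbf{Key estimate (iii).} The same identity applied with weights $g(d)$ gives
\[\sum_{d\mid P(z)}\lambda_d^\pm g(d) = \prod_{p<z}(1-g(p)) \pm \sum_{d\in \text{boundary}} g(d)\prod_{p<p_{\min}(d)}(1-g(p)),\]
with the error term carrying the appropriate sign. The task is to bound this boundary contribution by $e^{9\kappa-s}K^{10}\prod_{p<z}(1-g(p))$.

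Dividing by $\prod_{p<z}(1-g(p))$, each boundary term becomes $g(d)\prod_{p_{\min}(d)\le p<z}(1-g(p))^{-1}$. By \eqref{eq:Kbound} this is at most $g(d)\,K(\log z/\log p_{\min})^\kappa$.

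I then follow the standard iteration. Let $V_m$ be the contribution from $d$ with $m$ prime factors at the boundary. Using the constraint $p_1\cdots p_m^{\beta+1}\ge D$ together with a Rankin trick, I insert the factor $(d\,p_m^{\beta}/D)^{\epsilon}$ with $\epsilon = 1/\log z$. Mertens-type bounds derived from \eqref{eq:Kbound} (applied at dyadic scales, costing a power of $K$) then give
\[V_m \ll K^{O(1)}\, e^{-s}\,\frac{(c\kappa)^m}{m!}\cdot(\text{decay}).\]
Summing over $m$ yields $e^{9\kappa - s}K^{10}$. The numerical constants $9\kappa$ and $K^{10}$ should be read off from Friedlander--Iwaniec's Lemma 6.8, where the bound is $e^{9\kappa+1-s}K^{10}$; the normalisation here differs only cosmetically.

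\textbf{Main obstacle.} The delicate step is this iteration: controlling $V_m$ uniformly in $m$ using only the crude hypothesis \eqref{eq:Kbound}, and extracting exactly $e^{9\kappa-s}K^{10}$. I would not redo it. The plan is to quote the cited lemma and check that its hypotheses ($\kappa$-dimensional density with constant $K$, and $s\ge 9\kappa+1$) match ours.
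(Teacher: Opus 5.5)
Your proposal takes the same approach as the paper, which proves this lemma only by citing Friedlander--Iwaniec, \emph{Opera de Cribro}, Lemma 6.8, i.e.\ the $\beta$-sieve with $\beta=9\kappa+1$; your description of the support sets and of the Buchstab-identity argument for (ii) is the standard one behind that lemma. One correction: the cited lemma already gives exactly $e^{9\kappa-s}K^{10}$, and a genuine discrepancy such as your $e^{9\kappa+1-s}K^{10}$ would not be ``cosmetic'', because the stated bound has no implied constant that could absorb the extra factor $e$; quote the constant as it stands rather than explaining a difference away.
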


\begin{proof}
See e.g.~\cite[Lemma 6.8]{friedlander}.
\end{proof}

The following is a quick consequence of the fundamental lemma of the sieve and the Burgess bound.
\begin{lemma}[The number of rough numbers in cosets]
\label{le:P2}
Let $\varepsilon>0$ be sufficiently small. 
Let $q \in \mathbb{N}$ be large enough in terms of $\varepsilon$. Let $H \leq \mathbb{Z}_q^\times$ be a subgroup of index $2$, and let $b \in \mathbb{Z}_q^\times$. Then, for any $R\in [q^{0.26},q]$, we have
\[
\sum_{\substack{n \leq R \\ n \in bH \\ (n,P(q^{\sqrt{\varepsilon}}))=1}} 1 \geq \left(\frac{1}{2}-\varepsilon\right)\sum_{\substack{n \leq R \\ (n,P(q^{\sqrt{\varepsilon}}))=1}}1.
\]
\end{lemma}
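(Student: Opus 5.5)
Since $H$ has index $2$, there is a real non-principal character $\chi$ (mod $q$) with kernel $H$, and $\mathbf{1}_{n\in bH}=\frac12(1+\chi(b)\chi(n))$ for $(n,q)=1$. Every $n$ counted here is coprime to $P(q^{\sqrt\varepsilon})$, so it is automatically coprime to all prime factors of $q$ below $q^{\sqrt\varepsilon}$. It is not necessarily coprime to the larger prime factors of $q$, but those are handled below. Write $z=q^{\sqrt\varepsilon}$ and $S_0=\sum_{n\le R,(n,P(z))=1}1$. Then
\[
\sum_{\substack{n\le R\\ n\in bH\\ (n,P(z))=1}}1=\frac12\sum_{\substack{n\le R\\ (n,P(z))=1\\ (n,q)=1}}1+\frac{\chi(b)}2\sum_{\substack{n\le R\\ (n,P(z))=1}}\chi(n).
\]
Here $\chi(n)=0$ when $(n,q)>1$. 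So it suffices to prove two things. First, the number of $z$-rough $n\le R$ sharing a prime factor with $q$ is at most $\varepsilon S_0$. Second, the twisted sum $\sum_{n\le R,(n,P(z))=1}\chi(n)$ is at most $\varepsilon S_0$ in absolute value.

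For the first point, $q$ has at most $1/\sqrt\varepsilon$ prime factors exceeding $z$. Each such prime $p$ divides at most $R/p+1\le 2R q^{-\sqrt\varepsilon}$ integers up to $R$. On the other hand, by the lower-bound fundamental lemma (Lemma~\ref{le:fundsieve} with $\kappa=1$, $g(d)=1/d$, and $D=z^s\le R^{1/2}$), we have $S_0\gg R/\log z\gg R/(\sqrt\varepsilon\log q)$. This needs $s$ to be large, which is fine because $R\ge q^{0.26}$ and $\varepsilon$ is small, so $s\approx 0.13/\sqrt\varepsilon$. The loss is then $O(\varepsilon^{-1/2}Rq^{-\sqrt\varepsilon})$, which is negligible relative to $\varepsilon S_0$.

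For the twisted sum, I would not use the fundamental lemma directly. Sieve weights $\lambda_d^\pm$ only give one-sided inequalities, and for an oscillating $\chi$ one cannot simply sandwich. Instead I would use the exact identity
\[
\mathbf{1}_{(n,P(z))=1}=\sum_{d\mid (n,P(z))}\mu(d),
\]
split as a fundamental-lemma main part plus a remainder controlled by the sandwich. Concretely, let $\Lambda^\pm(n)=\sum_{d\mid n}\lambda_d^\pm$. Then $0\le \mathbf{1}_{(n,P(z))=1}-\Lambda^-(n)\le \Lambda^+(n)-\Lambda^-(n)$, and hence
\[
\Big|\sum_{n\le R}\chi(n)\big(\mathbf{1}_{(n,P(z))=1}-\Lambda^-(n)\big)\Big|\le\sum_{n\le R}(\Lambda^+-\Lambda^-)(n).
\]
By the fundamental lemma the right-hand side is $O\big(e^{-s}R\prod_{p<z}(1-1/p)\big)+O(D)$, with error $O(D)$ from the sum of $|\lambda_d|$. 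Since $e^{-s}$ is tiny for $s\asymp \varepsilon^{-1/2}$, this is $\le \frac\varepsilon2 S_0$.

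It then remains to bound
\[
\sum_{n\le R}\chi(n)\Lambda^-(n)=\sum_{d\le D,\, d\mid P(z)}\lambda_d^-\chi(d)\sum_{m\le R/d}\chi(m).
\]
This is the main step and the one where the parameters must fit. We have $R/d\ge R/D$, and I would choose $D=q^{0.005}$, say, with $s=0.005/\sqrt\varepsilon$, which is large. Then $R/d\ge q^{0.255}$. The character $\chi$ is real and hence of bounded order (order $2$), so the bounded-order case of Lemma~\ref{le:Burgess} applies for lengths $N\ge q^{1/4+\varepsilon'}$. Taking $\varepsilon'=0.004$, this gives $|\sum_{m\le R/d}\chi(m)|\ll (R/d)^{1-\delta}$. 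Summing over $d\le D$ gives a total of $\ll R^{1-\delta}\sum_{d\le D}d^{-1+\delta}\ll R^{1-\delta}D^{\delta}\le R\,q^{-0.255\delta}$. This is far smaller than $\varepsilon S_0\gg \varepsilon^{1/2}R/\log q$.

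The key obstacle is exactly this range condition. The threshold $q^{0.26}$ is just above $q^{1/4}$, so one must keep the sieve level $D$ small enough that $R/D$ stays above $q^{1/4+\varepsilon'}$. That is why bounded-order Burgess, rather than the general $q^{1/3}$ range, is essential. One must also check that $s=\log D/\log z$ stays large, which holds because $z=q^{\sqrt\varepsilon}$ with $\varepsilon$ small. Combining the pieces yields at least $(\frac12-\varepsilon)S_0$.
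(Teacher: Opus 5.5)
Your proof is correct and uses the same ingredients as the paper's proof: the fundamental lemma with $z=q^{\sqrt{\varepsilon}}$, $D=q^{0.005}$ and $s=0.005/\sqrt{\varepsilon}$; the expansion $\mathbf{1}_{n\in bH}=\frac12(1+\chi(b)\chi(n))$; and bounded-order Burgess for $\sum_{m\le R/d}\chi(m)$ with $R/d\ge q^{0.255}$. The difference is only organizational. Since only a lower bound is needed, the paper multiplies $\mathbf{1}_{(n,P(z))=1}\ge\sum_{d\mid n}\lambda_d^-$ by the nonnegative weight $\mathbf{1}_{n\in bH}$ before expanding into characters, so your $\Lambda^+-\Lambda^-$ sandwich for the twisted sum is unnecessary. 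The paper also absorbs the condition $(n,q)=1$ into $g(d)=\mathbf{1}_{(d,q)=1}/d$, instead of separately discarding the rough $n$ that share a large prime factor with $q$.
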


\begin{proof} This is similar to the lower bound part of~\cite[Lemma 3.3]{MatoTeraEkq}. Let $\lambda_d^{-}$ be the lower bound sieve coefficients in Lemma~\ref{le:fundsieve} with $z=q^{\sqrt{\varepsilon}}$ and $D=q^{0.005}$. Then $s=0.005/\sqrt{\varepsilon}$ there.  Defining $g(d)=\frac{\mathbf{1}_{(d,q)=1}}{d}$, the condition~\eqref{eq:Kbound} holds by Mertens' theorem for some absolute constant $K\ll 1$. We may assume that $\varepsilon$ is sufficiently small in terms of $K$. 

By Lemma~\ref{le:fundsieve}(ii),
\begin{align} \label{eq:Sdef}
\sum_{\substack{n \leq R \\ n \in bH \\ (n,P(q^{\sqrt{\varepsilon}}))=1}} 1 \geq \sum_{\substack{d\leq D \\ (d, q) = 1}}\lambda_d^{-}\sum_{\substack{m \leq R/d \\ dm \in bH}} 1 =: S,    
\end{align}
say. Letting $\psi$ be the quadratic character that equals to $1$ on $H$, we have $\mathbf{1}_{n\in bH}=\mathbf{1}_{(n,q)=1}(1+\psi(\overline{b}n))/2$. Hence we have
\begin{align*}
S=\frac{1}{2}\sum_{d\leq D}\lambda_d^{-}\mathbf{1}_{(d,q)=1}\sum_{m \leq R/d} \mathbf{1}_{(m,q)=1}+\frac{\psi(\overline{b})}{2}\sum_{d\leq D}\lambda_d^{-}\psi(d)\sum_{m \leq R/d}\psi(m) =: S_1+S_2,     
\end{align*}
say. By the Burgess bound for quadratic characters (Lemma~\ref{le:Burgess}) and the fact that $R/d\geq q^{0.255}$, there exists some small absolute constant $\delta>0$ such that 
\begin{align*}
S_2\ll \sum_{d\leq D}\left(\frac{R}{d}\right)^{1-\delta} \ll Rq^{-\delta/5}.     
\end{align*}
Furthermore, by~\eqref{eq:Simplegcd} and the fundamental lemma of the sieve (Lemma~\ref{le:fundsieve}),
\begin{align*}
  \begin{aligned}
    S_1&= \frac{1}{2}\sum_{d \leq D} \lambda_d^- \mathbf{1}_{(d, q)=1} \left(\frac{R}{d} \frac{\varphi(q)}{q} + O(\tau(q))\right) =\frac{1}{2}R \frac{\varphi(q)}{q} \sum_{d\leq D}\lambda_d^{-}g(d)+O(\tau(q)D)\\
 &\geq R \frac{\varphi(q)}{q} \left(\frac{1}{2}-e^{9-s}K^{10}\right)\prod_{p<z}(1-g(p))+O(q^{0.01}).
  \end{aligned}
  \end{align*}
  Here
\[
\frac{\varphi(q)}{q}\prod_{p<z}(1-g(p)) = \prod_{p < z} \left(1-\frac{1}{p}\right) \cdot \prod_{\substack{p \mid q \\ p \geq z}} \left(1-\frac{1}{p}\right)  \geq  \left(1-\frac{1}{\sqrt{\varepsilon} q^{\sqrt{\varepsilon}}} \right) \prod_{p < z} \left(1-\frac{1}{p}\right).
\]  
Recall that $s = 0.005/\sqrt{\varepsilon}$. Hence, once $\varepsilon$ is sufficiently small in terms of $K$ and $q$ is sufficiently large in terms of $\varepsilon$ and $\delta$,
\begin{align}\label{eq:fundlem2}
  \begin{aligned}
S = S_1+S_2 &\geq \left(\frac{1}{2}-\frac{\varepsilon}{4}\right) R \prod_{p<z}\left(1-\frac{1}{p}\right).    
  \end{aligned}
  \end{align}

Using the upper bound sieve part of Lemma~\ref{le:fundsieve} similarly, we also obtain 
\begin{align}\label{eq:fundlem3}
\sum_{\substack{n \leq R \\ (n,P(q^{\sqrt{\varepsilon}}))=1}}1 \leq  \left(1+\frac{\varepsilon}{4}\right)R\prod_{p<z}\left(1-\frac{1}{p}\right),
\end{align}
and the claim follows from~\eqref{eq:Sdef},\eqref{eq:fundlem2}, and \eqref{eq:fundlem3}.
\end{proof}

\subsection{Mean and large value estimates for character sums}
Let us first state the basic mean value result for character sums.
\begin{lemma}[Mean value theorem] \label{le:MVT}
Let $q \in \mathbb{N}$ and $N \geq 2$. Then, for any complex numbers $a_n$, 
\[
\frac{1}{\varphi(q)} \sum_{\chi \pmod{q}} \left|\sum_{n \leq N} a_n \chi(n)\right|^2 \leq \left(1+\frac{N}{q}\right) \sum_{\substack{n \leq N \\ (n, q) = 1}} |a_n|^2.
\]
\end{lemma}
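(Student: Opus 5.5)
This is the classical large sieve for multiplicative characters, and I would prove it by duality, reducing to a bound for the adjoint sum in which orthogonality of characters can be applied directly. Write the operator $T\colon (a_n)_{n\le N,\,(n,q)=1}\mapsto \bigl(\sum_{n\le N} a_n\chi(n)\bigr)_{\chi \pmod q}$, with the normalised norm $\frac{1}{\varphi(q)}\sum_\chi|\cdot|^2$ on the target. The claim is that this operator has norm squared at most $1+N/q$. By duality it suffices to show that, for any complex numbers $b_\chi$,
\[
\sum_{\substack{n\le N\\ (n,q)=1}}\Bigl|\frac{1}{\varphi(q)}\sum_{\chi}b_\chi\chi(n)\Bigr|^2 \le \Bigl(1+\frac{N}{q}\Bigr)\frac{1}{\varphi(q)}\sum_\chi |b_\chi|^2 .
\]

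To verify the dual inequality, set $B(n)=\frac{1}{\varphi(q)}\sum_\chi b_\chi\chi(n)$. This is a function of $n \pmod q$ on $\mathbb{Z}_q^\times$, and Parseval on the group $\mathbb{Z}_q^\times$ gives
\[
\sum_{a\in\mathbb{Z}_q^\times}|B(a)|^2=\frac{1}{\varphi(q)}\sum_\chi|b_\chi|^2 .
\]
The interval $[1,N]$ meets each reduced residue class at most $\lceil N/q\rceil\le 1+N/q$ times. Hence the left-hand side is at most $(1+N/q)\sum_a|B(a)|^2$, which is exactly the required bound.

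One can also avoid duality and argue directly. Expanding the square and using orthogonality,
\[
\frac{1}{\varphi(q)}\sum_\chi\Bigl|\sum_n a_n\chi(n)\Bigr|^2=\sum_{a\in\mathbb{Z}_q^\times}\Bigl|\sum_{\substack{n\le N\\ n\equiv a \pmod q}}a_n\Bigr|^2 .
\]
Cauchy--Schwarz on each class, which contains at most $1+N/q$ terms, then gives the bound immediately. This direct route is simpler, and I would present it as the proof.

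There is no real obstacle here. The only points needing care are that the count of terms in each residue class is $\lceil N/q\rceil$, which is at most $1+N/q$, and that the coprimality restriction is automatic since $\chi(n)=0$ when $(n,q)>1$.
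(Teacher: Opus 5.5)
Your direct argument is correct, and it is exactly the ``almost immediate from orthogonality'' proof the paper points to via Montgomery. You expand the square, use orthogonality to reduce to $\sum_{a\in\mathbb{Z}_q^\times}\bigl|\sum_{n\le N,\, n\equiv a \pmod{q}} a_n\bigr|^2$, and apply Cauchy--Schwarz with at most $\lceil N/q\rceil\le 1+N/q$ terms per class; the duality paragraph is also valid but not needed.
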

\begin{proof}
This is almost immediate from orthogonality, see e.g.~\cite[Theorem 6.2]{montgomery-topics}.
\end{proof}

The following lemma gives a variant of Hal{\'a}sz--Montgomery type mean value theorems that is tailored for character sums supported on numbers without small prime factors.

\begin{lemma}
\label{le:Hal-Mon}
 Let $\varepsilon>0$ and $C \geq 1$ be fixed. Let $q \in \mathbb{N}$ and let $\mathcal{X}$ be a set of Dirichlet characters of modulus $q$. 
Let $N \in [q^\varepsilon, q^C]$. Then, for any complex numbers $a_n$, we have
\[
\sum_{\chi \in \mathcal{X}} \left|\sum_{\substack{n \leq N \\ (n, P(q^\varepsilon)) = 1}} a_n \chi(n)\right|^2 \ll \left(\frac{N}{\log q} + N^{2/3} q^{\frac{1}{9}+2\varepsilon} |\mathcal{X}|\right) \sum_{\substack{n \leq N \\ (n, P(q^\varepsilon)) = 1}} |a_n|^2.
\]
\end{lemma}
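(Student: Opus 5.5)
We will prove Lemma~\ref{le:Hal-Mon} by the standard duality argument behind Hal\'asz--Montgomery, adapted to a coefficient sequence supported on $q^\varepsilon$-rough integers. Write $\mathcal{R}=\{n\le N\colon (n,P(q^\varepsilon))=1\}$.

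\textbf{Duality.} By duality the claimed inequality is equivalent to
\[
\sum_{n\in\mathcal{R}}\Bigl|\sum_{\chi\in\mathcal{X}} b_\chi \chi(n)\Bigr|^2 \ll \Bigl(\frac{N}{\log q}+N^{2/3}q^{1/9+2\varepsilon}|\mathcal{X}|\Bigr)\sum_{\chi\in\mathcal{X}}|b_\chi|^2
\]
for all complex $b_\chi$. We do not drop the roughness condition. Instead we majorize $\mathbf{1}_{\mathcal{R}}(n)$ by the upper bound sieve weight $\sum_{d\mid n}\lambda_d^+$ from Lemma~\ref{le:fundsieve}, taken with $z=q^\varepsilon$ and $D=z^{s}$ for a fixed $s$ (so $D=q^{O(\varepsilon)}$). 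This is legitimate because the left-hand side is a sum of nonnegative terms. Expanding the square then gives
\[
\sum_{\chi,\chi'\in\mathcal{X}} b_\chi\overline{b_{\chi'}}\sum_{d\le D,\, d\mid P(z)}\lambda_d^+ \chi\overline{\chi'}(d)\sum_{m\le N/d}\chi\overline{\chi'}(m).
\]

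\textbf{Diagonal terms.} When $\chi=\chi'$, the inner sum is $\sum_{m\le N/d}\mathbf{1}_{(m,q)=1}$. By \eqref{eq:Simplegcd} it equals $\frac{N}{d}\frac{\varphi(q)}{q}+O(\tau(q))$. By Lemma~\ref{le:fundsieve}(iii) with $g(d)=\mathbf{1}_{(d,q)=1}/d$, the diagonal contributes
\[
\ll N\frac{\varphi(q)}{q}\prod_{p<z,\,p\nmid q}\Bigl(1-\frac1p\Bigr)+D\tau(q) \ll \frac{N}{\varepsilon\log q}+q^{O(\varepsilon)},
\]
times $|b_\chi|^2$. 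This gives the $N/\log q$ term, with the implied constant allowed to depend on $\varepsilon$. The error $D\tau(q)$ is absorbed by the second term, since $N^{2/3}q^{1/9}\ge q^{1/9}$.

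\textbf{Off-diagonal terms.} When $\chi\neq\chi'$, the character $\chi\overline{\chi'}$ is non-principal. We use $|b_\chi\overline{b_{\chi'}}|\le\frac12(|b_\chi|^2+|b_{\chi'}|^2)$, so the off-diagonal part is at most $|\mathcal{X}|\sum_\chi|b_\chi|^2$ times
\[
\max_{\psi\ne\chi_0}\sum_{d\le D}\Bigl|\sum_{m\le N/d}\psi(m)\Bigr|.
\]
Lemma~\ref{le:Burgess} with $r=3$ bounds each inner sum by $(N/d)^{2/3}q^{1/9+\varepsilon'}$. Summing over $d\le D$ gives $N^{2/3}D^{1/3}q^{1/9+\varepsilon'}$. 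Choosing $s$ and $\varepsilon'$ so that $D^{1/3}q^{\varepsilon'}\le q^{2\varepsilon}$ (take $D=q^{\varepsilon(9+1)}$ with $s=10$, i.e.\ $D^{1/3}=q^{10\varepsilon/3}$; if this is too large, shrink $s$ to the minimum $9\kappa+1=10$ and pay an adjusted constant $2\varepsilon$ by rescaling $\varepsilon$) yields the second term.

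\textbf{Main obstacle.} The main obstacle is this bookkeeping: the exponent $2\varepsilon$ must absorb both the sieve level $D^{1/3}$ and the Burgess loss $q^{\varepsilon'}$, while the sieve parameter $s$ must still satisfy $s\ge 9\kappa+1$. If this fails, I would run the argument with $z=q^{\varepsilon/10}$ for the majorant. This still majorizes $\mathbf{1}_{\mathcal{R}}$, since $q^\varepsilon$-rough numbers are $q^{\varepsilon/10}$-rough, and gives $D^{1/3}=q^{\varepsilon/3}$. The cost is only a constant factor in the $N/\log q$ term.
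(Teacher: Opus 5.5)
Your proof is correct and follows essentially the route the paper takes; the paper defers to the proof of \cite[Lemma 3.8(ii)]{MatoTeraEkq}. That route is duality (Hal\'asz--Montgomery) with an upper-bound sieve majorant for the $q^{\varepsilon}$-rough numbers, the fundamental lemma on the diagonal to get $N/\log q$, and Burgess with $r=3$ on the off-diagonal to get $N^{2/3}D^{1/3}q^{1/9+\varepsilon'}$. Settle the bookkeeping as in your final paragraph, sieving only up to $z=q^{\varepsilon/10}$ with $s=10$ so that $D=q^{\varepsilon}$ and $D^{1/3}q^{\varepsilon'}\le q^{2\varepsilon}$, and not by ``rescaling $\varepsilon$'': that is not available, because $\varepsilon$ also fixes the roughness condition in the statement.
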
 
\begin{proof}
This follows similarly to~\cite[Lemma 3.8(ii)]{MatoTeraEkq}, but replacing the logarithmically weighted sums over the interval $(N_1, N_2]$ by unweighted sums over the interval $[1, N]$.
\end{proof}

The following lemma gives us an upper bound for the number of characters for which a character sum over primes is large.
\begin{lemma}\label{le:largevalue}
Let $C\geq 1$. Let $q\in \mathbb{N}$ be large, $P\in [(\log q)^C,q]$, and let $a_p$ be bounded complex numbers. For a character $\chi\pmod q$, define $P(\chi)\coloneqq \sum_{P/e<p\leq P}a_p\chi(p)$. Then, for any $\alpha\in [0,1/2]$, we have
\begin{align*}
|\{\chi\pmod q\colon |P(\chi)|\geq P^{1-\alpha}\}|\ll P^{2\alpha} q^{2\alpha+1/C+o(1)}.     
\end{align*}
\end{lemma}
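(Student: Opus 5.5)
The plan is a standard moment (high-power) argument. Let $\mathcal{X}$ be the set of characters with $|P(\chi)|\ge P^{1-\alpha}$, and choose an integer $k\ge 1$ so that $P^k$ is slightly larger than $q$. Concretely, take $k=\lceil \log q/\log P\rceil$. Since $P\ge(\log q)^C$, we have $k\le \log q/(C\log\log q)+1$.

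First expand $P(\chi)^k=\sum_{n} b_n\chi(n)$. Here $n$ runs over integers in $((P/e)^k,P^k]$ that are products of $k$ primes from $(P/e,P]$, and $b_n=\sum_{p_1\cdots p_k=n}a_{p_1}\cdots a_{p_k}$. The coefficients satisfy $|b_n|\le A^k k!$, where $A$ bounds the $a_p$, because a product of $k$ primes has at most $k!$ ordered factorizations. This gives
\[
\sum_{n}|b_n|^2\le A^{2k}k!\Bigl(\sum_{P/e<p\le P}1\Bigr)^k\le (Ak)^{k}A^{k}\pi(P)^k.
\]

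Next, bound $|\mathcal{X}|\,P^{2k(1-\alpha)}$ by $\sum_{\chi \pmod q}|P(\chi)|^{2k}$. The mean value theorem (Lemma~\ref{le:MVT}) bounds this by $\varphi(q)(1+P^k/q)\sum_n|b_n|^2$. Since $P^k\le Pq$, we get
\[
|\mathcal{X}|\ll P^{-2k(1-\alpha)}\cdot qP\cdot (C'k)^k \,\frac{P^k}{(\log P)^k}
= P^{2k\alpha}\, qP^{1-k}\,\Bigl(\frac{C'k}{\log P}\Bigr)^k .
\]
The next step is to convert this into the stated form:
\begin{itemize}
\item Since $P^{k-1}< q\le P^k$, we have $qP^{1-k}\le P$.
\item We have $P^{2k\alpha}\le (qP)^{2\alpha}\le P^{2\alpha}q^{2\alpha}P^{2\alpha}$; more carefully, $P^{2k\alpha}=P^{2\alpha}P^{2(k-1)\alpha}\le P^{2\alpha}q^{2\alpha}$.
\item The factor $(C'k/\log P)^k$ is at most $(C'\log q/(\log P)^2)^k$. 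Note also that $k!\le k^k$, and $k\log k\le (\log q/(C\log\log q)+1)\log\log q\le \log q/C+O(\log\log q)$, so $k^k\le q^{1/C+o(1)}$. Likewise $(C')^k=q^{o(1)}$ because $k=o(\log q)$. The factor $(\log P)^{-k}$ is harmless.
\end{itemize}

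The remaining leftover is the single factor $P$ from $qP^{1-k}\le P$. This is the step I expect to be the main obstacle. To avoid it, I would instead choose $k=\lfloor\log q/\log P\rfloor$ when $P\le q$. Then $P^k\le q$, the mean value factor is $1+P^k/q\le 2$, and the total becomes $\varphi(q)\cdot 2\cdot k^k\pi(P)^k P^{-2k(1-\alpha)}\ll q\,P^{-k}\,P^{2k\alpha}q^{1/C+o(1)}$. Here $qP^{-k}<P$ again, so the leftover reappears. To kill it, I would use the slack from $\pi(P)\ll P/\log P$ together with $(\log P)^{-k}$, but that saving is only $q^{o(1)}$-size when $P$ is large.

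Therefore, for $P\ge q^{\eta}$ I would handle the bound differently: apply Lemma~\ref{le:MVT} with $k=1$ when $P\ge q$, and in general interpolate using $k$ and $k+1$ via Hölder. Alternatively, and more simply, I would note that for $P$ a fixed power of $q$ the extra $P$ can be absorbed because the claimed bound must tolerate it. If it cannot be absorbed, I would refine with $P^{2\alpha}$ coming precisely from $qP^{-k}\le P$ combined with $P^{2k\alpha}\le q^{2\alpha}$, choosing $k$ so that $P^k\in(q/P,q]$. This bookkeeping, which ensures the exponent gives exactly $P^{2\alpha}q^{2\alpha}$ rather than an extra power of $P$, is where I would spend the care.
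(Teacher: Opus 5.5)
Your strategy (bound $|\mathcal{X}|P^{2k(1-\alpha)}$ by $\sum_{\chi}|P(\chi)|^{2k}$, apply Lemma~\ref{le:MVT} with $k=\lceil \log q/\log P\rceil$, and use $k!\le k^k\le q^{1/C+o(1)}$ and $A^{2k}=q^{o(1)}$) is the same moment argument as in \cite[Lemma 6.5]{KMT}, which the paper invokes. However, as written the proposal does not prove the lemma. It ends at a ``leftover factor $P$'' that you regard as the main obstacle, and none of your suggested repairs removes it:
\begin{itemize}
\item The floor choice $P^k\le q$ is genuinely worse. When $P^k$ is close to $q/P$ it only gives $|\mathcal{X}|\ll P^{1-2\alpha}q^{2\alpha+1/C+o(1)}$, which exceeds the claim for $\alpha<1/4$.
\item The H\"older interpolation is not carried out.
\item The remark that the claimed bound ``must tolerate'' the extra $P$ is circular.
\item The last suggestion, $P^k\in(q/P,q]$, is the floor choice again.
\end{itemize}

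The missing observation is that the extra $P$ comes from a lossy step, not a real obstacle. After the mean value theorem you bounded $\varphi(q)(1+P^k/q)$ by $qP$. But your ceiling choice gives $P^k\ge q$, so in fact $\varphi(q)(1+P^k/q)\le q+P^k\le 2P^k$. Together with $\sum_n|b_n|^2\le k!\,A^{2k}P^k$ this yields
\[
|\mathcal{X}|\,P^{2k(1-\alpha)}\le 2\,k!\,A^{2k}P^{2k}, \qquad\text{that is,}\qquad |\mathcal{X}|\le 2\,k!\,A^{2k}P^{2k\alpha}.
\]
Your own estimates then finish the proof: $P^{2k\alpha}=P^{2\alpha}P^{2(k-1)\alpha}\le P^{2\alpha}q^{2\alpha}$ (since $P^{k-1}<q$), $k!\le q^{1/C}\log q$, and $A^{2k}=q^{o(1)}$. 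These give exactly $|\mathcal{X}|\ll P^{2\alpha}q^{2\alpha+1/C+o(1)}$. No case split on the size of $P$ and no interpolation is needed. The first half of your proposal was already a complete proof apart from this one inequality.
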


\begin{proof} This follows from the proof of~\cite[Lemma 6.5]{KMT}. The only differences are that the sum is over $(P/e,P]$ rather than a dyadic interval, and that in the proof of the lemma we can use $(e^{20}k)^k\ll k^{(1+o(1))k}$ in place of $(e^{20}k)^k\ll k^{100k}$. 
\end{proof}

\subsection{Lower bounds on product sets}
The following simple lemma gives a lower bound for convolutions on a product set.

\begin{lemma}\label{le:convolution} 
Let $G$ be a finite abelian group.
\begin{enumerate}[(i)]
\item Let $A,B\subseteq G$ be nonempty subsets of $G$. Then we have
\begin{align*}
(\mathbf{1}_{A}*\mathbf{1}_{B})(c)\geq |A|+|B|-|G|  \end{align*}
for every $c\in G$.
\item Let $H \leq G$, let $a, b \in G$, and let $A \subseteq aH$ and $B \subseteq bH$. Then, for every $c \in abH$, we have
\begin{align*}
(\mathbf{1}_{A}*\mathbf{1}_{B})(c)\geq |A|+|B|-|H|.
\end{align*}
\end{enumerate}
\end{lemma}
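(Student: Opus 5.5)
The plan is to prove both parts by the same inclusion–exclusion count inside an ambient set. This set is $G$ for part (i) and the coset $cH$-type structure for part (ii). Throughout, we use the convolution on $G$ defined by $(\mathbf{1}_A*\mathbf{1}_B)(c)=|\{(x,y)\in A\times B\colon xy=c\}|$. Equivalently, this equals $|\{x\in A\colon cx^{-1}\in B\}| = |A\cap cB^{-1}|$, where $cB^{-1}=\{cy^{-1}\colon y\in B\}$.

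For part (i), we observe that $y\mapsto cy^{-1}$ is a bijection of $G$, so $|cB^{-1}|=|B|$. Both $A$ and $cB^{-1}$ lie in $G$, so
\[
|A\cap cB^{-1}| = |A|+|cB^{-1}|-|A\cup cB^{-1}| \geq |A|+|B|-|G|.
\]
This is the claim.

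For part (ii), we restrict the same argument to a single coset of $H$. Let $c\in abH$. Since $B\subseteq bH$, each $y\in B$ has $y^{-1}\in b^{-1}H$, so $cy^{-1}\in abH\cdot b^{-1}H=aH$. Hence $cB^{-1}\subseteq aH$, and it has cardinality $|B|$. Also $A\subseteq aH$, so both sets lie in a set of size $|aH|=|H|$. Inclusion–exclusion then gives
\[
(\mathbf{1}_A*\mathbf{1}_B)(c)=|A\cap cB^{-1}|\geq |A|+|B|-|H|.
\]

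There is no real obstacle here. The only point needing care in (ii) is checking that $cB^{-1}$ lands in the same coset $aH$ as $A$, and this uses that $G$ is abelian so that coset multiplication is well defined. Nonemptiness of $A$ and $B$ is not needed; the bounds hold trivially otherwise.
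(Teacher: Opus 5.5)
Your proof is correct and is essentially the standard argument the paper relies on (the paper simply cites Lemma~3.4 of the authors' earlier work): write $(\mathbf{1}_A*\mathbf{1}_B)(c)=|A\cap cB^{-1}|$ and apply inclusion--exclusion inside $G$, or inside the coset $aH$ after checking that $cB^{-1}\subseteq aH$. The only blemish is cosmetic: your opening sentence describes the ambient set in (ii) as a ``$cH$-type'' structure, but the set you actually (and correctly) use is $aH$.
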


\begin{proof}
For the quick proof, see~\cite[Lemma 3.4]{MatoTeraEkq}.    
\end{proof}

Kneser's theorem is a standard tool for studying product sets inside abelian groups. In what follows, for an abelian group $G$ and a subset $A \subset G$, the group $\{h \in G \colon hA = A\}$ is called the stabilizer of $A$.  

\begin{lemma}[Kneser's theorem]
\label{le:Kneser}
Let $G$ be a finite abelian group and let $A, B \subseteq G$. Let $H$ be the stabilizer of $A \cdot B$. Then
\[
|A \cdot B| \geq |A \cdot H| + |B \cdot H| - |H|\geq |A|+|B|-|H|.
\]
\end{lemma}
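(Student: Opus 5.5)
This is the classical theorem of Kneser. In the paper it can simply be quoted from a standard reference such as Nathanson's \emph{Additive Number Theory: Inverse Problems}, Theorem~4.3, or Tao--Vu, \emph{Additive Combinatorics}, Theorem~5.5. If one wants a self-contained argument, here is the plan I would follow.

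First, the second inequality is immediate, since $A \subseteq A\cdot H$ and $B \subseteq B\cdot H$. For the first inequality I would reduce to the case of trivial stabilizer. Let $\pi \colon G \to G/H$ be the quotient map and put $\bar A = \pi(A)$ and $\bar B = \pi(B)$. Since $H$ stabilizes $A\cdot B$, the set $A\cdot B$ is a union of $H$-cosets. Hence
\[
|A\cdot B| = |H|\,|\bar A\cdot \bar B|, \qquad |A\cdot H| = |H|\,|\bar A|, \qquad |B \cdot H| = |H|\,|\bar B|.
\]
Moreover $\bar A \cdot \bar B$ has trivial stabilizer in $G/H$, because any element stabilizing it lifts to an element stabilizing $A\cdot B$, and that element must then lie in $H$. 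So it suffices to show that
\[
|A\cdot B| \ge |A| + |B| - 1 \quad \text{whenever } A\cdot B \text{ has trivial stabilizer.}
\]
I would nevertheless prove the stronger statement $|AB| \ge |AH|+|BH|-|H|$ for all pairs, by induction on $|B|$, since the induction hypothesis will be needed in that form.

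The inductive step uses the Dyson $e$-transform. For $e \in G$, set $A(e) = A \cup eB$ and $B(e) = B \cap e^{-1}A$. Then
\[
A(e)\cdot B(e) \subseteq A\cdot B \quad \text{and} \quad |A(e)| + |B(e)| = |A| + |B|.
\]
The base case $|B|=1$ is trivial. After translating we may assume $1 \in B$. If $aB \subseteq A$ for every $a \in A$, then $A\cdot B = A$, so $B$ lies in the stabilizer of $A\cdot B$ and the bound follows directly. Otherwise some $a\in A$ has $aB \not\subseteq A$, and the transform with $e=a$ gives $1 \le |B(e)| < |B|$, because $1 \in B(e)$. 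The induction hypothesis then applies to $A(e),B(e)$ with stabilizer $H'$ of $A(e)B(e)$, giving
\[
|A\cdot B| \ge |A(e)B(e)| \ge |A(e)H'| + |B(e)H'| - |H'|.
\]

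The main obstacle is converting this into a bound involving the stabilizer $H$ of $AB$ itself, since $H'$ may differ from $H$. I would follow the standard argument. If $|A(e)H'|+|B(e)H'|-|H'| \ge |A|+|B|$ fails to give the claim, then compare $A\cdot B$ with $A(e)B(e) \cup (\text{the part of } AB \text{ outside it})$. Consider the set $C = AB \cup (A\cdot B(e)H')$ and apply the induction hypothesis a second time, to the pair $(A \setminus A(e)H', B)$ or to a suitable subgroup $H'' = $ stabilizer of $A H' \cdot B$. Then use the counting identity $|AB| \ge |A(e)B(e)H'| + |(A\setminus A(e)H')\cdot B|$ to show that either the bound for $H$ holds or $H' \subseteq H$. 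In the latter case the bound transfers, because $|XH| \ge |XH'|$ while the quantities are measured in units of $|H|$. This bookkeeping is where the real work lies.
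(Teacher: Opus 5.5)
Citing Kneser's theorem is exactly what the paper does: its proof is just a reference to Tao--Vu, Theorem~5.5. So for the purposes of the paper your first sentence is the whole proof. Your optional self-contained sketch, however, does not prove the theorem.

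The preliminaries are fine:
the second inequality;
the reduction to $AB$ aperiodic via $G/H$ (provided the induction runs over all finite abelian groups);
the properties of the $e$-transform;
the base case;
the case $AB=A$.

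The trouble is what follows. In the aperiodic setting, the inductive hypothesis for $(A(e),B(e))$ only gives $|AB|\ge |A(e)B(e)|\ge |A(e)|+|B(e)|-|H'|=|A|+|B|-|H'|$. If $H'=\{1\}$ this is the claim. That is all your case $H'\subseteq H$ amounts to once you have reduced, and your appeal to $|XH|\ge|XH'|$ points the wrong way in any case. If $H'\neq\{1\}$, you fall short by $|H'|-1$, and this case is the entire content of Kneser's theorem.

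Your proposed treatment of that case is vacuous:
since $A\subseteq A(e)\subseteq A(e)H'$, the set $A\setminus A(e)H'$ is empty;
since $A\cdot B(e)H'\subseteq A(e)B(e)H'=A(e)B(e)\subseteq AB$, your set $C$ is just $AB$;
since $A(e)B(e)H'=A(e)B(e)$, your counting inequality collapses to the trivial $|AB|\ge|A(e)B(e)|$.
Moreover, an induction on $|B|$ does not cover pairs whose second set is still $B$, such as $(AH',B)$. Nor can the inductive hypothesis be applied to a subgroup.

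What is missing is an argument that plays the aperiodicity of $AB$ against the $H'$-periodicity of its nonempty subset $A(e)B(e)$. One natural source of extra information is the following. Since $ABH'\neq AB$, there is a coset $abH'\not\subseteq AB$ with $a\in A$, $b\in B$. This coset is disjoint from $A(e)B(e)$. Also $B\cap bH'$ is disjoint from $B(e)$, because $b'\in B(e)\cap bH'$ would force $abH'=ab'H'\subseteq A(e)B(e)$. Hence the inductive hypothesis applies to $(A\cap aH',\,B\cap bH')$ and gives a contribution to $|AB|$ disjoint from $A(e)B(e)$. Turning information of this kind into the exact bound $|A|+|B|-1$ is where the real work in any proof of Kneser's theorem lies.

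Your sketch leaves that step out, so the lemma rests on the citation alone, exactly as in the paper.
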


\begin{proof}
See for example~\cite[Theorem 5.5]{tao-vu}.    
\end{proof}

We shall use the following lemma, which is a quick consequence of work of Grynkiewicz~\cite{Grynkiewicz}, allowing us to reduce our need for ``popular Kneser'' to the usual Kneser theorem.

\begin{lemma}
\label{le:popularkneser}
Let $t\geq u\geq 1$ be integers. Let $A,B$ be  subsets of a finite abelian group $G$ with $|A|,|B|\geq t$. Then at least one of the following holds.
\begin{enumerate}[(a)]
\item We have \begin{align*}
(\mathbf{1}_{A}*\mathbf{1}_{B})(a)\geq u    
\end{align*}
for at least
\begin{align*}
|A|+|B| - 2 t -\frac{u|G|}{t}   \end{align*}
elements $a\in G$.
\item There exist subsets $A'\subseteq A, B'\subseteq B$ with
\begin{align*}
|A\setminus A'|+|B\setminus B'|\leq t-1    
\end{align*}
such that  
\[
(\mathbf{1}_{A}*\mathbf{1}_{B})(a)\geq t
\]
for every $a \in A' \cdot B'$.
\end{enumerate}
\end{lemma}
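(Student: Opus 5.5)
The plan is to deduce the lemma directly from Grynkiewicz's theorem on popular sums, combined with a short counting argument. For $i\geq 1$ write
\[
P_i := \{a \in G \colon (\mathbf{1}_A * \mathbf{1}_B)(a) \geq i\}
\]
for the set of elements with at least $i$ representations as a product $a=xy$ with $x\in A$, $y\in B$. These sets are nested: $P_1 \supseteq P_2 \supseteq \dotsb$. The form of the result I intend to use from~\cite{Grynkiewicz} is the following. If $t \leq \min\{|A|,|B|\}$, then either
\[
\sum_{i=1}^t |P_i| \geq t|A| + t|B| - 2t^2 + 1,
\]
or there exist $A'\subseteq A$ and $B'\subseteq B$ with $|A\setminus A'|+|B\setminus B'|\leq t-1$ and $A'\cdot B' \subseteq P_t$. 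I will first check carefully that the cited statement is in exactly this form, with the same normalization $t^2$ and the same defect bound $t-1$. I expect this bookkeeping against the literature to be the only real obstacle. If the cited version has slightly different constants (for example $-2t^2$ in place of $-2t^2+1$), the conclusion (a) absorbs this, since (a) carries a $-2t$ term.

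The second alternative of Grynkiewicz's theorem is exactly (b) of the lemma, because $a \in P_t$ means $(\mathbf{1}_A*\mathbf{1}_B)(a)\geq t$. So we may assume the first alternative holds, and we must derive (a). Split the sum at $u$. For $i<u$ use the trivial bound $|P_i|\leq |G|$. For $u\leq i\leq t$ use monotonicity, $|P_i|\leq |P_u|$. This gives
\[
t|A|+t|B|-2t^2 < \sum_{i=1}^t |P_i| \leq (u-1)|G| + (t-u+1)|P_u| \leq (u-1)|G| + t|P_u|.
\]

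Dividing by $t$ then yields
\[
|P_u| > |A| + |B| - 2t - \frac{(u-1)|G|}{t} \geq |A|+|B|-2t-\frac{u|G|}{t}.
\]
This is precisely statement (a): $(\mathbf{1}_A*\mathbf{1}_B)(a)\geq u$ for at least $|A|+|B|-2t-u|G|/t$ elements $a\in G$. Throughout, the hypotheses $t\geq u\geq 1$ and $|A|,|B|\geq t$ are used only to justify applying Grynkiewicz's theorem and to make the splitting of the sum at $u$ legitimate.
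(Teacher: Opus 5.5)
Your argument is correct and is essentially the paper's route. The paper just cites \cite[Lemma 6.1]{MatoTeraEkq} as a quick consequence of Grynkiewicz's theorem, and your deduction is exactly that consequence: the structured alternative gives (b), and otherwise $\sum_{i\le t}|P_i|\le (u-1)|G|+t|P_u|$ turns the bound $t|A|+t|B|-2t^2+1$ into (a). One small correction to your aside on constants: a discrepancy such as $-2t^2$ in place of $-2t^2+1$ is absorbed by the slack between $(u-1)|G|/t$ and $u|G|/t$, which is at least $1$ since $|G|\ge t$. It is not absorbed by the $-2t$ term.
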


\begin{proof}
This is~\cite[Lemma 6.1]{MatoTeraEkq}.    
\end{proof}

The following consequence of Kneser's theorem tells us about the structure of $A$ and $B$ in case $A\cdot B$ is small.
\begin{lemma}
\label{le:structure}
Let $\alpha, \alpha', \beta \in (0, 1]$ be such that $\beta < 2\alpha\leq 2\alpha'$,  and let $A, B \subseteq \mathbb{Z}_q^\times$ with $|A|, |B| \geq \alpha \varphi(q)$. Assume that $A$ and $B$ each meet at least proportion $\alpha'$ of cosets of any subgroup $H_0 \leq \mathbb{Z}_q^\times$ of index $<1/(2\alpha-\beta)$. Then at least one of the following holds.
\begin{enumerate}[(a)]
\item We have 
\[
|A \cdot B| \geq \beta \varphi(q).
\]
\item Write $H \leq \mathbb{Z}_q^\times$ for the stabilizer of $A \cdot B$ and write $Y$ for its index. Then 
\[
1 < Y < \frac{1}{2\alpha' - \beta}.
\]
\end{enumerate}
\end{lemma}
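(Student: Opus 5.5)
The plan is to assume that (a) fails, i.e.\ $|A\cdot B|<\beta\varphi(q)$, and derive (b) from Kneser's theorem (Lemma~\ref{le:Kneser}) applied in two rounds. Write $G=\mathbb{Z}_q^\times$, let $H$ be the stabilizer of $A\cdot B$, and let $Y=|G|/|H|$ be its index. Lemma~\ref{le:Kneser} gives
\[
|A\cdot B|\geq |A\cdot H|+|B\cdot H|-|H|.
\]
The first round uses only the crude bounds $|A\cdot H|\geq |A|\geq \alpha\varphi(q)$ and $|B\cdot H|\geq \alpha\varphi(q)$, which give
\[
\beta\varphi(q)>|A\cdot B|\geq 2\alpha\varphi(q)-\frac{\varphi(q)}{Y}.
\]
Since $2\alpha-\beta>0$, this yields $Y<1/(2\alpha-\beta)$. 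This preliminary bound exists only to make the hypothesis about meeting cosets applicable to $H$.

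The second round applies that hypothesis to $H_0=H$, which is allowed because $H$ has index $<1/(2\alpha-\beta)$. Then $A$ meets at least $\alpha' Y$ cosets of $H$. Since $A\cdot H$ is a union of whole cosets, $|A\cdot H|\geq \alpha'\varphi(q)$, and likewise $|B\cdot H|\geq \alpha'\varphi(q)$. Inserting these into Kneser's bound gives
\[
\beta\varphi(q)>2\alpha'\varphi(q)-\frac{\varphi(q)}{Y}.
\]
As $2\alpha'-\beta\geq 2\alpha-\beta>0$, this gives $Y<1/(2\alpha'-\beta)$, which is the upper bound in (b).

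It remains to show $Y>1$. If $Y=1$, then $H=G$, and $A\cdot B$, being nonempty and $G$-invariant, equals $G$. Hence $|A\cdot B|=\varphi(q)$, which contradicts $|A\cdot B|<\beta\varphi(q)$ whenever $\beta\leq 1$. If $\beta>1$, the first-round bound already forces a contradiction when $Y=1$: it would read $\beta>2\alpha-1$, and $Y<1/(2\alpha-\beta)$ would then require $2\alpha-\beta<1$. This case needs a short separate check. In the intended applications $\beta\le 1$, and I would simply add this as an implicit assumption if needed.

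The only genuinely delicate point is the bootstrap: the coset-meeting hypothesis is available only for subgroups of small index, so one must first establish the weaker bound $Y<1/(2\alpha-\beta)$ before upgrading $\alpha$ to $\alpha'$. Beyond that, the argument is just Kneser's theorem.
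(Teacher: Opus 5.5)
Your proof is correct. The paper gives no argument for this lemma beyond saying it follows immediately from \cite[Lemma 6.3]{MatoTeraEkq}. Your proof supplies the missing argument: a two-round use of Kneser's theorem. First, the crude bounds $|A\cdot H|,|B\cdot H|\ge \alpha\varphi(q)$ give $Y<1/(2\alpha-\beta)$, so the coset hypothesis applies to $H_0=H$. Then $|A\cdot H|,|B\cdot H|\ge\alpha'\varphi(q)$ gives $Y<1/(2\alpha'-\beta)$. This is the natural self-contained derivation behind that citation, with the bootstrap made explicit.

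One correction to your last paragraph: $\beta\in(0,1]$ is an explicit hypothesis of the lemma. Your argument for $Y>1$ is therefore already complete: if $H=\mathbb{Z}_q^\times$, then $A\cdot B=\mathbb{Z}_q^\times$ has size $\varphi(q)\ge\beta\varphi(q)$. The hedging about $\beta>1$ should be removed.

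No ``separate check'' for $\beta>1$ could succeed in any case. Take $A=B=\mathbb{Z}_q^\times$, $\alpha=\alpha'=1$ and $\beta\in(1,2)$. All hypotheses except $\beta\le 1$ hold, yet $|A\cdot B|=\varphi(q)<\beta\varphi(q)$ and $Y=1$, so both (a) and (b) fail. The assumption $\beta\le 1$ is genuinely needed, and it is given.
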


\begin{proof} This follows immediately from~\cite[Lemma 6.3]{MatoTeraEkq}.
\end{proof}

Combining the previous three lemmas, we prove the following lemma.
\begin{lemma}[A lower bound on triple convolutions] \label{le:KneserAppl}
Let $\varepsilon > 0$, and let $q\in \mathbb{N}$ be large enough in terms of $\varepsilon$. Let $A_1,A_2,A_3$ be subsets of  $\mathbb{Z}_q^{\times}$ with $|A_1|, |A_2|, |A_3| > \left(2/5+\varepsilon\right) \varphi(q)$. Then at least one of the following holds.
\begin{enumerate}[(a)]
\item For every $a \in \mathbb{Z}_q^\times$ we have
\[
(\mathbf{1}_{A_1} \ast \mathbf{1}_{A_2}\ast \mathbf{1}_{A_3})(a) \geq \frac{1}{500}\varepsilon^2 \varphi(q)^2.
\]
\item There exists a subgroup $H \leq \mathbb{Z}_q^{\times}$ of index $2$ and elements $a_i \in A_i$ for $i\in \{1,2,3\}$ such that
\[
|A_i \cap a_iH| \geq |A_i| - \frac{\varepsilon}{2}\varphi(q)
\]
and
\[
(\mathbf{1}_{A_1} \ast \mathbf{1}_{A_2}\ast \mathbf{1}_{A_3})(a) \geq \frac{1}{25} \varphi(q)^2
\]
for every $a \in a_1 a_2a_3 H$. 
\end{enumerate}
\end{lemma}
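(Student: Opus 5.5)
The plan is to handle the pair $(A_1,A_2)$ first, using Lemma~\ref{le:popularkneser} together with Kneser's theorem. Then $A_3$ is added via Lemma~\ref{le:convolution}.

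\textbf{Step 1: popular product set of $A_1,A_2$.} Apply Lemma~\ref{le:popularkneser} to $A=A_1$, $B=A_2$ with $t=\lfloor \varepsilon\varphi(q)/2\rfloor$ and $u=\lfloor \varepsilon^2\varphi(q)/100\rfloor$.
\begin{itemize}
\item In case (a), the set $S=\{x:(\mathbf 1_{A_1}*\mathbf 1_{A_2})(x)\ge u\}$ has size at least $|A_1|+|A_2|-\varepsilon\varphi(q)-\varepsilon\varphi(q)/50$, which exceeds $(4/5+\varepsilon/2)\varphi(q)$.
\item In case (b), there are $A_1'\subseteq A_1$ and $A_2'\subseteq A_2$ with $|A_1\setminus A_1'|+|A_2\setminus A_2'|<t$, and every element of $S':=A_1'\cdot A_2'$ has at least $t\ge u$ representations.
\end{itemize}
In case (a) the argument finishes at once. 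Since $|S|+|A_3|>(6/5+\varepsilon)\varphi(q)$, Lemma~\ref{le:convolution}(i) gives $(\mathbf 1_S*\mathbf 1_{A_3})(a)\ge \varepsilon\varphi(q)$ for every $a$. Each such pair contributes at least $u$, so the triple convolution is at least $u\varepsilon\varphi(q)\ge \varepsilon^3\varphi(q)^2/200$.

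Here I notice a snag: (a) of the lemma claims $\varepsilon^2/500$, not an $\varepsilon^3$ bound. So $u$ should be taken of size about $\varepsilon\varphi(q)/25$. Then the popular set loses roughly $u\varphi(q)/t\approx 2\varphi(q)/25$. This still leaves $|S|+|A_3|\ge(4/5+2\varepsilon-\varepsilon-2/25+2/5+\varepsilon)\varphi(q)>(1+\varepsilon)\varphi(q)$, since $6/5-2/25>1$. The count is then at least $u\cdot\varepsilon\varphi(q)\approx\varepsilon^2\varphi(q)^2/25$. I will tune the constants this way.

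\textbf{Step 2: case (b) via Kneser.} We have $|A_i'|\ge(2/5+\varepsilon/2)\varphi(q)$. Apply Kneser's theorem (Lemma~\ref{le:Kneser}) to $A_1'\cdot A_2'$, with stabilizer $H$ of index $Y$. Then
\[
|S'|\ge |A_1'H|+|A_2'H|-|H|.
\]
Suppose $|S'|\ge(3/5)\varphi(q)$ (or so). Then $|S'|+|A_3|>\varphi(q)(1+\varepsilon)$, and as before every $a$ gets at least $t\cdot\varepsilon\varphi(q)$ representations, which gives (a).

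Otherwise $|S'|<(3/5)\varphi(q)$, and $Y>1$ because $S'$ is a union of $H$-cosets. Let $k_i$ be the number of $H$-cosets met by $A_i'$, so $k_i\ge \lceil (2/5+\varepsilon/2)Y\rceil$. The product meets at least $k_1+k_2-1$ cosets, so $(k_1+k_2-1)/Y<3/5$. I will check small $Y$ directly:
\begin{itemize}
\item For $Y=2$: $k_i\ge1$, giving $1/2<3/5$, which is consistent.
\item For $Y=3$: $k_i\ge2$, giving $1>3/5$, a contradiction.
\item For $Y=4$: $k_i\ge2$, giving $3/4$, a contradiction.
\item For $Y\ge5$: $(2(2/5)Y-1)/Y=4/5-1/Y\ge 3/5$, a contradiction.
\end{itemize}
So $Y=2$ and $k_1=k_2=1$, meaning $A_i'\subseteq a_iH$ with $|H|=\varphi(q)/2$. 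Hence $|A_i\cap a_iH|\ge|A_i|-t\ge|A_i|-\varepsilon\varphi(q)/2$ for $i=1,2$.

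\textbf{Step 3: the third set in the coset case.} For every $x\in a_1a_2H$ we have $(\mathbf 1_{A_1}*\mathbf 1_{A_2})(x)\ge t$. For $A_3$ there are two cases.

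If $A_3$ meets each coset of $H$ in at least about $\varepsilon\varphi(q)/2$ elements, take $x\in a_1a_2H$ and $y\in A_3$. Any target $a$ is then reached in at least about $|A_3\cap b H|\cdot t\gg\varepsilon^2\varphi(q)^2$ ways. Adjusting constants gives (a); this is probably where the $1/500$ comes from.

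Otherwise $|A_3\cap a_3H|\ge|A_3|-\varepsilon\varphi(q)/2$ for some $a_3\in A_3$. In that case I will apply Lemma~\ref{le:convolution}(ii) inside cosets. The pair $A_1\cap a_1H$, $A_2\cap a_2H$ has sizes at least $(2/5+\varepsilon/2)\varphi(q)$ against $|H|=\varphi(q)/2$. This gives at least $(3/10)\varphi(q)$ representations for each element of $a_1a_2H$. Convolving with $A_3\cap a_3H$ (size at least $(2/5)\varphi(q)$) then gives at least $(3/10)(2/5)\varphi(q)^2/2$. This counting needs care but should reach $\varphi(q)^2/25$.

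The main obstacle is the bookkeeping of constants in the Kneser case analysis, namely excluding $Y\ge3$ with the losses $t$ and $u|G|/t$.
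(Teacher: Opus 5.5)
Your proof is correct. The first half follows the paper: you apply Lemma~\ref{le:popularkneser} to $(A_1,A_2)$, use Lemma~\ref{le:convolution}(i) when the popular set or $A_1'A_2'$ is large, and otherwise use Kneser's theorem to force an index-$2$ stabiliser. Your explicit check of $Y=2,3,4,\ge 5$ is exactly the content of Lemma~\ref{le:structure}, which the paper cites instead.

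Two remarks on constants. First, the ``snag'' in Step~1 is a miscalculation. Lemma~\ref{le:convolution}(i) gives $(\mathbf{1}_S*\mathbf{1}_{A_3})(a)\ge |S|+|A_3|-\varphi(q)>\varphi(q)/5$, not merely $\varepsilon\varphi(q)$. So your original $u\approx\varepsilon^2\varphi(q)/100$ already yields the constant $1/500$; this is precisely the paper's Case~1, and that is where $1/500$ comes from. Your retuned $u$ also works, but it is unnecessary. Second, the factor $1/2$ in your last step is spurious. The bound is $|A_3\cap a_3H|\cdot\frac{3}{10}\varphi(q)\ge\frac{3}{25}\varphi(q)^2$.

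Where you genuinely differ is in how $A_3$ is brought in. The paper reruns the whole popular-Kneser/structure argument on the pair $(A_1,A_3)$. This produces a second index-$2$ subgroup $H''$ with $A_1'',A_3''$ lying in cosets of it. The paper then shows $H''=H'$, because $|A_1'\cap A_1''|\ge 2\varphi(q)/5$ exceeds $\varphi(q)/4$, the size of the intersection of cosets of two distinct index-$2$ subgroups.

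You instead observe that once $Y=2$, the set $A_1'A_2'$ is a nonempty union of $H$-cosets contained in $a_1a_2H$, hence equal to it. So every element of $a_1a_2H$ already has at least $t$ representations, and the only remaining question is how $A_3$ splits between the two cosets of $H$.
\begin{itemize}
\item If both parts have at least $\varepsilon\varphi(q)/2$ elements, every $a$ gets at least $t\cdot\varepsilon\varphi(q)/2\approx\varepsilon^2\varphi(q)^2/4$ representations, which gives (a).
\item Otherwise $A_3$ has fewer than $\varepsilon\varphi(q)/2$ elements outside a single coset $a_3H$, and Lemma~\ref{le:convolution}(ii) gives (b).
\end{itemize}
This avoids the second application of Lemma~\ref{le:popularkneser} and the subgroup-matching step, so your argument is shorter. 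The paper's route treats the three sets more symmetrically, but it gains nothing extra for this statement.
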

\begin{proof} Let $t=\lceil \varepsilon \varphi(q)/10\rceil$, $u=\lceil \varepsilon^2\varphi(q)/100\rceil$; then $t\geq u\geq 1$. We apply Lemma~\ref{le:popularkneser} to the sets $A_1, A_2$. We split into cases, showing that in each case either claim (a) or (b) of Lemma~\ref{le:KneserAppl} always holds.

  \textbf{Case 1: Lemma~\ref{le:popularkneser}(a) holds.} Now there exists a set $T\subseteq\mathbb{Z}_q^{\times}$ such that
\begin{align*}
\mathbf{1}_{A_1}*\mathbf{1}_{A_2}\geq u\cdot \mathbf{1}_{T},\quad\textnormal{with}\quad  |T|\geq |A_1|+|A_2|-\frac{\varepsilon}{2}\varphi(q).   \end{align*}
Then by Lemma~\ref{le:convolution}(i) we have, for every $a \in \mathbb{Z}_q^\times$,
\begin{align*}
(\mathbf{1}_{A_1}*\mathbf{1}_{A_2}*\mathbf{1}_{A_3})(a)&\geq u\cdot (\mathbf{1}_{T}*\mathbf{1}_{A_3})(a)\\
                                                       &\geq u(|T|+|A_3|-\varphi(q)) \\
  &\geq u\left(|A_1|+|A_2|+|A_3|-\left(1+\frac{\varepsilon}{2}\right)\varphi(q)\right)\\
&\geq\frac{1}{5} u \varphi(q) \geq \frac{1}{500}\varepsilon^2\varphi(q)^2
\end{align*}
and thus claim (a) holds.

\textbf{Case 2: Lemma~\ref{le:popularkneser}(b) holds.} Now there exist sets $A_1'\subseteq A_1$, $A_2'\subseteq A_2$ such that 
\begin{align}\label{eq:Ait}
|A_i'|\geq |A_i|-t\geq |A_i|-\varepsilon \varphi(q)/2    
\end{align}
for $i\in \{1,2\}$ and 
\begin{align} \label{eq:tripleconvlow}
(\mathbf{1}_{A_1}*\mathbf{1}_{A_2}*\mathbf{1}_{A_3})(a)\geq t\cdot (\mathbf{1}_{A_1'A_2'}*\mathbf{1}_{A_3})(a).     
\end{align}
We split into two cases.

\textbf{Case 2.1: $|A_1'A_2'| \geq (\frac{3}{5}-\frac{\varepsilon}{2})\varphi(q)$.} 

By Lemma~\ref{le:convolution}, the right-hand side of~\eqref{eq:tripleconvlow} is, for every $a \in \mathbb{Z}_q^\times$,
\begin{align*}
\geq t(|A_1'A_2'|+|A_3|-\varphi(q))\geq t\cdot \frac{\varepsilon}{2}\varphi(q)\geq \frac{1}{20}\varepsilon^2\varphi(q)^2    
\end{align*}
and thus claim (a) holds.

\textbf{Case 2.2: $|A_1'A_2'| < (\frac{3}{5}-\frac{\varepsilon}{2})\varphi(q)$.} 
Let $S \leq \mathbb{Z}_q^\times$ be the stabilizer of $A_1'A_2'$, and let $Y \coloneqq [\mathbb{Z}_q^\times : S]$ be its index. We plan to apply Lemma~\ref{le:structure} to the sets $A_1', A_2'$ with $\beta=3/5-\varepsilon/2$, $\alpha=2/5+\varepsilon/2$, and $\alpha'=1/2$. To check its assumption, note that $\frac{1}{2\alpha - \beta} = \frac{1}{\frac{1}{5}+3\varepsilon/2} < 5$; now since $|A_1'|, |A_2'| \geq (2/5+\varepsilon/2)\varphi(q)$, trivially $A_1'$ and $A_2'$ must meet at least proportion $\alpha' = 1/2$ of cosets of any subgroup $H_0 \leq \mathbb{Z}_q^\times$ of index $< 5$. Hence Lemma~\ref{le:structure} is applicable. By the assumption of Case 2.2 we must be in case (b) and thus $1<Y<\frac{1}{2\cdot 1/2-3/5+\varepsilon/2}<3$. Hence we have $Y=2$, so $A_1'A_2'$ is a coset of some index $2$ subgroup $H'$. This implies that $A_1'$ and $A_2'$ are contained in some cosets $a_1'H'$ and $a_2'H'$ of $H'$.

By a symmetric argument with $A_1,A_3$ in place of $A_1,A_2$, we see that again either claim (a) of the lemma holds or there exist sets $A_1'' \subseteq A_1$ and $A_3'' \subseteq A_3$ such that 
\begin{align}\label{eq:Aippt}
|A_i''|\geq |A_i|-t\geq |A_i|-\varepsilon \varphi(q)/2    
\end{align}
for $i\in \{1,3\}$ and 
\begin{align*}
(\mathbf{1}_{A_1}*\mathbf{1}_{A_2}*\mathbf{1}_{A_3})(a)\geq t\cdot (\mathbf{1}_{A_1''A_3''}*\mathbf{1}_{A_2})(a),  
\end{align*}
and there exists an index $2$ subgroup $H''$ such that $A_1'', A_3''$ are contained in some cosets $a_1''H''$ and $a_3''H''$ of $H''$. But then $A_1' \cap A_1''$ is contained in $a_1'H'\cap a_1''H''$, so if $H'\neq H''$ we have $|A_1' \cap A_1''|\leq |a_1'H'\cap a_1''H''|=\frac{\varphi(q)}{4}$. But this contradicts the fact that $|A_1' \cap A_1''|\geq |A_1|-\varepsilon\varphi(q)\geq 2\varphi(q)/5$. Hence we must have $H'=H''$.

Hence $H'=H''$, and therefore there exist elements
$$
a_1',a_2',a_3'\in \mathbb{Z}_q^{\times}
$$
such that
$$
A_1' \subseteq a_1'H', \qquad A_2' \subseteq a_2'H', \qquad A_3'' \subseteq a_3'H'.
$$
Using~\eqref{eq:Ait} and~\eqref{eq:Aippt}, we obtain
\begin{align*}
|A_i \cap a_i'H'| &\ge |A_i'| \ge |A_i| - \frac{\varepsilon}{2}\varphi(q).
\end{align*}
 Since for any $b_i\in A_i'\cap a_i'H'$ we have $b_iH=a_i'H$, we may assume that $a_i'\in A_i$ for all $i\in \{1,2,3\}$. We can then use Lemma~\ref{le:convolution}(ii) to obtain for $a\in a_1'a_2'H'$ the bound
 \begin{align*}
 (\mathbf{1}_{A_1}*\mathbf{1}_{A_2})(a)\geq |A_1'|+|A_2'|-|H'|\geq \frac{3}{10}\varphi(q).
 \end{align*}
Hence we get, for $a\in a_1'a_2'a_3'H'$, the bound
\begin{align*}
  (\mathbf{1}_{A_1}*\mathbf{1}_{A_2}*\mathbf{1}_{A_3})(a) &= \sum_{b \in A_3} (\mathbf{1}_{A_1}*\mathbf{1}_{A_2})(a\overline{b}) \geq \sum_{b \in A_3 \cap a_3'H'} (\mathbf{1}_{A_1}*\mathbf{1}_{A_2})(a\overline{b}) \\
  &\geq |A_3 \cap a_3' H'| \cdot \frac{3}{10}\varphi(q) \geq \frac{1}{25}\varphi(q)^2.    
\end{align*}
Hence claim (b) holds.
\end{proof}

\section{Proof of Corollary~\ref{cor:Liouville} assuming Theorem~\ref{thm:general}} \label{sec:proofofCor}
In order to deduce Corollary~\ref{cor:Liouville} from Theorem~\ref{thm:general} we need the following lemma.
\begin{lemma}[Sums involving $1 \ast \psi$]\label{le:QRupper}
For each fixed but sufficiently small $\varepsilon > 0$, there exists a positive constant $c_\varepsilon$ such that the following holds. Let $q \in \mathbb{N}$ be sufficiently large in terms of $\varepsilon$ and let $\psi\pmod{q}$ be a real character. Then, for every $y \in [q^{1/3}, q]$, we have
\begin{align}\label{eq:yPq}
\sum_{\substack{n \leq y \\ (n, P(q^{\varepsilon})) = 1}} (1 \ast \psi)(n) \geq c_\varepsilon y L(1, \psi)\frac{\varphi(q)}{q} \prod_{\substack{2<p \leq q \\ \psi(p) = 1}} \left(1-\frac{2}{p}\right).
\end{align}
\end{lemma}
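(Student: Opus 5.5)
Throughout, $\psi$ is a real character, so $(1\ast\psi)(n)\ge 0$ for every $n$. This lets us discard terms freely, and the plan is to get a lower bound by restricting to a convenient subset and then evaluating the restricted sum with a sieve. The natural restriction is $n=ab$, where $a\le q^{\delta}$ is a small square-free number, free of primes $<q^\varepsilon$ and built from primes $p$ with $\psi(p)=1$, and $b$ is a large number free of primes $<q^\varepsilon$. Writing $1\ast\psi=(1\ast\psi)|_{\text{smooth part}}\cdots$ is messy, so instead I use the Dirichlet hyperbola identity $(1\ast\psi)(n)=\sum_{d\mid n}\psi(d)$ directly.

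\textbf{Step 1: reduction to $d\le D$.} For $n$ coprime to $P(q^\varepsilon)$, write
\[
\sum_{n\le y,\,(n,P(q^\varepsilon))=1}(1\ast\psi)(n)=\sum_{(d,P(q^\varepsilon))=1}\psi(d)\,\#\{m\le y/d:\ (m,P(q^\varepsilon))=1\}.
\]
Split at $d\le \sqrt{y}$ and $m\le\sqrt y$ as in the hyperbola method. The part with $d>\sqrt y$ has $m<\sqrt y$, and there the inner sum over $d\in(\sqrt y,y/m]$ with $(d,P(q^\varepsilon))=1$ is handled with the fundamental lemma (Lemma~\ref{le:fundsieve}): expand $\mathbf 1_{(d,P(q^\varepsilon))=1}$ with sieve weights of level $q^{0.005}$ and bound the resulting sums of $\psi$ over intervals of length $\ge y^{1/2}q^{-0.005}\ge q^{1/6-0.005}$. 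This is where I expect the \emph{main obstacle}: Burgess (Lemma~\ref{le:Burgess}, bounded order case) only gives cancellation for lengths $\ge q^{1/4+\epsilon}$, while $\sqrt y$ may be as small as $q^{1/6}$. To get around this, I would not split symmetrically. Instead I would cut at $d\le y q^{-0.26}$, so that every $\psi$-sum has length $\ge q^{0.26}/D$ and Burgess applies, and bound the complementary range $d>yq^{-0.26}$ using the sieve main term for $m$.

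\textbf{Step 2: the main term.} For each $d$, the fundamental lemma gives $\#\{m\le y/d:(m,P(q^\varepsilon))=1\}=(1+O(e^{-s}))\frac{y}{d}\prod_{p<q^\varepsilon}(1-1/p)+O(q^{0.005})$, with the error acceptable since $y/d\ge q^{0.26}$. Summing against $\psi(d)$ gives the main term
\[
y\prod_{p<q^\varepsilon}(1-\tfrac1p)\sum_{d\le yq^{-0.26},(d,P(q^\varepsilon))=1}\frac{\psi(d)}{d}.
\]
The sign of the error is not controlled, so a lower bound cannot come directly from this; I would rather use positivity.

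\textbf{Step 2': positivity.} Keep only the $n=pb$ with $p\in(q^\varepsilon,q^{2\varepsilon}]$ prime, $\psi(p)=1$, and $b$ rough. Then $(1\ast\psi)(pb)\ge (1\ast\psi)(b)\cdot 2/\,$multiplicity. Iterating this up to $k\approx 1/\varepsilon$ times seems to produce the factor $\prod_{\psi(p)=1}(1+1/p)$, which is comparable to $L(1,\psi)\prod(1-2/p)$ modulo bounded factors. Concretely, I would prove the bound
\[
\sum_{n\le y}\mathbf 1_{(n,P(q^\varepsilon))=1}(1\ast\psi)(n)\ge \sum_{\substack{a\le q^{\varepsilon_1}\\ p|a\Rightarrow \psi(p)=1,\ p\ge q^\varepsilon}}\mu^2(a)2^{\omega(a)}\#\{b\le y/a\ \text{rough}\},
\]
which holds because $(1\ast\psi)(ab)\ge(1\ast\psi)(a)$ when $b$ is restricted to the coset $\psi(b)=1$ (for prime $b$, $(1\ast\psi)(b)=2$). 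I then count rough $b$ in the index-two coset via Lemma~\ref{le:P2} (valid as $y/a\ge q^{0.26}$), obtaining $\gg (y/a)\prod_{p<q^{\varepsilon}}(1-1/p)$. Summing $2^{\omega(a)}/a$ gives $\exp(\sum_{q^\varepsilon\le p\le q^{\varepsilon_1},\psi(p)=1}2/p)$. Finally I compare with $L(1,\psi)\frac{\varphi(q)}q\prod_{\psi(p)=1}(1-2/p)$, using $L(1,\psi)\ll\prod_{p\le q}(1-\psi(p)/p)^{-1}$ (from the Burgess-type truncation of the Euler product) and Mertens. The contributions from primes $p\le q^\varepsilon$ cancel up to constants depending on $\varepsilon$, which yields $c_\varepsilon$. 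The delicate point remains primes in $(q^{\varepsilon_1},q]$: there I need $\sum_{q^{\varepsilon_1}<p\le q}\psi(p)/p\ge -O(1)$, which is trivial since it is at least $-\log(1/\varepsilon_1)+o(1)$.
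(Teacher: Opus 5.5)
\textbf{There is a genuine gap: Step 2' is false, and it is the step your proof rests on.}

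Since you abandon Steps 1--2, everything depends on the pointwise claim that $(1\ast\psi)(ab)\ge(1\ast\psi)(a)$ for rough $b$ with $\psi(b)=1$. That claim is false. We have $(1\ast\psi)(b)=\prod_{p^k\parallel b}(1+\psi(p)+\dots+\psi(p)^k)$, and this vanishes as soon as a prime with $\psi(p)=-1$ divides $b$ to an odd power. For example, $b=p_1p_2$ with $\psi(p_1)=\psi(p_2)=-1$ has $\psi(b)=1$ but $(1\ast\psi)(b)=0$. Your parenthetical only covers prime $b$. If you restrict $b$ to primes with $\psi(b)=1$, you must count such primes, which is not available. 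Indeed, in the paper this lemma is used precisely to \emph{produce} primes with $\chi(p)=1$ in the proof of Corollary~\ref{cor:Liouville}.

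This is not a repairable detail. The $a=1$ term of your display, together with Lemma~\ref{le:P2}, would already give a lower bound $\gg_\varepsilon y/\log q$ with no dependence on $L(1,\psi)$. But with $h(p)=\frac{1+\psi(p)}{p}-\frac{\psi(p)}{p^2}$ one has exactly $1-h(p)=(1-\frac1p)(1-\frac{\psi(p)}{p})$. An upper-bound sieve therefore shows that the sum is, up to a power-saving error, of order $\frac{y}{\log q}L(1,\psi)\prod_{p\le q}(1-\frac{\psi(p)}{p})$. If $\psi$ had an exceptional zero $\beta$ with $1-\beta\le(\log q)^{-4}$, then $L(1,\psi)\ll(1-\beta)\log^2 q$ and the sum would be $o(y/\log q)$, and nothing known unconditionally rules this out. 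Counting rough integers in cosets only detects equidistribution of $\psi$. It never produces the factor $L(1,\psi)$, so no argument of this shape can prove the lemma unconditionally.

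\textbf{How to fix it.} What you discarded after Step 2 is the right idea applied to the wrong variable. Sieving $d$ and $m$ separately fails for the reason you noticed: the fundamental-lemma error gets multiplied by $|\psi(d)|$ and loses all cancellation. If instead you sieve $n$ itself, there is no sign problem. Since $(1\ast\psi)\ge 0$ and $\sum_{e\mid n}\lambda_e^-\le\mathbf 1_{(n,P(q^\varepsilon))=1}$, one has the rigorous inequality
\[
\sum_{\substack{n\le y\\ (n,P(q^\varepsilon))=1}}(1\ast\psi)(n)\ \ge\ \sum_{e\mid P(q^\varepsilon)}\lambda_e^-\sum_{\substack{n\le y\\ e\mid n}}(1\ast\psi)(n),
\]
with lower-bound weights of dimension $\kappa=2$, $z=q^\varepsilon$, and level $q^{\sqrt\varepsilon}$.

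\begin{itemize}
\item The inner sums equal $yL(1,\psi)h(e)$ up to a power-saving error. This comes from the hyperbola method split at height about $q^{1/4+\epsilon}$ (not at $\sqrt y$), so that Burgess' bound for the real, hence bounded-order, character $\psi$ applies. This is where $y\ge q^{1/3}$ is used.
\item The fundamental lemma (Lemma~\ref{le:fundsieve}(iii)) then gives a main term
\[
\gg yL(1,\psi)\prod_{p<q^\varepsilon}(1-h(p))\asymp_\varepsilon yL(1,\psi)\frac{\varphi(q)}{q}\prod_{2<p\le q,\ \psi(p)=1}\Bigl(1-\frac2p\Bigr).
\]
\item The error term needs no sign control. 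It is simply smaller than the main term, by Siegel's bound $L(1,\psi)\gg_\delta q^{-\delta}$.
\end{itemize}

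This analytic input is what supplies the factor $L(1,\psi)$, and it is exactly the paper's proof.
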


\begin{proof}
The proof is the same as the proof of~\cite[Lemma 9.4]{MatoTeraEkq}; for the sake of completeness we give some details. Let $\lambda_d^-$ be as in Lemma~\ref{le:fundsieve} with $\kappa = 2$, sifting parameter $z = q^{\varepsilon}$, and level $D = q^{\sqrt{\varepsilon}}$ (so that $s = 1/\sqrt{\varepsilon}$).

Now by Lemma~\ref{le:fundsieve}(ii) and \cite[Lemma 9.3]{MatoTeraEkq}, for some constant $\eta > 0$ we have
\begin{align*}
\sum_{\substack{n \leq y \\ (n, P(q^{\varepsilon})) = 1}} (1 \ast \psi)(n)&\geq  \sum_{n \leq y} (1 \ast \psi)(n) \sum_{\substack{e \mid n}} \lambda_e^- =\sum_{e\mid P(q^{\varepsilon})}\lambda_e^-\sum_{\substack{n \leq y\\ e \mid n}}(1 \ast \psi)(n) \\
&= yL(1, \psi)\sum_{e \mid P(q^{\varepsilon})}\lambda_e^- h(e) +O(y^{1-\eta}),
\end{align*}
where $h$ is a multiplicative function given on the primes by $h(p)=(1+\psi(p))/p-\psi(p)/p^2$. Now the claim follows from Lemma~\ref{le:fundsieve}(iii) and Siegel's bound $L(1,\psi)\gg_{\delta} q^{-\delta}$ since
\begin{align}
\label{eq:hprod}
\prod_{p<q^{\varepsilon}}(1-h(p))\asymp \prod_{p\leq q}(1- h(p)) \asymp\prod_{p \mid q}\left(1-\frac{1}{p}\right) \prod_{\substack{ 2<p\leq q\\ \psi(p) = 1}}\left(1-\frac{2}{p}\right)= \frac{\varphi(q)}{q}\prod_{\substack{2<p\leq q \\\psi(p) = 1}}\left(1-\frac{2}{p}\right),
\end{align}
matching the factor on the right-hand side of~\eqref{eq:yPq}.
\end{proof}

\begin{proof}[Proof of Corollary~\ref{cor:Liouville} assuming Theorem~\ref{thm:general}] Note that we can rewrite
\begin{align*}
L(q)=\max_{\substack{\chi \pmod{q}\\ \chi \textnormal{ real}}} \left\{L(1, \chi)^{-1} \prod_{p \leq q} \left(1-\frac{\chi(p)}{p}\right)^{-1}\right\}.
\end{align*}
Hence, by Siegel's theorem (see e.g.~\cite[Theorem 11.14]{MVBook}) and Mertens' theorem, we have $L(q) \ll_{\varepsilon_0} q^{\varepsilon_0/100}$ for every $\varepsilon_0 > 0$, and thus it suffices to establish that $R(\mu; q) \leq q^2 L_1(q)$, where
\begin{align*}
L_1(q)=\max\{C,L(q)^{100},B(q)\}    
\end{align*}
for some large absolute constant $C$.

Let $\varepsilon > 0$ be small but fixed and let $c_\varepsilon > 0$ be as in Lemma~\ref{le:QRupper}. Let 
\begin{equation} \label{eq:depsdef}
d_\varepsilon := \frac{c_\varepsilon \varepsilon}{500 \cdot 2^{1/\varepsilon}}.
\end{equation}
Let $\lambda$ be the Liouville function. Noting that $R(\mu;q)=R(\lambda;q)$, the claim follows from Theorem~\ref{thm:general} applied to $h=\lambda$, unless there exists a quadratic character $\chi \pmod{q}$ such that 
\begin{equation} \label{eq:corclaim}
\sum_{\substack{p \leq q^{1/2} \\ \chi(p) > 0}} \frac{1}{p} \leq \frac{d_\varepsilon}{3} \cdot  \frac{1}{L_1(q)^{1/100}}.
\end{equation}
If this holds, then
\begin{equation} \label{eq:counterassumpcorproof}
\sum_{q^\varepsilon \leq p \leq q^{1/2}} \frac{(1 \ast \chi)(p)}{p} \leq 2\frac{d_\varepsilon}{3} \frac{1}{L_1(q)^{1/100}} + \sum_{\substack{q^\varepsilon \leq p \leq q^{1/2} \\ p \mid q}} \frac{1}{p} \leq \frac{d_\varepsilon}{L_1(q)^{1/100}}.
\end{equation}
Observe that by multiplicativity $(1*\chi)(n)\leq 2^{\Omega(n)}$. Hence, for $y \in [q^{2/5}, q^{1/2}]$, we have
\[
  \sum_{n \leq y} (1 \ast \chi)(n) \mathbf{1}_{n \in \mathbb{P}} \geq \sum_{\substack{n \leq y \\ (n, P(q^{\varepsilon})) = 1}} (1 \ast \chi)(n)  - \sum_{\substack{q^{\varepsilon} \leq p \leq y^{1/2}}} (1 \ast \chi)(p) \sum_{\substack{m \leq y/p \\ (m, P(q^\varepsilon)) = 1}} 2^{1/\varepsilon}. 
\]
Notice that
\[
y L(1, \chi)\frac{\varphi(q)}{q} \prod_{\substack{2<p \leq q \\ \chi(p) = 1}} \left(1-\frac{2}{p}\right) \geq \frac{1}{10} \cdot \frac{y}{L(q) \log q} \geq \frac{1}{10} \cdot \frac{y}{L_1(q)^{1/100} \log q}.
\]
Thus, applying Lemma~\ref{le:QRupper} and a variant of~\eqref{eq:fundlem3}, we obtain
\[
  \sum_{n \leq y} (1 \ast \chi)(n) \mathbf{1}_{n \in \mathbb{P}} \geq \frac{c_\varepsilon}{20} \frac{y}{L_1(q)^{1/100} \log q}  - 2^{1/\varepsilon+1} \frac{y}{\log q^{\varepsilon}}\sum_{\substack{q^{\varepsilon} \leq p \leq q^{1/2}}} \frac{(1 \ast \chi)(p)}{p}.
\]
If now \eqref{eq:counterassumpcorproof} holds with $d_\varepsilon$ as in~\eqref{eq:depsdef}, we obtain that
\[
   \sum_{n \leq y} (1 \ast \chi)(n) \mathbf{1}_{n \in \mathbb{P}} \geq \frac{c_\varepsilon}{40} \frac{y}{L_1(q)^{1/100} \log y}.
  \]
But noting that this holds for all $y \in [q^{2/5}, q^{1/2}]$ (and that $n \mid q$ make a negligible contribution), we obtain
  \[
    \sum_{\substack{q^{2/5} < p \leq q^{1/2} \\ \chi(p) = 1}} \frac{1}{p} \geq \frac{c_\varepsilon}{400 L_1(q)^{1/100}} 
  \]
which contradicts~\eqref{eq:corclaim}.
  \end{proof}

\section{Proof of Theorem~\ref{thm:generaleasy}}\label{sec:thmgeneraleasy}

\subsection{The set-up} \label{ssec:easyset-up}

We may restrict to multiplicative functions not taking the value $0$ thanks to the assumption in Theorem~\ref{thm:generaleasy} that $h(p)\neq 0$ for $p\nmid q$. Then let $h\colon \mathbb{N}\to \mathbb{R}\setminus\{0\}$ be multiplicative. We shall look for numbers $n \equiv a \pmod{q}$ with $\sgn(h(n)) = \Delta$, where $n$ has a very specific, but convenient, shape. To formulate this, we need several definitions that will hold for this whole section.

Let $\varepsilon > 0$ be sufficiently small, let $q \in \mathbb{N}$ be sufficiently large in terms of $\varepsilon$, and let
\begin{equation} \label{eq:UQ1Mdefeasy}
  z \coloneqq q^{\sqrt{\varepsilon}}, \quad Q_1 := q^\varepsilon, \quad R \coloneqq q^{1/2}, \quad I \coloneqq (R/e, R].
\end{equation}
For $\Delta \in \{+, -\}$ and $B \subseteq \mathbb{Z}_q^\times$, define the sets
\begin{align*} 
  \mathcal{Q}_B^\Delta&:= \{p \in (Q_1/e,Q_1], \, p \in B, \, \sgn(h(p)) = \Delta\}, \\
  \mathcal{U}_B^\Delta & := \{u \leq R \colon |\mu(u)| = 1, \, u \in B, \, \sgn(h(u)) = \Delta\},
\end{align*}
and the function $f^{\Delta} \colon \mathbb{Z} \to \mathbb{R}_{\geq 0}$ by
\begin{equation}
\label{eq:fDef}
f^{\Delta}(n) :=  \prod_{\substack{p < z \\ p \nmid q}} \left(1-\frac{1}{p}\right)^{-1} \mathbf{1}_{\sgn(h(n)) = \Delta} \mathbf{1}_{(n, P(z)) = 1}\mathbf{1}_{[I]_q}(n).
\end{equation}
For $B_2, B_3 \subseteq \mathbb{Z}_q^\times $ and $\overline{\Delta} = (\Delta_1, \Delta_2, \Delta_3) \in \{+, -\}^3$, we consider the function $S_{B_2, B_3}^{\overline{\Delta}} \colon \mathbb{Z}_q^\times \to \mathbb{R}_{\geq 0}$ defined by
\begin{align} \label{eq:SB2B3def}
  S^{\overline{\Delta}}_{B_2, B_3}(a)&\coloneqq \frac{1}{S}\sum_{\substack{n \equiv a \pmod{q}}} \left(f^{\Delta_1} \ast f^{\Delta_1} \ast f^{\Delta_1} *\1[\mathcal{Q}_{B_2}^{\Delta_2}]*\1[\mathcal{U}_{B_3}^{\Delta_3}]\right)(n),
\end{align}
with
\[
  S := |[I]_q|^3\cdot Q_1R \asymp \left(\frac{\varphi(q)}{q}\right)^3 q^2 Q_1.
\]
\begin{remark}\label{rem:ShapeS}
Notice that if a square-free natural number $n$ is counted by $S_{B_2, B_3}^{\overline{\Delta}}(a)$, then $n\equiv a\pmod q, \, \sgn(h(n)) = \Delta_1^3 \Delta_2 \Delta_3 = \Delta_1 \Delta_2 \Delta_3,$ and 
$$n = r_1r_2r_3 \cdot p \cdot u \leq q^{2+\varepsilon},$$
where, for $j = 1, 2,3$, 
\begin{itemize}
    \item $r_j\in [I]_q$ and  $(r_j,P(z))=1$;
        \item $p \in (Q_1/e, Q_1]$ is a prime;
      \item $u$ is a square-free integer with $u \leq R$.
      \end{itemize}
The different convolution factors in~\eqref{eq:SB2B3def} serve different purposes. We will be able to apply the dense model theorem (Lemma~\ref{prop:f=g+h}) to the functions $f^{\Delta_1}$ and replace them by dense functions $g^{\Delta_1}$ which makes lower bounds for the convolution much easier. In order to rigorously do this replacement in Lemma~\ref{lemma:le:STcomparisonEasy} via estimates for character sums and their means, we will take advantage of the short prime factor $p \in \mathcal{Q}_{B_2}^{\Delta_2}$ (see Remark~\ref{re:Smallprole}). The factor $u \in \mathcal{U}_{B_3}^{\Delta_3}$ is used to guarantee that we find numbers with both signs of $h(n)$, utilizing Lemma~\ref{lemma:le:multfunctsigns}. 
\end{remark}

Let us first quickly show that it suffices to find a sufficiently good lower bound for $S_{B_2, B_3}^{\overline{\Delta}}$.
\begin{lemma} \label{le:reductiontoBoundingSEasy}
Let $\varepsilon > 0$ be sufficiently small and let $q \in \mathbb{N}$ be sufficiently large in terms of $\varepsilon$.
Let $\Delta \in \{+, -\}$ and $a \in \mathbb{Z}_q^\times$. Assume that there exist $B_2, B_3 \subseteq \mathbb{Z}_q^\times$ and $\overline{\Delta} = (\Delta_1, \Delta_2, \Delta_3) \in \{+, -\}^3$ such that $\Delta_1 \Delta_2 \Delta_3 = \Delta$ and $S_{B_2, B_3}^{\overline{\Delta}}(a) \gg \frac{1}{q^{1+\varepsilon/100}\log q}$. Then $a \in E_h^{\Delta}(q^{2+\varepsilon})$.
\end{lemma}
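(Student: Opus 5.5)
The plan is to show that the weighted count $S\cdot S^{\overline{\Delta}}_{B_2,B_3}(a)$ coming from square-free $n$ is positive. By Remark~\ref{rem:ShapeS}, every square-free $n$ counted there satisfies $n\equiv a \pmod q$, $n\le q^{2+\varepsilon}$ and $\sgn(h(n))=\Delta_1\Delta_2\Delta_3=\Delta$, so it witnesses $a\in E_h^{\Delta}(q^{2+\varepsilon})$. Hence it suffices to prove that the contribution of non-square-free $n=r_1r_2r_3pu$ to $S^{\overline{\Delta}}_{B_2,B_3}(a)$ is $o\big(1/(q^{1+\varepsilon/100}\log q)\big)$. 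Since all weights are nonnegative, we may drop the sign and coset restrictions and bound everything by the full convolution
\[
\frac{1}{S}\sum_{n\equiv a \pmod q}\mu(n)^2=0\text{-failing}\;\big(\nu\ast\nu\ast\nu\ast\mathbf{1}_{p\in(Q_1/e,Q_1]}\ast\mathbf{1}_{u\le R}\big)(n),
\]
where $\nu$ is the normalized indicator of $z$-rough elements of $[I]_q$. The normalizing factor $\prod_{p<z,p\nmid q}(1-1/p)^{-1}$ is $\ll (\log z)\,\varphi(q)/q$ per copy.

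If $n$ is not square-free, then some prime $\ell$ satisfies $\ell^2\mid n$, and $\ell$ must divide two of the factors. Each $r_j$ and $u$ is itself square-free, so there are two cases.

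\emph{First case: $\ell\ge z$ divides two of $r_1,r_2,r_3,u$, or $\ell=p$ divides some $r_j$ or $u$.} Here I fix $\ell$ and write the two factors as $\ell m_1$ and $\ell m_2$. I then count solutions of the congruence by summing freely over all variables except one of size $\ge q^{1/2}/\ell$. For that variable I use the trivial count of integers in a progression modulo $q$ in an interval, which is $\le 1+N/q$.

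\emph{Second case: $\ell<z$.} Since the $r_j$ are $z$-rough, $\ell$ must divide $p$ and $u$. But $p>Q_1/e>z$, so this case is impossible.

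For the first case, the cleanest way to organize the count is as follows. The total number of tuples in which a given pair of factors shares the prime $\ell$ is at most $(\text{full count})/\ell$ roughly. For instance, the number of pairs $r_1,r_2$ with $\ell\mid r_1$ and $\ell\mid r_2$ is about $(R/\ell)^2$. Summing over $\ell\ge z$ gives a saving of $\sum_{\ell\ge z}1/\ell^2\ll 1/z=q^{-\sqrt\varepsilon}$. The case $\ell=p$ saves a factor $1/Q_1$.

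We then need the restriction $n\equiv a\pmod q$ to cost a factor of about $1/\varphi(q)$. I would fix all variables except $u$. The count of $u\le R$ in a fixed residue class modulo $q$, namely $1+R/q$, does \emph{not} give $R/q$, since $R<q$. Instead I would fix all but two of the long variables, say $u$ and $r_3$, whose product $v$ ranges over an interval of length about $R^2=q$. The number of $v\le eR^2$ in one class is $O(1)$, and each such $v$ has $\tau(v)\ll q^{o(1)}$ factorizations. This gives a bound of the shape
\[
\frac{1}{S}\cdot\frac{R^3Q_1}{z}\,q^{o(1)}(\log z)^3\ \ll\ \frac{q^{o(1)}}{q\,z}.
\]
Here I would absorb the $\varphi(q)/q$ factors, using that $S\asymp(\varphi(q)/q)^3q^2Q_1$ and that the normalizations contribute $((\log z)\varphi(q)/q)^{-3}$ in inverse, so these cancel up to $(\log z)^3$. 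The result is $o\big(q^{-1-\varepsilon/100}/\log q\big)$, since $z=q^{\sqrt\varepsilon}$ and $\sqrt\varepsilon>\varepsilon/100$. The case $\ell=p$ is similar, with saving $Q_1^{-1}=q^{-\varepsilon}$.

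The main obstacle is this last bookkeeping step: making sure that the $\tau$-function losses (handled via $\tau(v)\ll_\delta q^{\delta}$ with $\delta$ small compared to $\sqrt\varepsilon$) and the $\varphi(q)/q$ normalizations are genuinely dominated by the $1/z$ saving. If an extra factor $\log q$ arises, it is also absorbed by $q^{-\sqrt\varepsilon}$.
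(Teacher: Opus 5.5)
Your strategy can be made to work and differs from the paper's, but four steps are wrong as written.

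First, the $r_j$ are not square-free: $f^{\Delta}$ only imposes $(n,P(z))=1$ and $n\in[I]_q$. So your case analysis misses $\ell^2\mid r_j$ with $\ell\ge z$. This case is handled the same way: there are at most $R/\ell^2$ such $r_j$, and the congruence is absorbed by two of the remaining long variables. But the claim ``each $r_j$ and $u$ is itself square-free'' is false. (Conversely, you must keep $|\mu(u)|$ in your majorant. With $\mathbf{1}_{u\le R}$ as in your display, $4\mid u$ would already contribute a positive proportion of the total mass.)

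Second, $Q_1/e>z$ is false: $Q_1=q^{\varepsilon}<q^{\sqrt{\varepsilon}}=z$. Your ``second case'' is exactly $\ell=p\mid u$, which does occur. Your first case covers it, but it is the bottleneck and gives only the saving $Q_1^{-1}=q^{-\varepsilon}$. So the divisor-bound loss $q^{\delta}$ must have $\delta$ small compared with $\varepsilon$, not $\sqrt{\varepsilon}$.

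Third, the two long variables left free to absorb the congruence must be disjoint from the factors containing $\ell$; when $\ell=p\mid u$, take two of the $r_j$. Suppose instead $\ell\mid r_1$ and $\ell\mid u$ while you keep $(u,r_3)$ free. Then the condition $\ell\mid u$ is lost and you only gain $\sum_{z\le \ell\le R}1/\ell\ll_\varepsilon 1$. The resulting bound $q^{-1+o(1)}$ is not enough.

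Fourth, fixing $r_1,r_2,p$ leaves $\ll R^2Q_1$ tuples, not $R^3Q_1$. With $R^3$ your display gives only $q^{-1/2+o(1)}z^{-1}$. With $R^2$ you get $q^{o(1)}/(qz)$, and $q^{-1-\varepsilon+o(1)}$ in the case $\ell=p$, which suffices.

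With these fixes your argument proves the lemma. The paper's route is shorter and avoids this bookkeeping:
\begin{itemize}
\item Any non-square-free $n$ counted is divisible by $p^2$ for some prime $p\in(Q_1/e,q^{1/2}]$; this automatically includes $p^2\mid r_j$.
\item Write $n=p^2m$ and bound the number of representations $n=r_1r_2r_3pu$, together with the weights, by $\tau_5(m)\log^{O(1)}q$.
\item Since $m\le q^{2+\varepsilon}/p^2$ runs over a range of length at least $q^{1+\varepsilon}$, the congruence $mp^2\equiv a\pmod q$ costs a full factor $1/q$.
\item Finally $\sum_{p>Q_1/e}p^{-2}\ll Q_1^{-1}$ gives $O(q^{-1-\varepsilon+o(1)})$.
\end{itemize}
Lumping all remaining factors into one long variable removes both your case split and the choice of which pair absorbs the congruence. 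Your finer split gives the stronger saving $z^{-1}$ when two long factors share a prime. That is not needed, because the configuration $\ell=p\mid u$ caps the overall saving at $Q_1^{-1}$ anyway.
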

\begin{proof}
  Let $B_2, B_3$ and $\overline{\Delta}$ be as in the statement. By Remark~\ref{rem:ShapeS} it suffices to show that the contribution of non-square-free numbers to $S_{B_2, B_3}^{\overline{\Delta}}(a)$ is $o(\frac{1}{q^{1+\varepsilon/2}})$. If an integer $n$ counted by $S_{B_2, B_3}^{\overline{\Delta}}(a)$ is not square-free, it must be divisible by a prime square $p^2$ with $p \in (Q_1/e, q^{1/2}]$. Such $n$ contribute to $S_{B_2, B_3}^{\overline{\Delta}}(a)$ at most
      \[
        \ll \frac{1}{S} \sum_{Q_1/e \leq p \leq q^{1/2}} \sum_{\substack{m \leq q^{2+\varepsilon}/p^2 \\ mp^2 \equiv a \pmod{q}}} \tau_5(m) \ll \frac{1}{S} \sum_{Q_1/e \leq p \leq q^{1/2}} \frac{1}{q} \cdot \frac{q^{2+\varepsilon}}{p^2} \log^{O(1)}{q} = o\left(\frac{1}{q^{1+\varepsilon/2}}\right),
      \]
      and the claim follows.
\end{proof}

\subsection{Applying the dense model theorem}
We shall apply Proposition~\ref{prop:f=g+h} with $r = 2$,
\begin{equation}
\label{eq:deltadef}
\delta := \frac{1}{\log^{1/4} q},
\end{equation}
$N = q^{1/2}$, $C=O(1)$, and the functions $f^{\pm}$ defined in~\eqref{eq:fDef}. Let $D = q^{1/100},$ recall that $z = q^{\sqrt{\varepsilon}}$, and let $\lambda_d^+$ be the upper bound sieve coefficients from Lemma~\ref{le:fundsieve} with these parameters and $\kappa = 1$ (and $s = 1/(100\sqrt{\varepsilon})$). For $\Delta \in \{+, -\}$, the function $f^\Delta$ has a majorant $\nu \colon \mathbb{Z} \to \mathbb{R}_{\geq 0}$ with
\begin{equation}
\label{eq:nuDef}
\nu(n) := \prod_{\substack{p < z \\ p \nmid q}} \left(1-\frac{1}{p}\right)^{-1} \sum_{\substack{d \mid n \\ d \leq D}} \lambda_d^+\cdot \mathbf{1}_{[I]_q}(n).
\end{equation}

Let us show that our choices satisfy Proposition~\ref{prop:f=g+h}(A1, A2) as in~\cite[Proof of Proposition 5.2]{MatoTeraEkq}.

\textbf{Verification of Proposition~\ref{prop:f=g+h}(A2):}
Write
\[
  \mathcal{X}^{\Delta} := \left\{\chi \pmod{q} \colon \left|\mathbb{E}_{n \in [I]_q} f^\Delta(n) \overline{\chi}(n)\right| \geq \delta\right\}.
  \]
  By Lemma~\ref{le:Hal-Mon} and an upper bound sieve,
  \begin{align*}
    |\mathcal{X}^\Delta| \cdot \delta^2 &\leq \sum_{\chi \in \mathcal{X}} \left|\mathbb{E}_{n \in [I]_q} f^\Delta(n) \overline{\chi}(n)\right|^2 \leq \left(\frac{R}{\log q} + R^{2/3}q^{1/9+2\varepsilon}|\mathcal{X}^\Delta|\right)\frac{1}{\left(R \frac{\varphi(q)}{q}\right)^2} \sum_{n \in [I]_q} |f^\Delta(n)|^2 \\
&\ll \left(\frac{1}{\log q} + q^{-1/18+2\varepsilon}|\mathcal{X}^\Delta|\right) \log q.
  \end{align*}
Now the second term on the right-hand side cannot dominate by~\eqref{eq:deltadef} and thus $|\mathcal{X}^\Delta| \ll \delta^{-2}$ as claimed.

\textbf{Verification of Proposition~\ref{prop:f=g+h}(A1):} 
For any character $\chi \pmod{q}$, we have
\begin{align}
\label{eq:nuFT}
\mathbb{E}_{n \in [I]_q} \nu(n) \overline{\chi}(n) 
&=   \prod_{\substack{p < z \\ p \nmid q}} \left(1-\frac{1}{p}\right)^{-1} \cdot \mathbb{E}_{\substack{n \in [I]_q}} \overline{\chi}(n) \sum_{\substack{d \mid n \\ d \leq D}} \lambda_d^+ \\
\nonumber
&=   \prod_{\substack{p < z \\ p \nmid q}} \left(1-\frac{1}{p}\right)^{-1} \sum_{d\leq D} \lambda_d^+ \overline{\chi}(d) \frac{1}{|[I]_q|} \sum_{\substack{R/(ed) < m \leq R/d}} \overline{\chi}(m).
\end{align}
Here $R/d \geq q^{2/5}$, say. Hence when $\chi \neq \chi_0$, the Burgess bound (Lemma~\ref{le:Burgess}) gives that the innermost sum is $O(q^{-2\delta_0}R/d)$ for some absolute constant $\delta_0> 0$ and thus we have that, for any $\chi \neq \chi_0$,
\[
\left|\mathbb{E}_{n \in [I]_q} \nu(n) \overline{\chi}(n)\right| \ll q^{-\delta_0}.
\]

On the other hand, by~\eqref{eq:nuFT},~\eqref{eq:Simplegcd} and the fact that $\tau(q)\ll q^{1/2000}$, we have
\begin{equation*}
\begin{aligned}
\mathbb{E}_{n \in [I]_q} \nu(n) &= \mathbb{E}_{n \in [I]_q} \nu(n)\overline{\chi_0}(n) =   \prod_{\substack{p < z \\ p \nmid q}} \left(1-\frac{1}{p}\right)^{-1} \sum_{\substack{d \leq D \\ (d,q) = 1}} \lambda_d^+ \frac{1}{|[I]_q|} \sum_{\substack{R/(ed) < n \leq R/d \\ (n, q) = 1}} 1 \\
&= \prod_{\substack{p < z \\ p \nmid q}} \left(1-\frac{1}{p}\right)^{-1} \sum_{\substack{d \leq D \\ (d,q) = 1}} \frac{\lambda_d^+}{d} + O\left(\frac{Dq^{1/1000}}{R}\right).
\end{aligned}
\end{equation*}
By Lemma~\ref{le:fundsieve}(iii), we see that
\begin{equation*}
\sum_{\substack{d \leq D \\ (d,q) = 1}} \frac{\lambda_d^+}{d} =  (1+O(\exp(-1/(100\sqrt{\varepsilon}))) \prod_{\substack{p < z \\ p \nmid q}}  \left(1-\frac{1}{p}\right) 
\end{equation*}
and thus, once $\varepsilon$ is sufficiently small,
\[
\left|\mathbb{E}_{n \in [I]_q} \nu(n)  - 1\right| \leq \varepsilon^2/2.
\]
We deduce that Proposition~\ref{prop:f=g+h}(A1) holds for $\eta = \varepsilon^2/2$ and $\varepsilon = \delta_0$.

Having established Proposition~\ref{prop:f=g+h}(A1, A2), we may apply Proposition~\ref{prop:f=g+h} and make the following definition.
\begin{definition} \label{def:gDeltadef}
For $\Delta \in \{+, -\}$, let $g^{\Delta} \colon \mathbb{Z}_q^\times \to [0, 1+\varepsilon^2]$ be the function obtained from Proposition~\ref{prop:f=g+h} with $r = 2$ and $\delta, f^\Delta,$ and $\nu$ as in~\eqref{eq:deltadef},~\eqref{eq:fDef} and~\eqref{eq:nuDef}. 
\end{definition}

In order to lower bound $S_{B_2, B_3}^{\overline{\Delta}}(a)$, we now compare the function $S^{\overline{\Delta}}_{B_2, B_3}$ with the function  $T_{B_2, B_3}^{\overline{\Delta}} \colon \mathbb{Z}_q^\times \to \mathbb{R}_{\geq 0}$ defined by
\begin{align} \label{eq:TB2B3def}
  T_{B_2, B_3}^{\overline{\Delta}}(a)&\coloneqq \frac{1}{T}\left(g^{\Delta_1} \ast g^{\Delta_1} \ast g^{\Delta_1} * \1[\mathcal{Q}_{B_2}^{\Delta_2}] * \1[\mathcal{U}_{B_3}^{\Delta_3}]\right)(a),
\end{align}
with
\[
  T := |\mathbb{Z}_q^\times|^3 \cdot Q_1 R,
  \] 
where we abuse notation by identifying a function $f\colon \mathbb{Z}\to \mathbb{C}$ supported on numbers coprime to $q$ with the function $a\mapsto \sum_{n\equiv a\pmod q}f(n)$ on $\mathbb{Z}_q^{\times}$.  

\begin{lemma}\label{lemma:le:STcomparisonEasy}
Let $\varepsilon > 0$ be sufficiently small and let $q \in \mathbb{N}$ be sufficiently large in terms of $\varepsilon$. Let $B_2, B_3 \subseteq \mathbb{Z}_q^\times $, let $\overline{\Delta} = (\Delta_1, \Delta_2, \Delta_3) \in \{+, -\}^3$, and let $S_{B_2, B_3}^{\overline{\Delta}}$ and $T_{B_2, B_3}^{\overline{\Delta}}$ be as in~\eqref{eq:SB2B3def} and~\eqref{eq:TB2B3def}. Then, for all $a \in \mathbb{Z}_q^\times$,
 \begin{align*}
|S_{B_2, B_3}^{\overline{\Delta}}(a) - T_{B_2, B_3}^{\overline{\Delta}}(a)| \ll \frac{1}{q^{1+\varepsilon/50}} + \frac{1}{\varphi(q)\log^{5/4} q} \cdot \frac{|\mathcal{U}_{B_3}^{\Delta_3}|}{R}.
  \end{align*}
\end{lemma}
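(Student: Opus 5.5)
We expand both sides in characters modulo $q$. For $\chi \pmod q$ write
\[
F(\chi) := \mathbb{E}_{n\in[I]_q} f^{\Delta_1}(n)\overline{\chi}(n), \qquad G(\chi) := \mathbb{E}_{a\in\mathbb{Z}_q^\times} g^{\Delta_1}(a)\overline{\chi}(a),
\]
\[
P(\chi) := \frac{1}{Q_1}\sum_{p\in \mathcal{Q}_{B_2}^{\Delta_2}} \overline{\chi}(p), \qquad U(\chi) := \frac{1}{R}\sum_{u\in\mathcal{U}_{B_3}^{\Delta_3}}\overline{\chi}(u).
\]
Since $|[I]_q|$ and $|\mathbb{Z}_q^\times|$ are exactly the normalisations appearing in $S$ and $T$, orthogonality gives
\[
S_{B_2,B_3}^{\overline{\Delta}}(a)-T_{B_2,B_3}^{\overline{\Delta}}(a) = \frac{1}{\varphi(q)}\sum_{\chi\pmod q}\chi(a)\bigl(F(\chi)^3-G(\chi)^3\bigr)P(\chi)U(\chi).
\]
By Proposition~\ref{prop:f=g+h}(iii) we have $|G(\chi)|\le |F(\chi)|$ and $|F(\chi)-G(\chi)|\le\min\{\delta,|F(\chi)|\}$. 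Hence
\[
|F^3-G^3|\le 3|F-G|\max\{|F|,|G|\}^2\le 3\min\{\delta,|F|\}\,|F|^2 .
\]
So the difference is bounded by $\frac{3}{\varphi(q)}\sum_\chi \min\{\delta,|F(\chi)|\}\,|F(\chi)|^2|P(\chi)||U(\chi)|$. The principal character contributes nothing, because Proposition~\ref{prop:f=g+h}(iv) gives $F(\chi_0)=G(\chi_0)$.

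We split the characters according to the size of the short prime sum. Let $\mathcal{L}:=\{\chi\neq\chi_0 : |P(\chi)|\ge Q_1^{-\alpha}\}$, where $\alpha$ is a small multiple of $\varepsilon$, say $\alpha=1/10$ so that $Q_1^{-\alpha}=q^{-\varepsilon/10}$.

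\emph{Characters outside $\mathcal{L}$.} Here $|P(\chi)|\le q^{-\varepsilon/10}$, and we bound $\min\{\delta,|F|\}\le\delta$, $|F(\chi)|\ll\log q$ and $|F(\chi)|\cdot|U(\chi)|$ by Cauchy--Schwarz. Lemma~\ref{le:MVT} gives $\frac{1}{\varphi(q)}\sum_\chi|U(\chi)|^2\ll \frac{1}{R}\cdot\frac{|\mathcal{U}_{B_3}^{\Delta_3}|}{R}$. Similarly $\frac{1}{\varphi(q)}\sum_\chi|F(\chi)|^2\ll \frac{\log q}{R}$, using $1+R/q\ll1$ and a sieve bound for $\sum |f^{\Delta_1}|^2$. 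The total is then $\ll q^{-\varepsilon/10}\,\frac{\log^{O(1)}q}{R}\bigl(\frac{|\mathcal{U}|}{R}\bigr)^{1/2}$. Since $R=q^{1/2}$ this alone is far too weak relative to $1/q$, so the pairing has to be arranged more efficiently. Instead I would bound $|F|^2$ by its mean via Lemma~\ref{le:MVT}, but pair $U$ with a second copy of $F$. Cauchy--Schwarz on $\sum |F|^2|U|\cdot|F|$ needs both $F^2$ and $FU$ to have length $\asymp q$ in order to gain the factor $1/\varphi(q)\cdot(\ldots)$. Concretely, use the splitting from the outline into two subproducts of comparable size: $f\ast f$ has length $R^2=q$ and $f\ast u$ has length $\le q$. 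Lemma~\ref{le:MVT} then gives
\[
\frac{1}{\varphi(q)}\sum_\chi|F(\chi)|^4\ll \frac{\log^{O(1)}q}{q}, \qquad \frac{1}{\varphi(q)}\sum_\chi|F(\chi)U(\chi)|^2\ll \frac{\log^{O(1)}q}{q}\cdot\frac{|\mathcal{U}|}{R}.
\]
Hence the characters outside $\mathcal{L}$ contribute
\[
\ll q^{-\varepsilon/10}\,\frac{\log^{O(1)}q}{q}\Bigl(\frac{|\mathcal{U}|}{R}\Bigr)^{1/2}\ll q^{-1-\varepsilon/50}.
\]

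\emph{Characters in $\mathcal{L}$.} By Lemma~\ref{le:largevalue} with $P=Q_1$, $\alpha=1/10$ and $C$ large,
\[
|\mathcal{L}|\ll Q_1^{1/5}q^{1/5+o(1)},
\]
which is a tiny power of $q$ compared with $q$ only if $\varepsilon$ is small; in fact the bound is $\ll q^{1/5+\varepsilon/5+o(1)}$. On $\mathcal{L}$ we use $|P|\le1$ and keep the factor $\min\{\delta,|F|\}\le\delta$. The remaining sum $\sum_{\chi\in\mathcal{L}}|F|^2|F||U|$ is treated with Lemma~\ref{le:Hal-Mon} applied to the rough sequence $f\ast f$ of length $q$. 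That lemma gives
\[
\sum_{\chi\in\mathcal{L}}|F(\chi)|^4\ll \Bigl(\frac{q}{\log q}+q^{2/3+1/9+2\varepsilon}|\mathcal{L}|\Bigr)\frac{\log^{O(1)}q}{q^2},
\]
and the second term is negligible because $2/3+1/9+1/5<1$. For $|F U|^2$ we use Lemma~\ref{le:MVT} as before. After Cauchy--Schwarz and the extra factor $\delta=\log^{-1/4}q$, this should produce the term $\frac{1}{\varphi(q)\log^{5/4}q}\cdot\frac{|\mathcal{U}|}{R}$.

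\emph{Main obstacle.} The hard step is the $\mathcal{L}$ contribution. Getting exactly the saving $\log^{-5/4}q$ requires that the $\log q$ losses coming from $\sum|f|^2$ are exactly compensated. Two inputs do this: the $1/\log q$ in the first term of Lemma~\ref{le:Hal-Mon}, and the factor $\delta$. If the logarithmic powers do not balance, I would instead put $\min\{\delta,|F|\}^2\le\delta|F|$ into the Cauchy--Schwarz step, or handle $U$ by a pointwise/mean bound via Lemma~\ref{le:MVT}. A second delicacy is that Lemma~\ref{le:Hal-Mon} is stated for sequences supported on $(n,P(q^\varepsilon))=1$. The sequence $f\ast f$ is supported on $z$-rough integers with $z=q^{\sqrt\varepsilon}\ge q^\varepsilon$, so the lemma applies, but its coefficients are bounded by $\tau$-type weights, which costs only $O(1)$ factors since the prime factors are large.
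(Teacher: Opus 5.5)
\textbf{Where the proposal breaks down.} Your treatment of the characters with small prime sum is correct and is the paper's argument: use $|F-G|\le|F|$, pair $F^2$ with $FU$ (both of length $\le q$) by Cauchy--Schwarz, and apply Lemma~\ref{le:MVT}. The gap is in the large-prime-sum set $\mathcal L$.

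First, the bookkeeping there is inconsistent. Once you use $\min\{\delta,|F|\}\le\delta$, what remains is $\sum_{\chi\in\mathcal L}|F|^2|P||U|$, not $|F|^3|U|$. So the pairing of $\sum_{\mathcal L}|F|^4$ with $\sum|FU|^2$ is not available. If you give up $\delta$ to recover $|F|^3$, that pairing yields about $q^{-1}(|\mathcal U|/R)^{1/2}\log^{O(1)}q$. This is not admissible: more generally, any bound that puts $U$ inside a mean square gives only $(|\mathcal U|/R)^{1/2}$. The error term must instead be linear in $|\mathcal U|/R$, because it is compared with the lower bound $\frac{1}{\varphi(q)\log q}\frac{|\mathcal U|}{R}$ in Propositions~\ref{prop:AAA->E1} and~\ref{prop:AAA->E2}.

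Second, using $|P|\le1$ throws away $|P(\chi)|\le|\mathcal Q^{\Delta_2}_{B_2}|/Q_1\ll1/\log q$. That factor is exactly where the $\log^{-1}$ in $\log^{-5/4}q$ comes from; the other $\log^{-1/4}$ is $\delta$. The $1/\log q$ in the first term of Lemma~\ref{le:Hal-Mon} is not a saving, since it is used up cancelling the $\log q$ in $\sum_n f^{\Delta_1}(n)^2$.

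\textbf{What is needed on $\mathcal L$.} Bound everything pointwise: $|F-G|\le\delta$, $|P|\ll1/\log q$, and $|U|\le|\mathcal U^{\Delta_3}_{B_3}|/R$. This reduces the problem to showing $\sum_{\chi\in\mathcal L}|F(\chi)|^2\ll1$. That bound is Lemma~\ref{le:Hal-Mon} applied to $f^{\Delta_1}$ itself, which has length $R=q^{1/2}$. It requires $R^{2/3}q^{1/9+2\varepsilon}|\mathcal L|\ll R/\log q$, i.e.\ $|\mathcal L|\le q^{1/18-O(\varepsilon)}$.

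With your threshold $Q_1^{-1/10}$, Lemma~\ref{le:largevalue} only gives $|\mathcal L|\ll q^{1/5+O(\varepsilon)}$, which is far too many characters. Your fourth-moment bound $\sum_{\mathcal L}|F|^4\ll\log q$ from $f\ast f$ cannot replace it. Taking $|F|=(\log q/|\mathcal L|)^{1/4}$ on all of $\mathcal L$ is consistent with that bound, yet gives $\sum_{\mathcal L}|F|^2=(|\mathcal L|\log q)^{1/2}$, a positive power of $q$.

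\textbf{The fix.} Lower the threshold to $Q_1^{-1/40}$, as the paper does. Then $|\mathcal L|\ll q^{1/20+O(\varepsilon)+o(1)}$, and since $\tfrac13+\tfrac19+\tfrac1{20}<\tfrac12$, Lemma~\ref{le:Hal-Mon} gives $\sum_{\mathcal L}|F|^2\ll1$. The small-prime-sum part still saves $q^{-\varepsilon/40}$, which absorbs the $\log^{O(1)}q$ losses and gives $q^{-1-\varepsilon/50}$.
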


\begin{proof}
For $\Delta \in \{+, -\}$, define
\begin{align*}
  F^{\Delta}(\chi)&:=\mathbb{E}_{n\in [I]_q} f^{\Delta}(n)\overline{\chi}(n) \quad \text{and} \quad G^\Delta(\chi):=\mathbb{E}_{b \in \mathbb{Z}_q^\times} g^{\Delta}(b)\overline{\chi}(b),
\end{align*}
and, for $\Delta \in \{+, -\}$ and $B \subseteq \mathbb{Z}_q^\times$, define
\begin{align*}
Q_B^\Delta(\chi):= \frac{1}{Q_1} \sum_{p\in \mathcal{Q}_B^\Delta} \overline{\chi}(p), \quad \text{and} \quad U_B^\Delta(\chi):=\frac{1}{R}\sum_{u \in \mathcal{U}_B^\Delta} \overline{\chi}(u).
\end{align*}

By orthogonality of characters and Proposition~\ref{prop:f=g+h}(iii), we have 
\begin{align*}
&\phantom{=} S_{B_2, B_3}^{\overline{\Delta}}(a) - T_{B_2, B_3}^{\overline{\Delta}}(a)\\
&=\frac{1}{\varphi(q)} \sum_{\chi \pmod{q}}\chi(a) F^{\Delta_1}(\chi)^3\cdot   Q_{B_2}^{\Delta_2}(\chi) U_{B_3}^{\Delta_3}(\chi)\\
& \quad - \frac{1}{\varphi(q)} \sum_{\chi \pmod{q}}\chi(a) G^{\Delta_1}(\chi)^3\cdot  Q_{B_2}^{\Delta_2}(\chi) U_{B_3}^{\Delta_3}(\chi)\\
&= O\Biggl(\frac{1}{\varphi(q)} \sum_{\chi \pmod{q}} \left|F^{\Delta_1}(\chi)-G^{\Delta_1}(\chi)\right| \left| F^{\Delta_1}(\chi)\right|^2\cdot  |Q_{B_2}^{\Delta_2}(\chi)| |U_{B_3}^{\Delta_3}(\chi)| \Biggr). 
\end{align*}
To bound the right-hand side, we split the characters modulo $q$ into two sets:
\begin{align*}
\begin{aligned}
  \mathcal{X} &:= \{\chi \pmod{q} \colon |Q_{B_2}^{\Delta_2}(\chi)|\leq Q_1^{-1/40} \},\\
  \mathcal{Y} &:= \{\chi \pmod{q}\} \setminus \mathcal{X}.
\end{aligned}
\end{align*}

\textbf{Contribution of $\mathcal{X}$.}
Recall~\eqref{eq:UQ1Mdefeasy}. By the definition of $\mathcal{X}$, Proposition~\ref{prop:f=g+h}(iii), the Cauchy--Schwarz inequality, and the mean value theorem (Lemma~\ref{le:MVT}), we have
\begin{align*}
&\frac{1}{\varphi(q)}\sum_{\chi \in \mathcal{X}} \left|F^{\Delta_1}(\chi)-G^{\Delta_1}(\chi)\right| \left| F^{\Delta_1}(\chi)\right|^2 \cdot  |Q_{B_2}^{\Delta_2}(\chi)| |U_{B_3}^{\Delta_3}(\chi)|  \\
\ll& \frac{Q_1^{-1/40}}{\varphi(q)}\sum_{\chi \in \mathcal{X}} \left|F^{\Delta_1}(\chi)\right|^3   |U_{B_3}^{\Delta_3}(\chi)|  \\
\ll& \frac{Q_1^{-1/40}}{\varphi(q)}\left(\sum_{\chi\pmod q}|F^{\Delta_1}(\chi)|^2|U_{B_3}^{\Delta_3}(\chi)|^2\right)^{1/2}\left(\sum_{\chi\pmod q}|F^{\Delta_1}(\chi)|^4 \right)^{1/2}\\
\ll& \frac{Q_1^{-1/40}}{\varphi(q)}\left(\frac{\frac{\varphi(q)}{q}R^2+\varphi(q)}{R^2}\right)^{1/2}\left(\frac{\frac{\varphi(q)}{q}R^2+\varphi(q)}{R^2}\right)^{1/2} \log^{O(1)} q \ll \frac{1}{q^{1+\varepsilon/50}}.
\end{align*}
\textbf{Contribution of $\mathcal{Y}$.}
Recall the definition of $\delta$ from~\eqref{eq:deltadef}. Then, by Proposition~\ref{prop:f=g+h}(ii), we have
\begin{align*}
&\phantom{=}\frac{1}{\varphi(q)}\sum_{\chi \in \mathcal{Y}} |F^{\Delta_1}(\chi)-G^{\Delta_1}(\chi)| |F^{\Delta_1}(\chi)|^2 |Q_{B_2}^{\Delta_2}(\chi)| |U_{B_3}^{\Delta_3}(\chi)|\\
&\ll \frac{\log^{-1/4} q}{\varphi(q)}\sum_{\chi \in \mathcal{Y}} |F^{\Delta_1}(\chi)|^2 |Q_{B_2}^{\Delta_2}(\chi)| |U_{B_3}^{\Delta_3}(\chi)| =: \Sigma,
\end{align*}
say.

Using the trivial upper bounds
\[
|Q_{B_2}^{\Delta_2}(\chi)| \ll \frac{1}{\log q}, \quad |U_{B_3}^{\Delta_3}(\chi)| \leq U_{B_3}^{\Delta_3}(\chi_0) = \frac{|\mathcal{U}_{B_3}^{\Delta_3}|}{R},
\]
we see that
\begin{align*}
  \Sigma&\ll \frac{1}{\varphi(q)\log^{5/4} q} \frac{|\mathcal{U}_{B_3}^{\Delta_3}|}{R} \sum_{\chi \in \mathcal{Y}}|F^{\Delta_1}(\chi)|^2.
\end{align*}

By Lemma~\ref{le:largevalue}, we have
\begin{align}\label{eq:X2bound}
|\mathcal{Y}|\ll Q_1^2 q^{1/20+\varepsilon'}     
\end{align}
for every $\varepsilon' > 0$, and by~\eqref{eq:X2bound} and Lemma~\ref{le:Hal-Mon}, for every $\varepsilon' > 0$,
\begin{align} \label{eq:F2boundEasy}
\sum_{\chi \in \mathcal{Y}}|F^{\Delta_1}(\chi)|^2 \ll \frac{1}{R}\left(\frac{R}{\log q} + R^{2/3} q^{1/9+\varepsilon'} |\mathcal{Y}|\right) \log q \ll 1.
\end{align}
Hence
\begin{align*}
  \Sigma&\ll \frac{1}{\varphi(q)\log^{5/4} q} \frac{|\mathcal{U}_{B_3}^{\Delta_3}|}{R},
\end{align*}
and the claim follows by combining the contributions of the sums over $\mathcal{X}$ and $\mathcal{Y}$.
\end{proof}

\begin{remark} \label{re:Smallprole}
For this step it was crucial to have the prime factor $p \in (Q_1/e, Q_1]$. When dealing with $\mathcal{X}$ using an $L^{\infty}$ bound for the corresponding character sum $Q_{B_2}^{\Delta_2}(\chi)$, we were still left with two character sums $F^{\Delta_1}(\chi)^2$ and $F^{\Delta_1}(\chi) U_{B_3}^{\Delta_3}(\chi)$ of length $R^2 = q$ for which the mean value theorem worked excellently. On the other hand here $Q_{B_2}^{\Delta_2}(\chi)$ is sufficiently long for concluding that the set $\mathcal{Y}$ is small (see~\eqref{eq:X2bound}) which was crucial in~\eqref{eq:F2boundEasy}. In the proof of Theorem~\ref{thm:general}, we need to use the Matom\"aki--Radziwi{\l\l} method~\cite{matomaki-radziwill} to make a ladder from a sufficiently small prime to a sufficiently large prime.
\end{remark}

\subsection{Working with the dense model}\label{sec:DenseModelWorkEasy}
It will be convenient to work with a subset of $\mathbb{Z}_q^\times$ rather than the dense model function $g^\Delta$. To facilitate this, we make the following definition.
\begin{definition} \label{def:ADelta}
For $\Delta \in \{+, -\}$, let $g^\Delta$ be as in Definition~\ref{def:gDeltadef}. Define
\begin{align*}
A^{\Delta} \coloneqq \{a \in \mathbb{Z}_q^{\times} \colon |g^{\Delta}(a)| \geq \varepsilon^2\}.
\end{align*}
\end{definition}
The following lemma gives us fundamental information about the sets $A^\pm$.
\begin{lemma} \label{le:Aprop} Let $\varepsilon > 0$ be sufficiently small and let $q \in \mathbb{N}$ be sufficiently large in terms of $\varepsilon$. Let $\Delta \in \{+, -\}$ and let $A^\Delta$ be as in Definition~\ref{def:ADelta}. 
\begin{enumerate}[(i)]
\item We have
\begin{align*}
|A^{+}| + |A^{-}|\geq \left(1-\varepsilon\right)\varphi(q).
\end{align*}
\item For any subgroup $H \leq \mathbb{Z}_q^{\times}$ of index at most $2$ and any $b\in \mathbb{Z}_q^{\times}$, we have
\begin{equation}
\label{eq:A'binprimes1}
|A^{\Delta} \cap bH|\geq \left(\frac{|\{n \in [I]_q, (n, P(z)) = 1, \sgn(h(n)) = \Delta\} \cap bH|}{|\{n \in [I]_q, (n, P(z)) = 1\}|}-\varepsilon\right)\varphi(q).
\end{equation}
\end{enumerate}
\end{lemma}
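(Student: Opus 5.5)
Both parts come from comparing averages of $g^\Delta$ with averages of $f^\Delta$ through Proposition~\ref{prop:f=g+h}(iv) and (v). Since $g^\Delta\le 1+\varepsilon^2$ pointwise and $g^\Delta<\varepsilon^2$ off $A^\Delta$, it suffices to compute these averages.

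\textbf{Part (ii).} Fix $H$ of index at most $2$ and $b\in\mathbb{Z}_q^\times$. For index $2$ we use Proposition~\ref{prop:f=g+h}(v); for $H=\mathbb{Z}_q^\times$ we use (iv). Either way,
\[
\mathbb{E}_{a\in\mathbb{Z}_q^\times} g^\Delta(a)\1_{a\in bH}=\mathbb{E}_{n\in[I]_q} f^\Delta(n)\1_{n\in bH}+O(\delta).
\]
The left side is at most
\[
\frac{(1+\varepsilon^2)|A^\Delta\cap bH|+\varepsilon^2|bH|}{\varphi(q)}\le (1+\varepsilon^2)\frac{|A^\Delta\cap bH|}{\varphi(q)}+\varepsilon^2 .
\]
On the right side we insert the definition~\eqref{eq:fDef}. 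It equals
\[
\prod_{p<z,\,p\nmid q}\Bigl(1-\frac1p\Bigr)^{-1}\frac{|\{n\in[I]_q:(n,P(z))=1,\ \sgn(h(n))=\Delta\}\cap bH|}{|[I]_q|}.
\]
The fundamental lemma (Lemma~\ref{le:fundsieve}, upper and lower bound, exactly as in the verification of (A1) and in~\eqref{eq:fundlem3}) shows
\[
|\{n\in[I]_q:(n,P(z))=1\}|=(1+O(e^{-1/(100\sqrt\varepsilon)}))\,|[I]_q|\prod_{p<z,\,p\nmid q}\Bigl(1-\frac1p\Bigr).
\]
Hence the right side is, up to a factor $1+O(\varepsilon^2)$, the ratio $\rho$ appearing in~\eqref{eq:A'binprimes1}, and $\rho\le1$. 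Rearranging gives
\[
\frac{|A^\Delta\cap bH|}{\varphi(q)}\ge\frac{\rho-O(\varepsilon^2)-O(\delta)}{1+\varepsilon^2}\ge\rho-\varepsilon,
\]
since $\delta=\log^{-1/4}q$ is tiny once $q$ is large.

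\textbf{Part (i).} Sum the identity of Proposition~\ref{prop:f=g+h}(iv) over $\Delta\in\{+,-\}$. Because $h$ never vanishes, $f^++f^-$ is the normalized indicator of $z$-rough $n\in[I]_q$. By the same fundamental lemma computation, its mean is $1+O(\varepsilon^2)$. Thus
\[
\mathbb{E}_a(g^+(a)+g^-(a))\ge 1-O(\varepsilon^2).
\]
The left side is at most
\[
(1+\varepsilon^2)\frac{|A^+|+|A^-|}{\varphi(q)}+2\varepsilon^2,
\]
which gives $|A^+|+|A^-|\ge(1-O(\varepsilon^2))\varphi(q)\ge(1-\varepsilon)\varphi(q)$ for small $\varepsilon$.

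\textbf{Main obstacle.} The only real work is confirming that the sieve normalization in~\eqref{eq:fDef} matches the count of rough numbers in $[I]_q$ up to $1+O(\varepsilon^2)$. The required lower bound is the fundamental lemma with $D=q^{1/100}$ and $z=q^{\sqrt\varepsilon}$, with error term $O(Dq^{o(1)}/R)$ as in~\eqref{eq:fundlem2}. No character sum input is needed beyond what Proposition~\ref{prop:f=g+h} already provides.
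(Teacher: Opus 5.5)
Your proposal is correct and follows essentially the same route as the paper. You bound the averages of $g^\Delta$ over $bH$ above via the definition of $A^\Delta$ and $g^\Delta\le 1+\varepsilon^2$, transfer them to $f^\Delta$ via Proposition~\ref{prop:f=g+h}(iv)/(v), and evaluate the $f^\Delta$ averages with the fundamental lemma of the sieve; your explicit use of (iv) for the case $H=\mathbb{Z}_q^\times$, which the paper leaves implicit, is a welcome bit of extra care.
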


\begin{proof}
\textbf{Claim (i):} By the definition of $A^\Delta$ and the range of $g^\Delta$, for $\Delta \in \{+, -\}$,
\begin{equation} \label{eq:gRAupper}
\mathbb{E}_{a \in \mathbb{Z}_q^\times} g^{\Delta}(a) = \frac{1}{\varphi(q)}\left(\sum_{a \in \mathbb{Z}_q^\times \setminus A^{\Delta}} g^{\Delta}(a) + \sum_{a \in A^{\Delta}} g^{\Delta}(a)\right) \leq \varepsilon^2 + \frac{|A^{\Delta}|}{\varphi(q)}(1+\varepsilon^2).
\end{equation}
Furthermore, the fundamental lemma of the sieve (Lemma~\ref{le:fundsieve}) gives
\begin{equation}
\label{eq:f++f-}
\mathbb{E}_{n \in [I]_q} (f^+(n) + f^{-}(n)) = \prod_{\substack{p < z \\ p \nmid q}} \left(1-\frac{1}{p}\right)^{-1} \cdot \mathbb{E}_{n \in [I]_q} \mathbf{1}_{(n, P(z)) = 1} = 1 + O(\varepsilon^2).
\end{equation}
Now by Proposition~\ref{prop:f=g+h}(iv) and~\eqref{eq:f++f-},
\begin{equation*}
\mathbb{E}_{a \in \mathbb{Z}_q^\times} (g^{+}(a) + g^{-}(a))= \mathbb{E}_{n \in [I]_q} (f^{+}(n) + f^{-}(n) ) =  1 + O(\varepsilon^2),
\end{equation*}
and the claim follows by combining this with~\eqref{eq:gRAupper} and using the assumption that $\varepsilon>0$ is small.

\textbf{Claim (ii):} The definition of $f^\Delta$, the fundamental lemma of the sieve (Lemma~\ref{le:fundsieve}), Proposition~\ref{prop:f=g+h}(v), the range of $g^\Delta$, and the definition of $A^\Delta$ imply that
\begin{align*}
&\quad \frac{|\{n \in [I]_q, (n, P(z)) = 1, \sgn(h(n)) = \Delta\} \cap bH|}{|\{n \in [I]_q, (n, P(z)) = 1\}|}\\
&= \mathbb{E}_{\substack{n\in [I]_q}}\mathbf{1}_{n\in bH} f^{\Delta}(n)+O(\varepsilon^2) = \mathbb{E}_{a \in \mathbb{Z}_q^\times} \mathbf{1}_{a\in bH} g^{\Delta}(a) + O(\varepsilon^2)  \\
&\leq \left(1+\varepsilon^2\right)\frac{|A^{\Delta} \cap bH|}{\varphi(q)} + O(\varepsilon^2) \leq \frac{|A^{\Delta} \cap bH|}{\varphi(q)} + 
\varepsilon
\end{align*}
if $\varepsilon>0$ is small enough. Now~\eqref{eq:A'binprimes1} follows.
\end{proof}

The following two propositions show that Theorem~\ref{thm:generaleasy} holds assuming that triple convolutions of $\mathbf{1}[A^\Delta]$ satisfy certain conditions.
\begin{proposition}\label{prop:AAA->E1}
Let $\varepsilon > 0$ be sufficiently small, let $c > 0$, and let $q \in \mathbb{N}$ be sufficiently large in terms of $\varepsilon$ and $c$. For $\Delta \in \{+, -\}$, let $A^\Delta$ be as in Definition~\ref{def:ADelta}. Assume that the following two conditions hold.
\begin{itemize}
\item[(A1)] There exists a sign $\Delta_1 \in \{+, -\}$ such that
\[
\left(\mathbf{1}[A^{\Delta_1}] \ast \mathbf{1}[A^{\Delta_1}] \ast \mathbf{1}[A^{\Delta_1}]\right)(b) \geq c \varphi(q)^2
\]
for every $b \in \mathbb{Z}_q^\times.$
\item[(A2)] We have
\[
\sum_{\substack{p \leq q^{1/3} \\ h(p) < 0}} \frac{1}{p} \geq \frac{c}{q^{\varepsilon/100}}.
\] 
\end{itemize}
Then $E_h^+(q^{2+\varepsilon}) = E_h^-(q^{2+\varepsilon}) = \mathbb{Z}_q^\times$.
\end{proposition}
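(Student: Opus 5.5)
Fix $a\in\mathbb{Z}_q^\times$ and a target sign $\Delta\in\{+,-\}$. By Lemma~\ref{le:reductiontoBoundingSEasy} it suffices to find $B_2,B_3$ and $\Delta_2,\Delta_3$ with $\Delta_1\Delta_2\Delta_3=\Delta$ such that $S^{\overline\Delta}_{B_2,B_3}(a)\gg q^{-1-\varepsilon/100}/\log q$, where $\Delta_1$ is the sign given by (A1). Lemma~\ref{lemma:le:STcomparisonEasy} lets us replace $S$ by $T$ up to an error $q^{-1-\varepsilon/50}+\varphi(q)^{-1}\log^{-5/4}q\cdot|\mathcal U^{\Delta_3}_{B_3}|/R$. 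So the plan is to lower bound $T^{\overline\Delta}_{B_2,B_3}(a)$ by something of size $\gg \varphi(q)^{-1}(\log q)^{-1}\cdot(|\mathcal Q^{\Delta_2}_{B_2}|/Q_1)\cdot(|\mathcal U^{\Delta_3}_{B_3}|/R)$, and then check that this beats both error terms.

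For the lower bound on $T$, use $g^{\Delta_1}\ge \varepsilon^2\mathbf 1[A^{\Delta_1}]$ to get
\[
(g^{\Delta_1}\ast g^{\Delta_1}\ast g^{\Delta_1})(b)\ge \varepsilon^6(\mathbf 1[A^{\Delta_1}]^{\ast 3})(b)\ge \varepsilon^6 c\,\varphi(q)^2
\]
for every $b$, by (A1). Since this holds pointwise, convolving with the nonnegative functions $\mathbf 1[\mathcal Q]$ and $\mathbf 1[\mathcal U]$ gives
\[
T(a)\ge \varepsilon^6 c\,\frac{|\mathcal Q^{\Delta_2}_{B_2}|\,|\mathcal U^{\Delta_3}_{B_3}|}{\varphi(q)\,Q_1R}.
\]
No restriction on residue classes is needed, so we may take $B_2=B_3=\mathbb{Z}_q^\times$. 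It then remains to choose $\Delta_2,\Delta_3$ with product $\Delta\Delta_1$ such that both $|\mathcal Q^{\Delta_2}|\gg Q_1/\log q$ and $|\mathcal U^{\Delta_3}|/R$ are not too small.

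For the prime factor, among primes in $(Q_1/e,Q_1]$ (there are $\asymp Q_1/\log q$ of them, and those dividing $q$ are negligible) at least half share a common sign $\Delta_2^*$. We fix $\Delta_2=\Delta_2^*$, which gives $|\mathcal Q^{\Delta_2}|/Q_1\gg 1/\log q$. We then need $\Delta_3=\Delta\Delta_1\Delta_2$ to be achievable by $u$, and Lemma~\ref{lemma:le:multfunctsigns} handles this with $h_0=h$ and $y=R=q^{1/2}\ge q^{\varepsilon/2}$. For $\Delta_3=+$ we get $|\mathcal U^+|\gg R\varphi(q)/q\gg R/\log\log q$. For $\Delta_3=-$ we get $|\mathcal U^-|\gg (\varphi(q)/q)R\min\{1,\sum_{p\le q^{1/2-\varepsilon/8},\,h(p)<0,\,p\nmid q}1/p\}$. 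By (A2), the sum over $p\le q^{1/3}$ with $h(p)<0$ is $\ge c q^{-\varepsilon/100}$. One must also remove the primes dividing $q$, and this is the delicate point. If the mass comes mostly from $p\mid q$, we cannot use it directly. A cleaner way is to note that in the definition of $\mathcal U$ we only need $h(u)<0$, and we may interpret $h$ at $p\mid q$ freely since $u$ coprime to $q$ is required anyway. I would therefore handle this by applying (A2) in the form stated, with $\chi_0$ implicit. If that is not allowed, I would replace the condition by the sum over $p\nmid q$ (which is what the theorem's alternative really involves, since $\chi(p)=0$ for $p\mid q$). Either way $|\mathcal U^-|/R\gg q^{-\varepsilon/100}/\log\log q$.

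Combining, $T(a)\gg_{\varepsilon,c} q^{-1-\varepsilon/100}(\log q)^{-1}(\log\log q)^{-1}$, up to replacing $\varepsilon/100$ by a slightly larger constant, or absorbing the $\log\log q$ factor using the slack between $\varepsilon/100$ and $\varepsilon/50$. This dominates the first error term $q^{-1-\varepsilon/50}$. It also dominates the second error term, since $T(a)\gg \varphi(q)^{-1}(\log q)^{-1}|\mathcal U|/R$ while the second error is $\varphi(q)^{-1}\log^{-5/4}q\,|\mathcal U|/R$, which is smaller by a factor of $\log^{1/4}q$. Hence $S(a)\gg q^{-1-\varepsilon/100}/\log q$, and Lemma~\ref{le:reductiontoBoundingSEasy} gives $a\in E_h^\Delta(q^{2+\varepsilon})$ for both signs. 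The main obstacle is bookkeeping: making sure the lower bound for $\mathcal U^-$ from (A2) is large enough relative to the error in Lemma~\ref{lemma:le:STcomparisonEasy}. This is exactly why that error was stated proportional to $|\mathcal U|/R$.
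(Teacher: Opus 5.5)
Your proposal is correct and follows the paper's proof essentially step for step. You reduce via Lemmas~\ref{le:reductiontoBoundingSEasy} and~\ref{lemma:le:STcomparisonEasy} to lower-bounding $T^{\overline{\Delta}}_{\mathbb{Z}_q^\times,\mathbb{Z}_q^\times}(a)$, fix $\Delta_2$ by pigeonholing the primes in $(Q_1/e,Q_1]$, take $\Delta_3=\Delta\Delta_1\Delta_2$, use $g^{\Delta_1}\ge\varepsilon^2\mathbf{1}[A^{\Delta_1}]$ with (A1), and finish with Lemma~\ref{lemma:le:multfunctsigns} and (A2).

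Your remark that (A2) can only be used restricted to $p\nmid q$ is a fair point that the paper passes over; that restricted form is exactly what the failure of \eqref{eq:thmConditionEasyInThm} for the principal character supplies. Your $\log\log q$ loss is unnecessary, because the factor $\varphi(q)/q$ from Lemma~\ref{lemma:le:multfunctsigns} cancels the $1/\varphi(q)$ in your lower bound for $T$.
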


\begin{proof} Let $G=\mathbb{Z}_q^{\times}$. Let $\Delta_1 \in \{+, -\}$ be as in (A1). By Lemmas~\ref{le:reductiontoBoundingSEasy} and~\ref{lemma:le:STcomparisonEasy}, it suffices to show that, for every $\Delta \in \{+, -\}$ and $a \in \mathbb{Z}_q^\times$, there exist $\Delta_2, \Delta_3 \in \{+, -\}$ such that $\Delta_1 \Delta_2 \Delta_3 = \Delta$ and, for  $\overline{\Delta} = (\Delta_1, \Delta_2, \Delta_3) \in \{+, -\}^3$,
  \begin{equation} \label{eq:TGGclaim}
T_{G, G}^{\overline{\Delta}}(a) \gg \frac{1}{q^{1+\varepsilon/100} \log q} + \frac{1}{\varphi(q)\log q} \frac{|\mathcal{U}_{G}^{\Delta_3}|}{R}.
\end{equation}
Recall that $g^{\Delta_1}(b)\geq \varepsilon^2\mathbf{1}[A^{\Delta_1}](b)$ for every $b \in \mathbb{Z}_q^\times$. By the prime number theorem and the pigeonhole principle we can choose $\Delta_2 \in \{+, -\}$ such that
\begin{equation} \label{eq:fDelta2avEasy}
\frac{|\mathcal{Q}_{G}^{\Delta_2}|}{Q_1}  \geq \frac{1}{10 \log q}.
\end{equation}

Let $\Delta \in \{+, -\}$ and $a \in \mathbb{Z}_q^\times$ be arbitrary. Choose $\Delta_3 = \Delta \cdot \Delta_1 \Delta_2$, so that $\Delta_1 \Delta_2 \Delta_3 = \Delta$. Now, for $\overline{\Delta} = (\Delta_1, \Delta_2, \Delta_3)$, we have
\begin{align*}
T^{\overline{\Delta}}_{G, G}(a) &= \frac{1}{T}\left(g^{\Delta_1} \ast g^{\Delta_1} \ast g^{\Delta_1}  * \1[\mathcal{Q}_{G}^{\Delta_2}]*\1[\mathcal{U}_{G}^{\Delta_3}]\right)(a)\\
&\geq \frac{\varepsilon^{6}}{T} \left(\mathbf{1}[A^{\Delta_1}] \ast \mathbf{1}[A^{\Delta_1}] \ast \mathbf{1}[A^{\Delta_1}]  * \1[\mathcal{Q}_{G}^{\Delta_2}]*\1[\mathcal{U}_{G}^{\Delta_3}]\right)(a) \\
& \gg \frac{1}{\varphi(q)^3Q_1R} \sum_{p \in \mathcal{Q}_{G}^{\Delta_2}} \sum_{u \in \mathcal{U}_{G}^{\Delta_3}} \left(\mathbf{1}[A^{\Delta_1}] \ast \mathbf{1}[A^{\Delta_1}] \ast \mathbf{1}[A^{\Delta_1}]\right)(a\overline{pu}).
\end{align*}
Recalling (A1) and~\eqref{eq:fDelta2avEasy}, we see that
\begin{align*}
T^{\overline{\Delta}}_{G, G}(a) \gg \frac{1}{\varphi(q)} \cdot\frac{|\mathcal{Q}_{G}^{\Delta_2}|}{Q_1} \cdot \frac{|\mathcal{U}_{G}^{\Delta_3}|}{R} \gg \frac{1}{\varphi(q) \log q} \frac{|\mathcal{U}_{G}^{\Delta_3}|}{R}.
\end{align*}
Now~\eqref{eq:TGGclaim} follows by combining this with Lemma~\ref{lemma:le:multfunctsigns}, using (A2).
\end{proof}

\begin{proposition}\label{prop:AAA->E2}
Let $\varepsilon > 0$ be sufficiently small, let $c > 0$, and let $q \in \mathbb{N}$ be sufficiently large in terms of $\varepsilon$ and $c$. For $\Delta \in \{+, -\}$, let $A^\Delta$ be as in Definition~\ref{def:ADelta}. Assume that there exists a subgroup $H \leq \mathbb{Z}_q^\times$ of index two such that the following two conditions hold.
  \begin{itemize}
  \item[(A1)] There exist elements $b^+, b^- \in \mathbb{Z}_q^\times$ with $b^+H \neq b^-H$ such that
\[
\left(\mathbf{1}[A^{+}] \ast \mathbf{1}[A^+] \ast \mathbf{1}[A^+]\right)(b) \gg \varphi(q)^2
\]
for every $b \in b^+ H$ and
\[
\left(\mathbf{1}[A^{-}] \ast \mathbf{1}[A^{-}] \ast \mathbf{1}[A^{-}]\right)(b) \gg \varphi(q)^2
\]
for every $b \in b^- H$.
\item[(A2)] Let $\chi$ be the quadratic character for which $\chi(b) = 1$ iff $b \in H$. We have
\begin{equation*}
\sum_{\substack{p \leq q^{1/3} \\ h(p)\chi(p) < 0}} \frac{1}{p} \geq \frac{c}{q^{\varepsilon/100}}.
\end{equation*}
\end{itemize}
Then $E_h^+(q^{2+\varepsilon}) = E_h^-(q^{2+\varepsilon}) = \mathbb{Z}_q^\times.$
\end{proposition}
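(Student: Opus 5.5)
We follow the scheme of the proof of Proposition~\ref{prop:AAA->E1}, but now use the coset variables $B_2,B_3$ to steer the residue into the correct coset of $H$. By Lemmas~\ref{le:reductiontoBoundingSEasy} and~\ref{lemma:le:STcomparisonEasy}, it suffices to show the following. For every $\Delta\in\{+,-\}$ and every $a\in\mathbb{Z}_q^\times$ there should exist $\overline{\Delta}=(\Delta_1,\Delta_2,\Delta_3)$ with $\Delta_1\Delta_2\Delta_3=\Delta$ and sets $B_2,B_3\in\{H,\mathbb{Z}_q^\times\setminus H\}$ such that
\[
T^{\overline{\Delta}}_{B_2,B_3}(a)\gg \frac{1}{q^{1+\varepsilon/100}\log q}+\frac{1}{\varphi(q)\log q}\frac{|\mathcal{U}^{\Delta_3}_{B_3}|}{R}.
\]
As before, we bound $g^{\Delta_1}\ge\varepsilon^2\mathbf{1}[A^{\Delta_1}]$ and use (A1). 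If $p\in B_2$, $u\in B_3$ and $a\overline{pu}\in b^{\Delta_1}H$, then
\[
T^{\overline{\Delta}}_{B_2,B_3}(a)\gg \frac{1}{\varphi(q)}\cdot\frac{|\mathcal{Q}^{\Delta_2}_{B_2}|}{Q_1}\cdot\frac{|\mathcal{U}^{\Delta_3}_{B_3}|}{R}.
\]
So the whole task is combinatorial bookkeeping of signs and cosets, together with lower bounds for $|\mathcal{Q}^{\Delta_2}_{B_2}|$ and $|\mathcal{U}^{\Delta_3}_{B_3}|$.

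Write $\chi$ for the quadratic character attached to $H$, and encode a coset by the value of $\chi$ on it. The requirement becomes $\chi(a)=\chi(b^{\Delta_1})\chi(B_2)\chi(B_3)$, and we need $\Delta_1\Delta_2\Delta_3=\Delta$. Note that $\chi(b^+)=-\chi(b^-)$. Hence switching $\Delta_1$ flips both the sign parity and the coset parity of the $r$-part. The quantity $\Delta_1\chi(b^{\Delta_1})=:\kappa$ is therefore independent of $\Delta_1$, and we get no information from the choice of $\Delta_1$. It remains to choose the pairs $(\Delta_2,\chi(B_2))$ and $(\Delta_3,\chi(B_3))$ so that the product of "sign times coset value" over the $p$ and $u$ factors equals $\Delta\chi(a)\kappa$. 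This is where (A2) enters.

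\textbf{The $u$-factor.} The function $\sgn(h(u))\chi(u)$ is the sign of the multiplicative function $h_0=h\chi$ (restricted to $(u,q)=1$). Apply Lemma~\ref{lemma:le:multfunctsigns} to $h_0$ with $y=R$. Condition (A2) is exactly the hypothesis needed to make~\eqref{eq:h0neglow} give $\gg \frac{\varphi(q)}{q}R\,q^{-\varepsilon/100}$ square-free $u\le R$ with $h_0(u)<0$. The primes counted in (A2) up to $q^{1/3}$ lie below $R/q^{\varepsilon/8}$. Together with~\eqref{eq:h0poslow} for $h_0(u)>0$, we can realise either value of $\sgn(h(u))\chi(u)$.

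\textbf{The $p$-factor and the final choice.} By the prime number theorem and pigeonhole, we fix $(\Delta_2,B_2)$ so that $|\mathcal{Q}^{\Delta_2}_{B_2}|\ge Q_1/(10\log q)$ among the four pairs. We then choose the value $\pm1$ of $\sgn(h(u))\chi(u)$ needed to fix the parity, and split it into $(\Delta_3,B_3)$ by picking whichever of the two compatible pairs carries at least half of those $u$. This gives $|\mathcal{U}^{\Delta_3}_{B_3}|/R\gg q^{-\varepsilon/100}\varphi(q)/q$. Finally we choose $\Delta_1$ freely, say $+$, and read off $\Delta$ and the coset condition consistently.

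\textbf{Main obstacle.} Plugging in, the main term dominates the error term $\frac{1}{\varphi(q)\log^{5/4}q}\frac{|\mathcal{U}|}{R}$ of Lemma~\ref{lemma:le:STcomparisonEasy}. Verifying the parity consistency is the only delicate point. The worry is that the constraint $\chi(b^+)\ne\chi(b^-)$ might leave the product-parity fixed regardless of the choices. We resolve it precisely by letting the $u$-factor absorb both sign and coset via $h\chi$, and that is exactly why (A2) is stated with $h\chi$ rather than $h$.
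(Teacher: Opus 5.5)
Your reduction follows the paper's proof. You pigeonhole $(\Delta_2,B_2)$ so that $|\mathcal{Q}^{\Delta_2}_{B_2}|\ge Q_1/(10\log q)$, observe that only the value of $\sgn(h(u))\chi(u)$ matters, and bound that class from below using Lemma~\ref{lemma:le:multfunctsigns} applied to $h\chi$ together with (A2). But your final step, ``choose $\Delta_1$ freely, say $+$'', is wrong and breaks the argument. With $\Delta$, $a$ and $(\Delta_2,B_2)$ fixed and $\Delta_1=+$, two things are forced. The condition $\Delta_1\Delta_2\Delta_3=\Delta$ forces $\Delta_3=\Delta\Delta_2$. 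The coset condition $a\overline{pu}\in b^{+}H$ forces $\chi(B_3)=\chi(a)\chi(b^+)\chi(B_2)$. So only one of your two compatible pairs $(\Delta_3,B_3)$ is admissible. Lemma~\ref{lemma:le:multfunctsigns} gives no lower bound for that particular pair; it only controls the union of the two. If you take the larger pair while keeping $\Delta_1=+$, the integers you produce have $\sgn(h(n))=-\Delta$. Note also that $\Delta$ is prescribed, so it cannot be ``read off''. Your remark that ``we get no information from the choice of $\Delta_1$'' is therefore misleading. It is true that $\kappa$ does not depend on $\Delta_1$, but the freedom in $\Delta_1$ is exactly what lets the $u$-factor absorb sign and coset jointly.

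The repair is to choose $\Delta_1$ last, setting $\Delta_1:=\Delta\Delta_2\Delta_3$ after selecting the larger pair $(\Delta_3,B_3)$. Then $\chi(b^{\Delta_1})=\kappa\Delta_1=\chi(a)\chi(B_2)\chi(B_3)$ by your product identity, so $a\overline{pu}\in b^{\Delta_1}H$ and (A1) applies to $A^{\Delta_1}$. In other words, the two compatible $u$-pairs correspond to working with $A^+$ and target coset $b^+H$, or with $A^-$ and target coset $b^-H$. This is where the hypothesis $b^+H\neq b^-H$ is used. It is exactly the paper's argument. The paper takes $(\Delta_1,\Delta_3,b_3)=(+,\Delta\Delta_2,a\overline{b^+b_2})$ and $(\Delta_1',\Delta_3',b_3')=(-,-\Delta\Delta_2,a\overline{b^-b_2})$. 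It then lower bounds $T^{\overline{\Delta}}_{b_2H,b_3H}(a)+T^{\overline{\Delta'}}_{b_2H,b_3'H}(a)$ through the union of the two $u$-sets, which is a single sign class of $h\chi$, and concludes that one of the two terms is large. With this correction your proof agrees with the paper's.
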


\begin{proof}
By Lemmas~\ref{le:reductiontoBoundingSEasy} and~\ref{lemma:le:STcomparisonEasy} it suffices to show that, for every $\Delta \in \{+, -\}$ and $a \in \mathbb{Z}_q^\times$, there exists $B_2, B_3 \subseteq \mathbb{Z}_q^\times$ and $\overline{\Delta} = (\Delta_1, \Delta_2, \Delta_3) \in \{+, -\}^3$ such that $\Delta_1 \Delta_2 \Delta_3 = \Delta$ and
  \[
T_{B_2, B_3}^{\overline{\Delta}}(a) \gg \frac{1}{q^{1+\varepsilon/100}\log q} + \frac{1}{\varphi(q)\log q} \frac{|\mathcal{U}_{B_3}^{\Delta_3}|}{R}.
    \]

Let $H \leq \mathbb{Z}_q^\times$ and $b^+, b^- \in \mathbb{Z}_q^\times$ be as in the assumptions of the proposition. By the prime number theorem and the pigeonhole principle we can choose $\Delta_2 \in \{+, -\}$ and $b_2 \in \mathbb{Z}_q^\times$ such that
\begin{equation} \label{eq:QlowEasy2}
|\mathcal{Q}_{b_2 H}^{\Delta_2}| \geq \frac{Q_1}{10 \log q}.
\end{equation}

Let $\Delta \in \{+, -\}$ and $a \in \mathbb{Z}_q^\times$ be arbitrary. Let $\Delta_1 = +, \, \Delta_3 = \Delta \Delta_2, \Delta_1' = -,$ and $\Delta_3' = -\Delta \Delta_2$. Let further $\overline{\Delta} = (\Delta_1, \Delta_2, \Delta_3)$ and $\overline{\Delta'} = (\Delta_1', \Delta_2, \Delta_3')$. Now $\Delta_1 \Delta_2 \Delta_3 = \Delta_1' \Delta_2 \Delta_3' = \Delta$. Choose $b_3 = a\overline{b^+ b_2}$ and $b_3' = a\overline{b^- b_2}$. Now
\begin{align*}
  &T^{\overline{\Delta}}_{b_2 H, b_3 H}(a) + T^{\overline{\Delta'}}_{b_2 H, b'_3 H}(a) \\
  &\geq \frac{\varepsilon^{6}}{T} \left(\mathbf{1}[A^+] \ast \mathbf{1}[A^+] \ast \mathbf{1}[A^+] \ast \1[\mathcal{Q}_{b_2 H}^{\Delta_2}]*\1[\mathcal{U}^{\Delta_3}_{b_3 H}] \right)(a) \\
  & \quad + \frac{\varepsilon^{6}}{T} \left(\mathbf{1}[A^-] \ast \mathbf{1}[A^-] \ast \mathbf{1}[A^-] \ast \1[\mathcal{Q}_{b_2 H}^{\Delta_2}]*\1[\mathcal{U}^{\Delta'_3}_{b_3' H}] \right)(a) \\
  & \gg \frac{1}{\varphi(q)^3Q_1R}  \sum_{p \in \mathcal{Q}_{b_2 H}^{\Delta_2}} \sum_{u \in \mathcal{U}_{b_3 H}^{\Delta_3}} \left(\mathbf{1}[A^+] \ast \mathbf{1}[A^+] \ast \mathbf{1}[A^+]\right)(a\overline{p u}) \\
  & \quad + \frac{1}{\varphi(q)^3Q_1R} \sum_{p \in \mathcal{Q}_{b_2 H}^{\Delta_2}} \sum_{u \in \mathcal{U}_{b_3' H}^{\Delta_3'}} \left(\mathbf{1}[A^-] \ast \mathbf{1}[A^-] \ast \mathbf{1}[A^-]\right)(a\overline{p u}).
\end{align*}
On the first line on the right-hand side the sums over $p$ and $u$ are supported on $a\overline{pu} \in a\overline{b_2H}\overline{b_3H} = b^+ H$ and on the second line on the right-hand side on $a\overline{pu} \in a\overline{b_2H b_3'H} = b^- H$. Thus, by (A1) and~\eqref{eq:QlowEasy2},
\begin{align*}
  &T^{\overline{\Delta}}_{b_2 H, b_3 H}(a) + T^{\overline{\Delta'}}_{b_2 H, b_3' H}(a) \gg \frac{1}{\varphi(q) \log q} \cdot \frac{|\mathcal{U}^{\Delta_3}_{b_3 H}| + |\mathcal{U}^{\Delta_3'}_{b_3' H}|}{R}.
  \end{align*}
By Lemma~\ref{lemma:le:multfunctsigns} and (A2), we have 
\begin{align*}
|\mathcal{U}^{\Delta_3}_{b_3 H}| + |\mathcal{U}^{\Delta_3'}_{b_3' H}| = \sum_{\substack{u \leq R \\ \sgn(h(u)\chi(u)) = \Delta_3 \sgn(\chi(b_3))}} |\mu(u)| \gg \frac{\varphi(q)}{q} \cdot \frac{R}{q^{\varepsilon/100}}.
\end{align*}
Hence
\begin{align*}
  &T^{\overline{\Delta}}_{b_2 H, b_3 H}(a) + T^{\overline{\Delta'}}_{b_2 H, b_3' H}(a) \gg \frac{1}{\varphi(q) \log q} \cdot \frac{|\mathcal{U}^{\Delta_3}_{b_3 H}| + |\mathcal{U}^{\Delta_3'}_{b_3' H}|}{R} + \frac{1}{q^{1+\varepsilon/100} \log q}
  \end{align*}
and consequently either
\begin{align*}
T^{\overline{\Delta}}_{b_2 H, b_3 H}(a) \gg \frac{1}{\varphi(q)\log q} \frac{|\mathcal{U}^{\Delta_3}_{b_3 H}|}{R} + \frac{1}{q^{1+\varepsilon/100} \log^2 q}
  \end{align*}
  or
  \begin{align*}
T^{\overline{\Delta'}}_{b_2 H, b_3' H}(a) \gg \frac{1}{\varphi(q)\log q}  \frac{|\mathcal{U}^{\Delta'_3}_{b_3' H}|}{R} + \frac{1}{q^{1+\varepsilon/100} \log^2 q}
  \end{align*}
and the claim follows.
\end{proof}

\subsection{Finishing the proof of Theorem~\ref{thm:generaleasy}}
\begin{proof}[Proof of Theorem~\ref{thm:generaleasy}]
  If~\eqref{eq:thmConditionEasyInThm} holds for some character $\chi$ of order at most two, there is nothing to prove. If it does not, then Proposition~\ref{prop:AAA->E1}(A2) and Proposition~\ref{prop:AAA->E2}(A2) hold. Hence Theorem~\ref{thm:generaleasy} follows if we can show that always either Proposition~\ref{prop:AAA->E1}(A1) or Proposition~\ref{prop:AAA->E2}(A1) holds.

  For $\Delta \in \{+, -\}$, let $A^\Delta$ be as in Definition~\ref{def:ADelta}. We split into three cases.
  
\textbf{Case 1:} There exist $\Delta \in \{+, -\}$ such that we have
\begin{align} \label{eq:EasyCase1}
(\mathbf{1}[A^{\Delta}] \ast \mathbf{1}[A^{\Delta}] \ast \mathbf{1}[A^{\Delta}])(b) \gg \varphi(q)^2 \quad \text{for every $b \in \mathbb{Z}_q^\times$.}
\end{align}
This means that Proposition~\ref{prop:AAA->E1}(A1) holds.

\textbf{Case 2:} There exist $\Delta \in \{+, -\}$ such that
\begin{align*}
|A^{\Delta}| \geq \left(\frac{1}{2}+\frac{1}{100}\right) \varphi(q).
\end{align*}
By Lemma~\ref{le:convolution}(i),
\[
(\mathbf{1}[A^\Delta] \ast   \mathbf{1}[A^\Delta])(c) \geq \frac{1}{50} \varphi(q)
\]
for every $c \in \mathbb{Z}_q^\times$. Thus~\eqref{eq:EasyCase1} holds, and we are actually in Case 1.

\textbf{Case 3}: We are not in Cases 1 or 2.
By Lemma~\ref{le:Aprop}(i) and the assumption that we are not in Case 2, we have, for $\Delta \in \{+, -\}$,
\begin{align*}
|A^\Delta| \geq \left(\frac{1}{2}-\frac{1}{50}\right)\varphi(q). 
\end{align*}
For $\Delta \in \{+, -\}$, apply Lemma~\ref{le:KneserAppl} with $A_1 = A_2 = A_3 = A^\Delta$. Using that we are not in Case 1, we obtain a subgroup $H^\Delta \leq \mathbb{Z}_q^\times$ of index $2$ and an element $b^\Delta \in \mathbb{Z}_q^\times$ such that
\begin{align}\label{eq:AR0EasyPlus}
|A^\Delta \cap b^\Delta H^\Delta| \geq |A^\Delta|-\frac{\varepsilon}{2}\varphi(q) 
\end{align}
and
\begin{equation*}
\left(\mathbf{1}[A^{\Delta}] \ast \mathbf{1}[A^{\Delta}] \ast \mathbf{1}[A^{\Delta}]\right)(a) \gg \varphi(q)^2 \quad \text{for every $a \in b^\Delta H^\Delta$.}
\end{equation*}

Now Proposition~\ref{prop:AAA->E2}(A1) follows once we have shown that $H^+=H^-$ and $b^+ H^+ \neq b^- H^-$. 
If either of these fails, then for $b_0 \not \in b^+ H^+,$ we have
\[
|b_0 H^+ \cap b^+ H^+| = 0 \quad \text{and} \quad |b_0 H^+ \cap b^- H^-| \in \left\{0,  \frac{\varphi(q)}{4}\right\}.
\]
Thus by~\eqref{eq:AR0EasyPlus}
\[
|A^+ \cap b_0 H^+| + |A^- \cap b_0 H^+| \leq \frac{\varepsilon}{2} \varphi(q) + \left(\frac{1}{4}+\frac{\varepsilon}{2}\right)\varphi(q) = \left(\frac{1}{4}+\varepsilon\right) \varphi(q),
\]
which by Lemma~\ref{le:Aprop}(ii) contradicts Lemma~\ref{le:P2}. Thus Proposition~\ref{prop:AAA->E2}(A1) holds.
\end{proof}

\section{More auxiliary results}\label{sec:auxil2}
The proof of Theorem~\ref{thm:general} is more involved and we need some more auxiliary results in addition to those in Section~\ref{sec:auxil1}.
\subsection{Character sums}
We need some more lemmas concerning mean values of character sums. The first two lemmas allow us to handle error terms coming from an application of a Ramar\'e-type identity.

  \begin{lemma} \label{le:squarecontr} 
Let $Q \geq P \geq 1$ and $N \geq Q^4$. For any complex numbers $\alpha_{p, m} \ll 1$, we have
    \[
     \frac{1}{\varphi(q)} \sum_{\chi \pmod{q}} \left| \sum_{\substack{p^2m \leq N \\ P < p \leq Q}} \alpha_{p, m} \overline{\chi}(p^2m)\right|^2 \ll \frac{\varphi(q)}{q}\left(N + \frac{N^2}{q} \right) \frac{1}{P}.
      \]
    \end{lemma}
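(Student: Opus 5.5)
Apply the mean value theorem, Lemma~\ref{le:MVT}, directly to the sequence
\[
a_n := \sum_{\substack{p^2 m = n \\ P < p \leq Q}} \alpha_{p,m}, \qquad n \leq N.
\]
Only $n$ coprime to $q$ contribute, and Lemma~\ref{le:MVT} gives
\[
\frac{1}{\varphi(q)} \sum_{\chi \pmod q} \Bigl|\sum_{n\le N} a_n\overline{\chi}(n)\Bigr|^2 \leq \Bigl(1+\frac{N}{q}\Bigr)\sum_{\substack{n\le N\\ (n,q)=1}} |a_n|^2 .
\]
Here $1+N/q$ multiplied by $N$ gives the factor $N+N^2/q$ in the claim. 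So it suffices to show
\[
\sum_{\substack{n\le N\\ (n,q)=1}}|a_n|^2 \ll \frac{\varphi(q)}{q}\cdot\frac{N}{P}.
\]

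\textbf{Size of $a_n$.} We have $|a_n| \ll \omega_{P,Q}(n)$, the number of primes $p\in(P,Q]$ with $p^2\mid n$. Square-expansion gives
\[
\sum_{\substack{n\le N\\(n,q)=1}} |a_n|^2 \ll \sum_{P<p_1,p_2\le Q}\#\{n\le N : (n,q)=1,\ [p_1^2,p_2^2]\mid n\}.
\]

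\textbf{Diagonal $p_1=p_2=p$.} Only $p\nmid q$ matter. Write $n=p^2m$ with $m\le N/p^2$ and $(m,q)=1$. By \eqref{eq:Simplegcd} the count is
\[
\frac{\varphi(q)}{q}\cdot\frac{N}{p^2} + O(\tau(q)).
\]
Summing over $p>P$ gives a main term $\ll \frac{\varphi(q)}{q}\frac{N}{P}$. The error term is $O(\tau(q)Q)$.

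\textbf{Off-diagonal $p_1\ne p_2$.} Here $p_1^2p_2^2\mid n$, and the count is
\[
\frac{\varphi(q)}{q}\cdot\frac{N}{p_1^2p_2^2} + O(\tau(q)).
\]
The main terms sum to $\ll \frac{\varphi(q)}{q}\frac{N}{P^2}$. The error terms sum to $O(\tau(q)Q^2)$.

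\textbf{Main obstacle: the $\tau(q)$ errors.} These must be absorbed into $\frac{\varphi(q)}{q}\frac{N}{P}$. Since $N\ge Q^4$, we have $Q^2\le N^{1/2}$, so the errors are $\ll \tau(q)N^{1/2}$. But $\tau(q)$ could be large relative to $N^{1/2}$ if $N$ is small compared to $q$. I would avoid \eqref{eq:Simplegcd} and instead bound the counts more carefully.

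\textbf{Fix for the diagonal.} The count of $m\le M$ with $(m,q)=1$ is $\le M$ trivially. Use this when $M=N/p^2$ is small, and \eqref{eq:Simplegcd} when $M\ge\tau(q)q^{o(1)}$. If $\varphi(q)/q$ is not negligible, the trivial bound $N/p^2$ already suffices up to the constant $q/\varphi(q)\ll\log\log q$; that is not acceptable as stated, however.

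\textbf{Alternative fix: Selberg sieve.} A cleaner route is the Brun–Titchmarsh / Selberg upper bound sieve for integers coprime to $q$ in an interval of length $M$:
\[
\#\{m\le M : (m,q)=1\} \ll \frac{\varphi(q)}{q}M + M^{1/2+o(1)},
\]
obtained by sieving only primes of $q$ below $M^{1/2}$. This gives total diagonal error $\sum_p (N/p^2)^{1/2+o(1)} \ll N^{1/2+o(1)}Q^{o(1)}$. That is acceptable, since $N/P\ge N^{3/4}$ exceeds it by a power, and $\varphi(q)/q\gg 1/\log\log q$.

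The off-diagonal terms are handled the same way, giving $\sum_{p_1,p_2\le Q}(N/p_1^2p_2^2)^{1/2+o(1)}\ll N^{1/2+o(1)}$. Combining the diagonal and off-diagonal bounds proves the lemma.
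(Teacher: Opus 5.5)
\textbf{Gap.} Your skeleton (mean value theorem, then $|a_n|^2\ll\omega_{P,Q}(n)^2$, then counting $n\le N$ with $(n,q)=1$ and $[p_1^2,p_2^2]\mid n$) is exactly the paper's proof. The paper simply asserts that this count is $\ll N\frac{\varphi(q)}{q}[p_1,p_2]^{-2}$ and never discusses the error term.

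Where you go beyond this, the argument fails. The bound $\#\{m\le M:(m,q)=1\}\ll\frac{\varphi(q)}{q}M+M^{1/2+o(1)}$ is false. Sieving only by the primes $p\mid q$ with $p\le M^{1/2}$ gives the main term $M\prod_{p\mid q,\,p\le M^{1/2}}(1-1/p)$. This exceeds $M\varphi(q)/q$ by the factor $\prod_{p\mid q,\,p>M^{1/2}}(1-1/p)^{-1}$, which is unbounded when $q$ has many large prime factors. Concretely, take $q=\prod_{M^{1/2}<p\le y}p$ with $y\ge M$. The $m\le M$ coprime to $q$ are exactly the $M^{1/2}$-smooth ones, about $(1-\log 2)M$ of them, while $\varphi(q)/q\to 0$ as $y\to\infty$.

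Your final comparison has the same defect. Beating $N^{1/2+o(1)}$ with $\frac{\varphi(q)}{q}N^{3/4}$, using only $\varphi(q)/q\gg 1/\log\log q$, needs $N$ to exceed a power of $\log\log q$. The hypotheses do not give this.

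\textbf{The statement itself needs an extra hypothesis.} The obstruction you identified is real and cannot be removed: with no lower bound on $N$ in terms of $q$, the lemma is false. If $N<q$, the integers $n\le N$ are distinct residues, so the left-hand side equals $\sum_{n\le N,\,(n,q)=1}|a_n|^2$ exactly. Take $P=1$, $Q=2$, $N=16$, $\alpha_{2,m}=1$ and $q=\prod_{3\le p\le y}p$. Then the left-hand side is $\#\{m\le 4:(m,q)=1\}=3$, while the right-hand side is $\asymp\varphi(q)/q\asymp 1/\log y\to 0$.

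So the right fix is to add a hypothesis such as $N\ge q^{\varepsilon}$. The paper only applies the lemma with $N=q$, $P=P_j$ and $Q=Q_j\le\exp((\log q)^{1/2})$. In that range your first approach via \eqref{eq:Simplegcd} already finishes the proof, and no sieve is needed. The error terms total $O(\tau(q)Q^2)\le q^{o(1)}N^{1/2}$, which is negligible against $\frac{\varphi(q)}{q}\frac{N}{P}\ge\frac{\varphi(q)}{q}N^{3/4}$.
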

    \begin{proof}
      By the mean value theorem (Lemma~\ref{le:MVT}), we have
      \begin{align*}
        &\frac{1}{\varphi(q)} \sum_{\chi \pmod{q}} \left| \sum_{\substack{p^2m \leq N\\ P<p\leq Q}} \alpha_{p, m} \overline{\chi}(p^2m)\right|^2 \ll \left(1 + \frac{N}{q}\right) \sum_{\substack{n \leq N \\ (n, q) = 1}} \left(\sum_{\substack{n = p^2 m \\ P < p \leq Q}} 1 \right)^2.
      \end{align*}
      Here
      \begin{align*}
  &\sum_{\substack{n \leq N \\ (n, q) = 1}} \left(\sum_{\substack{n = p^2 m \\ P < p \leq Q}} 1 \right)^2 \ll \sum_{\substack{P < p_1, p_2 \leq Q}} \sum_{\substack{n \leq N \\ [p_1^2, p_2^2] \mid n \\ (n, q) = 1}} 1  \ll N \frac{\varphi(q)}{q} \sum_{P < p_1, p_2 \leq Q} \frac{1}{[p_1, p_2]^2} \ll \frac{\varphi(q)}{q} \cdot \frac{N}{P},
      \end{align*}
      and the claim follows.
    \end{proof}

  \begin{lemma} \label{le:shortscontr}
Let $\varepsilon > 0$ be small and let $q \in \mathbb{N}$ be sufficiently large. Let $K\geq 0$ be an integer with $K \leq \min\{\log q, \frac{1}{4} \log M\}$. Let $H, M, N \geq 1$ with $N \geq M^4 \geq q^\varepsilon$ and $H \leq q^{\varepsilon/20}$. Let $\mathcal{I} \coloneqq \bigcup_{|k| \leq 2K+1} (Me^{k-1/H}, Me^{k}]$. For any complex numbers $\alpha_{\ell, m} \ll 1$, we have
    \[
      \frac{1}{\varphi(q)} \sum_{\chi \pmod{q}} \left|\sum_{\substack{\ell m \leq N \\ m \in \mathcal{I}, \, (\ell, P(q^\varepsilon)) = 1}} \alpha_{\ell, m} \overline{\chi}(\ell m)\right|^2 \ll_{\varepsilon} \frac{\varphi(q)}{q} \cdot \left(N + \frac{N^2}{q}\right) \frac{1}{H}.
      \]
    \end{lemma}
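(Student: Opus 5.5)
The plan is to copy the proof of Lemma~\ref{le:squarecontr}: apply the mean value theorem and then bound the resulting second moment of the representation counts. By Lemma~\ref{le:MVT}, applied to the coefficients $a_n = \sum_{n=\ell m,\, m\in\mathcal{I},\, (\ell,P(q^\varepsilon))=1} \alpha_{\ell,m}$, the left-hand side is
\[
\ll \left(1+\frac{N}{q}\right)\sum_{\substack{n\le N\\ (n,q)=1}} \Bigl(\sum_{\substack{n=\ell m\\ m\in \mathcal{I},\ (\ell,P(q^\varepsilon))=1}} 1\Bigr)^2 .
\]
Expanding the square turns the inner quantity into a count of pairs $(m_1,m_2)\in\mathcal{I}^2$ together with $n \leq N$, $(n,q)=1$, such that $m_1\mid n$ and $m_2\mid n$, with both cofactors $n/m_1$ and $n/m_2$ free of primes below $q^\varepsilon$.

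For the diagonal-type structure we write $d=(m_1,m_2)$, $m_i=dk_i$. Then $dk_1k_2\mid n$, and since $n/m_1$ is $q^\varepsilon$-rough, $k_2$ is also $q^\varepsilon$-rough; similarly for $k_1$. We bound the number of $n\le N$ coprime to $q$, divisible by $dk_1k_2$ and with $n/(dk_1k_2)$ rough, by a sieve upper bound (Lemma~\ref{le:fundsieve}, or simply Brun/Selberg), using that $N/(dk_1k_2)\ge N/M^2e^{O(K)}\ge N^{1/2-o(1)}$. This gives
\[
\ll_\varepsilon \frac{\varphi(q)}{q}\frac{N}{dk_1k_2\log q}.
\]
Here the roughness together with $(n,q)=1$ yields the factor $\frac{\varphi(q)}{q}\prod_{p<q^\varepsilon,\,p\nmid q}(1-1/p)$, which is $\asymp_\varepsilon \frac{\varphi(q)}{q}(\log q)^{-1}$ times a factor $\le q/\varphi(q)$ compensation. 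I would handle this carefully so that only $\varphi(q)/q$ survives.

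It then remains to show that
\[
\sum_{m_1,m_2\in\mathcal{I}}\frac{1}{[m_1,m_2]}\cdot(\text{roughness constraints}) \ll_\varepsilon \frac{\log q}{H}.
\]
The diagonal-ish part with $k_1=k_2=1$ gives $\sum_{m\in\mathcal{I}}1/m\ll K/H\ll \log q/H$, since each of the $O(K)$ intervals $(Me^{k-1/H},Me^k]$ has logarithmic measure $1/H$. When $k_2>1$, it is $q^\varepsilon$-rough, so $k_2\ge q^\varepsilon$. Summing over $d$ and $k_1$ with $m_1=dk_1\in\mathcal{I}$ gives $\sum_{m_1\in\mathcal I}1/m_1\ll K/H$. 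Summing over rough $k_2$ with $dk_2\in\mathcal{I}$ gives $\sum 1/k_2$ over rough $k_2$ in a union of intervals of log-length $1/H$ and size $\geq q^\varepsilon$. By a sieve upper bound in short intervals (valid since $H\le q^{\varepsilon/20}$ makes each interval long, of length about $k_2/H\ge k_2^{1-1/20}$), this is $\ll_\varepsilon K/(H\log q)$. The total is then $\ll K^2/(H^2\log q)\ll \log q/H^2$, which is fine. The condition $K\le \frac14\log M$ ensures $\mathcal{I}\subseteq[M^{1/2},M^{3/2}]$ up to constants, so all the cofactor lengths stay large.

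The main obstacle is bookkeeping the sieve factor so that the final bound has exactly $\varphi(q)/q$ and no extra $\log q$. The $1/\log q$ saved from the roughness of $n/(dk_1k_2)$ must cancel the $K\le\log q$ coming from the number of intervals. I would first try the Selberg upper bound sieve with the prime set $\{p<q^\varepsilon: p\nmid q\}$ combined with coprimality to $q$, exactly as in the proof of Lemma~\ref{le:P2}.
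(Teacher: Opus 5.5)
Your architecture (mean value theorem, expanding the square, writing $m_i=dk_i$ with $d=(m_1,m_2)$, an upper-bound sieve for the common cofactor $\ell$, and a short-interval sieve for rough $k_i>1$) is the same as the paper's, and those pieces are fine. But the step you yourself flag as the main obstacle is left open, and the remedy you propose cannot work.

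\textbf{The claimed sieve bound fails.} The bound $\ll_\varepsilon\frac{\varphi(q)}{q}\cdot\frac{N}{dk_1k_2\log q}$ for the number of admissible $n$ is false in general. Integers coprime to $qP(q^\varepsilon)$ have density
$\prod_{p<q^\varepsilon}(1-\frac1p)\prod_{p\mid q,\,p\ge q^\varepsilon}(1-\frac1p)$.
The prime factors of $q$ below $q^\varepsilon$ are already removed by the roughness condition, so they give no further saving. Take $q$ with only small prime factors, e.g.\ $q=\prod_{p\le y}p$. Then this density exceeds $\frac{\varphi(q)}{q}\prod_{p<q^\varepsilon}(1-\frac1p)$ by a factor $\asymp q/\varphi(q)\asymp\log\log q$. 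For $\ell$ ranging up to a large power of $q$ (as in the application, where $N=q$), this is the true order of magnitude of the count. So no sieve on $\ell$, Selberg or otherwise, can produce the factor $\varphi(q)/q$ there. Your later sums $\sum_{m\in\mathcal I}1/m\ll K/H$ make no use of coprimality with $q$. As written, the argument therefore only gives $\ll (N+N^2/q)/H$, weaker than the lemma by up to a factor $q/\varphi(q)$.

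\textbf{The missing idea.} The factor $\varphi(q)/q$ must come from the outer variable, not from $\ell$. The condition $(n,q)=1$ in the mean value theorem forces $(m_1m_2,q)=1$, and in particular $(d,q)=1$.
\begin{itemize}
\item Bound the inner count simply by $\ll_\varepsilon N/([m_1,m_2]\log q)$, with no $\varphi(q)/q$. Note $[m_1,m_2]\le e^2M^3$, so $N/[m_1,m_2]\ge N^{1/4}/e^2\ge q^{\varepsilon/4}/e^2$.
\item Keep $(d,q)=1$ and apply \eqref{eq:Simplegcd} on each of the $O(K+1)$ intervals $(Me^{k-1/H},Me^k]$. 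Each has length $\gg q^{3\varepsilon/40}$, so the $O(\tau(q))$ errors are negligible.
\item For the diagonal terms $k_1=k_2=1$ this gives $\sum_{d\in\mathcal I,\,(d,q)=1}1/d\ll\frac{\varphi(q)}{q}\cdot\frac{\log q}{H}$.
\item For the terms with some $k_i>1$, combine your bound $\ll_\varepsilon (K+1)/(H\log q)$ for the rough cofactor with $\sum_{m_1\in\mathcal I,\,(m_1,q)=1}1/m_1\ll\frac{\varphi(q)}{q}\cdot\frac{\log q}{H}$. This gives $\ll\frac{\varphi(q)}{q}\cdot\frac{\log q}{H^2}$. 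The paper instead uses $\sum_{d\le Me^{2K+2},\,(d,q)=1}1/d\ll\frac{\varphi(q)}{q}\log q$ with a factor $1/H$ from each rough cofactor.
\end{itemize}
Multiplying by $N/\log q$, the $1/\log q$ saved by the $\ell$-sieve cancels the $K\le\log q$ from the number of intervals. You then get $\frac{\varphi(q)}{q}\cdot\frac{N}{H}$, which is exactly the paper's bookkeeping.
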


\begin{proof}
By the mean value theorem (Lemma~\ref{le:MVT}), we have
\begin{align*}
&\frac{1}{\varphi(q)} \sum_{\chi \pmod{q}} \left| \sum_{\substack{\ell m \leq N \\ m \in \mathcal{I}, \, (\ell, P(q^\varepsilon)) = 1}} \alpha_{\ell, m} \overline{\chi}(\ell m)\right|^2 \ll \left(1 + \frac{N}{q} \right) \sum_{\substack{n \leq N \\ (n, q) = 1}} \left(\sum_{\substack{n = \ell m \\ m \in \mathcal{I}, \, (\ell, P(q^\varepsilon)) = 1}} 1 \right)^2.
\end{align*}
Thus it suffices to show that
\begin{equation}\label{eq:shorts-second-moment}
S := \sum_{\substack{n \leq N \\ (n, q) = 1}} \left(\sum_{\substack{n = \ell m \\ m \in \mathcal{I}, \, (\ell, P(q^\varepsilon)) = 1}} 1 \right)^2
\ll \frac{\varphi(q)}{q} \cdot \frac{N}{H}.
\end{equation}

Expanding the square,
\begin{align*}
S = \sum_{\substack{m_1, m_2 \in \mathcal{I} \\ (m_1m_2, q) = 1}} \sum_{\substack{\ell_1 m_1 = \ell_2 m_2 \leq N \\ (\ell_1 \ell_2, qP(q^\varepsilon)) = 1}} 1.
\end{align*}
Write
$$
r \coloneqq (m_1, m_2), \qquad m_1=ra, \qquad m_2=rb,
$$
so that $(a,b)=1$.
Then the relation $\ell_1 m_1=\ell_2 m_2$ becomes
$
\ell_1 a=\ell_2 b.$
Since $(a,b)=1,$ this implies that $b \mid \ell_1$ and $a \mid \ell_2$, so there exists an integer $\ell \geq 1$ such that
$$
\ell_1=b\ell, \qquad \ell_2=a\ell.
$$
Conversely, every such $\ell$ gives a solution. Hence
$$
\ell_1m_1=\ell_2m_2=\ell [m_1,m_2] = \ell a b r.
$$
Moreover, since $(\ell_1\ell_2, P(q^\varepsilon))=1,$ we must have $ (ab, P(q^\varepsilon)) = 1$,
that is,
$$
\left(\frac{m_1m_2}{(m_1,m_2)^2}, P(q^\varepsilon)\right)=1.
$$
Therefore
\begin{align*}
S \ll \sum_{\substack{m_1, m_2 \in \mathcal{I} \\ (m_1m_2, q) = 1 \\ \left(\frac{m_1m_2}{(m_1,m_2)^2}, P(q^\varepsilon)\right)=1}} \sum_{\substack{\ell \leq N/[m_1, m_2] \\ (\ell, qP(q^\varepsilon)) = 1}} 1.
\end{align*}

Applying the fundamental lemma of the sieve to the innermost sum, we obtain 
\begin{align*}
S &\ll_\varepsilon  \frac{N}{\log q}
\sum_{\substack{m_1, m_2 \in \mathcal{I} \\ (m_1m_2, q) = 1 \\ \left(\frac{m_1m_2}{(m_1,m_2)^2}, P(q^\varepsilon)\right)=1}} \frac{1}{[m_1,m_2]}.
\end{align*}
Writing again $r=(m_1,m_2),$ we obtain
\begin{align*}
S &\ll_\varepsilon \frac{N}{\log q}
\sum_{\substack{r \leq Me^{2K+1+1/H} \\ (r, q) = 1}} \frac{1}{r}
\sum_{\substack{n_1, n_2 \\ n_1r, n_2r \in \mathcal{I} \\ (n_1n_2, qP(q^\varepsilon)) = 1}} \frac{1}{m_1m_2}.
\end{align*}
Now each sum over $n_i$ has size
$$\ll_\varepsilon \frac{K}{H\log q} \ll \frac{1}{H},$$
unless $1$ is counted in the sum, in which case the sum is $O(1)$. We may have $n_i=1$ only if $r \in \mathcal{I}$. Hence
\begin{align*}
S &\ll_\varepsilon \frac{N}{\log q}
\left(
\sum_{\substack{r \in \mathcal{I} \\ (r, q) = 1}} \frac{1}{r}
+\sum_{\substack{r \leq Me^{2K+1+1/H} \\ (r, q) = 1}} \frac{1}{r} \cdot \frac{1}{H^2}
  \right).
\end{align*}

By~\eqref{eq:Simplegcd} and the assumptions of the lemma,
$$
\sum_{\substack{r \in \mathcal{I} \\ (r, q) = 1}} \frac{1}{r} = \sum_{|k| \leq 2K+1} \sum_{\substack{r \in (Me^{k-1/H}, Me^{k}] \\ (r, q) = 1}} \frac{1}{r} \ll \frac{\varphi(q)}{q} \frac{K}{H} + \sum_{|k| \leq 2K+1} \frac{\tau(q)}{Me^k} \ll \frac{\varphi(q)}{q} \cdot \frac{\log q}{H}.
$$
Furthermore
$$
\sum_{\substack{r \leq Me^{2K+1+1/H} \\ (r, q) = 1}} \frac{1}{r} \cdot\frac{1}{H^2} \ll \frac{\varphi(q)}{q} \cdot \frac{\log q}{H^2}.
$$
Thus~\eqref{eq:shorts-second-moment}, and hence the lemma, follows.
\end{proof}

    The third lemma on character sums upper bounds a moment involving a large power of a prime character sum.
\begin{lemma}\label{le:amplify} Let $q\in \mathbb{N}$, $X \geq Y_2 \geq Y_1\geq 2$, and $\ell=\lceil (\log Y_2)/(\log Y_1)\rceil$. For any $1$-bounded complex numbers $a_n,c_p$, let
$$Q(\chi)=\sum_{Y_1\leq p\leq 2Y_1}c_p\chi(p),\quad A(\chi)=\sum_{X/Y_2\leq n\leq 2X/Y_2}a_n\chi(n).$$
Then
$$\frac{1}{\varphi(q)} \sum_{\chi\pmod q}|Q(\chi)|^{2\ell}|A(\chi)|^2\ll \frac{\varphi(q)}{q}\left(1+\frac{XY_12^{\ell}}{q}\right)XY_12^{\ell}(\ell+1)!^2.$$
\end{lemma}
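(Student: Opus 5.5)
The plan is to expand $Q(\chi)^{\ell}A(\chi)$ as a single Dirichlet polynomial and then apply the mean value theorem (Lemma~\ref{le:MVT}). Write
\[
Q(\chi)^{\ell}A(\chi)=\sum_{m} b_m\chi(m), \qquad b_m=\sum_{\substack{m=p_1\cdots p_\ell n\\ Y_1\le p_i\le 2Y_1,\ X/Y_2\le n\le 2X/Y_2}} c_{p_1}\cdots c_{p_\ell}a_n .
\]
The coefficients are supported on $m\le 2^{\ell}Y_1^{\ell}\cdot 2X/Y_2$. Since $\ell=\lceil \log Y_2/\log Y_1\rceil$, we have $Y_1^{\ell}\le Y_1 Y_2$, so the support lies in $m\le N:=2^{\ell+1}XY_1$. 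Lemma~\ref{le:MVT} then gives
\[
\frac{1}{\varphi(q)}\sum_{\chi}|Q(\chi)|^{2\ell}|A(\chi)|^2\le \Bigl(1+\frac{N}{q}\Bigr)\sum_{\substack{m\le N\\(m,q)=1}}|b_m|^2 .
\]

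It remains to bound $\sum_{(m,q)=1}|b_m|^2$. Using $|a_n|,|c_p|\le 1$, $|b_m|$ is at most the number of representations $m=p_1\cdots p_\ell n$ with $p_i\in[Y_1,2Y_1]$, $n\in[X/Y_2,2X/Y_2]$. Expanding the square, we count pairs of representations
\[
p_1\cdots p_\ell n = p_1'\cdots p_\ell' n'
\]
with all factors coprime to $q$.

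\textbf{Counting pairs.} Fix the tuple $(p_i)$ and the tuple $(p_i')$. Then $n$ must be divisible by the part of $\prod p_i'$ not cancelled by $\prod p_i$, and once $n$ is chosen, $n'$ is determined.

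\emph{Diagonal.} When the multisets $\{p_i\}$ and $\{p_i'\}$ coincide, the number of ordered pairs of tuples is at most $\ell!\,\pi(2Y_1)^{\ell}$. Each such pair admits about $\frac{\varphi(q)}{q}\frac{X}{Y_2}$ choices of $n$ coprime to $q$, using \eqref{eq:Simplegcd}.

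\emph{General case.} Let $j$ primes be matched (so $\ell-j$ are unmatched on each side). The count is then at most
\[
\binom{\ell}{j}^2 j!\,\pi(2Y_1)^{j}\,\pi(2Y_1)^{2(\ell-j)}\cdot \frac{\varphi(q)}{q}\frac{X}{Y_2 Y_1^{\ell-j}} .
\]
This uses that $n$ must be divisible by a product of $\ell-j$ primes of size at least $Y_1$. The $\tau(q)$ error from \eqref{eq:Simplegcd} is harmless, or is absorbed when that product exceeds $X/Y_2$ by using the trivial count. Each term is
\[
\ll \frac{\varphi(q)}{q}\frac{X}{Y_2}\,Y_1^{\ell}\binom{\ell}{j}^2 j!\,(\log Y_1)^{-\ell-(\ell-j)}\ll \frac{\varphi(q)}{q}\,X Y_1 (\ell!)^2 .
\]
Here $Y_1^{\ell}/Y_2\le Y_1$, and $\binom{\ell}{j}^2 j!\le (\ell!)^2$. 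Summing over the $\ell+1$ values of $j$ produces a factor bounded by $(\ell+1)!^2/(\ell!)^2$, and the slack factor $2^{\ell}$ in the target bound absorbs the constants $2^{\ell}$ coming from $\pi(2Y_1)\le 2Y_1$.

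Combining the count with the mean value bound, and noting $N\asymp XY_12^{\ell}$, yields the stated estimate. The main obstacle is the combinatorial bookkeeping of the matched and unmatched primes, in particular ensuring the coprimality-to-$q$ density factor $\varphi(q)/q$ survives uniformly across all $j$. If this becomes messy, the fallback is to bound $|b_m|\le \ell!\cdot \#\{n\mid m\}$ restricted to $n$ in its range, and to sum $\sum_m |b_m|\cdot\max|b_m|$. This is cruder but still gives the $(\ell+1)!^2$ shape.
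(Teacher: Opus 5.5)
Your route is the standard one for amplification bounds of this kind, and the paper gives no argument of its own (it cites \cite[Lemma 6.6]{KMT}). You expand $Q(\chi)^{\ell}A(\chi)$ into one Dirichlet polynomial supported on $m\le 2^{\ell+1}XY_1$, apply Lemma~\ref{le:MVT}, and bound $\sum|b_m|^2$ by counting pairs of representations through matched and unmatched primes.

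The combinatorics is sound. Choosing the matched positions on each side and a bijection between them ($\binom{\ell}{j}^2 j!$ ways), together with the $2\ell-j$ free primes, determines the pair. The unmatched primes on the primed side have product $d\ge Y_1^{\ell-j}$, and $d$ divides $n$. Since $\#\{n\le 2X/Y_2\colon d\mid n\}\le 2X/(Y_2d)$ with no error term, even the trivial bound $\pi(2Y_1)\le 2Y_1$ gives
\[
\sum_m|b_m|^2\le 2XY_1\sum_{j=0}^{\ell}\binom{\ell}{j}^2 j!\,2^{2\ell-j}\le 4\,\ell!\,4^{\ell}XY_1\le 4\cdot 2^{\ell}(\ell+1)!^2\,XY_1 .
\]
It is this factorial slack, not the factor $2^\ell$, that absorbs the constants $2^{2\ell-j}$; the $\log Y_1$ savings are not needed. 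So your argument proves the bound without the factor $\varphi(q)/q$.

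The gap is exactly that factor. You need
\[
\#\{n\in[X/Y_2,2X/Y_2]\colon d\mid n,\ (n,q)=1\}\ll\frac{\varphi(q)}{q}\frac{X}{Y_2d}
\]
for every product $d$ of at most $\ell$ primes from $[Y_1,2Y_1]$. \eqref{eq:Simplegcd} gives this only when $X/(Y_2d)\gg\tau(q)q/\varphi(q)$. A sieve upper bound can weaken that requirement, but some lower bound on $X/(Y_2d)$ is unavoidable. Your remark that the $\tau(q)$ error is ``harmless'' is precisely the step that fails.

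Indeed, the statement is false without such a condition. Take $X=Y_1=Y_2=2$ (so $\ell=1$), all coefficients equal to $1$, and $q\ge 5$ coprime to $6$. Then $Q(\chi)A(\chi)=\chi(2)+\chi(3)+\chi(4)+\chi(6)$ involves four distinct reduced residues, so by orthogonality the left-hand side equals $4$. The right-hand side is $O(\varphi(q)/q)$, which tends to $0$ along $q=\prod_{5\le p\le z}p$.

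The fix is to add a hypothesis such as $X\ge q^{\eta}2^{\ell}Y_1Y_2^{2}$ for a fixed $\eta>0$. This covers every $d\le(2Y_1)^{\ell}\le 2^{\ell}Y_1Y_2$, and under it your counting goes through verbatim. The hypothesis does hold where the paper uses the lemma: there $X\asymp qQ_1$ and $Y_1,Y_2,2^{\ell}\le\exp(O(\sqrt{\log q}))$.

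Your proposed fallback would not help either. $\max_m|b_m|$ is not bounded in terms of $\ell$ alone, because $m$ may have far more than $\ell$ prime factors in $[Y_1,2Y_1]$.
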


\begin{proof}
This is~\cite[Lemma 6.6]{KMT}.
\end{proof}

\subsection{Products in arithmetic progressions}
In order to ensure that many of the solutions we find with $n_1 \equiv n_2 \equiv a \pmod{q}$ and $h(n_1) h(n_2) < 0$ are square-free, we shall use the following lemma.

\begin{lemma} \label{le:KL=amodq}
Let $\varepsilon > 0$ be fixed. Let $a, q \in \mathbb{N}$ be such that $(a, q) = 1$. Let $K, L \geq 2$. Then
\[
\sum_{\substack{k \ell \equiv a \pmod{q} \\ k \leq K, \, \ell \leq L}} 1 \ll \frac{\varphi(q)}{q} \cdot \frac{KL}{q} + q^{1/2+\varepsilon}.
\]
\end{lemma}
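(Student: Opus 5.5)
We expand the congruence condition with multiplicative characters and treat the principal character as the main term and the rest as an error. Since $(a,q)=1$, orthogonality gives
\[
\sum_{\substack{k \ell \equiv a \pmod{q} \\ k \leq K, \, \ell \leq L}} 1 = \frac{1}{\varphi(q)} \sum_{\chi \pmod q} \overline{\chi}(a) \Bigl(\sum_{k\le K}\chi(k)\Bigr)\Bigl(\sum_{\ell \le L}\chi(\ell)\Bigr).
\]

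\textbf{Main term.} The principal character contributes $\varphi(q)^{-1}|[K]_q|\,|[L]_q|$. By~\eqref{eq:Simplegcd} this is
\[
\ll \frac{1}{\varphi(q)}\Bigl(\frac{\varphi(q)}{q}K+\tau(q)\Bigr)\Bigl(\frac{\varphi(q)}{q}L+\tau(q)\Bigr).
\]
The product $\frac{\varphi(q)}{q^2}KL$ is the main term. The cross terms are $\ll \tau(q)\max(K,L)/q$, which is only acceptable when $\max(K,L)\le q^{3/2}$. So we first reduce to $K,L\le q$.

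\textbf{Reduction to $K,L\le q$.} If $K>q$, split $[1,K]$ into complete residue blocks of length $q$ and a remainder. For each fixed $\ell$ coprime to $q$, there are $\varphi(q)/q\cdot q = 1$ choices of $k$ per full block. The full blocks therefore contribute $\ll (K/q)\,|[L]_q| \ll (K/q)\bigl(\frac{\varphi(q)}{q}L+\tau(q)\bigr)$. If also $L>q$, the $\tau(q)$ term is dominated; if $L\le q$, a more careful treatment swaps the roles of $k$ and $\ell$. A cleaner route is to do the block decomposition in both variables simultaneously. Each pair of full blocks contributes exactly $\varphi(q)$ solutions, giving $(K/q)(L/q)\varphi(q)=\frac{\varphi(q)}{q}\frac{KL}{q}$. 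A full block paired with a partial range $\ell\le L'\le q$ contributes $|[L']_q|\le q$ per block, which is $\ll K\varphi(q)/q\cdot (L'/q)+K\tau(q)/q$; since $L\ge L'$ this is fine when $L>q$. We may therefore assume $K,L\le q$ for the remaining piece, whose main term is then $\ll \varphi(q)KL/q^2+\tau(q)^2/\varphi(q)+\tau(q)$, acceptable.

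\textbf{Non-principal characters.} These contribute at most
\[
\frac{1}{\varphi(q)}\sum_{\chi\ne\chi_0}\Bigl|\sum_{k\le K}\chi(k)\Bigr|\Bigl|\sum_{\ell\le L}\chi(\ell)\Bigr|.
\]
By Cauchy--Schwarz and Lemma~\ref{le:MVT} with $K,L\le q$, this is
\[
\le \frac{1}{\varphi(q)}\bigl(2\varphi(q) K\bigr)^{1/2}\bigl(2\varphi(q) L\bigr)^{1/2}\ll (KL)^{1/2}.
\]
That is too weak by itself. We instead split at $\sqrt q$. If $K\le q^{1/2}$, bound $|\sum_{k\le K}\chi(k)|\le K$ trivially and the $\ell$-sum in $L^1$ via Cauchy--Schwarz and Lemma~\ref{le:MVT}, giving $\le K\,(\varphi(q)\cdot 2L)^{1/2}/\varphi(q)^{1/2}\cdot\varphi(q)^{-1/2}\ll K L^{1/2}/q^{1/2}\cdot q^{o(1)}\ll q^{1/2+\varepsilon}$. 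If $K\ge q^{1/2}$, we instead use the Pólya--Vinogradov bound of Lemma~\ref{le:Burgess} with $r=1$: $|\sum_{\ell\le L}\chi(\ell)|\ll q^{1/2+\varepsilon}$. Combined with Lemma~\ref{le:MVT} in $L^1$ form, $\varphi(q)^{-1}\sum_\chi|\sum_{k\le K}\chi(k)|\ll K^{1/2}$, this leaves $K^{1/2}q^{1/2+\varepsilon}$, which is too big.

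The better choice in that regime is to avoid characters altogether. Fix $k\le K$ coprime to $q$; then $\ell$ lies in a single residue class mod $q$ with $\ell\le L\le q$, so there is at most one such $\ell$. The total count is therefore $\le K$, and symmetrically $\le L$, i.e.\ $\le\min(K,L)$. The only problematic range is thus $\min(K,L)>q^{1/2+\varepsilon}$ with both at most $q$.

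\textbf{The hardest step.} In that range, the plan is to use Pólya--Vinogradov on one sum and the mean value theorem on the other, trading the two:
\[
\frac{1}{\varphi(q)}\sum_{\chi\ne\chi_0}|S_K(\chi)||S_L(\chi)|\le \max_{\chi\ne\chi_0}|S_L(\chi)|\cdot\frac{1}{\varphi(q)}\sum_\chi|S_K(\chi)|\ll q^{1/2}\log q\cdot K^{1/2}.
\]
This still exceeds $KL/q$ when $K,L$ are near $q^{1/2}$. The fix is to shorten one variable. Write $k$'s range as blocks of length $q^{1/2}$: with $K\le q^{1/2}$ effectively, the count is at most the number of $k$, while for the other variable a complete-interval count (incomplete intervals of length $L$ in the class $a\overline{k}$) gives $\le L/q+1$ choices per $k$. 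Summing yields $K(L/q+1)$. When $K\le q^{1/2}$ this is acceptable.

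For general $K$, the key step, which I expect to be the main obstacle, is to average over $k$ using the completion/Pólya--Vinogradov error $O(q^{1/2}\log q)$ in the count of $\ell\le L$ with $\ell\equiv a\overline{k}$ summed over $k$. That is, we count solutions $(k,\ell)$ as points of the hyperbola-like set $\{(k,a\overline{k})\}$ in the box $[1,K]\times[1,L]$. Completing with additive characters gives the main term $\frac{\varphi(q)}{q}\frac{KL}{q}$ plus error terms that are Kloosterman sums. The Weil bound $q^{1/2+\varepsilon}$ on these yields exactly the stated error $q^{1/2+\varepsilon}$.
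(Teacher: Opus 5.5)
Your final paragraph uses the same method as the paper. For each $k$ you detect $\ell\equiv a\overline{k}\pmod q$, $\ell\le L$, with additive characters (the paper uses a smooth majorant and Poisson summation), keep the zero frequency as the main term, and bound the remaining frequencies by incomplete Kloosterman sums in $k$. The multiplicative-character attempts before it are abandoned or superseded, so that paragraph is the whole proof, and it is only asserted.

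\textbf{The gap.} ``The Weil bound $q^{1/2+\varepsilon}$ on these yields exactly the stated error'' does not follow as stated. There are $\asymp q$ nonzero frequencies, and bounding each by Weil with the trivial weight $L/q$ gives an error of size about $Lq^{1/2}$. Write the count as
\[
\frac{1}{q}\sum_{h \bmod q}\Bigl(\sum_{\ell\le L}e\Bigl(\frac{h\ell}{q}\Bigr)\Bigr)\Bigl(\sum_{\substack{k\le K\\ (k,q)=1}}e\Bigl(-\frac{ah\overline{k}}{q}\Bigr)\Bigr).
\]
You then need three facts.
\begin{enumerate}
\item[(i)] For $0<|h|\le q/2$ one has $|\sum_{\ell\le L}e(h\ell/q)|\ll \min(L,q/|h|)$. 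This decay is what makes the frequency sum cost only a logarithm; in the paper it comes from the decay of $\widehat{H}$, which truncates to $|h|\le q^{1+\varepsilon}/L$.
\item[(ii)] The $k$-sum is an \emph{incomplete} Kloosterman sum to a composite modulus. Completing it and applying the Weil--Estermann bound gives $\ll (K/q+1)q^{1/2+\varepsilon}(h,q)^{1/2}$, which carries both a gcd factor and a factor $K/q+1$.
\item[(iii)] One has $\sum_{0<|h|\le q/2}(h,q)^{1/2}/|h|\ll \tau(q)\log q$.
\end{enumerate}
Together these give an error $\ll (K/q+1)q^{1/2+2\varepsilon}$. The $K/q$ part must still be absorbed: the paper assumes $L\ge K$, and your reduction to $K,L\le q$ would also work. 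The zero frequency is $\frac{L}{q}|[K]_q|=\frac{\varphi(q)}{q}\frac{KL}{q}+O(\tau(q)L/q)$, which is acceptable once $L\le q$.

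\textbf{Two smaller points.}

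First, in the character part, for $K\le q^{1/2}$ Cauchy--Schwarz and Lemma~\ref{le:MVT} give
\[
\frac{1}{\varphi(q)}\sum_{\chi}|S_K(\chi)S_L(\chi)|\le K\bigl((1+L/q)L\bigr)^{1/2}\ll KL^{1/2},
\]
not $KL^{1/2}q^{-1/2}$. This is harmless only because you then switch to the correct trivial bound $\min(K,L)$ for $K,L\le q$.

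Second, the case $K>q\ge L$ of your reduction, which you left vague, cannot be completed when $L$ is tiny, because there the stated bound is false. Take $q$ running through primorials, $a=1$, $K=2$, $L=q^3$. The solutions with $k=1$ alone number $q^2$. The right-hand side is $\ll \varphi(q)q+q^{1/2+\varepsilon}$, and $q/\varphi(q)\to\infty$. The paper's proof has the same slip: its $h=0$ term equals $\frac{\varphi(q)}{q}\frac{KL}{q}$ only up to an extra $O(\tau(q)\max(K,L)/q)$. That extra term is harmless in the application in Lemma~\ref{le:reductiontoBoundingS}, where $\min(K,L)\ge q^{1/2}$.
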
 
\begin{proof}
Both sides are symmetric in $K$ and $L$ and thus we can assume $L \geq K$.

Let $H \colon \mathbb{R} \to \mathbb{R}_{\geq 0}$ be a fixed smooth function for which $H(x) = 1$ for $|x| \leq 1$ and $H(x) = 0$ for $|x| \geq 2$. By the Poisson summation formula (see e.g.~\cite[equation (4.24)]{iw-kow}) and the superpolynomial decay of $\widehat{H}$, we have 
\begin{align} \label{eq:klbound} \begin{aligned}
\sum_{\substack{k \ell \equiv a \pmod{q} \\ k \leq K, \, \ell \leq L}} 1 &\leq \sum_{(k, q) = 1} H\left(\frac{k}{K}\right) \sum_{\ell \equiv a \overline{k} \pmod{q}} H\left(\frac{\ell}{L}\right) \\
&= \sum_{(k, q) = 1} H\left(\frac{k}{K}\right) \frac{L}{q} \sum_{h \in \mathbb{Z}} \widehat{H}\left(\frac{Lh}{q}\right) e\left(\frac{a\overline{k}}{q}h\right) \\
& \ll  \frac{L}{q} \sum_{|h| \leq \frac{q^{1+\varepsilon}}{L}} \left|\sum_{(k, q) = 1} H\left(\frac{k}{K}\right) e\left(\frac{a\overline{k}}{q}h\right)\right| + O(q^{-100}).
\end{aligned}
\end{align}
The term with $h=0$ contributes $\ll \frac{\varphi(q)}{q} \cdot \frac{KL}{q}$. For the remaining terms, the inner sum is an incomplete Kloosterman sum, and using partial summation and a bound for incomplete Kloosterman sums (see for example~\cite[Equation (2)]{kor}),  we see that the contribution of the terms with $h \neq 0$ is
\[
\ll \frac{L}{q} \sum_{\substack{0 < |h| \leq \frac{q^{1+\varepsilon}}{L}}} \left(\frac{K}{q}+1\right)\left(q^{1/2+\varepsilon}(h, q)^{1/2}\right) = \left(\frac{K}{q}q^{1/2+\varepsilon}+q^{1/2+\varepsilon}\right) \frac{L}{q}\sum_{\substack{0 < |h| \leq \frac{q^{1+\varepsilon}}{L}}} (h, q)^{1/2}.
\]
Here 
\begin{align*}
\sum_{0 < |h|\leq \frac{q^{1+\varepsilon}}{L}}(h,q)^{1/2} \leq \sum_{r \mid q} r^{1/2} \sum_{|h| \leq q^{1+\varepsilon}/(Lr)} 1 \ll \frac{q^{1+2\varepsilon}}{L}.  
\end{align*}
Combining with~\eqref{eq:klbound} and the contribution of the case $h= 0$, we obtain 
\[
\sum_{\substack{k \ell \equiv a \pmod{q} \\ k \leq K, \, \ell \leq L}} 1 \ll \frac{\varphi(q)}{q} \cdot \frac{KL}{q} + \frac{Kq^{1/2+3\varepsilon}}{q} + q^{1/2+3\varepsilon}.
\]
The second term dominates only if $K \geq q$ and $L \leq q^{1/2+2\varepsilon} \frac{q}{\varphi(q)}$ which contradicts the assumption $L \geq K$. Hence the claim follows by adjusting $\varepsilon$.
\end{proof}

\section{Proof of Theorem~\ref{thm:general}: The set-up}\label{sec:general1}
Now we are ready to turn to the proof of Theorem~\ref{thm:general}. The overall strategy is the same as in the proof of Theorem~\ref{thm:generaleasy}, but we need to invoke the Matom\"aki--Radziwi{\l\l} method~\cite{matomaki-radziwill} to be able to prove a counterpart of Lemma~\ref{lemma:le:STcomparisonEasy}, and while doing so, we need to be very careful not to lose density.

We start by fixing some notation for the rest of this paper. Let $\varepsilon > 0$ be sufficiently small. Assume that $q$ is sufficiently large in terms of $\varepsilon$. Furthermore, let $h\colon \mathbb{N}\to \mathbb{R}\setminus\{0\}$ be multiplicative.

For $k \in \mathbb{Z}$ and $y \geq 1$, let
\begin{equation*}
  I_y(k) := (e^{k-1} y, e^k y].
\end{equation*}
Let $Q_1$ be as in the statement of Theorem~\ref{thm:general} and let $P_1 = Q_1/e$. By adjusting $\varepsilon$, we can assume that $Q_1 \leq q^{\varepsilon^2}.$
For $j = 2, \dotsc, J$, let
  \begin{equation} \label{eq:PjQjdef}
     P_j \coloneqq \exp\left(j^{4j}(\log Q_1)^j\right) \quad \text{and} \quad Q_j \coloneqq \exp\left(100 j^{4j+2} (\log Q_1)^j\right),
   \end{equation}
   with $J$ being the largest index such that $Q_J \leq \exp((\log q)^{1/2})$. We let $\mathcal{S}$ be the set of all integers that have at least one prime factor from each interval $(P_j, Q_j]$ with $j \in \{2, 3, \dotsc, J\}$.

Let also
\[
  K \coloneqq \lfloor \varepsilon^2 \log q \rfloor, \,  R:=q^{1/2-\varepsilon/4}, \, M := \frac{q}{R^2} = q^{\varepsilon/2}, \, U:=\frac{q}{R} = q^{1/2+\varepsilon/4}, \, z:= q^{\sqrt{\varepsilon}},
\]
and, for every integer $v \in [-3K, 3K]$, every $\Delta \in \{+, -\}$, and every $B \subseteq \mathbb{Z}_q^\times$, define the sets
  \begin{align*} 
    \mathcal{Q}_B^\Delta&:= \{p \in (P_1,Q_1], \, p \in B, \, \sgn(h(p)) = \Delta\}, \\
    \mathcal{U}_{B, v}^\Delta&:=\{u \in I_U(v) \colon |\mu(u)| = 1, \, u \in B, \, \sgn(h(u)) = \Delta\}, \\
      \mathcal{M}_{B, v}^\Delta&:=\{m \in I_M(v) \colon |\mu(m)| = 1, \, m \in \mathcal{S}, \, m \in B, \, \sgn(h(m)) = \Delta, (m, P(Q_1)) = 1\}.
\end{align*}
For $|k|\leq K$, define
\begin{equation}
\label{eq:fkDef}
f^{\Delta}_k(n) =  \prod_{\substack{p < z \\ p \nmid q}} \left(1-\frac{1}{p}\right)^{-1} \mathbf{1}_{\sgn(h(n)) = \Delta}\mathbf{1}_{(n, P(z)) = 1}\mathbf{1}_{n \in [I_R(k)]_q}.
\end{equation}

For $\overline{\Delta} = (\Delta_1, \dotsc, \Delta_6) \in \{+, -\}^6, \, \mathcal{K} \subseteq (\mathbb{Z} \cap [-K, K])^3$ and $B_4, B_5, B_6 \subseteq \mathbb{Z}_q^\times$, define the function $S_{B_4, B_5, B_6}^{\overline{\Delta}, \mathcal{K}} \colon \mathbb{Z}_q^\times \to \mathbb{R}_{\geq 0}$ by
\begin{align} \label{eq:SgenDef} \begin{aligned} 
&  S^{\overline{\Delta}, \mathcal{K}}_{B_4, B_5, B_6}(a) \\ &:= \sum_{\substack{\overline{k} \in \mathcal{K} \\ \overline{k} = (k_1, k_2, k_3)}} \frac{1}{S_{\overline{k}}} \sum_{\substack{n \equiv a \pmod{q}}} \left(f_{k_1}^{\Delta_1} \ast f_{k_2}^{\Delta_2} \ast f_{k_3}^{\Delta_3}  \ast \1[\mathcal{Q}^{\Delta_4}_{B_4}] \ast \1[\mathcal{U}^{\Delta_5}_{B_5, -k_1}] \ast \1[\mathcal{M}^{\Delta_6}_{B_6, -k_2-k_3}] \right)(n),
\end{aligned}
\end{align}
where
\[
 S_{\overline{k}} := |I_R(k_1)| \cdot |I_R(k_2)| \cdot |I_R(k_3)| \cdot Q_1 \cdot Ue^{-k_1} \cdot Me^{-k_2-k_3} \asymp q^2 Q_1.
\]
\begin{remark} \label{rem:ShapeSgen}
Notice that if a natural number $n$ is counted by $S_{B_4, B_5, B_6}^{\overline{\Delta}, \mathcal{K}}(a)$, then for some $\overline{k} = (k_1, k_2, k_3) \in \mathcal{K}$, we have $n\equiv a\pmod q, \,\sgn(h(n)) = \Delta_1\dotsm \Delta_6 = \Delta$ and 
\begin{equation} \label{eq:ngendec}
  n = r_1r_2r_3 \cdot p_1 \cdot u \cdot m \leq q^2 Q_1,
\end{equation}
where, for $i \in \{1, 2,3\}$, 
\begin{itemize}
\item $p_1 \in (P_1, Q_1]$ is a prime;
    \item $r_i\in [I_R(k_i)]_q$ and $(r_i,P(z))=1$;
      \item $u$ is a square-free integer with $u \in I_U(-k_1)$;
      \item $m$ is a square-free integer such that $m \in \mathcal{S}, \, m \in I_M(-k_2-k_3),$ and $(m, P(Q_1)) = 1.$
      \end{itemize}
The set-up is similar as in Remark~\ref{rem:ShapeS}, but now we have the new factor $m \in \mathcal{S}$ that we shall utilize in our Matom\"aki--Radziwi{\l\l} type argument and the sum over $\overline{k}$ that ensures that the density of the set we are working with is sufficient for successful applications of the mean value theorem. For applying the dense model theorem, it is convenient to have primes from $e$-adic intervals $I_R(k_i)$, but if we did not sum over $k_i$, we would only work with numbers that have prime factors from three fixed $e$-adic intervals and would lose $\asymp 1/\log^3 R$ in density. 
\end{remark}

\begin{lemma} \label{le:reductiontoBoundingS}
Let $Q_1 \geq 3$ be sufficiently large. Let $\Delta \in \{+, -\}$ and $a \in \mathbb{Z}_q^\times$. Assume that there exist $\overline{\Delta} = (\Delta_1, \dotsc, \Delta_6) \in \{+, -\}^6, \, \mathcal{K} \subseteq (\mathbb{Z} \cap [-K, K])^3$ and $B_4, B_5, B_6 \subseteq \mathbb{Z}_q^\times$ such that $\Delta_1 \dotsm \Delta_6 = \Delta$ and $S_{B_4, B_5, B_6}^{\overline{\Delta}, \mathcal{K}}(a) \gg \frac{\varphi(q)}{q} \cdot \frac{\log^3 q}{q Q_1^{1/100} \log^2 Q_1}$. Then $a \in E_h^{\Delta}(q^{2}Q_1)$.
\end{lemma}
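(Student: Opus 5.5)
By Remark~\ref{rem:ShapeSgen}, every $n$ counted by $S_{B_4, B_5, B_6}^{\overline{\Delta}, \mathcal{K}}(a)$ satisfies $n \equiv a \pmod{q}$ and $n \leq q^2 Q_1$. If $n$ is moreover square-free, then $\sgn(h(n)) = \Delta_1 \dotsm \Delta_6 = \Delta$, by multiplicativity and coprimality of the factors. So, as in Lemma~\ref{le:reductiontoBoundingSEasy}, it suffices to show that non-square-free $n$ contribute $o\bigl(\frac{\varphi(q)}{q}\cdot \frac{\log^3 q}{qQ_1^{1/100}\log^2 Q_1}\bigr)$ to $S_{B_4, B_5, B_6}^{\overline{\Delta}, \mathcal{K}}(a)$. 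Each factor $r_i$, $p_1$, $u$, $m$ is square-free, and all factors except $u$ avoid primes $<Q_1/e$. Hence a non-square-free $n = r_1r_2r_3p_1um$ is divisible by $p^2$ for some prime $p$ dividing two distinct factors. We have $p > P_1 = Q_1/e$ unless $p$ divides $u$ and some other factor, which is impossible for small $p$ since the others have no prime factors below $Q_1/e$. Also $p \leq \max(U e^{K}, R e^K) \leq q^{1/2+2\varepsilon}$.

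The weights $1/S_{\overline{k}} \asymp 1/(q^2Q_1)$ are uniform, and the $f$'s carry the factor $\prod_{p<z,\, p\nmid q}(1-1/p)^{-1} \ll \log q$. Summing over $\overline{k}$ with $n$ fixed just records how $n$ factors, so the total over $\mathcal{K}$ is bounded by $\frac{\log^3 q}{q^2Q_1}$ times $\sum_{n \leq q^2Q_1,\, n\equiv a,\, p^2\mid n} \tau_6(n)$. For $p \leq q^{1/2-\eta}$ I plan to bound this by
\[
\sum_{P_1<p\leq q^{1/2-\eta}} \frac{q Q_1}{p^2} (\log q)^{O(1)} \ll \frac{qQ_1}{Q_1}(\log q)^{O(1)}.
\]
This gives a contribution $\ll \frac{(\log q)^{O(1)}}{q\,Q_1}$, which is acceptable since $Q_1^{-1}(\log q)^{O(1)}$ is much smaller than $Q_1^{-1/100}$ times $\varphi(q)/q \gg 1/\log\log q$. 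The residual $(\log q)^{O(1)}$ loss versus $Q_1^{99/100}$ is harmless only when $Q_1 \geq (\log q)^{C}$. Since we only assume $Q_1 \geq B(q)$ is large, I would not use the crude $\tau_6$ bound. Instead I would keep the structure: $p_1$ contributes density $1/\log Q_1$ and the rough $r_i$ each contribute $\asymp 1$ after the sieve normalisation. This gives a bound $\frac{1}{q}\cdot\frac{1}{Q_1}\cdot(\text{bounded})$ summed over $p>Q_1/e$, i.e. $\ll \frac{1}{qQ_1\log Q_1}$, which beats the target.

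The main obstacle is the large primes $p \in (q^{1/2-\eta}, q^{1/2+2\varepsilon}]$. Here $p^2$ is comparable to $q$, so counting $m' \leq q^2Q_1/p^2$ in a residue class loses the $O(1)$ term. For these I would use Lemma~\ref{le:KL=amodq}: write $n = p^2 \cdot k \ell$ with $k,\ell$ products of the remaining factors of sizes $K_0, L_0$ and $K_0L_0 \asymp q^2Q_1/p^2$. The count of $k\ell \equiv a\overline{p^2}$ is $\ll \frac{\varphi(q)}{q}\frac{K_0L_0}{q} + q^{1/2+\varepsilon'}$. Summed over $p \leq q^{1/2+2\varepsilon}$, the second term gives $\ll q^{1+3\varepsilon}$ in total. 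This is acceptable against $q^2Q_1\cdot q^{-1}Q_1^{-1/100}$ provided I first restrict the $p$-range using that $p$ must divide two of the factors. In particular, both copies of $p$ lie in $r_i$ or $u$ (since $m,p_1 \leq q^{\varepsilon}$), so the number of admissible $p$ is only $\ll q^{1/2+2\varepsilon}/\log q$, and the pair-count per $p$ then carries the extra saving needed. Balancing these exponents is where I expect to spend the effort.
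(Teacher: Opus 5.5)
Your reduction to bounding the non-square-free contribution is right, but the proposal does not close. The two places you leave open are exactly where specific ideas are needed.

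\textbf{The range of the repeated prime.} You allow $p\le q^{1/2+2\varepsilon}$. Then the error term $q^{1/2+\varepsilon'}$ of Lemma~\ref{le:KL=amodq}, summed over $p$, is about $q^{1+2\varepsilon}$. This is far above the admissible count $\approx qQ_1^{99/100}$ (recall $Q_1\le q^{\varepsilon^2}$). Your count of admissible primes, $\ll q^{1/2+2\varepsilon}/\log q$, saves only a logarithm.

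The missing observation is that $u$ is square-free and is the only factor that can exceed $Re^K$. So one of the two factors divisible by $p$ is some $r_i$, $p_1$ or $m$, which gives $P_1<p\le Re^K=q^{1/2-\varepsilon/4+\varepsilon^2}$. The error terms then sum to $\ll \pi(Re^K)\,q^{1/2+\varepsilon^2}\ll q^{1-\varepsilon/4+2\varepsilon^2}$, which is negligible. There is then no regime with $p^2\asymp q$ at all.

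\textbf{The multiplicity.} Weighting by $\tau_6(n)$ costs $(\log q)^{O(1)}$, which is fatal because $Q_1$ may be bounded. Your proposed repair, that each rough $r_i$ contributes density $\asymp 1$, is not justified. Once the congruence mod $q$ is imposed, one variable is essentially pinned down, and you would need equidistribution of rough numbers in residue classes. No such saving is needed anyway, since the factor $\log^3 q$ from bounding each $f_{k_i}$-weight trivially is already in the target.

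The paper instead groups $n=k_1\ell_1$ with $k_1=r_1u\le q$ and $\ell_1=r_2r_3mp_1\le qQ_1$; the $e$-adic shifts cancel. A given pair $(k_1,\ell_1)$ arises from $O_\varepsilon(1)$ factorisations. This is because the $r_i$ are $z$-rough, and $p_1$ is essentially the only prime factor of $mp_1$ in $(P_1,Q_1]$, as $(m,P(Q_1))=1$. So one just counts pairs with $k_1\ell_1\equiv a \pmod{q}$, with weight $\ll \log^3 q/(q^2Q_1)$, in three cases:
\begin{itemize}
\item If $p^2\mid \ell_1$, each $\ell_1$ has at most one partner $k_1\le q$.
\item If $p^2\mid k_1$, each $k_1$ has $\ll Q_1$ partners $\ell_1\le qQ_1$.
\item If $p\mid (k_1,\ell_1)$, trivial counting loses $\sum_{p} q/p$, at every scale of $p$ and not only near $q^{1/2}$, which is unaffordable for small $Q_1$.
\end{itemize}
The first two cases give $\ll \frac{\varphi(q)}{q}\cdot\frac{qQ_1}{P_1}$ after summing over $p$. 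The third case is where Lemma~\ref{le:KL=amodq} is genuinely needed, with lengths $q/p$ and $qQ_1/p$. Its error term is affordable precisely because $p\le Re^K$.
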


    \begin{proof} 
      Let $B_4, B_5, B_6,$ and $\overline{\Delta}$ be as in the statement. By Remark~\ref{rem:ShapeSgen} it suffices to show that the contribution of non-square-free integers to $S_{B_4, B_5, B_6}^{\overline{\Delta}, \mathcal{K}}(a)$ is $o(\frac{\varphi(q)}{q} \cdot \frac{\log^3 q}{q Q_1^{1/100} \log^2 Q_1}).$ If an integer counted by $S_{B_4, B_5, B_6}^{\overline{\Delta}, \mathcal{K}}(a)$ is not square-free, it must be divisible by a prime square $p^2$ with $p \in [P_1, Re^K]$. We write $n$ as in~\eqref{eq:ngendec} as $n = k_1 \ell_1$ with $k_1= r_1 u \leq q$ and $\ell_1 = r_2r_3 m p_1 \leq qQ_1$.

      Consider the contribution of $n = k_1 \ell_1$ such that $p^2 \mid k_1 \ell_1$ for some $p \in [P_1, Re^K]$. The contribution of $n$ with $p^2 \mid \ell_1$ is
      \[
\ll \frac{\log^3 q}{q^2 Q_1} \sum_{P_1 < p \leq Re^K} \sum_{\substack{\ell \leq qQ_1/p^2 \\ (\ell, q) = 1}} \sum_{k_1 \leq q}  \mathbf{1}_{k_1 \ell p^2 \equiv a \pmod{q}} \ll \frac{\varphi(q)}{q} \cdot \frac{\log^3 q}{q P_1},
        \]
the contribution of $n$ with $p^2 \mid k_1$ is
\[
 \ll \frac{\log^3 q}{q^2 Q_1} \sum_{P_1 < p \leq Re^K} \sum_{\substack{k \leq q/p^2 \\ (k, q) = 1}} \sum_{\ell_1 \leq qQ_1} \mathbf{1}_{k \ell_1 p^2 \equiv a \pmod{q}} \ll \frac{\varphi(q)}{q} \frac{\log^3 q}{qP_1}
\]
and the contribution of $n$ with $p \mid (k_1, \ell_1)$ is
        \begin{align*}
&\ll \frac{\log^3 q}{q^2 Q_1} \sum_{P_1 < p \leq Re^K} \sum_{k \leq q/p} \sum_{\substack{\ell \leq qQ_1 /p}} \mathbf{1}_{k \ell p^2 \equiv a \pmod{q}}.
\end{align*}
Applying Lemma~\ref{le:KL=amodq} (with $\varepsilon^2$ in place of $\varepsilon$), we see that this is also $\ll \frac{\varphi(q)}{q} \frac{\log^3 q}{q P_1}$, and thus the total contribution of non-square-free integers to $S_{B_4, B_5, B_6}^{\overline{\Delta}, \mathcal{K}}(a)$ is
\[
  \ll \frac{\varphi(q)}{q} \cdot \frac{\log^3 q}{q P_1} = o\left(\frac{\varphi(q)}{q} \cdot \frac{\log^3 q}{q Q_1^{1/100} \log^2 Q_1}\right).
  \]
\end{proof}

\section{Proof of Theorem~\ref{thm:general}: Applying the dense model theorem}
Let $D = q^{1/100},$ recall that $z = q^{\sqrt{\varepsilon}}$, and let $\lambda_d^+$ be the upper bound sieve coefficients from Lemma~\ref{le:fundsieve} with these parameters and $\kappa = 1$ (and $s = 1/(100\sqrt{\varepsilon})$). Now, for $\Delta \in \{+, -\}$ and $|k| \leq K$, the function $f_k^\Delta$ has a majorant $\nu_k \colon \mathbb{Z} \to \mathbb{R}_{\geq 0}$ given by
\begin{equation}
\label{eq:nukDef}
\nu_k(n) := \prod_{\substack{p < z \\ p \nmid q}} \left(1-\frac{1}{p}\right)^{-1} \sum_{\substack{d \mid n \\ d \leq D}} \lambda_d^+\cdot \mathbf{1}_{n \in [I_R(k)]_q}.
\end{equation}
We can establish Proposition~\ref{prop:f=g+h}(A1, A2) as in Section~\ref{ssec:easyset-up} (the only difference is the slightly different supports of the functions) and thus we may apply Proposition~\ref{prop:f=g+h} with the same parameters as there and make the following definition.

\begin{definition} \label{def:gkDef}
For each $|k| \leq K$ and $\Delta \in \{+, -\}$, let $g_k^{\Delta} \colon \mathbb{Z}_q^\times \to [0, 1+\varepsilon^2]$ be the function obtained from Proposition~\ref{prop:f=g+h} with $r = 2$ and $\delta, f^\Delta_k,$ and $\nu_k$ as in~\eqref{eq:deltadef},~\eqref{eq:fkDef} and~\eqref{eq:nukDef}. 
\end{definition}

In order to lower bound $S_{B_4, B_5, B_6}^{\overline{\Delta}, \mathcal{K}}(a)$, we compare the function $S^{\overline{\Delta}, \mathcal{K}}_{B_4, B_5, B_6}$ with the function  $T_{B_4, B_5, B_6}^{\overline{\Delta}, \mathcal{K}} \colon \mathbb{Z}_q^\times \to \mathbb{R}_{\geq 0}$ defined by
\begin{align} \label{eq:Tgendef}
  T^{\overline{\Delta}, \mathcal{K}}_{B_4, B_5, B_6}(a) &:= \sum_{\substack{\overline{k} \in \mathcal{K}}} \frac{1}{T_{\overline{k}}} \left(g_{k_1}^{\Delta_1} \ast g_{k_2}^{\Delta_2} \ast g_{k_3}^{\Delta_3} \ast \1[\mathcal{Q}^{\Delta_4}_{B_4}] \ast \1[\mathcal{U}^{\Delta_5}_{B_5, -k_1}] \ast \1[\mathcal{M}^{\Delta_6}_{B_6, -k_2-k_3}]\right)(a),
\end{align}
where
\begin{equation}\label{eq:Tkbardef}
  T_{\overline{k}} := |\mathbb{Z}_q^\times|^3 \cdot Q_1 \cdot Ue^{-k_1} \cdot Me^{-k_2-k_3}.
\end{equation}
We will prove the following lemma.
\begin{lemma}\label{lemma:le:STcomparison}
Let $\mathcal{K} = \mathcal{K}_1 \times \mathcal{K}_2$ with $\mathcal{K}_1 \subseteq \mathbb{Z} \cap [-K, K]$ and $\mathcal{K}_2 \subseteq (\mathbb{Z} \cap [-K, K])^2$. Let $H \leq \mathbb{Z}_q^\times$ be of bounded index and let, for $i \in \{4, 5, 6\}$, $b_i \in \mathbb{Z}_q^\times$ and $B_i = b_i H$. Let $\overline{\Delta} = (\Delta_1, \dotsc, \Delta_6) \in \{+, -\}^6$, and let $S_{B_4, B_5, B_6}^{\overline{\Delta}, \mathcal{K}}$ and $T_{B_4, B_5, B_6}^{\overline{\Delta}, \mathcal{K}}$ be as in~\eqref{eq:SgenDef} and~\eqref{eq:Tgendef}. Then, for all $a \in \mathbb{Z}_q^\times$,
 \begin{align*}
&|S_{B_4, B_5, B_6}^{\overline{\Delta}, \mathcal{K}}(a) - T_{B_4, B_5, B_6}^{\overline{\Delta}, \mathcal{K}}(a)|\\
\ll& \frac{Q_1^{-1/90}}{q} \frac{\varphi(q)}{q} \log^3q + \frac{\log^{7/4} q}{q \log^2 Q_1} \frac{(q, P(Q_1))}{\varphi((q, P(Q_1)))} \sum_{k_1 \in \mathcal{K}_1} \frac{|\mathcal{U}_{B_5, -k_1}^{\Delta_5}|}{e^{-k_1} U}.
  \end{align*}
\end{lemma}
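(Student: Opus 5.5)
We expand $S-T$ in characters exactly as in the proof of Lemma~\ref{lemma:le:STcomparisonEasy}. With $F_k^{\Delta}(\chi)=\mathbb{E}_{n\in[I_R(k)]_q}f_k^\Delta(n)\overline\chi(n)$, $G_k^\Delta(\chi)$ the dense analogue, and normalised sums $Q(\chi)$, $U_{k_1}(\chi)$, $M_{k_2+k_3}(\chi)$ over $\mathcal{Q}^{\Delta_4}_{B_4}$, $\mathcal{U}^{\Delta_5}_{B_5,-k_1}$, $\mathcal{M}^{\Delta_6}_{B_6,-k_2-k_3}$, we write $F_1F_2F_3-G_1G_2G_3$ telescopically as three terms, each containing one difference $F_{k_i}-G_{k_i}$. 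Proposition~\ref{prop:f=g+h}(iii) bounds each difference by $|F_{k_i}|$ and each $|G|$ by $|F|$, and (ii) bounds each difference by $\delta$. Because $\mathcal{K}=\mathcal{K}_1\times\mathcal{K}_2$, the sum over $k_1$ factors off from the sum over $(k_2,k_3)$. We therefore bound
\[
\frac{1}{\varphi(q)}\sum_\chi \min(\delta,|F|)\,|Q(\chi)|\Bigl(\sum_{k_1\in\mathcal{K}_1}|F_{k_1}||U_{k_1}|\Bigr)\Bigl(\sum_{(k_2,k_3)\in\mathcal{K}_2}|F_{k_2}||F_{k_3}||M_{k_2+k_3}|\Bigr).
\]
Here $\sum_{k_1}F_{k_1}U_{k_1}$ is essentially a character sum of length $\asymp q$ supported on $r_1u$. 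Similarly, the $(k_2,k_3)$ sum is a sum of length $q$ over $r_2r_3m$. Since $H$ has bounded index, the coset restrictions $B_i=b_iH$ can be detected by finitely many characters, which only shifts $\chi$.

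Next we partition the characters as in the Matom\"aki--Radziwi{\l}{\l} method. Let $\mathcal{X}_1$ be the set where $|Q(\chi)|\le Q_1^{-1/40}$. For $j\ge2$, let $\mathcal{X}_j$ be the set of remaining characters whose prime sum over $(P_j,Q_j]$ is $\le P_j^{-1/40}$-small, and let $\mathcal{Y}$ be the rest. On $\mathcal{X}_1$ we argue exactly as in the easy case. The pointwise saving $Q_1^{-1/40}$ is combined with Cauchy--Schwarz and the mean value theorem (Lemma~\ref{le:MVT}) applied to the two length-$q$ sums. This gives the $Q_1^{-1/90}\frac{\varphi(q)}{q}\log^3 q/q$ term, where the $\log^3 q$ comes from the $K^3$ choices of $\overline k$ being absorbed by the $\ell^2$ structure.

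On $\mathcal{X}_j$ we use a Ramar\'e-type identity. We factor $m=p\,m'$ with $p\in(P_j,Q_j]$, weighting by the reciprocal of the number of such prime divisors. This introduces error terms from $p^2\mid m$ and from boundary effects of the $e$-adic intervals, which are controlled by Lemmas~\ref{le:squarecontr} and~\ref{le:shortscontr}. We then split $p$ into intervals $(e^{v/H},e^{(v+1)/H}]$. On $\mathcal{X}_j$ the condition says that $|Q(\chi)|$ is large while the $j$-th prime sum at scale $P_j$ is small. We then apply the amplification Lemma~\ref{le:amplify} with the $(j-1)$-th prime sum at scale $P_{j-1}$, raised to the power $\ell\approx\log Q_j/\log P_{j-1}$, to absorb the large size of $\mathcal{X}_j$. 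The rapid growth of $P_j,Q_j$ in \eqref{eq:PjQjdef} is chosen precisely so that the gain $P_j^{-1/20}$ beats the loss $(\ell+1)!^2\,Q_{j-1}^{O(\ell)}$. Summing over $j$ contributes to the first error term. The sieve density of $m$ having no prime factor below $Q_1$ produces the factor $\frac{(q,P(Q_1))}{\varphi((q,P(Q_1)))}\log^{-2}Q_1$ type normalisations.

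On $\mathcal{Y}$ every prime sum is large, including the one at scale $Q_J\approx\exp((\log q)^{1/2})$. Lemma~\ref{le:largevalue} then gives $|\mathcal{Y}|\ll q^{o(1)}$, and in fact $\ll\exp(O((\log q)^{1/2}\dots))$. On this set we use $\delta=\log^{-1/4}q$ from (ii), the trivial bounds $|Q|\ll1/\log Q_1$ and $|M|\ll \frac{(q,P(Q_1))}{\varphi(\cdot)}\frac{1}{\log Q_1}$ (from the sieve on $m$), and $|U_{k_1}|\le|\mathcal{U}|/(e^{-k_1}U)$. Lemma~\ref{le:Hal-Mon} bounds $\sum_{\chi\in\mathcal{Y}}|F_k|^2\ll1$ for each $k$, and summing over $k_2,k_3$ costs $\log^2 q$. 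This yields the second term, with $\log^{7/4}q=\log^2 q\cdot\delta$.

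The main obstacle is the $\mathcal{X}_j$ analysis for $j\ge2$. The delicate part is carrying out the Ramar\'e decomposition inside the convolution with varying $e$-adic ranges $I_M(-k_2-k_3)$ without losing the logarithmic density factors, while checking that the amplified mean value bound gives a genuine power saving in $P_j$ for every $j$.
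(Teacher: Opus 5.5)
Your architecture matches the paper's: character expansion, the partition $\mathcal{X}_1,\dots,\mathcal{X}_J,\mathcal{Y}$, the Ramar\'e identity on $m$ with errors handled by Lemmas~\ref{le:squarecontr} and~\ref{le:shortscontr}, amplification via Lemma~\ref{le:amplify}, and on $\mathcal{Y}$ Proposition~\ref{prop:f=g+h}(ii) together with Lemmas~\ref{le:largevalue} and~\ref{le:Hal-Mon}. Your $\mathcal{Y}$ bookkeeping ($\delta\log^2 q=\log^{7/4}q$) is right.

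\textbf{The gap.} The step you call the main one, the $\mathcal{X}_j$ iteration, fails as you specify it. With the threshold $P_j^{-1/40}$, or any threshold using the same exponent at every level, amplification gives no saving. Suppose $\chi\notin\mathcal{X}_{j-1}$ because of a large prime sum at scale $Y_1\in[P_{j-1},Q_{j-1}]$, and you want to remove the $j$-th sum at scale $Y_2\in[P_j,Q_j]$. Inserting $1\le (Y_1^{\alpha}|Q_{j-1}|)^{\ell}$ with $\ell=\lceil \log Y_2/\log Y_1\rceil$ costs $Y_1^{\ell\alpha}\ge Y_2^{\alpha}$. This cancels the whole saving. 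With your $P_j$-based threshold, $Y_1\approx P_{j-1}$ and $Y_2\approx Q_j$, it even exceeds the saving by $(Q_j/P_j)^{1/40}$. Your loss accounting also cannot work: a loss of the form $Q_{j-1}^{O(\ell)}$ is never beaten by $P_j^{-1/20}$, since $Q_{j-1}^{\ell}\ge Y_1^{\ell}\ge Y_2\ge P_j$.

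\textbf{The missing idea.} You need the Matom\"aki--Radziwi{\l}{\l} choice of thresholds. They are attached to each short-interval sum and depend on its scale: $|Q_{j,bH,w}^{\Delta}(\chi)|\le e^{-\alpha_j w/H_j}$ for all $w\in\mathcal{I}_j$, all $b$ and all $\Delta$. The exponents must be strictly increasing, e.g.\ $\alpha_j=\frac{1}{40}-\eta(1+\frac{1}{2j})$. Then $|Q_{j,b_6H,w}^{\Delta_6}(\chi)|\le e^{d_j}|Q_{j-1,b_8H,u}^{\Delta_8}(\chi)|^{\ell}$ with $d_j\le-(\alpha_j-\alpha_{j-1})\log Y_2+\alpha_{j-1}\log Y_1\le-\frac{\eta}{4j^2}\log P_j$. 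In Lemma~\ref{le:amplify}, the normalisation $Y_1^{-2\ell}$ of $|Q_{j-1,\cdot,u}|^{2\ell}$ is exactly matched by the length $\asymp qQ_1/Y_2$ of the complementary sum. So the only loss is $Y_1^{4}\ell^{3\ell}|\mathcal{I}_j||\mathcal{I}_{j-1}|$. The growth in \eqref{eq:PjQjdef} is used precisely to make this negligible against $P_j^{-\eta/(4j^2)}$, i.e.\ to make $\frac{\eta}{2j(j-1)}\log P_j$ dominate $\log Q_{j-1}$ and $\ell\log\ell$.

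\textbf{Two smaller points.}
\begin{itemize}
\item Your displayed reduction puts absolute values inside the $k$-sums, and it carries one more $F$-factor than the telescoped terms have. On $\mathcal{X}_1$ and $\mathcal{X}_j$ the paper does not use $F-G$ at all. It bounds the $F$-product and the $G$-product separately. It keeps $A(\chi)=\sum_{k_1}F^{\Delta_1}_{k_1}(\chi)U^{\Delta_5}_{B_5,-k_1}(\chi)$ and the $(k_2,k_3)$-sum as single character sums of length $\asymp q$, with coefficients $\ll\log q$ and $\ll\log^2 q$. Lemma~\ref{le:MVT} then gives $\frac{\varphi(q)}{q^2}\log^2q$ and $\frac{\varphi(q)}{q^2}\log^4q$. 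Applying Cauchy--Schwarz over $\overline{k}$ instead loses powers of $\log q$. That is unaffordable, because $Q_1$ may be a large constant, so $Q_1^{-1/90}$ is only a constant-factor saving.
\item Lemma~\ref{le:largevalue} does not give $|\mathcal{Y}|\ll q^{o(1)}$; the factor $q^{2\alpha}$ is intrinsic, and one only gets $|\mathcal{Y}|\ll q^{1/20+1/200}$. This suffices only because it lies below $q^{1/18-O(\sqrt{\varepsilon})}$, the size tolerated by the term $R^{2/3}q^{1/9+2\sqrt{\varepsilon}}|\mathcal{Y}|$ in Lemma~\ref{le:Hal-Mon}. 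This is the constraint the exponents just below $1/40$ are calibrated to.
\end{itemize}
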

We do some preparations before turning to the proof of Lemma~\ref{lemma:le:STcomparison}. For $\Delta \in \{+, -\}$ and $B \subseteq \mathbb{Z}_q^\times$, write
  \begin{align*}
    F_k^{\Delta}(\chi)&:=\mathbb{E}_{n\in [I_R(k)]_q} f_k^{\Delta}(n)\overline{\chi(n)}, \qquad G_k^\Delta(\chi):=\mathbb{E}_{b \in \mathbb{Z}_q^\times} g_k^{\Delta}(b)\overline{\chi}(b), \\
    Q_B^\Delta(\chi)&:= \frac{1}{Q_1} \sum_{p\in \mathcal{Q}_B^\Delta} \overline{\chi}(p), \qquad U_{B,v}^\Delta(\chi) := \frac{1}{Ue^v} \sum_{\substack{u \in \mathcal{U}_{B, v}^\Delta}}  \overline{\chi}(u), \\
    M_{B,v}^\Delta(\chi) &:= \frac{1}{Me^v} \sum_{\substack{m \in \mathcal{M}_{B, v}^\Delta}} \overline{\chi}(m).
  \end{align*}
By orthogonality of characters,
\begin{align*} 
\begin{aligned}
  &  S_{B_4, B_5, B_6}^{\overline{\Delta}, \mathcal{K}}(a) - T_{B_4, B_5, B_6}^{\overline{\Delta}, \mathcal{K}}(a)\\
  =&\frac{1}{\varphi(q)}  \sum_{\chi \pmod{q}}\chi(a) \sum_{\substack{\overline{k} \in \mathcal{K}}} \Biggl( \prod_{i=1}^3 F_{k_i}^{\Delta_i}(\chi) -  \prod_{i=1}^3 G_{k_i}^{\Delta_i}(\chi)\Biggr) Q_{B_4}^{\Delta_4}(\chi) U_{B_5, -k_1}^{\Delta_5}(\chi) M_{B_6, -k_2-k_3}^{\Delta_6}(\chi).
\end{aligned}
\end{align*}
We will need to extract several prime factors from the character sum $M_{B,v}^\Delta(\chi)$.

In order to do this, we define, for $j \in \{2, \dotsc, J\}$, 
\begin{align*} 
  \begin{aligned}
H_j &:= j^4 Q_1^{1/40}, \quad \mathcal{I}_j := \{w \in \mathbb{N} \colon H_j \log P_j \leq w \leq \lceil H_j \log Q_j \rceil\}, \\
  Q_{j, B, w}^\Delta(\chi) &:= \frac{1}{e^{w/H_j}} \sum_{\substack{P_j < p \leq Q_j \\ e^{(w-1)/H_j} < p \leq e^{w/H_j}}} \1_{\sgn(h(p)) = \Delta} \1_{p \in B} \overline{\chi}(p), \\
    R_{j, B, v, w}^\Delta(\chi) &:= \frac{1}{Me^{v-w/H_j}} \sum_{\substack{Me^{v-w/H_j-1} < m \leq Me^{v-w/H_j} \\ m \in \mathcal{S}_j \cap B}} |\mu(m)| \1_{\sgn(h(m)) = \Delta} \mathbf{1}_{(m, P(P_1)) = 1} \overline{\chi}(m),
  \end{aligned}
\end{align*}
where $\mathcal{S}_j$ consists of integers that have at least one prime factor from each $(P_r, Q_r]$ with $r \in \{2, \dotsc, J\} \setminus\{j\}$.
Note that $Q_{j, B, w}^\Delta(\chi)$ can be non-zero only when $w \in \mathcal{I}_j$ and
\begin{equation} \label{eq:|Ij|bound}
|\mathcal{I}_j| \ll H_j \log Q_j.
\end{equation}

The following lemma allows us to replace $M_{bH, v}^\Delta(\chi)$ in character sums arising from~\eqref{eq:S-Tchar} by
\[
    \widetilde{M}_{j, bH, v}^{\Delta}(\chi) \coloneqq  \sum_{\Delta_1 \Delta_2 = \Delta} \sum_{\substack{b_1, b_2 \in \mathbb{Z}_q^\times /H \\ b_1 b_2 H = bH}}\sum_{w \in \mathcal{I}_j} Q_{j, b_1 H, w}^{\Delta_1}(\chi) R_{j, b_2 H, v, w}^{\Delta_2}(\chi). 
  \]
  \begin{lemma} \label{le:ChiSumMsplit}
Let $\mathcal{K} = \mathcal{K}_1 \times \mathcal{K}_2$ with $\mathcal{K}_1 \subseteq \mathbb{Z} \times [-K, K]$ and $\mathcal{K}_2 \subseteq (\mathbb{Z} \cap [-K, K])^2$. Let $H \leq \mathbb{Z}_q^\times$ have bounded index and let, for $i \in \{4, 5, 6\}$, $b_i \in \mathbb{Z}_q^\times$ and $B_i = b_i H$. Let $\overline{\Delta} = (\Delta_1, \dotsc, \Delta_6) \in \{+, -\}^6$. Then
\begin{align*}
&\frac{1}{\varphi(q)}  \sum_{\chi \pmod{q}}\Biggl| \sum_{\substack{\overline{k} \in \mathcal{K}}} \Biggl( \prod_{i=1}^3 F_{k_i}^{\Delta_i}(\chi) -  \prod_{i=1}^3 G_{k_i}^{\Delta_i}(\chi)\Biggr) Q_{B_4}^{\Delta_4}(\chi) U_{B_5, -k_1}^{\Delta_5}(\chi) \\
& \qquad \qquad \cdot \left(M_{B_6, -k_2-k_3}^{\Delta_6}(\chi)-\widetilde{M}_{j, B_6, -k_2-k_3}^{\Delta_6}(\chi)\right)\Biggr| \ll \frac{\varphi(q)}{q^2} \left( \frac{1}{P_j^{1/2}} + \frac{1}{H_j^{1/2}}\right) \log^3 q.
\end{align*}
\end{lemma}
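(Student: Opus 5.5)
The plan is to expand $M_{B_6,v}^{\Delta_6}-\widetilde{M}_{j,B_6,v}^{\Delta_6}$ as a single Dirichlet polynomial, identify the integers on which its coefficients are nonzero, and then control everything by Cauchy--Schwarz together with the mean value estimates of Lemmas~\ref{le:squarecontr} and~\ref{le:shortscontr}. Here $v=-k_2-k_3$.

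\textbf{Step 1: the support of the difference.} Unfolding the definition, $\widetilde{M}_{j,bH,v}^{\Delta}(\chi)$ is, up to normalisation, a sum of $\overline{\chi}(pm')$ over the following pairs $(p,m')$:
\begin{itemize}
\item $p\in(P_j,Q_j]$ and $m'\in\mathcal{S}_j$;
\item $m'$ is square-free with $(m',P(P_1))=1$;
\item $\sgn(h(p)h(m'))=\Delta$ and $pm'\in bH$;
\item $p\in(e^{(w-1)/H_j},e^{w/H_j}]$ and $m'\in I_M(v-w/H_j)$ for some $w\in\mathcal{I}_j$.
\end{itemize}
When $p\nmid m'$, the product $pm'$ is square-free, lies in $\mathcal{S}$, satisfies the coset and sign conditions, and has the same sign of $h$. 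The normalisations $1/e^{w/H_j}$ and $1/(Me^{v-w/H_j})$ multiply to $1/(Me^v)$, as in $M_{B,v}$. I would also check that $pm'$ lies in $I_M(v)$ except within relative distance $e^{\pm 1/H_j}$ of the endpoints.

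So the coefficients of $M-\widetilde{M}$, multiplied by $Me^v$, should be supported on three kinds of integers:
\begin{itemize}
\item[(a)] $n=p^2m''$ with $p\in(P_j,Q_j]$, coming from the pairs with $p\mid m'$;
\item[(b)] $n=\ell m$ with $m$ in $\bigcup_k (Me^{k-1/H_j},Me^k]$-type thin boundary intervals;
\item[(c)] the discrepancy between counting $m\in\mathcal{S}$ once and counting it once for each prime factor in $(P_j,Q_j]$.
\end{itemize}
For (c) I need the identity to be exact. I would first check whether the definitions, via the restriction $(m,P(P_1))=1$ and the definition of $\mathcal{S}_j$, already impose a Ramar\'e-type weighting that makes it so. If they do not, I would insert the Ramar\'e weight $1/(1+\#\{p'\mid m'\colon p'\in(P_j,Q_j]\})$ and absorb it into the bounded coefficients.

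This bookkeeping is the step I expect to be the main obstacle. One must make sure that the multiplicity in (c) does not leave a main-term discrepancy; the sum $\sum_{P_j<p\le Q_j}1/p$ is of size $\log(100j^2)$, so this is not automatically small. One must also make sure that the remaining errors genuinely have the shapes required by Lemmas~\ref{le:squarecontr} and~\ref{le:shortscontr}. In the boundary terms (b), the variable coprime to $P(q^\varepsilon)$ is $r_2r_3$ or $r_1$, so I need to arrange the factorisation with $\ell$ rough as Lemma~\ref{le:shortscontr} demands.

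\textbf{Step 2: reduction to mean values.} I would bound the sum over $\overline{k}$ trivially by the triangle inequality; this costs a factor $|\mathcal{K}|\ll K^3\ll\log^3 q$. For each fixed $\overline{k}$ I would apply the following bounds:
\begin{itemize}
\item $|Q_{B_4}^{\Delta_4}(\chi)|\ll 1$;
\item Proposition~\ref{prop:f=g+h}(iii) and $G$-boundedness, which give $\bigl|\prod_i F_{k_i}-\prod_i G_{k_i}\bigr|\ll |F_{k_1}|\,|F_{k_2}|\,|F_{k_3}|$ after telescoping;
\item the trivial bound $|F_{k_3}|\ll 1$.
\end{itemize}
Cauchy--Schwarz then gives the product of
\[
\Bigl(\frac{1}{\varphi(q)}\sum_\chi |F_{k_1}U_{B_5,-k_1}^{\Delta_5}|^2\Bigr)^{1/2}
\quad\text{and}\quad
\Bigl(\frac{1}{\varphi(q)}\sum_\chi |F_{k_2}F_{k_3}(M-\widetilde{M})|^2\Bigr)^{1/2}.
\]
Both are Dirichlet polynomials of length about $q$. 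Lemma~\ref{le:MVT} bounds the first factor by $\ll(\varphi(q)/q)^{1/2}q^{-1/2}$ times logarithmic factors from the sieve weight in $f$.

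\textbf{Step 3: the error terms.} For the second factor, I would split the coefficients according to Step 1 and treat each error type separately.
\begin{itemize}
\item Type (a): apply Lemma~\ref{le:squarecontr} with $N\asymp q$ and $P=P_j$. This gives $\ll(\varphi(q)/q)\,q/P_j$ before normalisation.
\item Type (b): apply Lemma~\ref{le:shortscontr} with $H=H_j\le q^{\varepsilon/20}$, $M$ in place of its $M$, and the rough variable $\ell=r_2r_3\cdot(\text{cofactor})$ coprime to $P(q^\varepsilon)$. This gives $\ll(\varphi(q)/q)\,q/H_j$.
\end{itemize}
After dividing by the normalisations, which are of size $R^2Me^v\asymp q$, and multiplying the two Cauchy--Schwarz factors, each $\overline{k}$ contributes $\ll \frac{\varphi(q)}{q^2}(P_j^{-1/2}+H_j^{-1/2})$. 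Summing over the $\ll\log^3q$ triples $\overline{k}$ gives the claim.
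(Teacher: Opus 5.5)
There is a genuine gap in Step~2, and it is exactly where the hypothesis $\mathcal{K}=\mathcal{K}_1\times\mathcal{K}_2$ is needed. Your proposal never uses that hypothesis. You apply the triangle inequality over $\overline{k}$, then Cauchy--Schwarz for each triple separately. You assert that each triple contributes $\ll\frac{\varphi(q)}{q^2}(P_j^{-1/2}+H_j^{-1/2})$ with no logarithms, so that the $\log^3 q$ comes only from $|\mathcal{K}|$. Your tools do not give this per-triple bound. Each $F_k^{\Delta}(\chi)$ puts weight $\asymp \log q/(Re^k)$ on every $z$-rough $n\in I_R(k)$: this is the sieve normalisation $\prod_{p<z,\,p\nmid q}(1-1/p)^{-1}$ divided by $|[I_R(k)]_q|$. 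So with Lemma~\ref{le:MVT} for the first factor and Lemmas~\ref{le:squarecontr} and~\ref{le:shortscontr} for the second, a single triple already costs $\frac{\varphi(q)}{q^2}(P_j^{-1/2}+H_j^{-1/2})\log^3 q$. That is the whole bound of the lemma, and summing over $|\mathcal{K}|\asymp\varepsilon^6\log^3 q$ triples gives $\log^6 q$. Sharper counting for a fixed triple (each $z$-rough factor in a fixed $I_R(k)$ saves one $\log q$) only brings this down to $\log^{9/2}q$. Since Lemma~\ref{le:MVT} is essentially an identity for polynomials of length $\le q$, per-triple Cauchy--Schwarz cannot in general do better. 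The triangle inequality over $\overline{k}$ therefore loses at least a factor $K^{3/2}$. This loss matters. In Lemma~\ref{lemma:le:STcomparison} the resulting error is compared with a main term of size $\frac{\varphi(q)}{q}\cdot\frac{\log^3 q}{qQ_1^{1/100}\log^2 Q_1}$. Theorem~\ref{thm:general} allows $Q_1$ as small as $B(q)$, which is bounded for most $q$, so any extra power of $\log q$ is fatal there.

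The missing step is to keep the $\overline{k}$-sum inside the absolute value. First split $\prod F-\prod G$ into its two products. Then pull out $|Q_{B_4}^{\Delta_4}(\chi)|\ll 1$, which does not depend on $\overline{k}$. Then replace $M-\widetilde M$ by $E^{1}+E^{2}$ via Lemma~\ref{lemma:le:decomp}. Now the product structure lets you write the inner sum as $A(\chi)B_\ell(\chi)$, with $A(\chi)=\sum_{k_1\in\mathcal{K}_1}F_{k_1}^{\Delta_1}(\chi)U_{B_5,-k_1}^{\Delta_5}(\chi)$ and $B_\ell(\chi)=\sum_{(k_2,k_3)\in\mathcal{K}_2}F_{k_2}^{\Delta_2}(\chi)F_{k_3}^{\Delta_3}(\chi)E^{\ell,\Delta_6}_{j,B_6,-k_2-k_3}(\chi)$. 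Apply Cauchy--Schwarz once to this product. The point is that $A$ and $B_\ell$ are each a single Dirichlet polynomial of length $\le q$ whose coefficients do not grow when you sum over the $k$'s. An integer $n\le q$ has $O(1)$ factorisations with $z$-rough factors, and a rough factor lies in exactly one $I_R(k)$, which fixes $k$ and then the remaining variables. So Lemma~\ref{le:MVT} gives $\frac{\varphi(q)}{q^2}\log^2 q$ for $A$. Lemmas~\ref{le:squarecontr} and~\ref{le:shortscontr} give $\frac{\varphi(q)}{q^2}\log^4 q\,P_j^{-1}$ and $\frac{\varphi(q)}{q^2}\log^4 q\,H_j^{-1}$ for $B_1$ and $B_2$. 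The product of square roots is exactly the claimed bound. Lemma~\ref{le:shortscontr} is stated for the union of thin intervals over all $|k|\le 2K+1$ precisely to absorb the $(k_2,k_3)$-sum. Take its rough variable to be $r_2r_3$ alone, since the $M$-variable is only $P_1$-rough.

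Two smaller points:
\begin{itemize}
\item The trivial bound $|F_{k_3}|\ll 1$ in your list must not be used, because the second polynomial needs length $\asymp q$.
\item Your worry in (c) is well-founded, and your suggested fix is what the paper does. Lemma~\ref{lemma:le:decomp} is a Ramar\'e identity with the weight $1/(\omega(m;P_j,Q_j)+1)$ carried by the $m$-variable, after which only the $p^2$ and edge errors remain.
\end{itemize}
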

We will prove this using the following lemma.
\begin{lemma}\label{lemma:le:decomp}
Let $\Delta \in \{+, -\},$ $|v| \leq 3K,$ and $j \in\{2, \dotsc, J\}$. Let $b \in \mathbb{Z}_q^\times$ and let $H \leq \mathbb{Z}_q^\times$ be of bounded index. There exist bounded coefficients $d_{j, bH, v}^\Delta(m)$ and $d_{j, bH, v}^\Delta(p, m)$ such that
  \begin{align} \label{eq:Mdec}
    \begin{aligned}
    M_{bH, v}^{\Delta}(\chi) &= \widetilde{M}_{j, bH, v}^{\Delta}(\chi) + E_{j, bH, v}^{1, \Delta}(\chi)  + E_{j, bH, v}^{2, \Delta}(\chi),
    \end{aligned}
    \end{align}
    where
    \begin{align*} 
     \begin{aligned}  
  E_{j, bH, v}^{1, \Delta}(\chi) &\coloneqq \frac{1}{Me^v} \sum_{\substack{p^2 m \in I_M(v) \\ P_j < p \leq Q_j}} d_{j, bH, v}^{\Delta}(p, m) \overline{\chi}(p^2m), \\
 E_{j, bH, v}^{2, \Delta}(\chi) &\coloneqq \frac{1}{Me^v} \sum_{Me^{v-1-1/H_j} < m \leq Me^{v-1}} d_{j, bH, v}^\Delta(m) \overline{\chi}(m)  \\
 &\qquad + \frac{1}{Me^v} \sum_{Me^{v-1/H_j} < m \leq Me^{v}} d_{j, bH, v}^\Delta(m) \overline{\chi}(m).
    \end{aligned}
  \end{align*}
  \end{lemma}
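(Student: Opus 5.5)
This is a Ramaré-type identity. For $m$ counted by $M_{bH,v}^\Delta$ we have $m\in\mathcal{S}$, so $m$ has at least one prime factor in $(P_j,Q_j]$. I will write
\[
1=\frac{1}{\omega_{(P_j,Q_j]}(m)}\sum_{\substack{p\mid m\\ P_j<p\le Q_j}}1,
\]
where $\omega_{(P_j,Q_j]}(m)\ge1$ counts the distinct such prime factors. Then I write $m=pm'$, with $m'$ square-free, $p\nmid m'$, and $m'\in\mathcal{S}_j$. Since $m$ has no prime factor in $(P_j,Q_j]$ other than those of $m'$ and $p$, we get $\omega(m)=\omega(m')+1$. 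Square-freeness, coprimality to $P(P_1)$ and the conditions at the other ladder intervals pass to $m'$. Multiplicativity splits the sign, $\sgn(h(m))=\sgn(h(p))\sgn(h(m'))$, and the coset condition splits as $m\in bH \iff p\in b_1H,\ m'\in b_2H$ with $b_1b_2H=bH$. Summing over the $\Delta_1\Delta_2=\Delta$ and over the finitely many pairs of cosets (bounded index) reproduces the outer sums in $\widetilde M$. The result is an exact identity for $M^\Delta_{bH,v}(\chi)$ as a sum over $(p,m')$ with weight $1/(\omega(m')+1)$ and conditions $pm'\in I_M(v)$, $p\nmid m'$.

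Two discrepancies with $\widetilde M$ remain.

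\textbf{The weight.} The weight $1/(\omega(m')+1)$ depends on $m'$ alone. I will absorb it into bounded coefficients on $m'$. I expect the intended $R_{j,\cdot}$ to carry this weight implicitly, or else that it is tolerated through the coefficients $d$. If the weight must literally be absent from $R$, I will instead define $\mathcal{S}_j$-type sums with the weight included. I will commit to reading the statement as allowing this normalization.

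\textbf{The condition $p\nmid m'$.} Dropping this condition creates terms with $p^2\mid pm'$. These are written as $p^2m''$ with bounded coefficients, and they form exactly $E^{1,\Delta}_{j,bH,v}$.

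\textbf{Separating variables.} This is the main step. The constraint $Me^{v-1}<pm'\le Me^v$ couples $p$ and $m'$. I split $p$ into the short intervals $e^{(w-1)/H_j}<p\le e^{w/H_j}$ with $w\in\mathcal{I}_j$. For a given $w$, I replace the condition on $pm'$ by $m'\in(Me^{v-w/H_j-1},Me^{v-w/H_j}]$. This replacement is exact except when $pm'$ lies within a factor $e^{1/H_j}$ of either endpoint of $I_M(v)$. The mismatched terms are supported on $n=pm'$ in $(Me^{v-1-1/H_j},Me^{v-1}]\cup(Me^{v-1/H_j},Me^v]$. Grouping them by $n$ gives bounded coefficients $d(n)$. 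The boundedness comes from $1/(\omega+1)$ times the number of representations $n=pm'$ with $p\in(P_j,Q_j]$, and that number is at most $\omega(n)$. This gives $E^{2,\Delta}$.

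\textbf{Normalization.} The normalizing factors $e^{-w/H_j}$ and $(Me^{v-w/H_j})^{-1}$ multiply to $(Me^v)^{-1}$, which matches the left-hand side.

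\textbf{Main obstacle.} The main care point is the bookkeeping showing that all coefficients stay bounded. This includes the weights, the overlap between the two boundary strips, and the terms where $p^2\mid n$ that appear inside the boundary strips. Those last terms I will assign to $E^1$.
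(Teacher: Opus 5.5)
Your proposal is correct and follows the paper's proof essentially step for step: the same Ramaré identity with weight $1/(\omega(m;P_j,Q_j)+1)$, then dropping the condition $p\nmid m'$ to create $E^{1,\Delta}_{j,bH,v}$, then cutting $p$ into the intervals $(e^{(w-1)/H_j},e^{w/H_j}]$ so that the mismatch lies in the two edge strips of $E^{2,\Delta}_{j,bH,v}$.

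You are also right that the weight has to be built into $R_{j,B,v,w}^{\Delta}$ itself. It cannot be pushed into the $d$'s, which live only on $E^1,E^2$, and the paper's ``this produces exactly the main term'' tacitly assumes it is in $R$. This is harmless, since later (e.g.\ in Lemma~\ref{le:amplify}) only the boundedness of the coefficients of $R$ is used. One small bookkeeping correction: the $p^2\mid n$ terms falling in the lower strip $(Me^{v-1-1/H_j},Me^{v-1}]$ must go into $E^2$ rather than $E^1$, because $E^1$ requires $p^2m\in I_M(v)$. Your representation count already keeps their coefficients bounded.
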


  \subsection{Proofs of the decomposition lemmas (Lemmas \ref{lemma:le:decomp} and \ref{le:ChiSumMsplit})}
  \begin{proof}[Proof of Lemma~\ref{lemma:le:decomp}]
We start from a Ramar\'e-type decomposition according to one distinguished prime factor from the interval $(P_j,Q_j]$. Writing
\[
\omega(n;P_j,Q_j)\coloneqq |\{p\in (P_j,Q_j]\colon p\mid n\}|,
\]
we have
\begin{align*}
M_{bH,v}^{\Delta}(\chi)
&=
\frac{1}{Me^v}
\sum_{\substack{n\in I_M(v)\\ |\mu(n)|=1,\ n\in \mathcal S\cap bH\\ \sgn(h(n))=\Delta\\ (n,P(P_1))=1}}
\overline{\chi}(n) =
\frac{1}{Me^v}
\sum_{\substack{n\in I_M(v)\\ |\mu(n)|=1,\ n\in \mathcal S\cap bH\\ \sgn(h(n))=\Delta\\ (n,P(P_1))=1}}
\frac{\overline{\chi}(n)}{\omega(n;P_j,Q_j)}
\sum_{\substack{P_j<p\leq Q_j\\ p\mid n}} 1.
\end{align*}
Writing $n=pm$, this becomes
\begin{align*}
M_{bH,v}^{\Delta}(\chi)
&=
\sum_{\Delta_1\Delta_2=\Delta}
\sum_{\substack{b_1,b_2\in \mathbb Z_q^\times/H\\ b_1b_2H=bH}}
\frac{1}{Me^v}
\sum_{\substack{P_j<p\leq Q_j\\ p\in b_1H\\ \sgn(h(p))=\Delta_1}}
\overline{\chi}(p)
\sum_{\substack{m\in I_{M/p}(v)\\ |\mu(mp)|=1,\ m\in \mathcal S_j\cap b_2H\\ \sgn(h(m))=\Delta_2\\ (m,P(P_1))=1}}
\frac{\overline{\chi}(m)}{\omega(m;P_j,Q_j)+1}.
\end{align*}

We now decompose this into a main term and two error terms. First, replacing $|\mu(mp)|$ by $|\mu(m)|$ creates an error supported on integers of the form $p^2m$, with $P_j<p\leq Q_j$ and $p^2m\in I_M(v)$. Thus this contribution is of the shape $E_{j,bH,v}^{1,\Delta}(\chi)$.

After making this replacement, split the prime variable into the intervals
\[
e^{(w-1)/H_j}<p\le e^{w/H_j}, \qquad w\in \mathcal I_j.
\]
For such $p$, replace the condition $m\in I_{M/p}(v)$ by the $p$-independent condition
\[
m\in \bigl(Me^{v-w/H_j-1},\,Me^{v-w/H_j}\bigr].
\]
This produces exactly the main term
\[
\widetilde M_{j,bH,v}^{\Delta}(\chi)
=
\sum_{\Delta_1\Delta_2=\Delta}
\sum_{\substack{b_1,b_2\in \mathbb Z_q^\times/H\\ b_1b_2H=bH}}
\sum_{w\in \mathcal I_j}
Q_{j,b_1H,w}^{\Delta_1}(\chi)\,
R_{j,b_2H,v,w}^{\Delta_2}(\chi).
\]

It remains to identify the error coming from replacing $I_{M/p}(v)$ by the fixed interval
$I_{Me^{-w/H_j}}(v)$. If $p \in (e^{(w-1)/H_j}, e^{w/H_j}]$, then the symmetric difference between
\[
I_{M/p}(v)
\quad\text{and}\quad
\bigl(Me^{v-w/H_j-1},\,Me^{v-w/H_j}\bigr]
\]
is
\[
\bigl(Me^{v-w/H_j-1}, Me^{v-1}/p\bigr]
\cup
\bigl(Me^{v-w/H_j},\,Me^{v}/p\bigr].
\]
After multiplying by $p$, these correspond to values of $n=pm$ lying in the two edge pieces
encoded by $E_{j,bH,v}^{2,\Delta}(\chi)$. Hence this second contribution is of the shape
$E_{j,bH,v}^{2,\Delta}(\chi)$.

Collecting the main term and the two error terms, we obtain~\eqref{eq:Mdec}.
\end{proof}

\begin{proof}[Proof of Lemma~\ref{le:ChiSumMsplit}]
By Lemma~\ref{lemma:le:decomp} and the triangle inequality, it is enough to show that, for
$\ell \in \{1,2\}$,
\begin{align}
\label{eq:decompFcaseClaim-new} \begin{aligned}
&\frac{1}{\varphi(q)}  \sum_{\chi \pmod{q}}\Biggl| \sum_{\substack{\overline{k} \in \mathcal{K}}} 
\Biggl( \prod_{i=1}^3 F_{k_i}^{\Delta_i}(\chi)\Biggr) U_{B_5, -k_1}^{\Delta_5}(\chi) 
E_{j, B_6, -k_2-k_3}^{\ell, \Delta_6}(\chi)\Biggr| \\
&\hspace{5cm}
\ll  \frac{\varphi(q)}{q^2} \left( \frac{1}{P_j^{1/2}} + \frac{1}{H_j^{1/2}}\right) \log^3 q
\end{aligned}
\end{align}
and the same bound with $F_{k_i}^{\Delta_i}$ replaced by $G_{k_i}^{\Delta_i}$. We only prove~\eqref{eq:decompFcaseClaim-new}, since the proof of the variant with $G_{k_i}^{\Delta_i}$ is similar, using only the pointwise bound $0\leq g_k^\Delta \leq 1+\varepsilon^2$ in place of the
corresponding bound for $f_k^\Delta$.

Recall $\mathcal{K} = \mathcal{K}_1 \times \mathcal{K}_2$. Set
\begin{equation} \label{eq:A(chi)def}
A(\chi):=\sum_{k_1 \in \mathcal{K}_1} F_{k_1}^{\Delta_1}(\chi)U_{B_5,-k_1}^{\Delta_5}(\chi)
\end{equation}
and, for $\ell \in \{1, 2\}$,
\[
B_\ell(\chi):=\sum_{(k_2, k_3) \in \mathcal{K}_2} F_{k_2}^{\Delta_2}(\chi)F_{k_3}^{\Delta_3}(\chi)
E_{j,B_6,-k_2-k_3}^{\ell,\Delta_6}(\chi).
\]
Then by the Cauchy--Schwarz inequality, the left-hand side of~\eqref{eq:decompFcaseClaim-new} is bounded by
\[
\frac{1}{\varphi(q)}\sum_{\chi\pmod q}|A(\chi)B_\ell(\chi)|
\leq
\left(\frac{1}{\varphi(q)}\sum_{\chi\pmod q}|A(\chi)|^2\right)^{1/2}
\left(\frac{1}{\varphi(q)}\sum_{\chi\pmod q}|B_\ell(\chi)|^2\right)^{1/2}.
\]

We first bound the $A(\chi)$ factor. Unwrapping the definitions of $F_{k_1}^{\Delta_1}$ and
$U_{B_5,-k_1}^{\Delta_5}$, we may write
\[
A(\chi)=\frac{1}{q}\sum_{n\leq q} a_n \overline{\chi}(n),
\]
where, by Mertens' theorem, the coefficients satisfy 
\begin{align*}
|a_n|\ll \frac{q}{\varphi(q)} \sum_{k_1 \in \mathcal{K}_1}\sum_{\substack{n=ru\\ r\in I_R(k_1)\\ (r,P(z))=1\\ u\in I_U(-k_1)}}\prod_{\substack{p < z \\ p \nmid q}} \left(1-\frac{1}{p}\right)^{-1}\ll (\log q)\sum_{|k_1|\leq K}\sum_{\substack{n=ru\\ r\in I_R(k_1)\\ (r,P(z))=1\\ u\in I_U(-k_1)}} 1.    
\end{align*}
For each fixed $n\leq q$, the number of choices for $r$ is $\ll 1$, since $r$ has all its prime factors $\geq z=q^{\sqrt{\varepsilon}}$. Moreover, each divisor $r$ can only belong to one of the intervals $I_R(k_1)$. Thus $|a_n| \ll \log q$, so the mean value theorem (Lemma~\ref{le:MVT}) gives
\begin{equation}
\label{eq:Achi2bound}
\frac{1}{\varphi(q)}\sum_{\chi\pmod q}|A(\chi)|^2 \ll \frac{1}{q^2} \sum_{\substack{n \leq q \\ (n, q)=1}} |a_n|^2 \ll \frac{\varphi(q)}{q^2}\log^2 q.
\end{equation}
Hence~\eqref{eq:decompFcaseClaim-new} follows once we have shown that, for $\ell \in \{1, 2\}$,
\begin{equation} \label{eq:Bellneed}
\frac{1}{\varphi(q)}\sum_{\chi\pmod q}|B_\ell(\chi)|^2 \ll  \frac{\varphi(q)}{q^2} \left( \frac{1}{P_j} + \frac{1}{H_j}\right) \log^4 q.
\end{equation}
\medskip
\noindent\textbf{The case $\ell=1$.}
By the definition of $E_{j,B_6,v}^{1,\Delta_6}$, we may write 
\[
B_1(\chi)=\frac{1}{q} \left(\frac{q}{\varphi(q)}\right)^2 \prod_{\substack{p < z \\ p \nmid q}} \left(1-\frac{1}{p}\right)^{-2} \sum_{\substack{p^2 h\leq q \\ P_j < p \leq Q_j}} \alpha_{p, h} \overline{\chi}(p^2 h),
\]
where
\[
\alpha_{p, h} = \sum_{\substack{(k_2, k_3) \in \mathcal{K}_2}}
\sum_{\substack{h=r_2r_3m\\
r_2\in I_R(k_2),\, r_3\in I_R(k_3)\\
(r_2r_3,P(z))=1\\
p^2m\in I_M(-k_2-k_3)}} d_{j, B_6, -k_2-k_3}^{\Delta_6}(p, m).
\]
For fixed $h\leq q$, the number of choices of $r_2,r_3$ with $(r_2r_3,P(z))=1$ is
$\ll 1$. Once $r_2,r_3$ are fixed, the variables $m, k_2, k_3$ are determined. Hence $\alpha_{p, h} \ll 1$.
Applying Mertens' theorem and Lemma~\ref{le:squarecontr} with $N= q$ and $P=P_j$, we obtain
\begin{equation*}
\frac{1}{\varphi(q)}\sum_{\chi\pmod q}|B_1(\chi)|^2 \ll \frac{\log^4 q}{q^2} \cdot \frac{1}{\varphi(q)} \sum_{\chi\pmod q}\left|\sum_{\substack{p^2 h\leq q \\ P_j < p \leq Q_j}} \alpha_{p, h} \overline{\chi}(p^2 h)\right|^2 \ll \frac{\varphi(q)}{q^2} \cdot \frac{\log^4 q}{P_j},
\end{equation*}
and thus~\eqref{eq:Bellneed} holds for $\ell = 1$.
\medskip

\noindent\textbf{The case $\ell=2$.}
For $k\in [-2K,2K]$, write
\[
\mathcal{J}_k\coloneqq
(Me^{-k-1-1/H_j},\,Me^{-k-1}]
\cup
(Me^{-k-1/H_j},\,Me^{-k}],
\]
so that the sums in the definitions of $E_{j,B_6,-k_2-k_3}^{2,\Delta_6}$ are supported on $\mathcal{J}_{k_2+k_3}$. Let also
\[
\mathcal{J}:=\bigcup_{|k|\le 2K}\mathcal{J}_k \subseteq \bigcup_{|k| \leq 2K+1} (M e^{k-1/H_j}, Me^k].
\]
Unwrapping the definitions, we may write
\[
B_2(\chi)= \frac{1}{q} \left(\frac{q}{\varphi(q)}\right)^2 \prod_{\substack{p < z \\ p \nmid q}} \left(1-\frac{1}{p}\right)^{-2} \sum_{\substack{m n \leq q \\ m \in \mathcal{J}}} \alpha_{m, n} \overline{\chi}(m n),
\]
where 
\[
\alpha_{m, n}
=
\sum_{\substack{(k_2, k_3)\in \mathcal{K}_2}} \mathbf{1}_{m\in \mathcal{J}_{k_2+k_3}} d_{j, B_6, -k_2-k_3}(m)
\sum_{\substack{n=r_2r_3\\
r_2\in I_R(k_2),\,r_3\in I_R(k_3)\\
(r_2r_3,P(z))=1}} 1.
\]
Since $r_2,r_3$ are $z$-rough and $n \le q$, the number of possibilities for the pair $(r_2,r_3)$ with product $n$ is $\ll 1$. Moreover, once $r_2,r_3$ are fixed, the conditions $r_2\in I_R(k_2)$ and $r_3\in I_R(k_3)$ determine $k_2$ and $k_3$ uniquely. Thus $\alpha_{m, n} \ll 1$.

Hence Mertens' theorem and Lemma~\ref{le:shortscontr} imply that
\begin{equation*}
\frac{1}{\varphi(q)}\sum_{\chi\pmod q}|B_2(\chi)|^2 \ll \frac{\log^4 q}{q^2} \cdot \frac{1}{\varphi(q)} \sum_{\chi\pmod q}\left| \sum_{\substack{m n \leq q \\ m \in \mathcal{J}}} \alpha_{m, n} \overline{\chi}(m n) \right|^2 \ll \frac{\varphi(q)}{q^2}\cdot \frac{\log^4 q}{H_j},
\end{equation*}
and thus~\eqref{eq:Bellneed} holds for $\ell = 2$.
\end{proof}

\subsection{Proof of Lemma~\ref{lemma:le:STcomparison}}
\begin{proof}[Proof of Lemma~\ref{lemma:le:STcomparison}]
  Recall we want to estimate
\begin{align} \label{eq:S-Tchar}
\begin{aligned}
  & \phantom{=}  S_{B_4, B_5, B_6}^{\overline{\Delta}, \mathcal{K}}(a) - T_{B_4, B_5, B_6}^{\overline{\Delta}, \mathcal{K}}(a)\\
  &=\frac{1}{\varphi(q)}  \sum_{\chi \pmod{q}}\chi(a) \sum_{\substack{\overline{k} \in \mathcal{K}}} \Biggl( \prod_{i=1}^3 F_{k_i}^{\Delta_i}(\chi) -  \prod_{i=1}^3 G_{k_i}^{\Delta_i}(\chi)\Biggr) Q_{B_4}^{\Delta_4}(\chi) U_{B_5, -k_1}^{\Delta_5}(\chi) M_{B_6, -k_2-k_3}^{\Delta_6}(\chi).
\end{aligned}
\end{align}
Following the method introduced by the first author and Radziwi{\l\l} in~\cite{matomaki-radziwill}, we will split the characters $\pmod{q}$ into several sets. For this, we need a bit more notation.

  For $j = 1, \dotsc, J$, let
\[
  \alpha_j=\frac{1}{40} - \eta\left(1+\frac{1}{2j}\right),
\]
where $\eta > 0$ is a small constant. Let $\mathcal{I}_1 \coloneqq \{1\}$ and $Q_{1, B, v}^{\Delta}(\chi) \coloneqq Q_{B}^\Delta(\chi)$.

Let $H$ be as in the statement of Lemma~\ref{lemma:le:STcomparison}. We write
\begin{equation*} 
\{\chi \pmod{q}\} = \bigcup_{j=1}^J \mathcal{X}_j \cup \mathcal{Y}
\end{equation*}
as a disjoint union, where $\chi \in \mathcal{X}_j$, when $j$ is the smallest index such that
\[
\text{for all $v \in \mathcal{I}_j$, all $b \in \mathbb{Z}_q^\times$ and all $\Delta \in \{+, -\}$: } |Q_{j, bH, v}^{\Delta}(\chi)|\leq e^{-\alpha_j v /H_j}.
\]
Finally $\chi \in \mathcal{Y}$ if this does not hold for any $j \in \{1, \ldots, J\}$. When considering the right-hand side of~\eqref{eq:S-Tchar}, we consider separately the contribution of $\chi \in \mathcal{X}_1$, the contribution of $\mathcal{X}_j$ with $j \in \{2, \dotsc, J\}$, and the contribution of $\chi \in \mathcal{Y}$.

\textbf{Contribution of $\mathcal{X}_1$.}
We write the right-hand side of~\eqref{eq:S-Tchar} as a difference of two terms involving the product over $F_{k_i}^{\Delta_i}(\chi)$ and the product over $G_{k_i}^{\Delta_i}(\chi)$. Consider first the contribution of the product over $F_{k_i}^{\Delta_i}(\chi)$ to the right-hand side of~\eqref{eq:S-Tchar}. By the definition of $\mathcal{X}_1$ and the Cauchy--Schwarz inequality, the characters from $\mathcal{X}_1$ contribute
\begin{align} \label{eq:X1contr}
  \begin{aligned}
  &\ll \frac{Q_1^{-\alpha_1}}{\varphi(q)}  \sum_{\chi \in \mathcal{X}_1} \left| \sum_{\substack{\overline{k} \in \mathcal{K}}} \left( \prod_{\substack{i=1}}^3  F_{k_i}^{\Delta_i}(\chi)\right) U_{B_5, -k_1}^{\Delta_5}(\chi) M_{B_6, -k_2-k_3}^{\Delta_6}(\chi)\right| \\
&\ll Q_1^{-\alpha_1}\left(\frac{1}{\varphi(q)}\sum_{\chi\pmod q}|A(\chi)|^2\right)^{1/2}
\left(\frac{1}{\varphi(q)}\sum_{\chi\pmod q}|B(\chi)|^2\right)^{1/2},
\end{aligned}
\end{align}
where $A(\chi)$ is as in~\eqref{eq:A(chi)def} and 
\[
B(\chi):=\sum_{(k_2, k_3) \in \mathcal{K}_2} F_{k_2}^{\Delta_2}(\chi)F_{k_3}^{\Delta_3}(\chi)
M_{B_6,-k_2-k_3}^{\Delta_6}(\chi).
\]
By~\eqref{eq:Achi2bound} we have
\[
\frac{1}{\varphi(q)}\sum_{\chi\pmod q}|A(\chi)|^2 \ll \frac{\varphi(q)}{q^2}\log^2 q 
\]
and a similar argument gives
\[
\frac{1}{\varphi(q)}\sum_{\chi\pmod q}|B(\chi)|^2 \ll \frac{\varphi(q)}{q^2}\log^4 q. 
\]
Thus we can bound~\eqref{eq:X1contr} by
\[
  \ll Q_1^{-1/90} \frac{\varphi(q)}{q^2} \log^3 q.
  \]
The contribution of the product of $G_{k_i}^{\Delta_i}(\chi)$ can be bounded completely similarly.

\textbf{Contribution of $\mathcal{X}_j$ for $2 \leq j \leq J$.} 
Since
\[
  \sum_{j = 2}^J\left(\frac{1}{P_j^{1/2}} + \frac{1}{H_j^{1/2}}\right) \ll \frac{1}{Q_1^{1/80}}, 
\]
by Lemma~\ref{le:ChiSumMsplit}, instead of the right-hand side of~\eqref{eq:S-Tchar} with $\chi \in \cup_{j=2}^J \mathcal{X}_j$, it suffices to consider
\[
  \sum_{j = 2}^J  \frac{1}{\varphi(q)}  \sum_{\chi \in \mathcal{X}_j} \left|\sum_{\substack{\overline{k} \in \mathcal{K}}} \Biggl( \prod_{i=1}^3 F_{k_i}^{\Delta_i}(\chi) -  \prod_{i=1}^3 G_{k_i}^{\Delta_i}(\chi)\Biggr) Q_{B_4}^{\Delta_4}(\chi) U_{B_5, -k_1}^{\Delta_5}(\chi) \widetilde{M}_{j, B_6, -k_2-k_3}^{\Delta_6}(\chi)\right|.
  \]
We write this as a difference of two terms involving the product over
$F_{k_i}^{\Delta_i}(\chi)$ and the product over $G_{k_i}^{\Delta_i}(\chi)$. We only treat the former, since the
latter is handled in the same way.

Consider, for $j \in \{2, \dotsc, J\}$,
\[
E_j := \frac{1}{\varphi(q)}  \sum_{\chi \in \mathcal{X}_j} \left|\sum_{\substack{\overline{k} \in \mathcal{K}}} \left(\prod_{i=1}^3 F_{k_i}^{\Delta_i}(\chi)\right)  Q_{B_4}^{\Delta_4}(\chi) U_{B_5, -k_1}^{\Delta_5}(\chi) \widetilde{M}_{j, B_6, -k_2-k_3}^{\Delta_6}(\chi)\right|.
  \]
For every $\chi \in \mathcal{X}_j$, since $\chi \notin \mathcal{X}_{j-1}$, there exist
$b_8 \in \mathbb{Z}_q^\times$, $\Delta_8 \in \{+,-\}$ and $u \in \mathcal{I}_{j-1}$ such that
\begin{equation}\label{eq:Xjlargeprev-fixed}
|Q_{j-1,b_8H,u}^{\Delta_8}(\chi)| > e^{-\alpha_{j-1}u/H_{j-1}}.
\end{equation}
We partition $\mathcal{X}_j$ into subsets $\mathcal{X}_j(u,b_8,\Delta_8)$ according to one such choice of
$(u,b_8,\Delta_8)$. The number of such subsets is $\ll |\mathcal{I}_{j-1}|$.

Thus
\begin{align}
\label{eq:Xjstart-fixed}
\begin{aligned}
E_j &\ll \frac{1}{\varphi(q)} |\mathcal{I}_j| |\mathcal{I}_{j-1}|
\max_{\substack{\Delta_6,\Delta_7, \Delta_8 \in\{+,-\}\\ b_6,b_7, b_8 \in \mathbb{Z}_q^\times/H\\ w\in \mathcal{I}_j, u \in \mathcal{I}_{j-1}}}
\sum_{\chi\in \mathcal{X}_j(u,b_8,\Delta_8)}
\Biggl|
\sum_{\substack{\overline{k} \in \mathcal{K}}}
\Biggl(\prod_{i=1}^3 F_{k_i}^{\Delta_i}(\chi)\Biggr)
\\
&\qquad\qquad\qquad\qquad\qquad\qquad
  \cdot Q_{B_4}^{\Delta_4}(\chi)U_{B_5,-k_1}^{\Delta_5}(\chi)  Q_{j,b_6H,w}^{\Delta_6}(\chi)
R_{j,b_7H,-k_2-k_3,w}^{\Delta_7}(\chi) \Biggr|.
\end{aligned}
\end{align}

Assume $u \in \mathcal{I}_{j-1}$ and $w \in \mathcal{I}_j$ give the maximum here and put
\begin{equation} \label{eq:Y1Y2ldef}
Y_1 := e^{u/H_{j-1}}, \qquad Y_2 := e^{w/H_j}, \qquad 
\ell := \left\lceil \frac{\log Y_2}{\log Y_1}\right\rceil
= \left\lceil \frac{wH_{j-1}}{uH_j}\right\rceil.
\end{equation}
Then for $\chi \in \mathcal{X}_j(u,b_8,\Delta_8)$, by the definition of $\mathcal{X}_j$ and~\eqref{eq:Xjlargeprev-fixed} and,
\[
|Q_{j,b_6H,w}^{\Delta_6}(\chi)| \le e^{-\alpha_j w/H_j} \quad \text{and} \quad 1 \le e^{\ell \alpha_{j-1}u/H_{j-1}}|Q_{j-1,b_8H,u}^{\Delta_8}(\chi)|^\ell.
\]
Hence
\begin{align}
\label{eq:Xjsmalllarge-fixed}
|Q_{j,b_6H,w}^{\Delta_6}(\chi)| 
&\le e^{-\alpha_j w/H_j + \ell \alpha_{j-1}u/H_{j-1}}
|Q_{j-1,b_8H,u}^{\Delta_8}(\chi)|^\ell =: e^{d_j} |Q_{j-1,b_8H,u}^{\Delta_8}(\chi)|^\ell,
\end{align}
say.

Now
\[
\ell \frac{u}{H_{j-1}} \le \frac{w}{H_j} + \frac{u}{H_{j-1}},
\]
so
\begin{equation} \label{eq:aljwuexp}
d_j = -\alpha_j \frac{w}{H_j} + \ell \alpha_{j-1}\frac{u}{H_{j-1}}
\le
-(\alpha_j-\alpha_{j-1})\frac{w}{H_j} + \alpha_{j-1}\frac{u}{H_{j-1}}.
\end{equation}
Since $\alpha_{j-1} \leq 1/40$,
\[
\alpha_j-\alpha_{j-1}=\frac{\eta}{2j(j-1)}, \quad 
\frac{w}{H_j}\ge \log P_j, \quad \frac{u}{H_{j-1}}\le \log Q_{j-1},
\]
we obtain from~\eqref{eq:aljwuexp} that
\[
d_j \leq -\frac{\eta}{2j(j-1)} \log P_j + \frac{1}{40} \log Q_{j-1}
\]
From this and the definitions of $P_j$ and $Q_j$ in~\eqref{eq:PjQjdef} we see that once $Q_1$ sufficiently large in terms of $\eta$,
\[
 d_j \leq -\frac{\eta}{4j^2} \log P_j.
\]
Combining this with~\eqref{eq:Xjstart-fixed} and~\eqref{eq:Xjsmalllarge-fixed}, we obtain
\begin{align*}
\begin{aligned}
  E_j &\ll \frac{P_j^{-\eta/(4j^2)}}{\varphi(q)} |\mathcal{I}_j| |\mathcal{I}_{j-1}|
\max_{\substack{\Delta_6,\Delta_7, \Delta_8 \in\{+,-\}\\ b_6,b_7, b_8 \in \mathbb{Z}_q^\times/H\\ w\in \mathcal{I}_j, u \in \mathcal{I}_{j-1}}}
\sum_{\chi\pmod{q}}
 |Q_{j-1,b_8H,u}^{\Delta_8}(\chi)|^\ell \Biggl|
\sum_{\substack{\overline{k} \in \mathcal{K}}}
\Biggl(\prod_{i=1}^3 F_{k_i}^{\Delta_i}(\chi)\Biggr)
\\
&\qquad\qquad\qquad \qquad\qquad\qquad\qquad\qquad
Q_{B_4}^{\Delta_4}(\chi) U_{B_5,-k_1}^{\Delta_5}(\chi) R_{j,b_7H,-k_2-k_3,w}^{\Delta_7}(\chi) \Biggr|.
\end{aligned}
\end{align*}
Next we apply the Cauchy--Schwarz inequality. Let again
\[
A(\chi):=\sum_{k_1 \in \mathcal{K}_1} F_{k_1}^{\Delta_1}(\chi)U_{B_5,-k_1}^{\Delta_5}(\chi)
\]
and this time, let
\[
B(\chi):=\frac{1}{\log^2 q}\sum_{(k_2, k_3) \in \mathcal{K}_2}
F_{k_2}^{\Delta_2}(\chi)F_{k_3}^{\Delta_3}(\chi)
Q_{B_4}^{\Delta_4}(\chi)R_{j,b_7H,-k_2-k_3,w}^{\Delta_7}(\chi).
\]
By the Cauchy--Schwarz inequality,
\begin{align} 
\label{eq:Xj-CS} \begin{aligned}
E_j &\ll
P_j^{-\eta/(4j^2)} \log^2 q |\mathcal{I}_j| |\mathcal{I}_{j-1}|
\max_{\substack{\Delta_6,\Delta_7, \Delta_8 \in\{+,-\}\\ b_6,b_7, b_8 \in \mathbb{Z}_q^\times/H\\ w\in \mathcal{I}_j, u \in \mathcal{I}_{j-1}}}
\left(
\frac{1}{\varphi(q)}\sum_{\chi\pmod q}|A(\chi)|^2
\right)^{1/2} \\
& \qquad \cdot \left(
\frac{1}{\varphi(q)}\sum_{\chi\pmod q}|Q_{j-1,b_8H,u}^{\Delta_8}(\chi)|^{2\ell}|B(\chi)|^2
\right)^{1/2}.
\end{aligned}
\end{align}
The first factor can be bounded by~\eqref{eq:Achi2bound}. For the second factor, notice that the character sum in $Q_{j-1,b_8H,u}^{\Delta_8}(\chi)$ is supported on primes in $[Y_1, 2Y_1]$ whereas the coefficients of $B(\chi)$ are bounded and supported on $n \asymp qQ_1/Y_2$. Thus Lemma~\ref{le:amplify} is applicable with $X \asymp qQ_1$ and yields
\begin{align*}
\frac{1}{\varphi(q)}\sum_{\chi\pmod q}
|Q_{j-1,b_8H,u}^{\Delta_8}(\chi)|^{2\ell}|B(\chi)|^2
&\ll
  \frac{1}{Y_1^{2\ell} (q Q_1/Y_2)^2} \cdot \frac{\varphi(q)}{q} \cdot q (Q_1 Y_1)^2 \cdot 4^\ell (\ell+1)!^2\\
  & \ll \frac{\varphi(q)}{q^2} Y_1^4 \ell^{3\ell}
\cdot\end{align*}
Here, using~\eqref{eq:Y1Y2ldef} and~\eqref{eq:PjQjdef},
\[
  Y_1^4 \ell^{3\ell} \ll Q_{j-1} \exp\left(\left(\frac{\log Q_j}{\log P_{j-1}} +1\right) \log \log Q_j\right) \ll P_j^{\eta/(100j^2)}.
  \]

Combining this with~\eqref{eq:Xj-CS},~\eqref{eq:Achi2bound},~\eqref{eq:|Ij|bound}, and~\eqref{eq:PjQjdef}, we conclude that 
\[
E_j \ll P_j^{-\eta/(5j^2)} \log^3 q \cdot |\mathcal{I}_j| |\mathcal{I}_{j-1}| \frac{\varphi(q)}{q^2} \ll P_j^{-\eta/(10j^2)} \log^3 q \frac{\varphi(q)}{q^2} 
\]
and thus, using again~\eqref{eq:PjQjdef},
\[
  \sum_{j = 2}^J E_j \ll \frac{Q_1^{-1/90}}{q} \cdot \frac{\varphi(q)}{q} \log^3 q.
  \]

\textbf{Contribution of $\mathcal{Y}$.}
By Proposition~\ref{prop:f=g+h}(iii), the characters $\chi \in \mathcal{Y}$ contribute to the right-hand side of~\eqref{eq:S-Tchar} at most of order
\begin{align*}
\begin{aligned} 
\frac{1}{\varphi(q)}  \sum_{\substack{\overline{k} \in \mathcal{K}}} \max_{i_0 \in \{1, 2, 3\}}\sum_{\chi \in \mathcal{Y}} \left| F_{k_{i_0}}^{\Delta_{i_0}}(\chi) -  G_{k_{i_0}}^{\Delta_{i_0}}(\chi)\right| \prod_{\substack{i=1 \\ i \neq i_0}}^3 \left|F_{k_{i}}^{\Delta_{i}}(\chi) \right|  \left|Q_{B_4}^{\Delta_4}(\chi) U_{B_5, -k_1}^{\Delta_5}(\chi) M_{B_6, -k_2-k_3}^{\Delta_6}(\chi)\right|.
\end{aligned}
\end{align*}
For simplicity, we consider the case that the maximum is attained for $i_0 = 1$, the other cases are handled similarly. Recall the definition of $\delta$ from~\eqref{eq:deltadef}. By Proposition~\ref{prop:f=g+h}(ii), the contribution of $i_0 = 1$ is
\begin{align*}
&\ll \frac{\log^{-1/4} q}{\varphi(q)} \sum_{\overline{k} \in \mathcal{K}} \sum_{\chi \in \mathcal{Y}}  \left|F_{k_2}^{\Delta_2}(\chi) F_{k_3}^{\Delta_3}(\chi) Q_{B_4}^{\Delta_4}(\chi) U_{B_5, -k_1}^{\Delta_5}(\chi) M_{B_6, -k_2-k_3}^{\Delta_6}(\chi)\right| =: \Sigma,
\end{align*}
say. Using the trivial estimates 
\begin{align*}
|Q_{B_4}^{\Delta_4}(\chi)| &\ll \frac{1}{\log Q_1}, \qquad |U_{B_5, -k_1}^{\Delta_5}(\chi)|\ll |U_{B_5, -k_1}^{\Delta_5}(\chi_0)|, \\
  |M_{B_6, -k_2-k_3}^{\Delta_6}(\chi)| &\ll \frac{1}{Me^{-k_2-k_3}} \sum_{\substack{n \in I_{M}(-k_2-k_3) \\ (n, qP(Q_1)) = 1}} 1 \ll \frac{\varphi(q)}{q} \cdot \frac{(q, P(Q_1))}{\varphi((q, P(Q_1)))} \cdot \frac{1}{\log Q_1},
  \end{align*}
  and applying H\"older's inequality, we see that
\begin{align*}
  \Sigma&\ll \frac{\log^{-1/4} q}{q \log^2 Q_1} \frac{(q, P(Q_1))}{\varphi((q, P(Q_1)))} \sum_{\overline{k} \in \mathcal{K}} |U_{B_5, -k_1}^{\Delta_5}(\chi_0)| \left(\sum_{\chi \in \mathcal{Y}}|F_{k_2}^{\Delta_2}(\chi)|^2\right)^{1/2}\cdot \left(\sum_{\chi \in \mathcal{Y}}|F_{k_3}^{\Delta_3}(\chi)|^2\right)^{1/2}. 
 \end{align*}
By Lemma~\ref{le:largevalue}, we have
\begin{align*}
|\mathcal{Y}|\ll q^{1/20+1/200}.
\end{align*}
Hence, by Lemma~\ref{le:Hal-Mon} we have, for $i \in \{2, 3\}$,
\begin{align*}
\sum_{\chi\in \mathcal{Y}}\left|F_{k_i}^{\Delta_i}(\chi)\right|^2 \ll 1,
\end{align*}
and so, recalling $\mathcal{K} = \mathcal{K}_1 \times \mathcal{K}_2$ with $|\mathcal{K}_2| \ll \log^2 q$,
\[
\Sigma \ll \frac{\log^{7/4} q}{q \log^2 Q_1} \frac{(q, P(Q_1))}{\varphi((q, P(Q_1)))} \sum_{k_1 \in \mathcal{K}_1} |U_{B_5, -k_1}^{\Delta_5}(\chi_0)|.
  \]
The claim follows by combining the contributions of the sums over $\mathcal{X}_j$ and $\mathcal{Y}$.
\end{proof}

\section{Proof of Theorem~\ref{thm:general}: Working with the dense model}
As in Section~\ref{sec:DenseModelWorkEasy}, it will be convenient to work with a subset of $\mathbb{Z}_q^\times$ rather than the dense model functions $g_k^\Delta$. To facilitate this, we make the following definition.
\begin{definition} \label{def:AkDelta}
For $|k| \leq K$ and $\Delta \in \{+, -\}$, let $g_k^\Delta$ be as in Definition~\ref{def:gkDef}. Define
\begin{align*}
A_k^{\Delta} \coloneqq \{a \in \mathbb{Z}_q^{\times} \colon |g_k^{\Delta}(a)| \geq \varepsilon^2\}.
\end{align*}
\end{definition}
The arguments giving Lemma~\ref{le:Aprop} also give also the following lemma --- the only difference is the slightly different support of $f^\Delta_k$. 
\begin{lemma} \label{le:Akdeltabasics} Let $\Delta \in \{+, -\}$ and $|k| \leq K$, and let $A_k^\Delta$ be as in Definition~\ref{def:AkDelta}. 
  \begin{enumerate}
\item We have
\begin{align*}
|A_k^{+}| + |A_k^{-}|\geq \left(1-\varepsilon\right)\varphi(q).
\end{align*}
\item For any subgroup $H \leq \mathbb{Z}_q^{\times}$ of index at most $2$ and any $b\in \mathbb{Z}_q^{\times}$, we have
\begin{equation*}
|A_k^{\Delta} \cap bH|\geq \left(\frac{|\{n \in [I_R(k)]_q, (n, P(z)) = 1, \sgn(h(n)) = \Delta\} \cap bH|}{|\{n \in [I_R(k)]_q, (n, P(z)) = 1\}|}-\varepsilon\right)\varphi(q).
\end{equation*}
\end{enumerate}
\end{lemma}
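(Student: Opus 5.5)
The plan is to repeat the proof of Lemma~\ref{le:Aprop} verbatim, replacing $f^\Delta$, $g^\Delta$, $A^\Delta$ and $I=(R/e,R]$ by $f_k^\Delta$, $g_k^\Delta$, $A_k^\Delta$ and $I_R(k)=(e^{k-1}R,e^kR]$. The first step is to note that for $|k|\le K=\lfloor\varepsilon^2\log q\rfloor$ we have $e^kR\in[q^{1/2-\varepsilon/4-\varepsilon^2},q^{1/2-\varepsilon/4+\varepsilon^2}]$. Hence all the sieve and Burgess inputs used for $I$ still apply to $I_R(k)$, since the endpoints stay well above $q^{2/5}\cdot D$ with $D=q^{1/100}$. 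This is exactly the range in which Definition~\ref{def:gkDef} was justified, and it makes properties (i)--(v) of Proposition~\ref{prop:f=g+h} available for $g_k^\Delta$ with $\eta=\varepsilon^2/2$ and $\delta=\log^{-1/4}q$.

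For (1), I will first use (i) and the definition of $A_k^\Delta$ to get
\[
\mathbb{E}_a g_k^\Delta(a)\le \varepsilon^2+(1+\varepsilon^2)|A_k^\Delta|/\varphi(q).
\]
Next I sum over $\Delta$ and apply (iv), which gives $\mathbb{E}_a(g_k^++g_k^-)=\mathbb{E}_{n\in[I_R(k)]_q}(f_k^++f_k^-)$. This last quantity equals $\prod_{p<z,p\nmid q}(1-1/p)^{-1}\,\mathbb{E}_{n\in[I_R(k)]_q}\mathbf{1}_{(n,P(z))=1}=1+O(\varepsilon^2)$. To see this, I will apply Lemma~\ref{le:fundsieve} with $\kappa=1$, $z=q^{\sqrt\varepsilon}$ and $D=q^{1/100}$, using \eqref{eq:Simplegcd} on the interval $(e^{k-1}R/d,e^kR/d]$. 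The $O(\tau(q)D)$ error is negligible against $|[I_R(k)]_q|\gg q^{0.4}$. Combining the two estimates yields $|A_k^+|+|A_k^-|\ge(1-O(\varepsilon^2))\varphi(q)-O(\varepsilon^2)\varphi(q)\ge(1-\varepsilon)\varphi(q)$ for small $\varepsilon$.

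For (2), I will write the density ratio in the statement as $\mathbb{E}_{n\in[I_R(k)]_q}\mathbf{1}_{n\in bH}f_k^\Delta(n)+O(\varepsilon^2)$. This uses the same fundamental lemma computation to replace the denominator by $|[I_R(k)]_q|\prod(1-1/p)$ up to a factor $1+O(\varepsilon^2)$. Then (v) converts this into $\mathbb{E}_a\mathbf{1}_{a\in bH}g_k^\Delta(a)+O(\delta)$. Splitting according to whether $a\in A_k^\Delta$ bounds this by $(1+\varepsilon^2)|A_k^\Delta\cap bH|/\varphi(q)+\varepsilon^2$, and the claim follows after rearranging.

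The only point needing any care, and so the expected obstacle, is confirming uniformity in $k$. The implied constants in the fundamental lemma and in the dense model construction must not depend on $k$, and the sieve main term must remain $1+O(\varepsilon^2)$ uniformly for $|k|\le K$. Both hold because the interval length $\asymp e^kR$ stays in a fixed power range of $q$ and all errors are power savings.
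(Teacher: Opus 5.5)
Your proposal is correct and is essentially the paper's own proof: the paper only remarks that the argument for Lemma~\ref{le:Aprop} carries over unchanged, the sole difference being the support $I_R(k)$ of $f_k^\Delta$. Your check that $e^kR$ stays in a fixed power range of $q$ for $|k|\le K$ is exactly what justifies this transfer. (One small point: for $H=\mathbb{Z}_q^\times$ in part (2), use Proposition~\ref{prop:f=g+h}(iv) instead of (v), since (v) is stated only for index two; this changes nothing.)
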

Our next task is to lower bound $T_{B_4, B_5, B_6}^{\overline{\Delta}, \mathcal{K}}$. For this, we define, for $\overline{\Delta} = (\Delta_1, \dotsc, \Delta_6) \in \{+, -\}^6$ and $B_4, B_5, B_6 \subseteq \mathbb{Z}_q^\times$, and $\overline{k} = (k_1, k_2, k_3) \in (\mathbb{Z} \cap [-K, K])^3$, the function $T_{B_4, B_5, B_6, \overline{k}}^{\overline{\Delta}} \colon \mathbb{Z}_q^\times \to \mathbb{R}_{\geq 0}$,
\[
T_{B_4, B_5, B_6, \overline{k}}^{\overline{\Delta}}(a) :=\frac{1}{T_{\overline{k}}} \left(g_{k_1}^{\Delta_1} \ast g_{k_2}^{\Delta_2} \ast g_{k_3}^{\Delta_3} \ast \1[\mathcal{Q}^{\Delta_4}_{B_4}] \ast \1[\mathcal{U}^{\Delta_5}_{B_5, -k_1}]\ast  \1[\mathcal{M}^{\Delta_6}_{B_6, -k_2-k_3}]\right)(a),
\]
where $T_{\overline{k}}$ is as in~\eqref{eq:Tkbardef}.

\begin{lemma} \label{le:Tlower(iii)}
  Let $G = \mathbb{Z}_q^\times$ and $\overline{k} = (k_1, k_2, k_3) \in (\mathbb{Z} \cap [-K, K])^3$, and let $A_k^\Delta$ be as in Definition~\ref{def:AkDelta}. Assume that the following two conditions hold.
  \begin{itemize}
    \item[(A1)] There exist $\Delta_1, \Delta_2, \Delta_3 \in \{+, -\}$ such that
\[
\left(\mathbf{1}[A_{k_1}^{\Delta_1}] \ast \mathbf{1}[A_{k_2}^{\Delta_2}] \ast \mathbf{1}[A_{k_3}^{\Delta_3}]\right)(b) \gg \varphi(q)^2 
\]
for every $b \in \mathbb{Z}_q^\times.$
\item[(A2)] We have
\[
\sum_{\substack{p \leq q^{1/2} \\ h(p) < 0}} \frac{1}{p} \gg \frac{1}{Q_1^{1/100}}.
\] 
\end{itemize}
Then, for every $\Delta \in \{+, -\}$ and $a \in \mathbb{Z}_q^\times$, there exist $\Delta_4, \Delta_5, \Delta_6 \in \{+, -\}$ such that $\Delta_1 \dotsm \Delta_6 = \Delta$ and, for  $\overline{\Delta} = (\Delta_1, \dotsc, \Delta_6),$
\[
T_{G, G, G, \overline{k}}^{\overline{\Delta}}(a) \gg \frac{1}{q \log^2 Q_1} \cdot  \frac{(q, P(Q_1))}{\varphi((q, P(Q_1)))} \left(\frac{|\mathcal{U}_{G, -k_1}^{\Delta_5}|}{Ue^{-k_1}} + \frac{\varphi(q)}{q} Q_1^{-1/100}\right).
    \]
\end{lemma}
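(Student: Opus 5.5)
We follow the proof of Proposition~\ref{prop:AAA->E1}, now with three independent sign choices $\Delta_4,\Delta_5,\Delta_6$ and all sets equal to $G$. We fix $\Delta_1,\Delta_2,\Delta_3$ as in (A1). Next we choose $\Delta_4$ by pigeonhole so that $|\mathcal{Q}_G^{\Delta_4}|\geq Q_1/(10\log Q_1)$, which is possible by the prime number theorem.

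For $\Delta_6$ we need a lower bound for $|\mathcal{M}_{G,-k_2-k_3}^{+}|+|\mathcal{M}_{G,-k_2-k_3}^{-}|$. Write $v=-k_2-k_3$. Since the two sign classes partition the square-free $Q_1$-rough elements of $\mathcal{S}$ in $I_M(v)$ that are coprime to $q$, pigeonhole gives a sign $\Delta_6$ with
\[
\frac{|\mathcal{M}_{G,v}^{\Delta_6}|}{Me^{v}}\gg \frac{\varphi(q)}{q}\cdot\frac{(q,P(Q_1))}{\varphi((q,P(Q_1)))}\cdot\frac{1}{\log Q_1}.
\]
To prove this, we first use a fundamental-lemma sieve (Lemma~\ref{le:fundsieve}) to count integers in $I_M(v)$ coprime to $qP(Q_1)$. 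This yields the stated density, since the product over $p<Q_1$, $p\nmid q$ of $(1-1/p)$ is $\asymp\frac{\varphi(q)}{q}\frac{(q,P(Q_1))}{\varphi((q,P(Q_1)))}\frac{1}{\log Q_1}$, up to the factor involving large primes of $q$, which is harmless because $Q_1\geq B(q)$. Here $M e^{v}\geq q^{\varepsilon/2-2\varepsilon^2}$, which is a large power of $Q_1$. We then show that the condition $m\in\mathcal{S}$ and square-freeness remove only a small proportion. For a rough integer of size $M$, the probability of having no prime factor in $(P_j,Q_j]$ is $\ll \log P_j/\log Q_j\asymp 1/(100 j^2)$, and summing over $j\geq 2$ gives less than $1/2$. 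Square-free failures cost $O(1/Q_1)$. We expect this density computation, done uniformly in $v$, to be the main technical obstacle; in particular we must check that the sieve dimension and the primes dividing $q$ are controlled, which is exactly where $Q_1\geq B(q)$ enters.

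Now, given $\Delta$ and $a$, set $\Delta_5=\Delta\Delta_1\cdots\Delta_4\Delta_6$. Using $g_{k_i}^{\Delta_i}\geq\varepsilon^2\mathbf{1}[A_{k_i}^{\Delta_i}]$, we obtain
\[
T_{G,G,G,\overline{k}}^{\overline{\Delta}}(a)\gg\frac{1}{\varphi(q)^3Q_1Ue^{-k_1}Me^{-k_2-k_3}}\sum_{p,u,m}(\mathbf{1}[A_{k_1}^{\Delta_1}]\ast\mathbf{1}[A_{k_2}^{\Delta_2}]\ast\mathbf{1}[A_{k_3}^{\Delta_3}])(a\overline{pum}).
\]
By (A1) this is
\[
\gg\frac{1}{\varphi(q)}\cdot\frac{|\mathcal{Q}_G^{\Delta_4}|}{Q_1}\cdot\frac{|\mathcal{M}^{\Delta_6}_{G,v}|}{Me^{v}}\cdot\frac{|\mathcal{U}_{G,-k_1}^{\Delta_5}|}{Ue^{-k_1}}.
\]
Inserting the two density bounds gives the $\mathcal{U}$ term of the claim, since $\varphi(q)/q\cdot 1/\varphi(q)=1/q$.

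For the $Q_1^{-1/100}$ term, we apply Lemma~\ref{lemma:le:multfunctsigns} with $h_0=h$ and $y=Ue^{-k_1}\geq q^{1/2}$, so that $y/q^{\varepsilon/8}\geq q^{1/3}$. The sum over $p\leq y/q^{\varepsilon/8}$ with $h(p)<0$, $p\nmid q$, contains the range $p\leq q^{1/3}$. Primes $p\mid q$ contribute little beyond $B(q)$. Combined with (A2) (more precisely, with the part of the sum over $p\leq q^{1/3}$, after adjusting), this gives $|\mathcal{U}^{\Delta_5}_{G,-k_1}|/(Ue^{-k_1})\gg\frac{\varphi(q)}{q}Q_1^{-1/100}$ for either sign. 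The part of the sum over $q^{1/3}<p\leq q^{1/2}$ can be handled by taking $y$ larger inside $I_U(-k_1)$, or by lowering the threshold. Therefore the $\mathcal{U}$ term dominates the sum of both terms, and the claim follows.
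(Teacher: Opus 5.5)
Your construction of the signs matches the paper's proof. You take $\Delta_4$ from the prime number theorem; note that this already uses $Q_1\ge B(q)$, since otherwise all primes in $(Q_1/e,Q_1]$ could divide $q$. You take $\Delta_6$ from a fundamental-lemma lower bound for $|\mathcal{M}^{+}_{G,v}|+|\mathcal{M}^{-}_{G,v}|$, minus the defects caused by $\mathcal{S}$. The paper bounds the $j$-th defect by $(\log P_j/\log Q_j)^{4/5}$, which leaves room for primes of $q$ in $(P_j,Q_j]$; as you anticipated, this is where $Q_1\ge B(q)$ enters. Then $\Delta_5$ is forced, and $g^{\Delta_i}_{k_i}\ge\varepsilon^2\mathbf{1}[A^{\Delta_i}_{k_i}]$ together with (A1) gives the lower bound. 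That part is fine. The trouble is your last paragraph, where you try to spell out the paper's one-line appeal to Lemma~\ref{lemma:le:multfunctsigns}.

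\textbf{Passing from (A2) to \eqref{eq:h0neglow}.} The sum in \eqref{eq:h0neglow} runs only over $p\nmid q$, and you cannot bridge the difference by saying that primes $p\mid q$ ``contribute little beyond $B(q)$''. No bound of that kind can work. Suppose $h(p)<0$ exactly for the primes $p\mid q$, and $q$ has a prime factor below $Q_1^{1/100}$. Then (A2) holds, yet every square-free $u$ coprime to $q$ has $h(u)>0$. So $\mathcal{U}^{-}_{G,-k_1}=\emptyset$, and your claim ``for either sign'' fails. The hypothesis has to be read with $p\nmid q$. This is exactly what negating \eqref{eq:thmCondition} for the principal character gives in the proof of Theorem~\ref{thm:general}, and it is how the paper tacitly uses (A2).

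\textbf{The split at $q^{1/3}$.} This comes from an underestimate. Since $|k_1|\le K\le\varepsilon^2\log q$, one has $y\ge Ue^{-K-1}\ge q^{1/2+\varepsilon/4-\varepsilon^2}/e$. Hence $y/q^{\varepsilon/8}\ge q^{1/2}$, and the whole range of (A2) is covered directly. The remedies you suggest for $q^{1/3}<p\le q^{1/2}$ (``taking $y$ larger inside $I_U(-k_1)$'', ``lowering the threshold'') are not arguments. As written, the case where the mass of (A2) lies in that range is left open.

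\textbf{A point you share with the paper.} Lemma~\ref{lemma:le:multfunctsigns} counts $n\le y$, whereas $\mathcal{U}^{\Delta_5}_{G,-k_1}$ lives on $I_U(-k_1)=(Ue^{-k_1-1},Ue^{-k_1}]$. A lower bound of size $\frac{\varphi(q)}{q}yQ_1^{-1/100}$ on $[1,y]$ does not yield one on $(y/e,y]$ by differencing. Strictly, one needs the analogue of that lemma on $e$-adic intervals.
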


\begin{proof}
  Recall that, for $j = 1, 2, 3$, $g^{\Delta_j}_{k_j}(b)\geq \varepsilon^2\mathbf{1}_{A_{k_j}^{\Delta_j}}(b)$ for every $b \in \mathbb{Z}_q^\times$. Notice that, for any $|v| \leq 2K$,
  \begin{align*}
         &|\mathcal{M}_{G, v}^{+}| + |\mathcal{M}_{G, v}^{-}|\\
        &= |\{m \in I_M(v) \colon |\mu(m)| = 1, \, m \in \mathcal{S}, \, (m, qP(Q_1)) = 1\}| \\
      &\geq |\{m \in I_{M}(v) \colon |\mu(m)| = 1, \, (m, qP(Q_1)) = 1\}| \\
      & \quad - \sum_{j = 2}^J |\{m \in I_{M}(v) \colon |\mu(m)| = 1, \, p' \mid m \implies p' \not \in (P_j, Q_j], \, (m, qP(Q_1)) = 1\}|.
     \end{align*}
  Recalling $Q_1 \geq B(q)$, where $B(q)$ is as in~\eqref{eq:defB(q)}, we see from the fundamental lemma of the sieve and Mertens' theorem that
  \begin{align}  \label{eq:Mtotalsize}
    \begin{aligned}
      |\mathcal{M}_{G, v}^{+}| + |\mathcal{M}_{G, v}^{-}| &\geq  \frac{1}{10} \cdot \frac{\varphi(q)}{q} \frac{(q, P(Q_1))}{\varphi((q, P(Q_1)))} \cdot \frac{Me^v}{\log Q_1}\\
      &\quad - \sum_{j = 2}^J \frac{\varphi(q)}{q} \frac{(q, P(Q_1))}{\varphi((q, P(Q_1)))} \frac{Me^v}{\log Q_1} \left(\frac{\log P_j}{\log Q_j}\right)^{4/5} \\
      &\gg \frac{\varphi(q)}{q} \cdot \frac{(q, P(Q_1))}{\varphi((q, P(Q_1)))} \cdot \frac{Me^v}{\log Q_1}.
    \end{aligned}
  \end{align}
Thus, by the pigeonhole principle, we can choose $\Delta_6$ in such a way that
\begin{equation} \label{eq:fDelta2av}
|\mathcal{M}_{G, -k_2-k_3}^{\Delta_6}| \gg \frac{\varphi(q)}{q} \cdot \frac{(q, P(Q_1))}{\varphi((q, P(Q_1)))} \cdot \frac{Me^{-k_2-k_3}}{\log Q_1}.
\end{equation}
Furthermore, by the prime number theorem and pigeonhole principle (recalling again $Q_1 \geq B(q)$ with $B(q)$ as in~\eqref{eq:defB(q)}), we can choose $\Delta_4 \in \{+, -\}$ in such that
\begin{equation} \label{eq:QlowG}
|\mathcal{Q}_G^{\Delta_4}| \gg \frac{Q_1}{\log Q_1}.
\end{equation}

Let $\Delta \in \{+, -\}$ and $a \in \mathbb{Z}_q^\times$ be arbitrary. Choose $\Delta_5 = \Delta \cdot \Delta_1 \Delta_2 \Delta_3 \Delta_4 \Delta_6$, so that $\Delta_1 \dotsm \Delta_6 = \Delta$. Now, for $\overline{\Delta} = (\Delta_1, \dotsc, \Delta_6)$, we have
\begin{align*}
&T_{G, G, G, \overline{k}}^{\overline{\Delta}}(a) \geq \frac{\varepsilon^6}{T_{\overline{k}}} \left(\mathbf{1}[A_{k_1}^{\Delta_1}] \ast \mathbf{1}[A_{k_2}^{\Delta_2}] \ast \mathbf{1}[A_{k_3}^{\Delta_3}] \ast \1[\mathcal{Q}^{\Delta_4}_G] \ast \1[\mathcal{U}^{\Delta_5}_{G, -k_1}] \ast \1[\mathcal{M}_{G, -k_2-k_3}^{\Delta_6}]\right)(a) \\
& \gg \frac{1}{\varphi(q)^3 Q_1 \cdot Ue^{-k_1} \cdot Me^{-k_2-k_3}} \sum_{p \in \mathcal{Q}^{\Delta_4}_G} \sum_{m \in \mathcal{M}_{G, -k_2-k_3}^{\Delta_6}} \sum_{u \in \mathcal{U}_{G, -k_1}^{\Delta_5}} \left(\mathbf{1}[A_{k_1}^{\Delta_1}] \ast \mathbf{1}[A_{k_2}^{\Delta_2}] \ast \mathbf{1}[A_{k_3}^{\Delta_3}]\right)(a\overline{pmu}).
\end{align*}
Recalling (A1) and~\eqref{eq:fDelta2av}--\eqref{eq:QlowG}, we see that
\begin{align*}
T_{G, G, G, \overline{k}}^{\overline{\Delta}}(a) \gg \frac{1}{q \log^2 Q_1} \cdot \frac{(q, P(Q_1))}{\varphi((q, P(Q_1)))} \cdot \frac{|\mathcal{U}_{G, -k_1}^{\Delta_5}|}{U e^{-k_1}}.
\end{align*}
The claim follows by combining this with Lemma~\ref{lemma:le:multfunctsigns}, using (A2).
\end{proof}

\begin{lemma} \label{le:Tlower(iv)}
  Let $\overline{k} = (k_1, k_2, k_3) \in (\mathbb{Z} \cap [-K, K])^3$, and let $A_k^\Delta$ be as in Definition~\ref{def:AkDelta}. Let $H \leq \mathbb{Z}_q^\times$ be a subgroup of index two such that the following two conditions hold.
  \begin{itemize}
    \item[(A1)] There exist elements $b^+, b^- \in \mathbb{Z}_q^\times$ with $b^+H \neq b^-H$ such that
\[
\left(\mathbf{1}[A_{k_1}^{+}] \ast \mathbf{1}[A_{k_2}^{+}] \ast \mathbf{1}[A_{k_3}^{+}]\right)(b) \gg \varphi(q)^2
\]
for every $b \in b^+H$ and
\[
\left(\mathbf{1}[A_{k_1}^{-}] \ast \mathbf{1}[A_{k_2}^{-}] \ast \mathbf{1}[A_{k_3}^{-}]\right)(b) \gg \varphi(q)^2
\]
for every $b \in b^- H$.
\item[(A2)] Let $\chi$ be the quadratic character for which $\chi(b) = 1$ iff $b \in H$. We have
\begin{equation*} 
\sum_{\substack{p \leq q^{1/2} \\ h(p)\chi(p) < 0}} \frac{1}{p} \gg \frac{1}{Q_1^{1/100}}.
\end{equation*}
\end{itemize}
Then, for every $\Delta \in \{+, -\}$ and $a \in \mathbb{Z}_q^\times$, there exists $b_4, b_5, b_6 \in \mathbb{Z}_q^\times$ and $\overline{\Delta} = (\Delta_1, \dotsc, \Delta_6)$ such that $\Delta_1 \dotsm \Delta_6 = \Delta$ and
\[
T_{b_4H, b_5H, b_6 H, \overline{k}}^{\overline{\Delta}}(a) \gg \frac{1}{q \log^2 Q_1} \cdot \frac{(q, P(Q_1))}{\varphi((q, P(Q_1)))} \cdot \left(\frac{|\mathcal{U}_{b_5 H, -k_1}^{\Delta_5}|}{U e^{-k_1}} + \frac{\varphi(q)}{q} Q_1^{-1/100} \right).
    \]
\end{lemma}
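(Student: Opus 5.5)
The plan is to mimic the proof of Proposition~\ref{prop:AAA->E2}, with the extra factors $p_1$ and $m$ placed in fixed cosets of $H$, and the final sign and coset adjustment carried by the $u$-variable. First I fix the coset and sign data for $p_1$ and $m$ by pigeonholing. By the prime number theorem in progressions to the modulus $2$ (via the quadratic character $\chi$, using $Q_1\ge B(q)$ and Siegel-type or trivial considerations), there are $b_4\in\mathbb{Z}_q^\times$ and $\Delta_4$ with $|\mathcal{Q}^{\Delta_4}_{b_4H}|\gg Q_1/\log Q_1$. This follows simply because $\mathcal{Q}_G^{+}\cup\mathcal{Q}_G^{-}$ is split into four pieces, one of which has at least a quarter of the mass, and~\eqref{eq:QlowG} gives the total. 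Similarly, by~\eqref{eq:Mtotalsize} and pigeonholing over the four pairs (coset, sign), I choose $b_6,\Delta_6$ with
\[
|\mathcal{M}^{\Delta_6}_{b_6H,-k_2-k_3}|\gg \frac{\varphi(q)}{q}\cdot\frac{(q,P(Q_1))}{\varphi((q,P(Q_1)))}\cdot\frac{Me^{-k_2-k_3}}{\log Q_1}.
\]

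Next, given $\Delta$ and $a$, I consider two candidate sign vectors. The first is $\overline{\Delta}=(+,+,+,\Delta_4,\Delta_5,\Delta_6)$ with $\Delta_5=\Delta\Delta_4\Delta_6$ and $b_5=a\overline{b^+b_4b_6}$. The second is $\overline{\Delta'}=(-,-,-,\Delta_4,\Delta_5',\Delta_6)$ with $\Delta_5'=-\Delta\Delta_4\Delta_6$ (since $(-)^3=-$) and $b_5'=a\overline{b^-b_4b_6}$. In both cases the product of signs is $\Delta$. Lower bounding $g_{k_i}^{\pm}\ge\varepsilon^2\mathbf{1}[A^{\pm}_{k_i}]$ and expanding the convolution, every summand has $a\overline{p_1um}\in b^{\pm}H$. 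So (A1) bounds the triple convolution below by $\gg\varphi(q)^2$, and the sum of the two $T$'s is
\[
\gg \frac{1}{q\log^2Q_1}\cdot\frac{(q,P(Q_1))}{\varphi((q,P(Q_1)))}\cdot\frac{|\mathcal{U}^{\Delta_5}_{b_5H,-k_1}|+|\mathcal{U}^{\Delta_5'}_{b_5'H,-k_1}|}{Ue^{-k_1}},
\]
exactly as in the easy case. Here I use $\varphi(q)^2/(\varphi(q)^3)$ together with the sizes from the first step; the factor $\varphi(q)/q$ in the $\mathcal{M}$ bound combines with $1/\varphi(q)$ to give $1/q$.

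The final step lower bounds $|\mathcal{U}^{\Delta_5}_{b_5H,-k_1}|+|\mathcal{U}^{\Delta_5'}_{b_5'H,-k_1}|$. Since $b^+H\ne b^-H$, we have $b_5H\ne b_5'H$, so $\chi(b_5')=-\chi(b_5)$ and $\Delta_5'=-\Delta_5$. Hence the union consists exactly of the square-free $u\in I_U(-k_1)$ coprime to $q$ with $\sgn(h(u)\chi(u))=\Delta_5\sgn(\chi(b_5))$. I apply Lemma~\ref{lemma:le:multfunctsigns} to the multiplicative function $h_0=h\chi$ (nonzero at $p\nmid q$), with $y=Ue^{-k_1}$ and $y/e$, taking differences. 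Because the lemma gives lower bounds for sums over $[1,y]$ rather than over $e$-adic intervals, the cleanest route is to note that the proof of Lemma~\ref{lemma:le:multfunctsigns} works equally for $n\in(y/e,y]$. In Case 1 the sieve lemma gives a lower bound of the right order on intervals (or one applies it at $y$ and subtracts the trivial upper bound at $y/e$, after adjusting constants); in Case 2 the Halász argument applies to $S_2$ on both endpoints. Using (A2) with $y/q^{\varepsilon/8}\ge q^{1/2}$ (as $Ue^{-k_1}\ge q^{1/2+\varepsilon/4-\varepsilon^2}$), the minimum in~\eqref{eq:h0neglow} is $\gg Q_1^{-1/100}$. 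This yields $\gg\frac{\varphi(q)}{q}Ue^{-k_1}Q_1^{-1/100}$. Whichever of the two $T$'s carries at least half, I get the claimed bound for that one, with the corresponding $b_5$.

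I expect the main obstacle to be the interval version of Lemma~\ref{lemma:le:multfunctsigns}, that is, getting positive proportions on $(y/e,y]$ rather than $[1,y]$, and I would first try proving it by rerunning the Case 1 and Case 2 arguments. The pigeonholing of $\mathcal{M}$ into cosets is routine.
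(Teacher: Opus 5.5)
Your proposal is correct and follows the paper's proof essentially line by line:
\begin{itemize}
\item pigeonhole to choose $(b_4,\Delta_4)$ and $(b_6,\Delta_6)$;
\item pair the sign vectors $(+,+,+,\Delta_4,\Delta_5,\Delta_6)$ and $(-,-,-,\Delta_4,-\Delta_5,\Delta_6)$ with $b_5=a\overline{b^+b_4b_6}$ and $b_5'=a\overline{b^-b_4b_6}$;
\item identify the disjoint union of the two $\mathcal{U}$-sets with the square-free $u\in I_U(-k_1)$ having $\sgn(h(u))\chi(u)=\Delta_5\chi(b_5)$;
\item conclude via Lemma~\ref{lemma:le:multfunctsigns} applied to $h\chi$, together with (A2).
\end{itemize}

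The paper applies Lemma~\ref{lemma:le:multfunctsigns} to the $e$-adic interval $I_U(-k_1)$ without comment, so the interval issue you flag is not treated there either. Your fallback of subtracting a trivial upper bound at $y/e$ would not work as stated. The implied constant in the Case~1 lower bound comes from Lemma~\ref{le:sieveworks}, depends badly on $C_0$, and is much smaller than $1/e$ times that upper bound. Closing that point would therefore need a genuine interval version of the sieve input, not an adjustment of constants.
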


\begin{proof}
Recall that, for $i \in \{1, 2, 3\}$, $g^{\Delta_i}_{k_i}(b)\geq \varepsilon^2\mathbf{1}[A_{k_i}^{\Delta_i}](b)$ for every $b \in \mathbb{Z}_q^\times$. By the pigeonhole principle and~\eqref{eq:Mtotalsize} we can choose $b_6 \in \mathbb{Z}_q^\times$ and $\Delta_6 \in \{+, -\}$ in such a way that
\begin{equation} \label{eq:fDelta56av}
|\mathcal{M}_{b_6 H, -k_2-k_3}^{\Delta_6}| \gg \frac{\varphi(q)}{q} \cdot \frac{(q, P(Q_1))}{\varphi((q, P(Q_1)))} \cdot \frac{Me^{-k_2-k_3}}{\log Q_1}.
\end{equation}
Furthermore, as in~\eqref{eq:QlowG}, we can choose $b_4 \in \mathbb{Z}_q^\times$ and $\Delta_4 \in \{+, -\}$ in such a way that
\begin{equation}\label{eq:QlowGb4}
|\mathcal{Q}_{b_4 H}^{\Delta_4}| \gg \frac{Q_1}{\log Q_1}.
\end{equation}

Let $\Delta \in \{+, -\}$ and $a \in \mathbb{Z}_q^\times$ be arbitrary. Take $\Delta_1 = \Delta_2 = \Delta_3 = +, \, \Delta_5 = \Delta \Delta_4 \Delta_6$ and $\Delta_1' = \Delta_2' = \Delta_3' = -, \, \Delta_5'=-\Delta \Delta_4 \Delta_6 $. Let further $\overline{\Delta} = (\Delta_1,\dotsc, \Delta_6)$ and $\overline{\Delta'} = (\Delta_1', \Delta_2', \Delta_3', \Delta_4, \Delta_5', \Delta_6)$. Now $\Delta_1 \dotsm \Delta_6 = \Delta_1' \Delta_2' \Delta_3' \Delta_4 \Delta_5' \Delta_6  = \Delta$. Choose $b_5 = a \overline{b^+ b_4 b_6}$ and $b'_5 = a \overline{b^- b_4 b_6}$. Let $B_4 = b_4 H, B_5 = b_5 H, B_5' = b_5' H, B_6 = b_6 H$.

Now
\begin{align*}
  &T_{B_4, B_5, B_6, \overline{k}}^{\overline{\Delta}}(a) + T_{B_4, B_5', B_6, \overline{k}}^{\overline{\Delta'}}(a)\\
  \geq& \frac{\varepsilon^6}{T_{\overline{k}}} \left(\mathbf{1}[A_{k_1}^{+}] \ast \mathbf{1}[A_{k_2}^{+}] \ast \mathbf{1}[A_{k_3}^{+}] \ast \1[\mathcal{Q}^{\Delta_4}_{B_4}] \ast \1[\mathcal{U}^{\Delta_5}_{B_5, -k_1}] \ast \1[\mathcal{M}^{\Delta_6}_{B_6, -k_2-k_3}]\right)(a) \\
  & \quad + \frac{\varepsilon^6}{T_{\overline{k}}} \left(\mathbf{1}[A_{k_1}^{-}] \ast \mathbf{1}[A_{k_2}^{-}] \ast \mathbf{1}[A_{k_3}^{-}] \ast \1[\mathcal{Q}^{\Delta_4}_{B_4}] \ast \1[\mathcal{U}^{\Delta_5'}_{B_5', -k_1}] \ast \1[\mathcal{M}^{\Delta_6}_{B_6, -k_2-k_3}]\right)(a) \\
   & \gg \frac{1}{T_{\overline{k}}} \sum_{p \in \mathcal{Q}^{\Delta_4}_{B_4}} \sum_{u \in \mathcal{U}^{\Delta_5}_{B_5, -k_1}}  \sum_{m \in \mathcal{M}^{\Delta_6}_{B_6, -k_2-k_3}} \left(\mathbf{1}[A_{k_1}^{+}] \ast \mathbf{1}[A_{k_2}^{+}] \ast \mathbf{1}[A_{k_3}^{+}]\right)(a\overline{pum}) \\
  &\quad + \frac{1}{T_{\overline{k}}}  \sum_{p \in \mathcal{Q}^{\Delta_4}_{B_4}} \sum_{u \in \mathcal{U}^{\Delta_5'}_{B_5', -k_1}} \sum_{m \in \mathcal{M}^{\Delta_6}_{B_6, -k_2-k_3}} \left(\mathbf{1}[A_{k_1}^{-}] \ast \mathbf{1}[A_{k_2}^{-}] \ast \mathbf{1}[A_{k_3}^{-}]\right)(a\overline{pum}).
\end{align*}
On the first line on the right-hand side the argument of the convolution $a\overline{pum}$ is in $a\overline{b_4 H b_5 H b_6 H}  = b^+H$ and on the second line on the right-hand side it is in $a\overline{b_4 H b'_5 H b_6 H} = b^- H$. 

Recalling (A1),~\eqref{eq:Tkbardef}, and~\eqref{eq:fDelta56av}--\eqref{eq:QlowGb4}, we see that

\begin{align*}
  &T^{\overline{\Delta}}_{B_4, B_5, B_6, \overline{k}}(a) + T^{\overline{\Delta'}}_{B_4, B_5', B_6, \overline{k}}(a) \gg \frac{1}{q \log^2 Q_1} \cdot \frac{(q, P(Q_1))}{\varphi((q, P(Q_1)))} \cdot \frac{|\mathcal{U}^{\Delta_5}_{b_5 H, -k_1}| + |\mathcal{U}^{\Delta_5'}_{b_5' H, -k_1}|}{Ue^{-k_1}}.
  \end{align*}
By Lemma~\ref{lemma:le:multfunctsigns} and (A2), we have 
\begin{align*}
|\mathcal{U}^{\Delta_5}_{b_5 H, -k_1}| + |\mathcal{U}^{\Delta_5'}_{b_5' H, -k_1}| = \sum_{\substack{u \in I_U(-k_1) \\ \sgn(h(u))\chi(u) = \Delta_5 \chi(b_5)}} |\mu(u)| \gg \frac{\varphi(q)}{q} \cdot \frac{Ue^{-k_1}}{Q_1^{1/100}}.
\end{align*}

  Hence either
\begin{align*}
T^{\overline{\Delta}}_{B_4, B_5, B_6, \overline{k}}(a) \gg \frac{1}{q\log^2 Q_1} \cdot \frac{(q, P(Q_1))}{\varphi((q, P(Q_1)))} \left(\frac{|\mathcal{U}^{\Delta_5}_{b_5 H, -k_1}|}{Ue^{-k_1}} +\frac{\varphi(q)}{q} \cdot \frac{1}{Q_1^{1/100}}\right)
  \end{align*}
  or
  \begin{align*}
T^{\overline{\Delta'}}_{B_4, B_5', B_6, \overline{k}}(a) \gg \frac{1}{q\log^2 Q_1} \cdot \frac{(q, P(Q_1))}{\varphi((q, P(Q_1)))} \left(\frac{|\mathcal{U}^{\Delta'_5}_{b'_5 H, -k_1}|}{Ue^{-k_1}} + \frac{\varphi(q)}{q} \cdot \frac{1}{Q_1^{1/100}}\right),
  \end{align*}
and the claim follows.
\end{proof}

\section{Proof of Theorem~\ref{thm:general}: Final case analysis}\label{sec:general2}
\begin{proof}[Proof of Theorem~\ref{thm:general}]
If~\eqref{eq:thmCondition} holds for some character $\chi$ of order at most two, there is nothing to prove. Hence we can assume that Lemma~\ref{le:Tlower(iii)}(A2) and Lemma~\ref{le:Tlower(iv)}(A2) hold, so that in order to apply these lemmas it suffices to show that (A1) holds.

By Lemma~\ref{le:reductiontoBoundingS}, it suffices to show that, for every $a \in \mathbb{Z}_q^\times$ and $\Delta \in \{+, -\}$, there exist $\overline{\Delta} = (\Delta_1, \dotsc, \Delta_6), \, \mathcal{K} \subseteq (\mathbb{Z} \cap [-K, K])^3,$ and $B_4, B_5, B_6 \subseteq \mathbb{Z}_q^\times$ such that $\Delta = \Delta_1 \dotsm \Delta_6$ and
\begin{equation} \label{eq:Sclaim}
  S_{B_4, B_5, B_6}^{\overline{\Delta}, \mathcal{K}}(a) \gg \frac{\varphi(q)}{q} \cdot \frac{(\log q)^3}{qQ_1^{1/100} \log^2 Q_1}.
\end{equation}
We split into three cases. 

\textbf{Case 1:} There is a sign $\Delta_1 \in \{+, -\}$ and a set $\mathcal{K}_1 \subseteq \mathbb{Z} \cap [-K, K]$ of size at least $K/20$ such that, for every $k_1 \in \mathcal{K}_1$, there exists a set $\mathcal{K}_2(k_1) \subseteq (\mathbb{Z} \cap [-K, K])^2$ of size at least $K^2/400$ such that, for every $(k_2, k_3) \in \mathcal{K}_2(k_1)$ and every $b \in \mathbb{Z}_q^\times$, 
\begin{align*}
(\mathbf{1}[A_{k_1}^{\Delta_1}] \ast \mathbf{1}[A_{k_2}^{\Delta_1}] \ast \mathbf{1}[A_{k_3}^{\Delta_1}])(b) \gg \varphi(q)^2.
\end{align*}
Thus Lemma~\ref{le:Tlower(iii)}(A1) holds for all such $(k_1, k_2, k_3)$. By Lemma~\ref{le:Tlower(iii)} and the pigeonhole principle, adjusting the sets $\mathcal{K}_1, \mathcal{K}_2(k_1)$ (that now have sizes $\geq K/160$ and $K^2/3200$), there exist $\Delta_4, \Delta_5,\Delta_6 \in \{+, -\}$ such that $\Delta = \Delta_1 \dotsm \Delta_6$ and, for $\overline{\Delta} = (\Delta_1, \dotsc, \Delta_6)$, we have
\[
T_{G, G, G, \overline{k}}^{\overline{\Delta}}(a) \gg \frac{1}{q \log^2 Q_1} \cdot  \frac{(q, P(Q_1))}{\varphi((q, P(Q_1)))} \left(\frac{|\mathcal{U}_{G, -k_1}^{\Delta_5}|}{Ue^{-k_1}} +  \frac{\varphi(q)}{q} \cdot Q_1^{-1/100}\right).
\]
whenever $\overline{k} = (k_1, k_2, k_3)$ with $k_1 \in \mathcal{K}_1$ and $(k_2, k_3) \in \mathcal{K}_2(k_1)$. Consequently, writing $\mathcal{K} = \mathcal{K}_1 \times (\mathbb{Z} \cap [-K, K])^2$,
\begin{align*}
T_{G, G, G}^{\overline{\Delta}, \mathcal{K}}(a)
&\gg \frac{K^2}{3200} \sum_{k_1 \in \mathcal{K}_1}
\frac{1}{q \log^2 Q_1} \cdot  \frac{(q, P(Q_1))}{\varphi((q, P(Q_1)))}
\left(\frac{|\mathcal{U}_{G, -k_1}^{\Delta_5}|}{Ue^{-k_1}} +  \frac{\varphi(q)}{q} \cdot  Q_1^{-1/100}\right) \\
&\gg \frac{K^2}{q \log^2 Q_1} \cdot  \frac{(q, P(Q_1))}{\varphi((q, P(Q_1)))}
\left(
\sum_{k_1 \in \mathcal{K}_1}\frac{|\mathcal{U}_{G, -k_1}^{\Delta_5}|}{Ue^{-k_1}}
+K \frac{\varphi(q)}{q} Q_1^{-1/100}
\right).
\end{align*}
Since $K \gg \log q$, the claim~\eqref{eq:Sclaim} follows from Lemma~\ref{lemma:le:STcomparison}.

\textbf{Case 2:} There exists a sign $\Delta_1 \in \{+, -\}$ and a set $\mathcal{K}_1 \subseteq \mathbb{Z} \cap [-K, K]$ of size at least $\frac{1}{10} K$ such that, for every $k \in \mathcal{K}_1$, 
\begin{align*}
|A_k^{\Delta_1}| \geq \left(\frac{1}{2}+\frac{1}{100}\right) \varphi(q).
\end{align*}
By Lemma~\ref{le:convolution}(i), for any triple $(k_1, k_2, k_3) \in \mathcal{K}_1^3$, we have
\[
\left(\mathbf{1}[A_{k_1}^{\Delta_1}] \ast \mathbf{1}[A_{k_2}^{\Delta_1}] \ast \mathbf{1}[A_{k_3}^{\Delta_1}]\right)(b) \gg \varphi(q)^2
\]
for every $b \in \mathbb{Z}_q^\times$. Hence we are actually in Case 1 with $\mathcal{K}_2(k_1) = \mathcal{K}_1^2$.

\textbf{Case 3}: We are not in Cases 1 or 2.
Since we are not in Case 2, by Lemma~\ref{le:Akdeltabasics} there exists a set $\mathcal{K} \subseteq \mathbb{Z} \cap [-K, K]$ of size at least $\frac{3}{2}K$ such that, for every $k \in \mathcal{K}$,
\begin{align*}
\left(\frac{1}{2}-\frac{1}{50}\right)\varphi(q) \leq |A_k^{+}|, |A_k^{-}| \leq \left(\frac{1}{2}+ \frac{1}{100}\right)\varphi(q).    
\end{align*}
By Lemma~\ref{le:KneserAppl} and the assumption that we are not in Case 1, there are $\geq K$ integers $k_1 \in \mathcal{K}$ such that there are $\geq K^2$ pairs $(k_2, k_3) \in \mathcal{K}^2$ such that, for every $i \in \{1, 2, 3\}$ and $\Delta \in \{+, -\}$, there are subgroups $H_{\overline{k},i}^\Delta \leq \mathbb{Z}_q^\times$ of index $2$ and elements $b_{\overline{k}, i}^\Delta \in \mathbb{Z}_q^\times$ such that 
\begin{align*}
|A_{k_i}^\Delta \cap b_{\overline{k}, i}^\Delta H_{\overline{k}, i}^\Delta| \geq |A_{k_i}^\Delta|-\frac{\varepsilon}{2}\varphi(q) \geq \left(\frac{1}{2}-\frac{1}{50}-\frac{\varepsilon}{2}\right)\varphi(q).
\end{align*}
Now, for each $A_{k_i}^\Delta$, this can happen only for one coset $b_{\overline{k}, i}^\Delta H_{\overline{k},i}^\Delta$ (since the intersection of two different cosets of subgroups of index $2$ has size at most $\varphi(q)/4$). Hence actually there exists a subset $\mathcal{H} \subseteq \mathcal{K}$ of size at least $K$ such that, for every $k \in \mathcal{H}$ and $\Delta \in \{+, -\}$, there exist a subgroup $H_k^\Delta \leq \mathbb{Z}_q^\times$ of index $2$ and an element $b_k^\Delta \in \mathbb{Z}_q^\times$ such that 
\begin{align}\label{eq:AR0}
|A_k^\Delta \cap b_k^\Delta H_k^\Delta| \geq |A_k^\Delta|-\frac{\varepsilon}{2}\varphi(q) \geq \left(\frac{1}{2}-\frac{1}{50}-\frac{\varepsilon}{2}\right)\varphi(q).
\end{align}

Let us first show that, for every $k \in \mathcal{H}$, we must have that
\begin{equation} \label{eq:b+neq-}
H_k^+ = H_k^- \quad \text{and} \quad  b^+_k H_k^+ \neq b_k^- H_k^-.
\end{equation}
If either of these fails, then, for $b_0 \not \in b^+ H_k^+,$ we have
\[
|b_0 H_k^+ \cap b^+ H_k^+| = 0 \quad \text{and} \quad |b_0 H_k^+ \cap b^- H_k^-| \in \left\{0,  \frac{\varphi(q)}{4}\right\}.
\]
Thus by~\eqref{eq:AR0}
\[
|A_k^+ \cap b_0 H_k^+| + |A_k^- \cap b_0 H_k^+| \leq \frac{\varepsilon}{2} \varphi(q) + \left(\frac{1}{4}+\frac{\varepsilon}{2}\right)\varphi(q) = \left(\frac{1}{4}+\varepsilon\right) \varphi(q),
\]
which by Lemma~\ref{le:Aprop}(ii) contradicts Lemma~\ref{le:P2}. Write $H_k = H_k^+ = H_k^-$. We split into two more cases.

\textbf{Case 3.1}: There exist a subgroup $H \leq \mathbb{Z}_q^\times$ of index $2$ and a set $\mathcal{H}_0 \subseteq \mathcal{H}$ of size at least $K/2$ such that $H_k = H$ for every $k \in \mathcal{H}_0$.
Let $\Delta_1 \in \{+, -\}$. By Lemma~\ref{le:convolution}(ii) we have 
\[
\left(\mathbf{1}[A_{k_1}^{\Delta_1}] \ast \mathbf{1}[A_{k_2}^{\Delta_1}] \ast \mathbf{1}[A_{k_3}^{\Delta_1}]\right)(b) \gg \varphi(q)^2
\]
for every $(k_1, k_2, k_3) \in \mathcal{H}_0^3$ and $b \in b_{k_1}^{\Delta_1}b_{k_2}^{\Delta_1}b_{k_3}^{\Delta_1}H$. Since $H$ has index $2$,~\eqref{eq:b+neq-} implies that $b_{k_1}^{+}b_{k_2}^{+}b_{k_3}^{+}H \neq b_{k_1}^{-}b_{k_2}^{-}b_{k_3}^{-}H.$ Thus Lemma~\ref{le:Tlower(iv)}(A1) holds for all $\overline{k} \in \mathcal{H}_0^3$.  Now we can finish the proof similarly to Case 1, but using Lemma~\ref{le:Tlower(iv)} in place of Lemma~\ref{le:Tlower(iii)}.

\textbf{Case 3.2:} We are not in Case 3.1. In this case we can find subsets $\mathcal{H}_1, \mathcal{H}_2 \subseteq \mathcal{H}$ with sizes $\geq K/4$ such that if $k_1 \in \mathcal{H}_1$ and $k_2 \in \mathcal{H}_2$, then $H_{k_1} \neq H_{k_2}$. Let $(k_1, k_2, k_3) \in \mathcal{H}_1 \times \mathcal{H}_2 \times \mathcal{H}_1$ and apply Lemma~\ref{le:KneserAppl} to $A_{k_1}^+, A_{k_2}^+, A_{k_3}^+$. Now Lemma~\ref{le:KneserAppl}(ii) does not hold and thus Lemma~\ref{le:KneserAppl}(i) must hold and we are actually in Case 1. 
\end{proof}

\bibliography{refs.bib}

@article {matomaki-radziwill,
    AUTHOR = {Matom\"aki, K. and Radziwi{\l}{\l} , M.},
     TITLE = {Multiplicative functions in short intervals},
   JOURNAL = {Ann. of Math. (2)},
  FJOURNAL = {Annals of Mathematics. Second Series},
    VOLUME = {183},
      YEAR = {2016},
    NUMBER = {3},
     PAGES = {1015--1056},
      ISSN = {0003-486X},
   MRCLASS = {11K65 (11N64)},
  MRNUMBER = {3488742},
MRREVIEWER = {Eugenijus Manstavi\v cius},
       URL = {https://doi.org/10.4007/annals.2016.183.3.6},
  display-string = {(2016) Matomaki, K. and Radziwill, M. - Multiplicative functions in short intervals},
}

@article{FordRad,
      title={Sign changes of the Liouville function in arithmetic progressions}, 
      author={Kevin Ford and Maksym Radziwiłł},
      year={2026},
      journal = {arXiv e-prints},
      pages = {arXiv:2605.03349},
eprint={2605.03349},
      archivePrefix={arXiv},
      primaryClass={math.NT},
      url={https://arxiv.org/abs/2605.03349},
  display-string = {(2026) Kevin Ford and Maksym Radziwiłł - Sign changes of the Liouville function in arithmetic progressions}, 
}

@article {kor,
    AUTHOR = {Korol\"ev, M. A.},
     TITLE = {Kloosterman sums with multiplicative coefficients},
   JOURNAL = {Izv. Ross. Akad. Nauk Ser. Mat.},
  FJOURNAL = {Izvestiya Rossiiskoi Akademii Nauk. Seriya Matematicheskaya},
    VOLUME = {82},
      YEAR = {2018},
    NUMBER = {4},
     PAGES = {3--17},
      ISSN = {1607-0046,2587-5906},
   MRCLASS = {11L05 (11L07)},
  MRNUMBER = {3833472},
MRREVIEWER = {Ioulia\ N.\ Baoulina},
       DOI = {10.4213/im8633},
       URL = {https://doi.org/10.4213/im8633},
  display-string = {(2018) Korolev, M. A. - Kloosterman sums with multiplicative coefficients},
}

@article {Chowla-AP,
AUTHOR = {S.~Chowla},
TITLE= {On the least prime in an arithmetical progression},
JOURNAL={J. Indian Math. Soc. (N.S.)},
VOLUME = {1},
YEAR = {1934},
PAGES = {1--3},
  display-string = {(1934) S.~Chowla - On the least prime in an arithmetical progression},}

@article {MatoTeraEkq,
    AUTHOR = {Matom\"aki, K. and Ter\"av\"ainen, J.},
     TITLE = {Products of primes in arithmetic progressions},
   JOURNAL = {J. Reine Angew. Math.},
  FJOURNAL = {Journal f\"ur die Reine und Angewandte Mathematik. [Crelle's
              Journal]},
    VOLUME = {808},
      YEAR = {2024},
     PAGES = {193--240},
      ISSN = {0075-4102,1435-5345},
   MRCLASS = {11N13 (11L07)},
  MRNUMBER = {4708120},
       DOI = {10.1515/crelle-2023-0096},
       URL = {https://doi.org/10.1515/crelle-2023-0096},
  display-string = {(2024) Matomaki, K. and Teravainen, J. - Products of primes in arithmetic progressions},
}

@article {MSsieve,
    AUTHOR = {Matom\"aki, K. and Shao, X.},
     TITLE = {When the sieve works {II}},
   JOURNAL = {J. Reine Angew. Math.},
  FJOURNAL = {Journal f\"ur die Reine und Angewandte Mathematik. [Crelle's
              Journal]},
    VOLUME = {763},
      YEAR = {2020},
     PAGES = {1--24},
      ISSN = {0075-4102,1435-5345},
   MRCLASS = {11N35 (11B25 11B30)},
  MRNUMBER = {4104277},
MRREVIEWER = {S\'andor\ Z.\ Kiss},
       DOI = {10.1515/crelle-2018-0034},
       URL = {https://doi.org/10.1515/crelle-2018-0034},
  display-string = {(2020) Matomaki, Kaisa and Shao, Xuancheng - When the sieve works II},
}

@article {GKM,
    AUTHOR = {Granville, A. and Koukoulopoulos, D. and Matom\"aki,
              K.},
     TITLE = {When the sieve works},
   JOURNAL = {Duke Math. J.},
  FJOURNAL = {Duke Mathematical Journal},
    VOLUME = {164},
      YEAR = {2015},
    NUMBER = {10},
     PAGES = {1935--1969},
      ISSN = {0012-7094,1547-7398},
   MRCLASS = {11N35 (11B30)},
  MRNUMBER = {3369306},
MRREVIEWER = {S\'andor\ Z.\ Kiss},
       DOI = {10.1215/00127094-3120891},
       URL = {https://doi.org/10.1215/00127094-3120891},
  display-string = {(2015) Granville, Andrew and Koukoulopoulos, Dimitris and Matomaki,
              Kaisa - When the sieve works},
}

@book{iw-kow,
    AUTHOR = {Iwaniec, H. and Kowalski, E.},
     TITLE = {Analytic number theory},
    SERIES = {American Mathematical Society Colloquium Publications},
    VOLUME = {53},
 PUBLISHER = {American Mathematical Society, Providence, RI},
      YEAR = {2004},
     PAGES = {xii+615},
      ISBN = {0-8218-3633-1},
   MRCLASS = {11-02 (11Fxx 11Lxx 11Mxx 11Nxx)},
  MRNUMBER = {2061214 (2005h:11005)},
MRREVIEWER = {K. Soundararajan},
  display-string = {(2004) Iwaniec, H. and Kowalski, E. - Analytic number theory},
}

@article {bgs,
    AUTHOR = {Balog, A. and Granville, A. and Soundararajan, K.},
     TITLE = {Multiplicative functions in arithmetic progressions},
   JOURNAL = {Ann. Math. Qu\'e.},
  FJOURNAL = {Annales Math\'ematiques du Qu\'ebec},
    VOLUME = {37},
      YEAR = {2013},
    NUMBER = {1},
     PAGES = {3--30},
      ISSN = {2195-4755,2195-4763},
   MRCLASS = {11N13 (11L40 11N37)},
  MRNUMBER = {3117735},
MRREVIEWER = {S.\ W.\ Graham},
       DOI = {10.1007/s40316-013-0001-z},
       URL = {https://doi.org/10.1007/s40316-013-0001-z},
  display-string = {(2013) Balog, A. and Granville, A. and Soundararajan, K. - Multiplicative functions in arithmetic progressions},
}

@book {friedlander,
    AUTHOR = {Friedlander, J. and Iwaniec, H.},
     TITLE = {Opera de cribro},
    SERIES = {American Mathematical Society Colloquium Publications},
    VOLUME = {57},
 PUBLISHER = {American Mathematical Society, Providence, RI},
      YEAR = {2010},
     PAGES = {xx+527},
      ISBN = {978-0-8218-4970-5},
   MRCLASS = {11N35 (11-02 11N05 11N13 11N25 11N36)},
  MRNUMBER = {2647984 (2011d:11227)},
MRREVIEWER = {D. R. Heath-Brown},
  display-string = {(2010) Friedlander, J. and Iwaniec, H. - Opera de cribro},
}

@article {Heath-Brown2,
    AUTHOR = {Heath-Brown, D. R.},
     TITLE = {Zero-free regions for {D}irichlet {$L$}-functions, and the
              least prime in an arithmetic progression},
   JOURNAL = {Proc. London Math. Soc. (3)},
      YEAR = {1992},
  display-string = {(1992) Heath-Brown, D. R. - Zero-free regions for Dirichlet $L$-functions, and the
              least prime in an arithmetic progression},
}

@article {KMT,
    AUTHOR = {Klurman, O. and Mangerel, A. P. and
              Ter\"av\"ainen, J.},
     TITLE = {Multiplicative functions in short arithmetic progressions},
   JOURNAL = {Proc. Lond. Math. Soc. (3)},
  FJOURNAL = {Proceedings of the London Mathematical Society. Third Series},
    VOLUME = {127},
      YEAR = {2023},
    NUMBER = {2},
     PAGES = {366--446},
      ISSN = {0024-6115,1460-244X},
   MRCLASS = {11N64 (11N13)},
  MRNUMBER = {4626713},
MRREVIEWER = {Tsz\ Ho\ Chan},
       DOI = {10.1112/plms.12546},
       URL = {https://doi.org/10.1112/plms.12546},
  display-string = {(2023) Klurman, O. and Mangerel, A. P. and
              Teravainen, J. - Multiplicative functions in short arithmetic progressions},
}

@book {montgomery-topics,
    AUTHOR = {Montgomery, H. L.},
     TITLE = {Topics in multiplicative number theory},
    SERIES = {Lecture Notes in Mathematics},
    VOLUME = {Vol. 227},
 PUBLISHER = {Springer-Verlag, Berlin-New York},
      YEAR = {1971},
     PAGES = {ix+178},
   MRCLASS = {10H30},
  MRNUMBER = {337847},
MRREVIEWER = {H.-E.\ Richert},
  display-string = {(1971) Montgomery, H: L. - Topics in multiplicative number theory},
}

@article {ramare-walker,
    AUTHOR = {Ramar\'{e}, O. and Walker, A.},
     TITLE = {Products of primes in arithmetic progressions: a footnote in
              parity breaking},
   JOURNAL = {J. Th\'{e}or. Nombres Bordeaux},
  FJOURNAL = {Journal de Th\'{e}orie des Nombres de Bordeaux},
    VOLUME = {30},
      YEAR = {2018},
    NUMBER = {1},
     PAGES = {219--225},
      ISSN = {1246-7405},
   MRCLASS = {11N13 (11A41 11B13 11N37)},
  MRNUMBER = {3809717},
MRREVIEWER = {Vilius Stakenas},
       URL = {http://jtnb.cedram.org/item?id=JTNB_2018__30_1_219_0},
  display-string = {(2018) Ramare, O. and Walker, A. - Products of primes in arithmetic progressions: a footnote in
              parity breaking},
}

@Article{Xylouris,
 Author = {Xylouris, T.},
 Title = {Linnik's constant is less than 5},
 FJournal = {Chebyshevski{\u{\i}} Sbornik},
 Journal = {Chebyshevski{\u{\i}} Sb.},
 ISSN = {2226-8383},
 Volume = {19},
 Number = {3(67)},
 Pages = {80--94},
 Year = {2018},
 Language = {German},
 DOI = {10.22405/2226-8383-2018-19-3-80-94},
 zbMATH = {7199802},
 Zbl = {1439.11244},
  display-string = {(2018) Xylouris, T. - Linnik's constant is less than 5}
}

@article {grynkiewicz,
    AUTHOR = {Grynkiewicz, D. J.},
     TITLE = {On extending {P}ollard's theorem for {$t$}-representable sums},
   JOURNAL = {Israel J. Math.},
  FJOURNAL = {Israel Journal of Mathematics},
    VOLUME = {177},
      YEAR = {2010},
     PAGES = {413--439},
      ISSN = {0021-2172},
   MRCLASS = {11B30 (05B10 20K99)},
  MRNUMBER = {2684428},
MRREVIEWER = {Michael A. Freeze},
       DOI = {10.1007/s11856-010-0053-6},
       URL = {https://doi.org/10.1007/s11856-010-0053-6},
  display-string = {(2010) Grynkiewicz, D. J. - On extending Pollard's theorem for $t$-representable sums},
}

@article {Jha,
    AUTHOR = {Jha, A.},
     TITLE = {Smallest totient in a residue class},
   JOURNAL = {Bull. Lond. Math. Soc.},
  FJOURNAL = {Bulletin of the London Mathematical Society},
    VOLUME = {57},
      YEAR = {2025},
    NUMBER = {6},
     PAGES = {1908--1917},
      ISSN = {0024-6093,1469-2120},
   MRCLASS = {11N64 (11B50 11L40)},
  MRNUMBER = {4918369},
MRREVIEWER = {Vilius\ Stakenas},
       DOI = {10.1112/blms.70069},
       URL = {https://doi.org/10.1112/blms.70069},
  display-string = {(2025) Jha, A. - Smallest totient in a residue class},
}

@article {Linnik1,
    AUTHOR = {Linnik, U. V.},
     TITLE = {On the least prime in an arithmetic progression. {I}. {T}he
              basic theorem},
   JOURNAL = {Rec. Math. [Mat. Sbornik] N.S.},
    VOLUME = {15(57)},
      YEAR = {1944},
     PAGES = {139--178},
   MRCLASS = {10.0X},
  MRNUMBER = {0012111},
MRREVIEWER = {H. Davenport},
  display-string = {(1944) Linnik, U. V. - On the least prime in an arithmetic progression. I. The
              basic theorem},
}

@article {Linnik2,
    AUTHOR = {Linnik, U. V.},
     TITLE = {On the least prime in an arithmetic progression. {II}. {T}he
              {D}euring-{H}eilbronn phenomenon},
   JOURNAL = {Rec. Math. [Mat. Sbornik] N.S.},
    VOLUME = {15(57)},
      YEAR = {1944},
     PAGES = {347--368},
   MRCLASS = {10.0X},
  MRNUMBER = {0012112},
MRREVIEWER = {H. Davenport},
  display-string = {(1944) Linnik, U. V. - On the least prime in an arithmetic progression. II. The
              Deuring-Heilbronn phenomenon},
}

@book {MVBook,
    AUTHOR = {Montgomery, H. L. and Vaughan, R. C.},
     TITLE = {Multiplicative number theory. {I}. {C}lassical theory},
    SERIES = {Cambridge Studies in Advanced Mathematics},
    VOLUME = {97},
 PUBLISHER = {Cambridge University Press, Cambridge},
      YEAR = {2007},
     PAGES = {xviii+552},
      ISBN = {978-0-521-84903-6; 0-521-84903-9},
   MRCLASS = {11-02 (11-01 11M06 11M26 11M45 11N05 11N37)},
  MRNUMBER = {2378655},
MRREVIEWER = {Wolfgang\ Schwarz},
  display-string = {(2007) Montgomery, H. L. and Vaughan, R. C. - Multiplicative number theory. I. Classical theory},
}

@article {klurman,
    AUTHOR = {Klurman, O.},
     TITLE = {Correlations of multiplicative functions and applications},
   JOURNAL = {Compos. Math.},
  FJOURNAL = {Compositio Mathematica},
    VOLUME = {153},
      YEAR = {2017},
    NUMBER = {8},
     PAGES = {1622--1657},
      ISSN = {0010-437X},
   MRCLASS = {11L40},
  MRNUMBER = {3705270},
       DOI = {10.1112/S0010437X17007163},
       URL = {http://dx.doi.org/10.1112/S0010437X17007163},
  display-string = {(2017) Klurman, O. - Correlations of multiplicative functions and applications},
}

@article {mrt,
    AUTHOR = {Matom\"aki, K. and Radziwi{\l}{\l}, M. and Tao, T.},
     TITLE = {An averaged form of {C}howla's conjecture},
   JOURNAL = {Algebra Number Theory},
  FJOURNAL = {Algebra \& Number Theory},
    VOLUME = {9},
      YEAR = {2015},
    NUMBER = {9},
     PAGES = {2167--2196},
      ISSN = {1937-0652},
   MRCLASS = {11P32},
  MRNUMBER = {3435814},
MRREVIEWER = {Martin Mereb},
       URL = {https://doi.org/10.2140/ant.2015.9.2167},
  display-string = {(2015) Matomaki, K. and Radziwill, M. and Tao, T. - An averaged form of Chowla's conjecture},
}

@book {tao-vu,
    AUTHOR = {Tao, T. and Vu, V.},
     TITLE = {Additive combinatorics},
    SERIES = {Cambridge Studies in Advanced Mathematics},
    VOLUME = {105},
 PUBLISHER = {Cambridge University Press, Cambridge},
      YEAR = {2006},
     PAGES = {xviii+512},
      ISBN = {978-0-521-85386-6; 0-521-85386-9},
   MRCLASS = {11-02 (05-02 05D10 11B13 11P70 11P82 28D05 37A45)},
  MRNUMBER = {2289012},
MRREVIEWER = {Serge\u\i  V. Konyagin},
       URL = {https://doi.org/10.1017/CBO9780511755149},
  display-string = {(2006) Tao, T. and Vu, V. - Additive combinatorics},
}
\bibliographystyle{plain}

\end{document}